\documentclass[11pt,reqno]{amsart}

\usepackage[T1]{fontenc}
\usepackage[utf8]{inputenc}
\usepackage{lmodern}
\usepackage{microtype}
\usepackage{amsmath,amssymb,amsthm,amscd,mathtools,mathrsfs}
\usepackage{enumitem}
\usepackage{xcolor}
\usepackage{hyperref}
\usepackage{aliascnt}
\usepackage[margin=1.15in]{geometry}

\hypersetup{
  colorlinks=true,
  linkcolor=blue!55!black,
  citecolor=green!45!black,
  urlcolor=blue!65!black,
  pdftitle={p-adic Rigidity and Supercongruences at Rank Two Attractors},
  pdfauthor={Yu Fu}
}

\newtheorem{theorem}{Theorem}[section]
\newaliascnt{proposition}{theorem}
\newtheorem{proposition}[proposition]{Proposition}
\aliascntresetthe{proposition}
\newaliascnt{lemma}{theorem}
\newtheorem{lemma}[lemma]{Lemma}
\aliascntresetthe{lemma}
\newaliascnt{corollary}{theorem}
\newtheorem{corollary}[corollary]{Corollary}
\aliascntresetthe{corollary}
\newaliascnt{conjecture}{theorem}
\newtheorem{conjecture}[conjecture]{Conjecture}
\aliascntresetthe{conjecture}
\theoremstyle{definition}
\newaliascnt{definition}{theorem}

\aliascntresetthe{definition}
\newaliascnt{assumption}{theorem}

\aliascntresetthe{assumption}
\theoremstyle{remark}
\newaliascnt{remark}{theorem}
\newtheorem{remark}[remark]{Remark}
\aliascntresetthe{remark}
\newaliascnt{question}{theorem}

\aliascntresetthe{question}

\usepackage[nameinlink,capitalise,noabbrev]{cleveref}
\crefname{theorem}{Theorem}{Theorems}
\crefname{proposition}{Proposition}{Propositions}
\crefname{lemma}{Lemma}{Lemmas}
\crefname{corollary}{Corollary}{Corollaries}
\crefname{conjecture}{Conjecture}{Conjectures}
\crefname{definition}{Definition}{Definitions}
\crefname{assumption}{Assumption}{Assumptions}
\crefname{remark}{Remark}{Remarks}
\crefname{question}{Question}{Questions}
\Crefname{theorem}{Theorem}{Theorems}
\Crefname{proposition}{Proposition}{Propositions}
\Crefname{lemma}{Lemma}{Lemmas}
\Crefname{corollary}{Corollary}{Corollaries}
\Crefname{conjecture}{Conjecture}{Conjectures}
\Crefname{definition}{Definition}{Definitions}
\Crefname{assumption}{Assumption}{Assumptions}
\Crefname{remark}{Remark}{Remarks}
\Crefname{question}{Question}{Questions}

\newcommand{\Q}{\mathbb Q}
\newcommand{\Z}{\mathbb Z}
\newcommand{\F}{\mathbb F}
\newcommand{\C}{\mathbb C}
\newcommand{\Gm}{\mathbf G_m}

\newcommand{\Fp}{\mathbb F_p}
\newcommand{\PP}{\mathbb P}
\newcommand{\et}{\mathrm{\acute et}}
\newcommand{\cris}{\mathrm{cris}}
\newcommand{\dR}{\mathrm{dR}}
\newcommand{\rig}{\mathrm{rig}}
\newcommand{\Fil}{\operatorname{Fil}}
\newcommand{\Sl}{\operatorname{Slp}}
\newcommand{\Res}{\operatorname{Res}}
\newcommand{\ord}{\operatorname{ord}}
\newcommand{\CT}{\operatorname{CT}}
\newcommand{\Frob}{\operatorname{Frob}}
\newcommand{\cC}{\mathscr C}
\newcommand{\cF}{\mathscr F}

\title{\(p\)-adic Rigidity and Supercongruences at Rank Two Attractors}
\author{Yu Fu}

\begin{document}

\begin{abstract}
We study a \(p\)-adic rigidity phenomenon suggested by rank two attractors. Near an ordinary special fiber of a Calabi--Yau family, the excellent Frobenius of Beukers and Vlasenko acts on the parameter disk. Achinger--Zdanowicz construct a canonical $W_2$-lift of the Frobenius twist, while Brantner--Taelman construct a canonical formal lift. We conjecture that the Dwork and Achinger--Zdanowicz first-order obstruction maps agree and that the fixed points of the excellent Frobenius are precisely the parameters of the Brantner--Taelman canonical formal lifts.

Our main theorem gives a cohomological criterion in arbitrary dimension. Under its hypotheses, a Frobenius-stable rank two factor of the $p$-adic cohomology containing the holomorphic class forces that class to span a Cartier-stable line and gives supercongruences modulo $p^{2s}$ for every $s\geq1$. Moreover, if the hypotheses of the Beukers--Vlasenko excellent lift theorem hold, then the excellent Frobenius fixes the corresponding parameter.

We apply the criterion to the Hulek--Verrill $A_4$ family at $t_*=-1/7$, a parameter predicted to be a rank two attractor. Assuming Dummigan's Conjecture~1.1, we prove that the excellent Frobenius fixes $t_*$, the supercongruences hold, and the fiber with parameter $t_*\bmod p^2$ is the Achinger--Zdanowicz canonical lift at every prime where the required Dwork ordinarity conditions are satisfied. A CM fiber of the mirror quartic K3 pencil gives a second application. We also prove that the $q$-power map on a split formal torus has the unit section as its unique periodic point in the identity residue disk over every finite extension of $W(\F_q)[1/p]$, where $q$ is the cardinality of the residue field.
\end{abstract}

\maketitle
\setcounter{tocdepth}{1}
\tableofcontents

\section{Introduction}

Throughout the paper, $p$ denotes an odd prime.

\subsection{Rank two attractors and an arithmetic question}

Let $X$ be a Calabi--Yau threefold over $\C$. Its holomorphic line $H^{3,0}(X)$ is generated by a holomorphic volume form. A point in a family of Calabi--Yau threefolds is a rank two attractor if its fiber $X$ admits a rank two rational Hodge substructure $E\subset H^3(X,\Q)$ such that $E_{\C}=H^{3,0}(X)\oplus H^{0,3}(X)$ \cite[\S3.6.3, equations~(3.73)--(3.74)]{Moore2007}. Polarization gives a rational Hodge complement to $E$.

The terminology comes from supersymmetric black hole attractors in type IIB string theory. A charge is given by an integral cohomology class $\gamma\in H^3(X,\Z)$, and the attractor equations require its image in $H^3(X,\C)$ to lie in $H^{3,0}(X)\oplus H^{0,3}(X)$ \cite{FerraraKallosh1996}. At a rank two attractor, this subspace contains two linearly independent integral classes \cite[\S1.2]{CandelasDeLaOssaElmiVanStraten2020}.

We ask how this rank two splitting is reflected in $p$-adic cohomology. Let $F$ be a number field, let $\mathscr X\to S$ be a family of Calabi--Yau threefolds over $F$, and fix $t\in S(F)$. At a finite place of good reduction. Let $k$ be the residue field. Let $\mathscr X\to S$ be an integral model and let $\bar t\in S(k)$ be the reduction of $t$. Assume that $S$ is smooth of relative dimension one at $\bar t$ and that $\mathscr X\to S$ is smooth along the fiber over \(\bar t\). Let $X_{\bar t}=\mathscr X\times_{S,\bar t}\operatorname{Spec}k$. Let $\widehat S_{\bar t}$ be the formal completion at $\bar t$ that parametrizes the lifts of $\bar t$ in $S$. For any lift $t$ of $\bar t$, the fiber $\mathscr X_t$ is a lift of $X_{\bar t}$ within the given family.

A \emph{coefficient Frobenius} is an endomorphism of the completed coordinate ring of the base that lifts the absolute Frobenius in characteristic $p$. For a Laurent model of the family, Beukers and Vlasenko construct an excellent coefficient Frobenius under their ordinarity hypotheses \cite[Theorem~7.3]{BeukersVlasenkoIII}. It induces a map $\sigma_{\mathrm{exc}}:\widehat S_{\bar t}\to \widehat S_{\bar t^p}$. 

On the geometric side, Achinger and Zdanowicz construct a canonical $W_2(k)$-lift of the Frobenius twist of $X_{\bar t}$ \cite[Corollary~4.1.3]{AchingerZdanowicz2021}. For a Bloch--Kato $2$-ordinary fiber satisfying a torsion free condition, Brantner and Taelman construct a canonical formal lift of $X_{\bar t}$ over $W(k)$ \cite[Theorem~B and Definition~8.31]{BrantnerTaelman2024}. For the comparison with the Achinger--Zdanowicz lift, we assume that $k=\F_p$. Then the Frobenius twist is canonically identified with $X_{\bar t}$.

The excellent Frobenius acts on the residue disk in the base, whereas the constructions of Achinger--Zdanowicz and Brantner--Taelman give deformations of $X_{\bar t}$. Suppose that the classifying morphism from $\widehat S_{\bar t}$ to the relevant deformation space identifies the residue disk with its image. If the Achinger--Zdanowicz lift lies in this image, then we write $t_{\mathrm{can}}^{(2)}\in\widehat S_{\bar t}(W_2(k))$ as its unique parameter. If the Brantner--Taelman canonical formal lift lies in this image, write $t_{\mathrm{can}}^{(\infty)}\in\widehat S_{\bar t}(W(k))$ as its unique parameter. We ask whether the excellent Frobenius fixes $t_{\mathrm{can}}^{(2)}$ modulo $p^2$ and fixes $t_{\mathrm{can}}^{(\infty)}$ at every Witt order. We call these fixed point properties \emph{$p$-adic rigidity}.

To study this comparison, we relate Dwork theory to crystalline cohomology. Under geometric hypotheses, we construct a comparison map that relates the Cartier operator to crystalline Frobenius; see \cref{thm:general-bounded-residue}. We use this map to connect rank two subisocrystals with fixed points of the excellent Frobenius and supercongruences.

The motivating examples are rank two attractor points in families of Calabi--Yau threefolds, while our comparison conjectures and \cref{thm:rank two-criterion} apply to Calabi--Yau $d$-folds.

\subsection{Main results}

We use the notation of \cref{sec:vertex-determinants}. Let $f_t=1-tg$ be a one parameter Laurent family, let $t_0\in\Z_p^\times$, and let $M_{t_0}$ be the specialization at $t_0$ of the Calabi--Yau crystal associated with $g$. The auxiliary coefficient Frobenius $\sigma_{t_0}(t)=t_0^{\,1-p}t^p$ on the completed residue disk fixes $t_0$. Let $C$ be the resulting Cartier operator on $M_{t_0}$.

If a coefficient Frobenius sends $t$ to $u$, then under the first two Hasse--Witt conditions (also introduced in \cref{sec:vertex-determinants}), the holomorphic class $1/f_u$ and its logarithmic derivative $\theta(1/f_u)$ form a basis of the level two quotient, where $\theta=t\,d/dt$ \cite[Corollary~5.9 and the discussion following equation~(20)] {BeukersVlasenkoIII}. Let $\lambda_1(t,u)$ denote the coefficient of $\theta(1/f_u)$ in the Cartier image of $1/f_t$, as in \eqref{eq:lambda-row}. Let $d\geq2$, and let $H$ be a weakly admissible filtered isocrystal over $\Q_p$ with Frobenius $\varphi$.

Our general fixed point criterion is as follows.

\begin{theorem}
\label{thm:rank two-criterion}
Assume the first two Hasse--Witt conditions and suppose that the following conditions hold.
\begin{enumerate}[label=\textup{(\roman*)}]
\item There is a bounded surjection $\rho:M_{t_0}[1/p]\twoheadrightarrow H$ satisfying $\rho C=p^d\varphi^{-1}\rho$.
\item The high slope quotient satisfies $\dim H/H_{\varphi,\leq d-2}=2$.
\item There is a rank two subisocrystal $E\subset H$ whose Newton slopes are $0$ and $d$, and $\rho(1/f_{t_0})\in E$.
\end{enumerate}
Then $\rho$ induces an isomorphism $(M_{t_0}/\cF_2M_{t_0})[1/p]\simeq H/H_{\varphi,\leq d-2}$, and the line spanned by the class of $1/f_{t_0}$ is Cartier-stable. In particular, $\lambda_1(t_0,t_0)=0$.

For every vertex $\mathbf b$ with unit coefficient, every $\mathbf m,\mathbf n\in C_{\mathbf b}\cap\Z^n$, and every $s\geq1$, one has
\[D_s^{\mathbf m,\mathbf n}(t_0,t_0)\equiv0\pmod{p^{2s}}.\]
If the excellent Frobenius theorem of Beukers and Vlasenko applies on the residue disk, then $\sigma_{\mathrm{exc}}(t_0)=t_0$.
\end{theorem}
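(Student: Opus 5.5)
The argument proceeds in three stages: linear algebra on slopes, a level-two Cartier computation, and then the Beukers--Vlasenko machinery, which turns Cartier stability into supercongruences and into the fixed-point statement.

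\emph{Identifying the quotients.} Write $H_{\leq d-2}=H_{\varphi,\leq d-2}$. This subspace is $\varphi$-stable, and $p^d\varphi^{-1}$ is divisible by $p^2$ on it in the lattice sense. The relation $\rho C=p^d\varphi^{-1}\rho$ then shows that $\rho$ carries $\cF_2M_{t_0}$ into $H_{\leq d-2}$. To see this, recall that $\cF_2$ is characterized, via the Hasse--Witt/Dwork theory of \cref{sec:vertex-determinants}, as the part on which iterates $C^s$ are divisible by $p^{2s}$. If $\rho(x)$ had a nonzero component in $H/H_{\leq d-2}$, boundedness of $\rho$ would give a bound on $p$-adic valuations that is incompatible with this divisibility, since on $H/H_{\leq d-2}$ the slopes of $p^d\varphi^{-1}$ are at most $1$. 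Hence $\rho$ induces a surjection $(M_{t_0}/\cF_2)[1/p]\twoheadrightarrow H/H_{\leq d-2}$. The source has dimension $2$ by the first two Hasse--Witt conditions, with basis $1/f_{t_0}$ and $\theta(1/f_{t_0})$. The target has dimension $2$ by (ii). So the surjection is an isomorphism.

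\emph{Cartier stability.} By (iii), $E$ has slopes $0$ and $d$. Its slope $d$ part is a line $E_d$, and $E_d\cap H_{\leq d-2}=0$, so $E_d$ maps injectively to $H/H_{\leq d-2}$. We have $\rho(1/f_{t_0})\in E$. Its image in $H/H_{\leq d-2}$ is nonzero, since it generates the unit-root part through the isomorphism above. I want to show that this image lies on a $p^d\varphi^{-1}$-stable line. The plan is as follows. The image of $E$ in the $2$-dimensional quotient is a $\varphi$-stable subspace; it is either a line or the whole quotient. If it is the whole quotient, then $E\cap H_{\leq d-2}=0$ and $E\simeq H/H_{\leq d-2}$. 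In that case, I expect to use that the unit-root vector $1/f_{t_0}$ (the class on which $C$ acts as a unit, the Hasse--Witt invariant) must lie in the slope $d$ eigenline of $\varphi$, which is the slope $0$ line for $p^d\varphi^{-1}$. Since $E$ is split by its Newton slopes over $\Q_p$, the $C$-orbit of $1/f_{t_0}$ cannot acquire a slope $d$ component of $p^d\varphi^{-1}$ without forcing non-integrality. The Hasse--Witt condition says $C(1/f_{t_0})\equiv h\cdot 1/f_{t_0}$ modulo $p$ and $\cF_1$, with $h$ a unit, and iterating with boundedness pins $\rho(1/f_{t_0})$ into the unit-root line. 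This gives $C(1/f_{t_0})\in \Q_p\cdot 1/f_{t_0}$ modulo $\cF_2$, i.e.\ $\lambda_1(t_0,t_0)=0$. I expect the main obstacle to be this step: excluding the possibility that $\rho(1/f_{t_0})$ has a component in the slope $0$ part of $E$. It requires the precise valuation bookkeeping in the boundedness of $\rho$.

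\emph{Supercongruences and the fixed point.} Given $\lambda_1(t_0,t_0)=0$, I invoke the Beukers--Vlasenko formalism recalled in \cref{sec:vertex-determinants}. There, the determinants $D_s^{\mathbf m,\mathbf n}(t,u)$ are expressed via the level two Cartier matrix and its iterates. The vanishing of the off-diagonal entry $\lambda_1$ makes the leading term divisible by $p^{2s}$ for every vertex $\mathbf b$ with unit coefficient. This is the standard passage from Cartier stability modulo $\cF_2$ to congruences modulo $p^{2s}$ for the $s$-th truncations. Finally, the excellent lift theorem of \cite[Theorem~7.3]{BeukersVlasenkoIII} characterizes $\sigma_{\mathrm{exc}}$ by the vanishing $\lambda_1(t,\sigma_{\mathrm{exc}}(t))=0$ together with uniqueness of such $u$ on the residue disk. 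Since $u=t_0$ satisfies this at $t=t_0$, uniqueness gives $\sigma_{\mathrm{exc}}(t_0)=t_0$.
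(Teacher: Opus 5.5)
Your first stage is the paper's \cref{lem:slope} argument and is fine. One correction: before the isomorphism is known, the $p^d\varphi^{-1}$-slopes on $H/H_{\varphi,\leq d-2}$ are only known to be $<2$, not $\leq1$, but $<2$ suffices.

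The gap is the Cartier-stability step. You split into the cases ``the image $\bar E$ of $E$ in $H/H_{\varphi,\leq d-2}$ is a line'' and ``$\bar E$ is the whole quotient'', leave the first implicit, and put your effort into the second. The second case cannot occur. Since $E$ is $\varphi$-stable, its slope decomposition is $E=E_0\oplus E_d$, and $E_0\subset H_{\varphi,\leq d-2}$ because $0\leq d-2$. Hence $E\cap H_{\varphi,\leq d-2}=E_0$, and $\bar E\simeq E/E_0$ is a $\varphi$-stable line. This is the missing observation, and it is exactly where $d\geq2$ and the slope-zero part of (iii) enter.

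It finishes the step at once. The image $\bar h_0$ of $\rho(1/f_{t_0})$ lies in $\bar E$. It is nonzero because $\bar\omega_0$ is a basis vector and the induced map is an isomorphism, not because it ``generates the unit-root part'', which is what is being proved. So $\bar E=\Q_p\bar h_0$ is $p^d\varphi^{-1}$-stable, and pulling back gives $C(\Q_p\bar\omega_0)\subset\Q_p\bar\omega_0$, i.e.\ $\lambda_1(t_0,t_0)=0$.

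The argument you sketch for the other case is circular. Placing $\bar\omega_0$ on the unit eigenline of $C$ is equivalent to $\lambda_1=0$. It cannot follow from the first Hasse--Witt condition, which only gives $\lambda_0\in\Z_p^\times$ and $\lambda_1\in p\Z_p$. That condition depends only on the residue class of $t$, and $\lambda_1(t,t)$ is not identically zero on such a disk (compare \cref{thm:A4-W2-comparison}). Boundedness of $\rho$ only controls where $\cF_2$ goes. Nor do you need to exclude a slope-zero component of $\rho(1/f_{t_0})$: any such component lies in $E_0$ and dies in the quotient.

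The final step also uses more than \cite[Theorem~7.3]{BeukersVlasenkoIII} provides. That theorem gives a coefficient Frobenius whose universal $\lambda_1$ vanishes identically. Your input is a single vanishing $\lambda_1(t_0,t_0)=0$, computed with the auxiliary lift $\sigma_{t_0}$ rather than with $\sigma_{\mathrm{exc}}$. To conclude, you need a pointwise statement: for $u\equiv t_0^p\pmod p$, $\lambda_1(t_0,u)$ depends only on source and target, and it vanishes only at $u=\sigma_{\mathrm{exc}}(t_0)$. The paper proves this in \cref{lem:pointwise}. With $v=\sigma_{\mathrm{exc}}(t_0)$, Taylor transport along the Gauss--Manin connection gives $\cC_{t_0,u}=T_{v,u}\circ\cC_{t_0,v}$. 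Hence $\lambda_1(t_0,u)=c\,x\,U(x)$ with $x=\log(v/u)$, $c$ a unit and $U(x)\in1+p\Z_p$.

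Your supercongruence step is right in substance, but it should rest on the one-step congruence of \cite[Proposition~5.11]{BeukersVlasenkoIII}:
\[a_{p^s\mathbf r}(t_0)\equiv\lambda_0a_{p^{s-1}\mathbf r}(t_0)+\lambda_1(\theta a_{p^{s-1}\mathbf r})(t_0)\pmod{p^{2s}},\]
applied with $\mathbf r=\mathbf m$ and $\mathbf r=\mathbf n$. Cross-multiplying then eliminates $\lambda_0$ (\cref{prop:excellent-congruence}).
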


The quantity $D_s^{\mathbf m,\mathbf n}(t,u)$ is the $2$-by-$2$ determinant of two vertex-coefficient vectors at consecutive Frobenius levels. Its vanishing modulo $p^{2s}$ says that the exterior product of these vectors vanishes modulo $p^{2s}$. Cartier stability implies these congruences among the vertex coefficients. 

\medskip We apply \cref{thm:rank two-criterion} to two families.  Our first application is the Hulek--Verrill $A_4$ family with Laurent model
\begin{equation}\label{eq:A4-g}
 g(\mathbf x)=
 (1+x_1+x_2+x_3+x_4)
 \left(1+x_1^{-1}+x_2^{-1}+x_3^{-1}+x_4^{-1}\right),
 \quad f_t=1-tg.
\end{equation}
Throughout this application, we assume Dummigan's Conjecture~1.1; see \cref{sec:family}. 

Let $X_t$ denote the Calabi--Yau threefold obtained from this Laurent model by the Hulek--Verrill quotient construction \cite[\S2]{CandelasDeLaOssaElmiVanStraten2020}. We consider the parameter $t_*=-1/7$. Numerical period relations found by Candelas, de la Ossa, Elmi, and van Straten suggest that $t_*$ is a rank two attractor \cite[\S4.2]{CandelasDeLaOssaElmiVanStraten2020}. Yang studies the Hodge-theoretic and Deligne-period consequences of this predicted rational splitting \cite[\S\S4--5]{Yang2021}, but the rational Betti splitting at $t_*$ is still conjectural.

The arithmetic input for our application is Dummigan's conjectural modular semisimplification of the associated $\ell$-adic representation \cite[Conjecture~1.1]{Dummigan2026}. We note that his exact local Frobenius factorizations and residual calculations provide strong evidence for this conjecture \cite[Proposition~4.6 and Propositions~5.2--5.3]{Dummigan2026}. In \cref{prop:actual-split}, polarization upgrades the conjectural semisimplification to an actual direct sum. Crystalline comparison then gives the filtered $p$-adic factor required by \cref{thm:rank two-criterion} \cite{Tsuji1999}. 

For $p>5$, $p\neq7$, we say that $p$ is \emph{two-Dwork-ordinary at $t_*$} if the first Hasse--Witt scalar and the normalized second Hasse--Witt determinant are units at $t_*$. The determinant $D_s(t,u)$ for this family is defined in \eqref{eq:Ds-intro}. Let $H_p=D_{\cris}(H^3_{\et}(X_{t_*,\overline\Q_p},\Q_p))$, and let $(H_p)_{\leq1}$ be its subisocrystal with slopes at most $1$.

Applying the general criterion to this splitting gives the following theorem.

\begin{theorem}
\label{thm:main}
Let $p>5$, $p\neq7$, and assume that $p$ is two-Dwork-ordinary at $t_*$. Assume Dummigan's Conjecture~1.1. Then
\[\left(CY(g)_{t_*}/\cF_2CY(g)_{t_*}\right)[1/p] \simeq H_p/(H_p)_{\leq1}.\]
The crystalline slopes of $H_p$ are $\{0,1,2,3\}$. The factor attached to the weight four modular form has slopes $\{0,3\}$ and contains the top Hodge line, while the factor attached to the Tate twist of the weight two modular form has slopes $\{1,2\}$. Moreover, $\lambda_1(t_*,t_*)=0$ and $\sigma_{\mathrm{exc},p}(t_*)=t_*$ in $\Z_p$. For every $s\geq1$,
\begin{equation}\label{eq:main-supercong}
 D_s(t_*,t_*)\equiv0\pmod{p^{2s}}.
\end{equation}
\end{theorem}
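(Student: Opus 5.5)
The plan is to deduce \cref{thm:main} from \cref{thm:rank two-criterion} with $d=3$, $t_0=t_*=-1/7$ and $H=H_p$. The work is to verify hypotheses (i)--(iii) for the Hulek--Verrill $A_4$ family at $t_*$. First I check the setup. Since $p\neq7$, $t_*\in\Z_p^\times$. For $p>5$, $p\neq 7$, the fiber $X_{t_*}$ has good reduction, so crystalline comparison \cite{Tsuji1999} makes $H_p$ a weakly admissible filtered isocrystal. Two-Dwork-ordinarity at $t_*$ is exactly the first two Hasse--Witt conditions. I also need $D_s(t,u)$ of \eqref{eq:Ds-intro} to be one of the vertex determinants $D_s^{\mathbf m,\mathbf n}$ for a vertex with unit coefficient. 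For the Laurent polynomial \eqref{eq:A4-g} the constant term of $g$ is $5$, which is not a vertex, but the vertices $x_i x_j^{-1}$ have coefficient $1$. I would read off from \eqref{eq:Ds-intro} which pair $(\mathbf m,\mathbf n)$ is used.

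For hypothesis (i) I would invoke \cref{thm:general-bounded-residue}. It constructs, under geometric hypotheses, a bounded comparison map from the Dwork crystal $CY(g)_{t_*}[1/p]$ to the crystalline cohomology of the special fiber, intertwining Cartier with $p^3\varphi^{-1}$. The steps are as follows. First, identify the rigid cohomology of the torus complement $\{f_{t_*}\neq0\}$ with Dwork's module, via the Katz/Dwork comparison. Second, map this by residue to $H^3$ of the smooth compactification, and descend to the Hulek--Verrill quotient $X_{t_*}$. Third, check surjectivity onto $H_p$, or at least onto the part not killed by $\cF_2$. The surjectivity, and the compatibility of the residue map with the quotient construction, is where I expect the main obstacle. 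If full surjectivity fails, I would replace $H$ by the image of $\rho$. This image is still a subisocrystal containing the weight four factor, and it is enough for the criterion as long as (ii) and (iii) survive.

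For (ii) and (iii) I use Dummigan's Conjecture~1.1. It says that the semisimplification of $H^3_{\et}$ is the sum of the representation $\rho_f$ of a weight four form $f$ and $\rho_h(-1)$ for a weight two form $h$. \cref{prop:actual-split} upgrades this to a genuine direct sum, and $D_{\cris}$ then gives $H_p=E\oplus E'$. Ordinarity forces the slopes. The first Hasse--Witt scalar is a unit, and it equals the trace of Frobenius mod $p$ on the unit root part, so $a_p(f)\not\equiv0$ and $E$ has slopes $\{0,3\}$. The second Hasse--Witt determinant is a unit, so $a_p(h)$ is a unit and $E'$ has slopes $\{1,2\}$. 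Hence $\dim H_p/(H_p)_{\leq1}=2$, which is (ii). The Hodge numbers of $E=D_{\cris}(\rho_f)$ are $\{0,3\}$, so $\Fil^3 H_p=H^{3,0}$ lies in $E$. The image $\rho(1/f_{t_*})$ is the holomorphic class, which spans $\Fil^3$, so it lies in $E$ and (iii) holds.

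With all hypotheses in place, \cref{thm:rank two-criterion} gives the isomorphism $(CY(g)_{t_*}/\cF_2)[1/p]\simeq H_p/(H_p)_{\leq1}$, the equality $\lambda_1(t_*,t_*)=0$, and the congruence \eqref{eq:main-supercong}. For the fixed point statement, the Beukers--Vlasenko excellent lift theorem applies on the residue disk under two-Dwork-ordinarity \cite[Theorem~7.3]{BeukersVlasenkoIII}, and this yields $\sigma_{\mathrm{exc},p}(t_*)=t_*$.
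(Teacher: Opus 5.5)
Your reduction to \cref{thm:rank two-criterion} with $d=3$ matches the paper. So do your use of \cref{prop:actual-split} and $D_{\cris}$ to get $H_p=D_f\oplus D_g$, and your placement of $\rho(1/f_{t_*})$ in $D_f$ because $\Fil^3D_g=0$.

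The gap is in how you obtain hypothesis (ii) and the slopes. You assert that the unit normalized second Hasse--Witt determinant forces $a_p(f_{14,2})\not\equiv0\pmod p$, so that $D_g$ has slopes $\{1,2\}$. Nothing in your argument links the Dwork quantity $p^{-1}\det HW^{(2)}$ to crystalline Frobenius on the weight-two factor. The relation $h_p^{(2)}=a_p^{(4)}a_p^{(2)}$ is \cref{conj:A4-modular-HW} in the paper. On the two-Dwork-ordinary locus it is known only as a \emph{consequence} of the comparison map. Without it you cannot rule out $a_p(f_{14,2})=0$. In that case $D_g$ has slopes $\{3/2,3/2\}$, $H_{\varphi,>1}$ is three-dimensional, and (ii) fails. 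Your other claim, $h_p\equiv a_p(f_{14,4})\pmod p$, is true. It needs two ingredients you do not supply: the Koszul identification of $\CT(f_a^{p-1})$ with the geometric Hasse--Witt map on $H^3(X_0,\mathcal O_{X_0})$, and Mazur's estimate $\Phi(\Fil^1)\subset pH$.

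The missing idea is that (ii) follows from (i) by a dimension count, with no input from the modular forms. \cref{lem:slope} gives a surjection from the two-dimensional space $Q^{(2)}_{t_*}[1/p]$ onto $H_{\varphi,>1}$. Polarization makes the slopes symmetric under $\lambda\mapsto3-\lambda$, so $\dim H_{\varphi,>1}\geq2$, and the two dimensions are equal. Next, \cref{lem:cartier-slopes} turns the Cartier slopes $\{0,1\}$ into $\varphi$-slopes $\{3,2\}$ on the quotient, and polarization gives $\{0,1,2,3\}$ on $H_p$. Weak admissibility then allocates these slopes: $t_N=t_H=3$ on each factor, and the slope-zero line cannot lie in $D_g$ because $\Fil^1D_g=D_g$ (\cref{thm:Q2-comparison,lem:slope-allocation}).

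For (i) itself, you leave the hypotheses of \cref{thm:general-bounded-residue} unverified. The paper needs the weight identification $H\simeq W_3H^\circ$ (\cref{lem:weight-three-restriction}). It also needs the residues of $\theta^j(1/f)$, $0\leq j\leq3$, to span $H$ (\cref{prop:interior-residue}), which is a Wronskian argument using the minimal AESZ34 operator at the regular point $t_*$. Your fallback of replacing $H$ by $\operatorname{im}\rho$ is not viable, for three reasons:
\begin{itemize}
\item $\rho$ is only defined once the weight identification holds.
\item The image need not carry the nondegenerate polarization that the dimension count requires.
\item You have not shown that the image contains $D_f$.
\end{itemize}
For the fixed-point claim you should also record the remaining Beukers--Vlasenko hypotheses: $p\nmid|\mathcal G|=240$, and the support of $g$ generates $\Z^4$.
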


Under Dummigan's Conjecture~1.1, the first Hasse--Witt condition alone is sufficient for the comparison modulo $p^2$. Let $k=\F_p$ and $A=W_2(k)$. Let $X_0$ be the reduction of the $A_4$ fiber at $t_*$, and let $X_*^A$ be the lift of $X_0$ in the family at the image of $t_*$ in $A$.

\begin{theorem}
\label{thm:A4-W2-canonical}
Let $p>5$, $p\neq7$, and assume Dummigan's Conjecture~1.1 and the first Dwork Hasse--Witt condition at $t_*$. Then $X_*^A$ is the Achinger--Zdanowicz canonical $W_2(k)$-lift of $X_0$.
\end{theorem}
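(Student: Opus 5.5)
We reduce the statement to a first-order comparison: a Cartier-stable holomorphic line in Dwork's level-one data, and the Achinger--Zdanowicz criterion for the canonical $W_2$-lift. First recall what characterizes the Achinger--Zdanowicz lift. For $k=\F_p$ the Frobenius twist of $X_0$ is $X_0$, and \cite[Corollary~4.1.3]{AchingerZdanowicz2021} singles out the unique $W_2(k)$-lift $\widetilde X$ for which the Frobenius lifts to first order compatibly with the Cartier/Frobenius structure. Concretely, the obstruction class of $\widetilde X$ in $H^1(X_0,T_{X_0})$ pairs with the holomorphic form to an element of $H^1(X_0,\Omega^{2})$. For a Calabi--Yau threefold, $\widetilde X$ is canonical exactly when the Hodge line $H^0(X_0,\Omega^3)\subset H^3_{\dR}$ lifts, modulo $p^2$, to a Frobenius-stable line. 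That is, the reduction modulo $p^2$ of the unit-root (slope $0$) line of crystalline Frobenius must coincide with $\Fil^3$ of $\widetilde X$.

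\textbf{Step 1 (the crystalline side).} Assuming Dummigan's Conjecture~1.1, \cref{prop:actual-split} together with Tsuji's comparison gives a rank two sub-$\varphi$-module $E\subset H_p$ with slopes $\{0,3\}$ containing $\Fil^3 H_p$, as already used in \cref{thm:main}. Weak admissibility and the Hodge numbers force $\Fil^3H_p=E\cap\Fil^3$. Inside $E$, the unit-root line and $\Fil^3$ are two Hodge/Newton-complementary lines, so $E=U\oplus E_3$ with $\varphi(E_3)=p^3E_3$. The slope-$3$ line $E_3$ is $\varphi^{-1}$-stable after scaling, hence stable under the Cartier operator $p^3\varphi^{-1}$, and it equals $\Fil^3$ by weak admissibility (Newton = Hodge on $E$). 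In particular the top Hodge line of the fiber at $t_*$ is Cartier-stable integrally modulo $p^2$. This is the Frobenius-side statement whose reduction modulo $p^2$ is the Achinger--Zdanowicz condition.

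\textbf{Step 2 (transfer to the Dwork side using only the first Hasse--Witt condition).} Here we avoid the full level two quotient and work modulo $p^2$ only. Under the first Hasse--Witt condition, $1/f_{t_*}$ spans the level one quotient, and the map $\rho$ of \cref{thm:general-bounded-residue} sends it into $E$. Cartier stability of $\rho(1/f_{t_*})$ then gives $C(1/f_{t_*})\equiv\lambda_0\cdot 1/f_{t_*}$ modulo $\cF_2$ and modulo $p^2$, with no contribution from the $\theta(1/f)$-direction. The boundedness of $\rho$ is what lets the congruence descend integrally. I expect this to be the main obstacle: showing that the first-order obstruction of the lift $X_*^A$, read off from Dwork's expansion of $\sigma_{t_*}$ modulo $p^2$, equals the first-order Achinger--Zdanowicz obstruction. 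I would establish this via the Gauss--Manin connection. Since $\sigma_{t_*}$ fixes $t_*$, a nonzero obstruction would appear as a nonzero $\nabla_\theta$-component, that is, as $\lambda_1(t_*,t_*)\bmod p$. Its vanishing is exactly what Step 1 provides.

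\textbf{Step 3 (identification of parameters).} The family is unobstructed and the Kodaira--Spencer map at $t_*$ is nonzero, which follows from the first Hasse--Witt condition because $\theta(1/f)\not\equiv0$. So lifts of $X_0$ in the family over $W_2(k)$ form a torsor under $k$ that embeds into $H^1(T_{X_0})$. The canonical lift is characterized by the vanishing of the pairing from Step 2. The obstruction is affine-linear in the parameter with nonzero slope, so exactly one parameter modulo $p^2$ satisfies it, and Step 2 shows that $t_*$ does. Hence $X_*^A$ is the Achinger--Zdanowicz canonical lift.
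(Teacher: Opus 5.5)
There is a genuine gap, and it starts with the criterion you aim for. The Achinger--Zdanowicz lift is characterized (\cref{lem:AZ-degree-three}, after \cite[Theorem~5.7.1]{AchingerZdanowicz2021}) by $\Phi(\Fil^1_Y)\subset\Fil^1_Y$ in $H^3_{\cris}(X_0/W_2(k))$. It is not characterized by ``the unit-root line reduces to $\Fil^3$'': that is impossible, since $\Phi(\Fil^3)\subset p^3H_W$ while $\Phi$ acts by a unit on the unit-root line. Nor is it characterized by Frobenius stability of $\Fil^3$.

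Your Step~1 claim that $\Fil^3D_f$ equals the slope-$3$ line $E_3$ ``by weak admissibility'' is false. Weak admissibility only forbids $\Fil^3=U$. The equality $\Fil^3=E_3$ would mean that $D_f$ splits as a filtered $\varphi$-module, i.e.\ that the local representation of $f_{14,4}$ at $p$ is a sum of two characters. That is not expected, and in any case it is not needed. Consequently $\rho(1/f_{t_*})$ is not Cartier-stable in $H$. The paper only gets stability of its image in the high slope quotient $H/H_{\varphi,\leq1}$, which is a rational statement.

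More fundamentally, everything in your Step~1 is rational. The bounded map $\rho$ only controls denominators up to an unspecified $p^{-c}$, so it cannot produce a statement modulo $p^2$. The missing idea is an integral splitting of the crystalline lattice compatible with $\Phi$ and $\Fil^\bullet$. The paper proceeds as follows:
\begin{itemize}
\item It builds $e_f=P_g(F_3)(8-F_3)/(14\cdot27)$ from Frobenius at the auxiliary prime $3$; this preserves the \'etale lattice since $p\nmid42$.
\item It transports $e_f$ to $H_W=H^3_{\cris}(X_0/W)$ by the Cais--Liu integral comparison.
\item It uses the saturation in \cref{prop:A4-W2-cohomology} to obtain $\Fil^1H_W=\Fil^3H_{W,f}\oplus H_{W,g}$.
\end{itemize}
Strong divisibility gives $\Phi(\Fil^3H_{W,f})\subset p^3H_{W,f}$, which vanishes modulo $p^2$, while $H_{W,g}\subset\Fil^1H_W$ is $\Phi$-stable. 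So $\Phi(\Fil^1)\subset\Fil^1$ for $X_*^A$, and \cref{lem:AZ-degree-three} concludes. That lemma also needs $X_0$ to be $1$-ordinary. The paper derives this from the first Dwork condition by identifying the Hasse--Witt scalar on $H^3(X_0,\mathcal O_{X_0})$ with $h_p=\CT(f_a^{p-1})$ via the Koszul/toric \v{C}ech computation. Your proposal never makes this link.

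Steps~2--3 cannot replace this. They use $M/\cF_2M$, the basis $(1/f,\theta(1/f))$, and $\lambda_1$, all of which require the second Hasse--Witt condition, which is not assumed here. Also, $\lambda_1\in pR$ always, so ``$\lambda_1\bmod p$'' carries no information; the relevant quantity is $\lambda_1/p\bmod p$.

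Even under two-Dwork-ordinarity, Step~3 characterizes the canonical lift by the vanishing of the Dwork obstruction, which is precisely \cref{conj:W2-comparison}. You flag this as the main obstacle but give no argument. Moreover, in the paper the comparison for this family, \cref{thm:A4-W2-comparison}, is proved \emph{using} the present theorem: the Achinger--Zdanowicz obstruction is known to vanish at $X_*^A$, and Gauss--Manin transport computes it elsewhere on the disk. Without an integral Dwork--crystalline comparison modulo $p^2$, which the bounded comparison does not supply, your route is circular.

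Finally, nonvanishing of Kodaira--Spencer is a geometric input from \cref{prop:permutohedral-deformations}, not a consequence of the first Hasse--Witt condition.
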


At two-Dwork-ordinary primes, we compare the obstruction maps on the entire $W_2$ residue disk. Let $a=\overline{t_*}$, let $h_p=\CT(f_a^{p-1})\in k$, where $\CT$ denotes the constant term, and let $t_\delta=t_*+p\delta\in A$ for $\delta\in k$. The general obstruction maps are defined in \cref{sec:canonical-lifts}; their integral identification and normalization for this family are defined in \cref{sec:A4-W2}. We have the following theorem.

\begin{theorem}
\label{thm:A4-W2-comparison}
Let $p>5$, $p\neq7$, and assume that $p$ is two-Dwork-ordinary at $t_*$. Assume Dummigan's Conjecture~1.1. For every $\delta\in k$, one has $\kappa_a(\operatorname{ob}_D(t_\delta))= \operatorname{ob}_C(t_\delta)$, and in the normalization of \cref{sec:A4-W2} the scalar of either side is
\[\frac{\lambda_1(t_\delta,t_\delta)}p =-\frac{h_p}{a}\,\delta \quad\text{in }k.\]
Their common zero is $\delta=0$.
\end{theorem}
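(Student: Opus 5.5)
We will prove \cref{thm:A4-W2-comparison} by making both obstruction maps explicit on the $W_2$ residue disk and showing that each one is computed by the same first-order Dwork coefficient. Concretely, we use the identifications of \cref{sec:A4-W2} to write both $\operatorname{ob}_D(t_\delta)$ and $\operatorname{ob}_C(t_\delta)$ as scalars in $k$. The Dwork obstruction measures the failure of the holomorphic line to be Cartier-stable modulo $p^2$. Since the class of $1/f_{t_\delta}$ already spans a stable line modulo $p$ by the first Hasse--Witt condition, this failure is the $\theta(1/f_u)$-coefficient $\lambda_1(t_\delta,t_\delta)$. That coefficient is divisible by $p$, and its reduction $\lambda_1/p\in k$ is the scalar. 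The Achinger--Zdanowicz obstruction is the class measuring whether the chosen $W_2$-lift is compatible with the Frobenius splitting. Via the comparison map of \cref{thm:general-bounded-residue}, which intertwines $C$ with $p^3\varphi^{-1}$, we translate it into the component of the crystalline Frobenius image of the Hodge line along the second graded piece. In the normalization of \cref{sec:A4-W2}, $\kappa_a$ is the reduction of this identification. The equality $\kappa_a\circ\operatorname{ob}_D=\operatorname{ob}_C$ then follows by naturality of the comparison map applied to the $W_2$-point $t_\delta$ rather than to $t_*$.

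The key steps are as follows.

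\begin{enumerate}
\item \textbf{Reduce to $\delta=0$.} We show that both sides are affine-linear in $\delta$, because they are first-order deformation classes on a $k$-torsor. We then check the constant term at $\delta=0$. There \cref{thm:main}, applied under two-Dwork-ordinarity and Dummigan's Conjecture~1.1, gives $\lambda_1(t_*,t_*)=0$. So both obstructions vanish at $\delta=0$, consistent with \cref{thm:A4-W2-canonical}.
\item \textbf{Compute the slope by Taylor expansion.} We expand $\lambda_1(t_*+p\delta,t_*+p\delta)$ to first order in $p\delta$. Since $\lambda_1(t_*,t_*)=0$, modulo $p^2$ we get $\lambda_1(t_\delta,t_\delta)\equiv p\delta\,(\partial_t\lambda_1+\partial_u\lambda_1)(t_*,t_*)$.
\item \textbf{Evaluate the derivatives modulo $p$.} The $u$-derivative enters through $\sigma(t)=u$ and is divisible by $p$ after the Frobenius substitution, so it drops out. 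The $t$-derivative is computed from the explicit Cartier formula. Modulo $p$, the Cartier image of $1/f_t$ is $\CT(f_t^{p-1})/f_{t^p}$-type data. Differentiating $1/f_t$ in $t$ produces $\theta(1/f)/t$, and the coefficient picked up is $-\CT(f_a^{p-1})/a=-h_p/a$. The sign comes from $\partial_t$ versus $\theta$ and the convention $f_t=1-tg$.
\item \textbf{Locate the zero.} Since $h_p\neq0$ by the first Hasse--Witt condition and $a\neq0$ because $p\neq7$, the common zero is exactly $\delta=0$.
\end{enumerate}

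The main obstacle is the naturality step: verifying that $\kappa_a$ really intertwines the two obstruction classes at every $\delta$, not only at the fixed point. At $\delta=0$ the auxiliary Frobenius $\sigma_{t_0}$ fixes the point, but for $\delta\neq0$ the point $t_\delta$ is not fixed. The Cartier operator then maps $M_{t_\delta}$ to $M_{\sigma(t_\delta)}$, and we must check that modulo $p^2$ the discrepancy is absorbed by the deformation parameter itself. I would first try to handle this by working on the universal $W_2$ disk $k[\delta]$. There I would show that the Achinger--Zdanowicz obstruction class over this base is represented by the Frobenius-twisted Kodaira--Spencer image, which two-Dwork-ordinarity identifies with the $\theta(1/f)$ direction.
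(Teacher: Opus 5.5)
There are two genuine gaps, one on each side of the comparison.

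On the Dwork side, your Step~2 is the right move, but Step~3 puts the derivative on the wrong variable. As written, the term you keep is $0$ modulo $p$ and the term you discard is the whole answer. Reduce \eqref{eq:lambda-finite-target} modulo $p$ using $A_{2p-1}(t)\equiv(1-5t^p)A_{p-1}(t)$ and $B_{2p-1,p}(t)\equiv-t^pA_{p-1}(t)$. This gives $\lambda_1(t,u)\equiv A_{p-1}(t)\bigl(-(1-5t^p)+(1-5u)t^p/u\bigr)\pmod p$.

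The source enters only through $A_{p-1}(t)$ and $t^p$, and the bracket vanishes at $t=u=t_*$. Hence $\partial_t\lambda_1(t_*,t_*)\equiv0\pmod p$. The target derivative is what survives: $\partial_u\lambda_1(t_*,t_*)\equiv-A_{p-1}(a)\,a^p/a^2=-h_p/a$.

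The same conclusion follows from \cref{lem:pointwise}. There $\lambda_1(t,u)=c\,xU(x)$ with $x=\log(\sigma_{\mathrm{exc}}(t)/u)$ and $\sigma_{\mathrm{exc}}(t)\equiv t^p\pmod p$. So $\partial_tx\in p\Z_p$, while $\partial_ux=-1/u$. The paper packages this as $R_p(t)=\lambda_1(t,t)\equiv(t^{p-1}-1)A_{p-1}(t)\pmod p$, so $R_p'(a)=-h_p/a$. It then uses $\lambda_1(t_*,t_*)=0$ from \cref{thm:main} and the first-order expansion of the Laurent polynomial $R_p$.

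On the crystalline side, you never compute $\operatorname{ob}_C(t_\delta)$. You defer $\kappa_a\circ\operatorname{ob}_D=\operatorname{ob}_C$ to a naturality property of the map $\rho$ of \cref{thm:general-bounded-residue}. You do not prove it, and that map cannot supply it. The map $\rho$ is only a bounded $\Q_p$-linear map with image in $p^{-c}\Lambda$ for an unspecified $c$. So it carries no information about integral Hodge filtrations modulo $p^2$, which is exactly what $\gamma_Y$ measures.

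The obstacle you name is also not the real one. $\operatorname{ob}_D(t_\delta)$ is defined with the auxiliary Frobenius $T\mapsto t_\delta^{1-p}T^p$, which fixes $t_\delta$.

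The missing step is a direct computation of the crystalline scalar:
\begin{enumerate}
\item $\gamma_{X_*^A}=0$ by \cref{thm:A4-W2-canonical}.
\item Gauss--Manin Taylor transport identifies the Hodge line of $X_\delta^A$ with $\langle e_3+p(\delta/a)e_2\rangle$. Here $(e_3,e_2,e_1,e_0)$ is a symplectic basis with $e_3=[\omega]$ and $e_2\equiv[\nabla_\theta\omega]$ modulo $\Fil^3$.
\item Taking annihilators, $\Fil^1_\delta$ is generated modulo $\Fil^2_\delta$ by $e_1-p(\delta/a)e_0$. Hence $\eta(\bar e_1)=-(\delta/a)\bar e_0$.
\item The difference formula \eqref{eq:AZ-obstruction-difference}, with $HW(0)=h_p$, gives $\gamma_{X_\delta^A}=-(h_p/a)\delta$.
\end{enumerate}
In these bases $\kappa_a$ sends $\bar\omega_0\mapsto\bar\omega_1$ to $\bar e_1\mapsto\bar e_0$. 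The comparison is therefore just the equality of two independently computed scalars, and no naturality of a comparison map is needed. Your affine-linearity reduction in Step~1 still requires this slope on the crystalline side, and the proposal never produces it.
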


The second application is the mirror quartic K3 pencil $f_t=1-t\bigl(x+y+z+(xyz)^{-1}\bigr)$; see \cref{sec:K3}. Let $a_Q^{K3}(t)$ be the vertex coefficients defined in \eqref{eq:K3-vertex}. We prove the following theorem.

\begin{theorem}
Let $\tau\in\Z_{37}$ be the root of $\tau^4=-1/12288$ with $\tau\equiv15\pmod{37}$. Then $\sigma_{\mathrm{exc},37}(\tau)=\tau$. For every $s\geq1$, one has
\[a_{37^s}^{K3}(\tau)a_{2\cdot37^{s-1}}^{K3}(\tau) -a_{2\cdot37^s}^{K3}(\tau)a_{37^{s-1}}^{K3}(\tau) \equiv0\pmod{37^{2s}}.\]
\end{theorem}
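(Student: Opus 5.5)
The plan is to reduce the K3 statement to \cref{thm:rank two-criterion} with $d=2$, $p=37$, $t_0=\tau$, and the mirror quartic Laurent polynomial $g=x+y+z+(xyz)^{-1}$.

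The first step is to check the hypotheses at $p=37$. Since $12288=3\cdot 2^{12}$, we have $\tau^4=-1/12288$. The condition $\tau\equiv15\pmod{37}$ selects a root, and this root lifts uniquely to $\Z_{37}$ by Hensel's lemma. The lemma applies because $4\tau^3$ is a unit, as $-12288^{-1}$ is a unit mod $37$. We then verify the first two Hasse--Witt conditions at $\bar\tau$ by a finite computation:
\begin{itemize}
\item the first Hasse--Witt scalar is $\CT(f_{\bar\tau}^{36})=\sum_k \frac{(4k)!}{k!^4}\bar\tau^{4k}$, truncated at $k\le 9$;
\item the normalized second Hasse--Witt determinant is the analogous $2\times2$ determinant at level $p^2$.
\end{itemize}
Both must be nonzero in $\F_{37}$. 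The same computation confirms that the Beukers--Vlasenko excellent lift theorem applies on the residue disk, since its hypotheses are exactly these ordinarity conditions.

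The geometric input comes next. The parameter $\tau$ is chosen so that the fiber $X_\tau$ of the mirror quartic pencil is a singular (CM) K3 surface. Concretely, $\tau$ corresponds, via the mirror map or via the known description of the pencil's transcendental lattice in terms of an elliptic curve, to a CM point. The transcendental motive $T$ of $X_\tau$ then has rank two. By Shioda--Inose, or by Livn\'e's modularity for singular K3 surfaces, its realization is attached to a CM weight three newform for an imaginary quadratic field $K$. We must check that $37$ splits in $K$, so that $T$ is ordinary at $37$ with slopes $\{0,2\}$.

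We take $H=D_{\cris}(H^2_{\et})$ of the fiber, restricted to the primitive part cut out by the Laurent model. In this $H$, the algebraic classes are pure of slope $1$ and $E=D_{\cris}(T)$ is a rank two subisocrystal. Condition (ii) holds because $H/H_{\varphi,\le 0}$ is two-dimensional: the slopes are $0$, then $1$ with multiplicity $\operatorname{rk} NS$, then $2$. Condition (iii) holds because $\rho(1/f_\tau)$ is the holomorphic two-form, which lies in $T\otimes\C$ and hence in $E$. The map $\rho$ in condition (i) is supplied by \cref{thm:general-bounded-residue}, which gives a bounded comparison satisfying $\rho C=p^2\varphi^{-1}\rho$ with $\sigma_{\tau}$. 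Surjectivity onto the primitive quotient follows from the level-two basis statement combined with the isomorphism in the theorem.

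The conclusion then follows from \cref{thm:rank two-criterion}. It gives $\sigma_{\mathrm{exc},37}(\tau)=\tau$ and $D_s\equiv0\pmod{37^{2s}}$. For this family, the vertex $\mathbf b$ is the monomial $(xyz)^{-1}$ direction. Taking $\mathbf m,\mathbf n$ to correspond to the exponents $1$ and $2$, the determinant $D_s^{\mathbf m,\mathbf n}(\tau,\tau)$ is exactly the displayed expression in the $a^{K3}_{Q}$ of \eqref{eq:K3-vertex}.

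The main obstacle is the geometric step: identifying $\tau$ with a CM fiber precisely enough to establish both the rank two splitting $E$ and its ordinarity at $37$. I would first try the explicit Shioda--Inose or Kummer structure of the mirror quartic, computing the discriminant of $K$ and checking that $37$ splits in it. As a cross-check, I would compare the number of points of the fiber over $\F_{37}$ with the trace of the CM form.
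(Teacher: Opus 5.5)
Your overall architecture matches the paper's. You apply \cref{thm:rank two-criterion} with $d=2$ and $E$ the transcendental summand, use $\Fil^2\subset E$ for the holomorphic class, and read off the determinant with $\mathbf m=\mathbf v$, $\mathbf n=2\mathbf v$; any vertex works by the $S_4$ symmetry.

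The gap is that you never produce a valid $\rho$ with the right target.

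\textbf{Condition (ii).} Condition~(ii) needs $\dim H/H_{\varphi,\leq0}=2$. By your own slope count this dimension is $1+r$, where $r$ is the number of slope-one (algebraic) classes in $H$. For the full $H^2$ of a singular K3 it is $21$. So $H$ must be a rank-three piece containing exactly one algebraic line. It must also live in the cohomology of a smooth compactification of $\{f_\tau=0\}\subset\Gm^3$ itself, not of the quartic. You have to prove that this piece is what the Laurent model sees. In the paper it is $H_{\mathrm{var}}=L_{\mathrm{tor}}^\perp$, where the toric boundary classes span a rank-$19$ space $L_{\mathrm{tor}}$. Restriction then identifies $H_{\mathrm{var}}$ with $W_2H^2_{\rig}(Y_0^\circ/K)$, which is hypothesis~(iii) of \cref{thm:general-bounded-residue}.

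\textbf{Surjectivity.} Your surjectivity sentence is circular. The isomorphism $(M/\cF_2M)[1/p]\simeq H/H_{\varphi,\leq0}$ is an output of \cref{thm:rank two-criterion} and presupposes a surjective $\rho$. Moreover, the level-two basis has only two elements, so it cannot span a rank-three $H$. What is actually required is hypothesis~(iv): the residues of $1/f,\theta(1/f),\theta^2(1/f)$ form a basis of $H_{\mathrm{var}}$.

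\textbf{How the paper closes this.} \cref{prop:K3-BR} obtains both (iii) and (iv) from four ingredients:
\begin{enumerate}
\item the Batyrev--Cox Jacobian ring: at $\tau$ the positive-degree quotient is spanned by $z,z^2,z^3$ because $1-(4\tau)^4=49/48$ is a $37$-adic unit;
\item the triangular relation between the residues of $1/f,1/f^2,1/f^3$ and the $\theta$-derivatives;
\item the localization sequence together with $b_2=22$;
\item a smooth projective model over $\Z_{37}$ with relative simple normal crossings boundary, coming from a unimodular subdivision of the fan.
\end{enumerate}
This proposition is the real content of the proof, and your proposal leaves it unaddressed.

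\textbf{The step you call the main obstacle is automatic.} Suppose a bounded, Frobenius-compatible surjection onto the polarized rank-three $H_{\mathrm{var}}$ exists, and the two Hasse--Witt conditions hold. The paper computes both for $\sigma_0(t)=t^{37}$, finds each $\equiv4\pmod{37}$, and transfers them to $\sigma_\tau$ by Beukers--Vlasenko's Lemma~7.2. Then \cref{lem:low-weight-ordinary} forces slopes $\{0,1,2\}$. The algebraic line is spanned by a divisor class, on which Frobenius is $37$ times a finite-order operator, so it has slope $1$. Hence $H_{\mathrm{tr}}$ has slopes $\{0,2\}$.

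Your CM route would also give this: the discriminant $-36$ order lies in $\Q(i)$, and $37=1^2+6^2$ splits. But it imports Livn\'e-type modularity that the argument does not need. Note also that the excellent-lift theorem requires $37\nmid|\mathcal G|=24$ and support-lattice index one, not only the ordinarity conditions.
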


\subsection{Two comparison conjectures} We give two conjectures comparing excellent Frobenius with canonical lifting.
Let $k=\F_p$. Assume that the Laurent parameter is nonzero at $\bar t$, that $X_{\bar t}$ is $1$-ordinary, and that the level two Dwork quotient and the excellent Frobenius are defined on the residue disk. Let $\iota:\widehat S_{\bar t}\to\operatorname{Def}(X_{\bar t})$ be the classifying morphism of the given family. Assume that $\iota$ identifies $\widehat S_{\bar t}$ with a one-dimensional polarized or equivariant formal subscheme \(\mathcal D_{\bar t}\subset\operatorname{Def}(X_{\bar t}).\) Assume also that the point defined by the Achinger--Zdanowicz canonical lift belongs to $\mathcal D_{\bar t}(W_2(k))$. The two obstruction maps and the integral identification $\kappa_{\bar t}$ are defined in \cref{sec:canonical-lifts}.

For $t\in\widehat S_{\bar t}(W_2(k))$, the Dwork obstruction $\operatorname{ob}_D(t)$ measures whether $t$ is fixed by the excellent Frobenius, while the crystalline obstruction $\operatorname{ob}_C(t)$ measures whether the corresponding lift is the Achinger--Zdanowicz canonical lift. The two obstruction maps take values in different spaces. We use the residue, Kodaira--Spencer, and polarization maps to define an isomorphism $\kappa_{\bar t}$ between these target spaces.

\begin{conjecture}\label{conj:W2-comparison}
For every $t\in\widehat S_{\bar t}(W_2(k))$, one has $\kappa_{\bar t}(\operatorname{ob}_D(t)) =\operatorname{ob}_C(t)$.
\end{conjecture}

The conjecture identifies the two normalized obstruction maps and their zero sets. The uniqueness assertions in \cref{lem:pointwise,lem:canonical-obstruction} then identify their common zero with the parameter of that canonical lift.

Suppose in addition that the deformation group of Brantner and Taelman exists and that its canonical formal lift belongs to $\mathcal D_{\bar t}$. Under their equivalence, the unit section $e$ corresponds to the canonical formal lift \cite[Theorem~B and Definition~8.31]{BrantnerTaelman2024}. We make the following conjecture.

\begin{conjecture}
\label{conj:full-canonical-comparison}
For every $t\in\widehat S_{\bar t}(W(k))$, one has $\sigma_{\mathrm{exc}}(t)=t$ if and only if $\iota(t)=e$.
\end{conjecture}

We note that both conjectures would follow from an integral comparison at all Witt orders between the construction of Beukers and Vlasenko and the geometric construction of the canonical lift.

\subsection{Motivation for the comparison conjectures}

There are several reasons to compare the excellent and geometric lifts. For a versal family of ordinary curves, Dwork and Ogus construct an obstruction to the Serre--Tate canonical lift of the Jacobian remaining in the Torelli locus modulo $p^2$. Under their Torelli hypotheses, the vanishing of this obstruction is equivalent to the existence of an excellent lifting of Frobenius \cite[\S2, especially Remark~(2.9)]{DworkOgus1986}. Their argument uses $H^1$ and the Torelli map, whereas the present problem concerns middle cohomology and does not use a Jacobian description.

The two obstruction maps arise from parallel cohomological conditions. By the pointwise uniqueness of the excellent Frobenius, a parameter $t$ is fixed if and only if $\lambda_1(t,t)=0$; see \cref{lem:pointwise} and \cite[Lemma~7.13 and Definition~7.14]{BeukersVlasenkoIII}. Under the hypotheses of Achinger and Zdanowicz, their canonical $W_2$-lift is characterized by the preservation of $\Fil^1$ under crystalline Frobenius \cite[Theorem~5.7.1]{AchingerZdanowicz2021}. Thus each construction selects a lift by a Frobenius preservation condition. Grabowski proves that the Achinger--Zdanowicz lift of an ordinary Dwork hypersurface remains in the Dwork family and computes its parameter in terms of Hasse--Dwork polynomials \cite[Theorem~3.17]{Grabowski2025}. This places the geometric lift in the same coordinates used in Dwork theory.

The rank two condition gives further evidence for the comparison. \Cref{thm:rank two-criterion} gives a cohomological condition for excellent fixedness. For the $A_4$ family, the same splitting also gives geometric canonical lifting modulo $p^2$ under Dummigan's Conjecture~1.1. \Cref{thm:main,thm:A4-W2-canonical,thm:A4-W2-comparison} verify \cref{conj:W2-comparison} on the entire $W_2$ residue disk and give  evidence for \cref{conj:full-canonical-comparison}.

\subsection*{Acknowledgments}
The author thanks  Frits Beukers, Neil Dummigan and Qingtao Chen for reading an earlier draft of this paper and for their helpful comments and suggestions.

She thanks Prof. Beukers for explaining the modular interpretation of excellent Frobenius discussed in \cref{sec:K3}, for sharing a note on excellent Frobenius and canonical lifting, and for raising the algebraicity question for attractor points discussed in \cref{sec:future}. She thanks Prof. Dummigan for suggesting a generalization of \cref{prop:birational-H3} to the covering varieties and for many writing suggestions. She is also grateful to him for sharing a revised version of his manuscript and for promptly communicating the correction to its modularity argument. She also thanks Prof. Qingtao Chen for suggestions that helped make the introduction clearer.

The author thanks OpenAI for providing access to GPT 5.6-sol, which assisted with the language and structural refinement of the paper. Furthermore, the model served as an initial mathematical referee for the first draft of the paper. In particular, it helps the author to weaken the original splitting hypothesis in \cref{thm:rank two-criterion}\textup{(iii)}.

\section{Preliminaries}

This section introduces the Dwork and crystalline objects used in the comparison and proves the preliminary results needed for the general criterion and its applications.

\subsection{Filtered isocrystals}

A filtered isocrystal over $\Q_p$ is a finite-dimensional $\Q_p$-vector space $H$ with an invertible $\Q_p$-linear map $\varphi$ and an exhaustive, separated, decreasing filtration $\Fil^iH$.  Let $\operatorname{gr}^iH=\Fil^iH/\Fil^{i+1}H$. The Hodge degrees are the integers $i$, with multiplicity $\dim\operatorname{gr}^iH$.  Let $v_p$ be the valuation on $\overline{\Q}_p$ normalized by $v_p(p)=1$. After extending scalars to $\overline{\Q}_p$, we group the generalized eigenspaces of $\varphi$ by the $v_p$-valuations of their eigenvalues.  The resulting slope subspaces descend to $\Q_p$ \cite{Katz1979}.  We write $H_{\varphi,\leq r}$ and $H_{\varphi,>r}$ for the sums of the slope subspaces with slopes at most $r$ and greater than $r$, respectively.

The Hodge and Newton numbers are defined by $t_H(H)=\sum_i i\dim\operatorname{gr}^iH$ and $t_N(H)=v_p(\det\varphi)$. The filtered isocrystal $H$ is \emph{weakly admissible} if $t_H(H)=t_N(H)$ and $t_H(D)\leq t_N(D)$ for every $\varphi$-stable subspace $D\subset H$, where $D$ has the induced filtration $\Fil^iD=D\cap\Fil^iH$; see \cite[\S3]{ColmezFontaine2000}. A decomposition $H=E\oplus H'$ as filtered isocrystals requires both summands to be $\varphi$-stable and $\Fil^iH=\Fil^iE\oplus\Fil^iH'$ for every $i$.

Our convention is that $\Q_p(-j)$ has Hodge degree $j$ and Frobenius $p^j$.  A nondegenerate Frobenius-compatible pairing $H\times H\to\Q_p(-d)$ therefore makes the Newton slopes symmetric under $\lambda\mapsto d-\lambda$.

For a $\Z_p$-module $M$, a $\Q_p$-linear map $\rho:M[1/p]\to H$ is \emph{bounded} if there exist a $\Z_p$-lattice $\Lambda\subset H$ and an integer $c\geq0$ such that $\rho(M)\subset p^{-c}\Lambda$.

\subsection{Dwork crystals and Cartier}
\label{sec:vertex-determinants}

We now define the Dwork crystal whose level two quotient will be compared with the filtered isocrystals above.

Let $p>2$, let $T$ be an indeterminate, and let $f_T=1-Tg(\mathbf x)$, where $g$ is an integral Laurent polynomial with reflexive Newton polytope $\Delta$. Let $\Gamma\subset\Z^n$ be the lattice generated by $\operatorname{Supp}(g)$. For the excellent Frobenius theorem \cite[Theorem~7.3]{BeukersVlasenkoIII}, assume $p\nmid[\Z^n:\Gamma]$. A \emph{monomial substitution} is an automorphism of the Laurent polynomial ring $R[x_1^{\pm1},\ldots,x_n^{\pm1}]$ of the form $x_i\mapsto\prod_jx_j^{a_{ij}}$, where $(a_{ij})\in\operatorname{GL}_n(\Z)$. Let $\mathcal G$ be a finite group of monomial substitutions preserving $g$, with $p\nmid|\mathcal G|$. Following \cite[\S7]{BeukersVlasenkoIII}, we say that $g$ is completely symmetric with respect to $\mathcal G$ if the nonzero lattice points of $\Delta$ are its vertices and $\mathcal G$ acts transitively on them. We assume from now on that $g$ is completely symmetric with respect to $\mathcal G$. Since $\mathcal G$ acts transitively on the vertices and preserves $g$, all vertex monomials have the same coefficient. We assume that this coefficient is a $p$-adic unit.

Let $R$ be the $p$-adic completion of $\Z_p[T,1/P(T)]$, where $P\in\Z_p[T]$ and $P(0)\in\Z_p^\times$. Expansion as formal power series at $T=0$ embeds $R$ in $\Z_p[[T]]$. A \emph{coefficient Frobenius} is a continuous endomorphism $\sigma:R\to R$ that restricts to the identity on $\Z_p$ and reduces modulo $p$ to the $p$-power Frobenius. We assume that $\sigma(T)=T^pv_\sigma(T)$ for some $v_\sigma(T)\in1+pR$. For a specialization $T\mapsto t_0$, let $u_0=\left.\sigma(T)\right|_{T=t_0}$. Thus $u_0\equiv t_0^p\pmod p$, and the specialized Cartier map has source $t_0$ and target $u_0$. For a lattice point $\mathbf m$, let $\deg_\Delta(\mathbf m)=\min\{r\geq0:\mathbf m\in r\Delta\}$. A Laurent polynomial $A=\sum_{\mathbf m\in\Gamma} a_{\mathbf m}(T)\mathbf x^{\mathbf m}$ is admissible if $\operatorname{ord}_T a_{\mathbf m}\geq\deg_\Delta(\mathbf m)$ for every $\mathbf m$. Let $L_{\mathrm{num}}$ be the module of $\mathcal G$-invariant admissible numerators.  These are the support lattice and admissibility conditions of \cite[Lemma~2.4, Definition~2.5, and Definition~7.1]{BeukersVlasenkoIII}. Write $f=f_T$. Let $\Omega_{L_{\mathrm{num}},f}(\Delta^\circ)$ be the $R$-module generated by $(m-1)!A(\mathbf x)/f(\mathbf x)^m$, where $m\geq1$, $A\in L_{\mathrm{num}}$, and $\operatorname{Supp}(A)\subset m\Delta^\circ$. Let $M=CY(g)=\widehat\Omega_{L_{\mathrm{num}},f}(\Delta^\circ)$ be its $p$-adic completion. This is the Calabi--Yau crystal of Beukers and Vlasenko \cite[\S7 and Definition~7.1]{BeukersVlasenkoIII}. All specializations below lie in the locus where $P(T)$ is a unit. For such a point $t_0$, we write $M_{t_0}=M\widehat\otimes_{R,t_0}\Z_p$ and use the same convention for its submodules and quotients. For $r\geq1$, the level submodule $M(r)$ consists of the elements with denominator $f_T^r$, as in \cite[Definitions~2.8--2.9]{BeukersVlasenkoIII}.

The Cartier operator is defined on formal Laurent expansions by
\[\cC_p\left(\sum_{\mathbf u}c_{\mathbf u}\mathbf x^{\mathbf u}\right) =\sum_{\mathbf u}c_{p\mathbf u}\mathbf x^{\mathbf u}.\]
It maps $M$ to the coefficient-twisted module $M^\sigma$ obtained by applying $\sigma$ to the coefficients of $f_T$ and of the numerators. Let $\Omega_f$ be the module generated by the elements $(m-1)!A/f_T^m$ with $\operatorname{Supp}(A)\subset m\Delta$, and let $\widehat\Omega_f$ be its $p$-adic completion. The inclusion $\Omega_{L_{\mathrm{num}},f}(\Delta^\circ)\hookrightarrow\Omega_f$ extends to the continuous Cartier-compatible map \cite[Proposition~2.7 and Definition~2.8]{BeukersVlasenkoIII}
\begin{equation}\label{eq:CY-full-inclusion}
 CY(g)\longrightarrow\widehat\Omega_f.
\end{equation}

For $r\geq1$, the module of $r$th formal derivatives is
\begin{equation*}
 \cF_rM=
 \left\{\eta\in M:\cC_p^j(\eta)\in p^{rj}M^{\sigma^j}
 \text{ for every }j\geq1\right\}.
\end{equation*}
This is Definition~3.1 of \cite{BeukersVlasenkoIII}. The first two Hasse--Witt conditions require the first Hasse--Witt scalar and the normalized second determinant $p^{-1}\det HW^{(2)}$ to be units; the matrices are defined in \cite[Definition~5.1]{BeukersVlasenkoIII}. Under these conditions, \cite[Theorem~4.2 and Corollary~5.9]{BeukersVlasenkoIII} give
\begin{equation}\label{eq:BV-split}
 M=M(2)\oplus\cF_2M.
\end{equation}
Let $Q^{(2)}=M/\cF_2M$. It has the following basis \cite[discussion following equation~(20)]{BeukersVlasenkoIII}:
\begin{equation*}
 \omega_0=\frac1{f_T},
 \quad
 \omega_1=\theta\left(\frac1{f_T}\right)=\frac{Tg}{f_T^2},
 \quad \theta=T\frac d{dT}.
\end{equation*}
We use the same symbols for these rational functions and their classes. Let $\omega_0^\sigma$ and $\omega_1^\sigma$ denote the coefficientwise $\sigma$-twists of $\omega_0$ and $\omega_1$; their classes form the corresponding basis of $M^\sigma/\cF_2M^\sigma$. Write the image of the first basis vector as
\begin{equation}\label{eq:lambda-row}
 \cC_p(\omega_0)
 \equiv\lambda_0\omega_0^\sigma+\lambda_1\omega_1^\sigma
 \pmod{p^2\cF_2(M^\sigma)}.
\end{equation}
For a specialization $T\mapsto t_0$ with target $u_0=\left.\sigma(T)\right|_{T=t_0}$, we write the induced Cartier map and its coefficients as $\cC_{t_0,u_0}:M_{t_0}\to M_{u_0}$ and $\lambda_i(t_0,u_0)$. Proposition~7.7 of \cite{BeukersVlasenkoIII} says that the universal coefficient $\lambda_1$ belongs to $pR$; in particular, $\lambda_1(t_0,u_0)\in p\Z_p$ after specialization. An \emph{excellent Frobenius} is a coefficient Frobenius for which the universal coefficient $\lambda_1$ vanishes identically; see \cite[Lemma~7.13, Definition~7.14, and Theorem~7.3]{BeukersVlasenkoIII}.

For $t_0\in\Z_p^\times$, the formula $\sigma_{t_0}(T)=t_0^{\,1-p}T^p$ defines a coefficient Frobenius on the completed residue disk of $t_0$ and fixes the point $t_0$. It also defines the endomorphism $C=\cC_{t_0,t_0}$ on $M_{t_0}[1/p]$. For a bounded map $\rho:M_{t_0}[1/p]\to H$ and a fixed integer $d$, Frobenius compatibility means $\rho C=p^d\varphi^{-1}\rho$. 

The Cartier slopes of a finite-dimensional $\Q_p$-space with an invertible Cartier operator are the $p$-adic valuations of its eigenvalues, counted with multiplicity after extending scalars to $\overline\Q_p$. We first determine the Cartier slopes of the level two quotient. This calculation will be used in \cref{lem:low-weight-ordinary,thm:Q2-comparison}.

\begin{lemma}
\label{lem:cartier-slopes}
Let $M_{t_0}$ be a specialization of the Calabi--Yau crystal defined above at $t_0\in\Z_p^\times$. Assume the first two Hasse--Witt conditions and use the auxiliary coefficient Frobenius defined above. Let $C=\cC_{t_0,t_0}$. Then the Cartier slopes on $(M_{t_0}/\cF_2M_{t_0})[1/p]$ are $0$ and $1$.
\end{lemma}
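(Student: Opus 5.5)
The plan is to compute the matrix of $C$ on the rank two free $\Z_p$-module $Q^{(2)}_{t_0}=M_{t_0}/\cF_2M_{t_0}$ in the basis $\omega_0,\omega_1$. We then read off the slopes from the valuations of its trace and determinant.

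First, $C$ preserves $\cF_2M_{t_0}$. If $\eta\in\cF_2$, then $\cC_p^j(\cC_p\eta)=\cC_p^{j+1}\eta\in p^{2(j+1)}M\subset p^{2j}M$. So $C$ descends to an endomorphism $\bar C$ of $Q^{(2)}_{t_0}$. Since $\sigma_{t_0}$ fixes $t_0$, the target basis $\omega_0^\sigma,\omega_1^\sigma$ specializes to $\omega_0,\omega_1$. Hence $\bar C$ is an endomorphism of one lattice with a $2\times2$ matrix $\Lambda$ with $\Z_p$ entries. Its first column is $(\lambda_0(t_0,t_0),\lambda_1(t_0,t_0))$, by \eqref{eq:lambda-row}.

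Next I identify $\Lambda$ modulo $p$ with Hasse--Witt data, using the Beukers--Vlasenko theory \cite[Theorem~4.2, Corollary~5.9, Definition~5.1]{BeukersVlasenkoIII}. Their results say that the Cartier matrix on $M/\cF_rM$ relative to the level basis agrees with the Hasse--Witt matrix $HW^{(r)}$, up to $\GL_r(\Z_p)$ change of basis. Here $HW^{(r)}$ is built from the coefficients of $f^{p-1}$.

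For $r=1$, this gives that $\lambda_0(t_0,t_0)$ is congruent to the first Hasse--Witt scalar modulo $p$. Under the first Hasse--Witt condition, $\lambda_0(t_0,t_0)$ is therefore a unit.

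For $r=2$, the determinant $\det\Lambda$ equals $\det HW^{(2)}$ up to a unit. The second Hasse--Witt condition then gives $v_p(\det\Lambda)=1$.

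Finally, by Proposition~7.7 of \cite{BeukersVlasenkoIII}, $\lambda_1\in p\Z_p$. So $\Lambda$ is lower triangular modulo $p$, with a unit in the $(1,1)$ entry. Combined with $v_p(\det\Lambda)=1$, this forces the $(2,2)$ entry $\mu$ to lie in $p\Z_p$. Hence $\operatorname{tr}\Lambda$ is a unit. The characteristic polynomial $X^2-(\operatorname{tr}\Lambda)X+\det\Lambda$ then has Newton polygon with vertices $(0,1),(1,0),(2,0)$, in valuations. Its roots therefore have valuations $0$ and $1$. In particular $\bar C$ is invertible on $Q^{(2)}_{t_0}[1/p]$, and its Cartier slopes are $0$ and $1$.

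The main obstacle is justifying $v_p(\det\Lambda)=1$ exactly. This needs the Beukers--Vlasenko identification of the Cartier matrix on the level two quotient, at the chosen coefficient Frobenius, with $HW^{(2)}$. Their Theorem~4.2 gives this congruence for a general coefficient Frobenius, only up to multiplication by unit-determinant matrices. I would check that specializing $\sigma_{t_0}$ at $t_0$ is compatible with that statement. Here $v_{\sigma_{t_0}}=(T/t_0)^{1-p}$ is a unit on the residue disk, so it lies in $1+pR$ after completion.

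I would also check that $\det\Lambda$ is independent of the choice of level basis up to units. This holds because $\omega_0,\omega_1$ is a $\Z_p$-basis, not merely a $\Q_p$-basis. With those two points settled, $p^{-1}\det HW^{(2)}\in\Z_p^\times$ transfers to $v_p(\det\Lambda)=1$, and the Newton polygon argument finishes the proof.
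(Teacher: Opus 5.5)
Your argument is correct, but it is organized differently from the paper's. The paper never writes down the $2\times2$ matrix. It uses the Cartier-stable exact sequence $0\to\cF_1M_{t_0}/\cF_2M_{t_0}\to M_{t_0}/\cF_2M_{t_0}\to M_{t_0}/\cF_1M_{t_0}\to0$, whose outer terms are free of rank one. There $C$ acts on the quotient by a unit (first Hasse--Witt condition) and on the subobject by $p$ times a unit (level two case of the proof of \cite[Theorem~4.2]{BeukersVlasenkoIII}), and the paper reads the slopes off the two graded pieces.

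You stay in the Dwork basis $(\omega_0,\omega_1)$ and use the Newton polygon of $X^2-(\operatorname{tr}\Lambda)X+\det\Lambda$. This costs you one extra input, $\lambda_1\in p\Z_p$ from \cite[Proposition~7.7]{BeukersVlasenkoIII}. A unit $\lambda_0$ together with $v_p(\det\Lambda)=1$ would not by itself exclude a nonunit trace and a double slope $1/2$. In a basis adapted to $\cF_1M_{t_0}/\cF_2M_{t_0}$ the matrix is integrally triangular, so the determinant is the product of the two graded scalars and no off-diagonal information is needed. That route also places slope $0$ on $M_{t_0}/\cF_1M_{t_0}$ and slope $1$ on $\cF_1M_{t_0}/\cF_2M_{t_0}$.

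Your route works directly with $\lambda_0$, $\lambda_1$ and $\det\Lambda$, the quantities used later in the paper. The step you flag as the main obstacle, $p^{-1}\det HW^{(2)}\in\Z_p^\times\Rightarrow v_p(\det\Lambda)=1$, is the same inference the paper makes in \cref{lem:det-valuation}. Given the first Hasse--Witt condition, it is equivalent to the paper's level two input.

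There are three small slips, none affecting the conclusion.
\begin{enumerate}
\item Since $\sigma_{t_0}(T)=t_0^{1-p}T^p$, one has $v_{\sigma_{t_0}}=t_0^{1-p}$, a constant in $1+p\Z_p$ by Fermat's little theorem, not $(T/t_0)^{1-p}$.
\item With first column $(\lambda_0,\lambda_1)$, the matrix is upper rather than lower triangular modulo $p$.
\item Deducing that $\lambda_0$ is a unit from the level one statement already requires $\lambda_1\in p\Z_p$. The class $\omega_1$ need not lie in $\cF_1M_{t_0}$, so if it maps to $c\,\omega_0$ in $M_{t_0}/\cF_1M_{t_0}$, the scalar of $C$ there is $\lambda_0+c\lambda_1$. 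This agrees with $\lambda_0$ modulo $p$ only after you invoke Proposition~7.7, so cite it before that step.
\end{enumerate}
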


\begin{proof}
The first two Hasse--Witt conditions give a Cartier-stable exact sequence
\[0\longrightarrow\cF_1M_{t_0}/\cF_2M_{t_0} \longrightarrow M_{t_0}/\cF_2M_{t_0} \longrightarrow M_{t_0}/\cF_1M_{t_0}\longrightarrow0\]
whose left and right terms are free of rank one \cite[Corollary~5.9 and proof of Theorem~4.2]{BeukersVlasenkoIII}. The first condition makes Cartier invertible modulo $p$ on the right term. The proof of \cite[Theorem~4.2]{BeukersVlasenkoIII} at level two shows that $p^{-1}C$ is invertible modulo $p$ on the left term. The two slopes are therefore $0$ and $1$.
\end{proof}

\subsection{Vertex coefficients}

We next extract coefficient sequences from the holomorphic Dwork class. The Cartier relation for these sequences gives the supercongruences used later.

Let $\mathbf b$ be a vertex of $\Delta$, and let $c_{\mathbf b}(T)$ be the coefficient of $\mathbf x^{\mathbf b}$ in $f_T$. Assume that $c_{\mathbf b}(T)$ is a unit on the chosen residue disk. The expansion of $1/f_T$ at $\mathbf b$ is
\[\left(\frac1{f_T}\right)_{\mathbf b} = \frac{\mathbf x^{-\mathbf b}}{c_{\mathbf b}(T)} \sum_{j\geq0} \left(1-\frac{f_T}{c_{\mathbf b}(T)\mathbf x^{\mathbf b}}\right)^j.\]
Let $C_{\mathbf b}\subset\mathbb R^n$ be the cone generated by $\Delta-\mathbf b$; the support of this expansion lies in $C_{\mathbf b}$. For $\mathbf m\in C_{\mathbf b}\cap\Z^n$, let $a_{\mathbf m}(T)= \left[\mathbf x^{\mathbf m}\right]_{\mathbf b}(1/f_T)$, where the subscript $\mathbf b$ means coefficient extraction from the $\mathbf b$-expansion above. For parameter values $t,u$ in the chosen residue disk, $\mathbf m,\mathbf n\in C_{\mathbf b}\cap\Z^n$, and $s\geq1$, let
\[D_s^{\mathbf m,\mathbf n}(t,u)= a_{p^s\mathbf m}(t)a_{p^{s-1}\mathbf n}(u) -a_{p^s\mathbf n}(t)a_{p^{s-1}\mathbf m}(u).\]
The coefficient of $\mathbf x^{\mathbf m}$ in $\omega_1$ is $(\theta a_{\mathbf m})(T)$. If $\mathbf v=-\mathbf b$, let
\begin{equation}\label{eq:aQ-def-intro}
 a_Q(T)=a_{Q\mathbf v}(T)
 =\left[\mathbf x^{Q\mathbf v}\right]_{\mathbf b}\frac1{f_T}
 \quad(Q\geq1)
\end{equation}
and
\begin{equation}\label{eq:Ds-intro}
 D_s(t,u)=D_s^{\mathbf v,2\mathbf v}(t,u)
 =a_{p^s}(t)a_{2p^{s-1}}(u)
 -a_{2p^s}(t)a_{p^{s-1}}(u).
\end{equation}

We next show that the vanishing of the off-diagonal Cartier coefficient implies the vertex congruences. 

\begin{proposition}
\label{prop:excellent-congruence}
Assume $p>2$ and that the first two Hasse--Witt conditions hold on the residue disk of a point of $\F_p^\times$. Let $\sigma$ be a coefficient Frobenius, let $t$ be a point of this disk, and let $u=\left.\sigma(T)\right|_{T=t}$. Assume that $\lambda_1(t,u)=0$. Fix a vertex $\mathbf b$ for which the coefficient of $\mathbf x^{\mathbf b}$ in $f_T$ is a unit. For every $\mathbf m,\mathbf n\in C_{\mathbf b}\cap\Z^n$ and every $s\geq1$,
\begin{equation}\label{eq:excellent-congruence}
 D_s^{\mathbf m,\mathbf n}(t,u)\equiv0\pmod{p^{2s}}.
\end{equation}
\end{proposition}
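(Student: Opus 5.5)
The plan is to turn the hypothesis $\lambda_1(t,u)=0$ into a statement about the Cartier image of $\omega_0=1/f_t$ at every iterated level, and then read off the vertex coefficients. By \eqref{eq:lambda-row} with $\lambda_1(t,u)=0$, we have
\[\cC_p(\omega_0)\equiv\lambda_0\,\omega_0^\sigma\pmod{p^2\cF_2M_u}.\]
More precisely, there is $\eta\in\cF_2M_u$ with $\cC_p(1/f_t)=\lambda_0/f_u+p^2\eta$. By the definition of $\cF_2$, for every $j\geq1$ the iterate $\cC_p^{j}(\eta)$ lies in $p^{2j}$ times the appropriately twisted module. Hence for each $s\geq1$, applying $\cC_p^{s-1}$ along the chain of twists gives
\[\cC_p^{s}(1/f_t)=\lambda_0\,\cC_p^{s-1}(1/f_u)+p^{2s}\eta_s .\]
Here $\eta_s$ is integral, i.e.\ it lies in the completed module and so, via \eqref{eq:CY-full-inclusion}, in $\widehat\Omega_f$ with integral coefficients.

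Next, pass to $\mathbf b$-expansions. The Cartier operator commutes with expansion at a vertex, since it merely extracts exponents divisible by $p$. Every element of $\widehat\Omega_f$ (a $p$-adic limit of $(m-1)!A/f^m$ with $\operatorname{Supp}A\subset m\Delta$) has a $\mathbf b$-expansion with $\Z_p$-coefficients supported in $C_{\mathbf b}$. This uses that $c_{\mathbf b}$ is a unit, so $f^{-m}$ expands integrally. Taking the coefficient of $\mathbf x^{\mathbf m}$ yields, for all $\mathbf m\in C_{\mathbf b}\cap\Z^n$,
\[a_{p^s\mathbf m}(t)=\lambda_0\,\bigl[\mathbf x^{\mathbf m}\bigr]_{\mathbf b}\cC_p^{s-1}(1/f_u)+p^{2s}e_{\mathbf m},\qquad e_{\mathbf m}\in\Z_p.\]
Here $[\mathbf x^{\mathbf m}]_{\mathbf b}\cC_p^{s-1}(1/f_u)=a_{p^{s-1}\mathbf m}(u)$. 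The iterated twists must be tracked: $\cC_p^{s-1}$ of $1/f_u$ lands in $\sigma^{s-1}$-twisted modules, but coefficient extraction of $\cC_p^{s-1}$ is just $a_{p^{s-1}\mathbf m}$ of the same function $1/f_u$, so this is consistent.

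Then the determinant vanishes up to error:
\[D_s^{\mathbf m,\mathbf n}(t,u)=\bigl(\lambda_0a_{p^{s-1}\mathbf m}(u)+p^{2s}e_{\mathbf m}\bigr)a_{p^{s-1}\mathbf n}(u)-\bigl(\lambda_0a_{p^{s-1}\mathbf n}(u)+p^{2s}e_{\mathbf n}\bigr)a_{p^{s-1}\mathbf m}(u).\]
The $\lambda_0$ terms cancel, and the remainder is divisible by $p^{2s}$ because the $a(u)$ are integral.

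The main obstacle is the first step, namely justifying that the congruence in \eqref{eq:lambda-row} holds in the strong form $\cC_p(\omega_0)-\lambda_0\omega_0^\sigma\in p^2\cF_2M^\sigma$ after specialization. It must also be checked that $\cF_2$ is compatible with specialization and with iterating Cartier through successive twists, so that the error really gains $p^{2}$ at each step rather than only at the first. I would handle this by working with the universal module over $R$, where $\cF_2$ is defined with all iterates $\cC_p^j$, and specializing only at the end. The remaining care is integrality of the vertex expansion of elements of $CY(g)$ (through \eqref{eq:CY-full-inclusion}), including the factorials $(m-1)!$, which only help.
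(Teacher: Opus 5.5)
Your argument is correct and is essentially the paper's proof. The paper cites \cite[Proposition~5.11]{BeukersVlasenkoIII} at level two for $a_{p^s\mathbf r}(t)\equiv\lambda_0(t,u)a_{p^{s-1}\mathbf r}(u)+\lambda_1(t,u)(\theta a_{p^{s-1}\mathbf r})(u)\pmod{p^{2s}}$; you re-derive this congruence by iterating Cartier on the error term in \eqref{eq:lambda-row} and extracting $\mathbf b$-expansion coefficients, and both arguments then drop the $\lambda_1$ term and cross-multiply to eliminate $\lambda_0$. The obstacle you flag is already handled by the setup: \eqref{eq:lambda-row} is stated over $R$ with error in $p^2\cF_2(M^\sigma)$, so specializing only at the end, as you propose, suffices.
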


\begin{proof}
For any $\mathbf r\in C_{\mathbf b}\cap\Z^n$, \cite[Proposition~5.11]{BeukersVlasenkoIII}, at level two, gives
\[a_{p^s\mathbf r}(t)\equiv \lambda_0(t,u)a_{p^{s-1}\mathbf r}(u) +\lambda_1(t,u)(\theta a_{p^{s-1}\mathbf r})(u) \pmod{p^{2s}}.\]
The coefficients of $g$ lie in $\Z$ and $\sigma$ restricts to the identity on $\Z_p$, so specialization at $t$ gives $\left.\sigma(a_{p^{s-1}\mathbf r}(T))\right|_{T=t} =a_{p^{s-1}\mathbf r}(u)$ and the same identity for $\theta a_{p^{s-1}\mathbf r}$. The second term vanishes because $\lambda_1(t,u)=0$. Apply the congruence with $\mathbf r=\mathbf m$ and $\mathbf r=\mathbf n$, cross multiply, and subtract.  This eliminates $\lambda_0(t,u)$ and proves \eqref{eq:excellent-congruence}.
\end{proof}
The following proposition extends the vertex congruences from residue degree one to a closed point of arbitrary residue degree.

\begin{proposition}
\label{prop:finite-frobenius-orbit}
Let $q=p^f$, let $k=\F_q$, and let $t_0\in W(k)^\times$. Assume that the Beukers--Vlasenko excellent Frobenius is defined after base change to $W(k)$, with Witt Frobenius on the constants.  Let $t_i=\sigma_{\mathrm{exc}}^i(t_0)$ for $0\leq i\leq f$.  Assume that the first two Hasse--Witt conditions hold along this orbit and that $t_f=t_0$. Fix a vertex $\mathbf b$ whose coefficient is a unit.  For $\mathbf m\in C_{\mathbf b}\cap\Z^n$, let $a_{\mathbf m}(t_i)$ be the coefficient of $\mathbf x^{\mathbf m}$ in the expansion of $1/f_{t_i}$ at $\mathbf b$. For every $\mathbf m,\mathbf n\in C_{\mathbf b}\cap\Z^n$ and every $s\geq1$,
\begin{align*}
 a_{q^s\mathbf m}(t_0)a_{q^{s-1}\mathbf n}(t_0)
 -a_{q^s\mathbf n}(t_0)a_{q^{s-1}\mathbf m}(t_0) \equiv0
 \pmod{p^{2(f(s-1)+1)}}.
\end{align*}
Equivalently, the modulus is $p^2q^{2(s-1)}$.
\end{proposition}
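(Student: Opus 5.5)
The plan is to iterate the single-step congruence of \cref{prop:excellent-congruence} along the excellent Frobenius orbit and compose it $f$ times. Since $\sigma_{\mathrm{exc}}$ is excellent, the universal coefficient $\lambda_1$ vanishes identically. Hence for each $i$ we have $\lambda_1(t_i,t_{i+1})=0$, because $t_{i+1}=\sigma_{\mathrm{exc}}(T)|_{T=t_i}$. The level-two case of \cite[Proposition~5.11]{BeukersVlasenkoIII}, used in that proof, then gives for every $\mathbf r\in C_{\mathbf b}\cap\Z^n$ and every $N\geq1$
\[a_{p^N\mathbf r}(t_i)\equiv\lambda_0(t_i,t_{i+1})\,a_{p^{N-1}\mathbf r}(t_{i+1})\pmod{p^{2N}}.\]
Here $\lambda_0(t_i,t_{i+1})$ is a unit by the first Hasse--Witt condition along the orbit. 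After base change to $W(k)$ the constants are twisted by the Witt Frobenius. Since $g$ has integral coefficients, the specialization identity $\sigma(a_{\mathbf r}(T))|_{T=t_i}=a_{\mathbf r}(t_{i+1})$ still holds. I would check this first, since it is the only point where base change to $W(k)$ enters.

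Next, I chain these congruences. Write $\Lambda_s=\prod_{i=0}^{f-1}\lambda_0(t_i,t_{i+1})$, a unit. Start from $a_{p^{fs}\mathbf r}(t_0)$ and apply the single-step relation at $t_0,t_1,\dots,t_{f-1}$ with $N=fs,fs-1,\dots,fs-f+1$. Since $t_f=t_0$, this yields
\[a_{q^s\mathbf r}(t_0)\equiv\Lambda_s\,a_{q^{s-1}\mathbf r}(t_0)\pmod{p^{2(fs-f+1)}}.\]
The $j$-th step, at $N=fs-j$, has error of valuation at least $2(fs-j)$, multiplied by units. The worst error is therefore the last one, with $N=f(s-1)+1$, which gives exactly the modulus $p^{2(f(s-1)+1)}$. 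The constant $\Lambda_s$ is independent of $\mathbf r$; this holds because each $\lambda_0(t_i,t_{i+1})$ depends only on the points and not on $\mathbf r$.

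The conclusion then follows as in \cref{prop:excellent-congruence}. Apply the chained congruence with $\mathbf r=\mathbf m$ and with $\mathbf r=\mathbf n$, multiply each by the other's $a_{q^{s-1}}$ coefficient at $t_0$, and subtract. The $\Lambda_s$ terms cancel, and since all coefficients are $p$-integral the determinant is $0$ modulo $p^{2(f(s-1)+1)}=p^2q^{2(s-1)}$.

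The main obstacle is bookkeeping. I have to confirm that \cite[Proposition~5.11]{BeukersVlasenkoIII} holds over $W(k)$ with the semilinear coefficient Frobenius, and that the Hasse--Witt hypotheses at each $t_i$ are exactly what it needs. If the proposition only gives the relation modulo $p^{2N}$ up to $\cF_2$-corrections, I would instead compose the Cartier operators $\cC_{t_{f-1},t_f}\circ\cdots\circ\cC_{t_0,t_1}$ on $\omega_0$. By \eqref{eq:lambda-row} with $\lambda_1=0$, this composite sends $\omega_0$ to a unit multiple of $\omega_0$ modulo $p^2\cF_2$. I would then extract coefficients from that composite.
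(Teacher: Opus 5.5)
Your proposal is correct and is essentially the paper's proof. The paper also chains the level-two congruence from Proposition~5.11 of Beukers--Vlasenko along the orbit, using $\lambda_1(t_i,t_{i+1})=0$, with exponents descending from $N+f$ to $N+1$, and takes $N=f(s-1)$. It then uses $t_f=t_0$ and cross-multiplies to eliminate $\prod_i\lambda_0(t_i,t_{i+1})$. This matches your bookkeeping exactly; integrality of the $\lambda_0(t_i,t_{i+1})$ is all that is needed there, so their being units is not required.
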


\begin{proof}
Write $c_i=\lambda_0(t_i,t_{i+1})\in W(k)$. Since $t_{i+1}=\sigma_{\mathrm{exc}}(t_i)$, one has $\lambda_1(t_i,t_{i+1})=0$. For every $S\geq1$ and every $\mathbf r\in C_{\mathbf b}\cap\Z^n$, \cite[Proposition~5.11]{BeukersVlasenkoIII}, applied at level two, gives
\[a_{p^S\mathbf r}(t_i)\equiv c_i a_{p^{S-1}\mathbf r}(t_{i+1}) \pmod{p^{2S}}.\]
Fix $N\geq0$ and apply these congruences successively with $S=N+f-i$ for $i=0,\ldots,f-1$.  The respective moduli are $p^{2(N+f)},p^{2(N+f-1)},\ldots,p^{2(N+1)}$.  Successive substitution therefore gives
\[a_{p^{N+f}\mathbf r}(t_0)\equiv \left(\prod_{i=0}^{f-1}c_i\right)a_{p^N\mathbf r}(t_f) \pmod{p^{2(N+1)}}.\]
Since $t_f=t_0$, apply this formula with $\mathbf r=\mathbf m$ and $\mathbf r=\mathbf n$, cross multiply, and subtract.  We obtain
\[a_{p^{N+f}\mathbf m}(t_0)a_{p^N\mathbf n}(t_0) -a_{p^{N+f}\mathbf n}(t_0)a_{p^N\mathbf m}(t_0) \equiv0\pmod{p^{2(N+1)}}.\]
Taking $N=f(s-1)$ proves the proposition.
\end{proof}

For $f>1$, the iteration gives only $p^{2(f(s-1)+1)}$. If one expect the stronger modulus $q^{2s}$ then it would follow from an $f$-fold Cartier relation whose error lies in $q^2\cF_2$.
\subsection{Canonical lifts and comparison maps}
\label{sec:canonical-lifts}
We first recall the geometric lifting construction used in the $W_2$ comparison. Recall that a smooth proper $d$-fold $X/k$ is $1$-ordinary if absolute Frobenius acts bijectively on $H^d(X,\mathcal O_X)$ \cite[Definition~1.2.1(a)]{AchingerZdanowicz2021}. Achinger and Zdanowicz attach to a $1$-ordinary smooth proper variety with trivial canonical bundle a canonical $W_2$-lift of its Frobenius twist \cite[Corollary~4.1.3]{AchingerZdanowicz2021}. Take $k=\F_p$ so that the Frobenius twist is canonically identified with $X$. Let $F_k$ denote absolute Frobenius and let $\sigma_W$ denote Witt-vector Frobenius. For a lift $Y/W_2(k)$ of $X/k$, crystalline--de Rham comparison identifies $H^d_{\cris}(X/W_2(k))$ with $H^d_{\dR}(Y/W_2(k))$. For each lift $Y$, let $\Fil_Y^\bullet$ denote the filtration on $H^d_{\cris}(X/W_2(k))$ corresponding to the Hodge filtration on $H^d_{\dR}(Y/W_2(k))$ under this identification. We use the linearized crystalline Frobenius
\[\Phi:\sigma_W^*H^d_{\cris}(X/W_2(k)) \longrightarrow H^d_{\cris}(X/W_2(k)).\]
Under the hypotheses stated in \cite[\S1.2]{Grabowski2025}, crystalline Frobenius satisfies $\Phi(\sigma_W^*\Fil_Y^1)\subset pH^d_{\cris}(X/W_2(k))$. The divided Frobenius therefore defines an obstruction $\gamma_Y:F_k^*\operatorname{gr}^1\to\operatorname{gr}^0$. By the definition of $\gamma_Y$, its vanishing is equivalent to $\Phi(\sigma_W^*\Fil_Y^1)\subset\Fil_Y^1$. Achinger and Zdanowicz prove that crystalline Frobenius preserves $\Fil^1$ for the canonical lift \cite[Theorem~5.0.1]{AchingerZdanowicz2021}. Hence the canonical lift has zero obstruction.

We now define the two obstruction maps and the comparison isomorphism between their target spaces.

Let $S$ be smooth of relative dimension one over $W(k)$, and let $\mathscr X\to S$ be a smooth family of Calabi--Yau $d$-folds with a local coordinate $T$ and a Laurent model $f_T=1-Tg$ that is completely symmetric with respect to $\mathcal G$. Fix $\bar t\in S(k)$ such that $T(\bar t)\in k^\times$ and the first two Hasse--Witt conditions hold. Let $X_{\bar t}=\mathscr X\times_{S,\bar t}\operatorname{Spec}k$.  Assume that $X_{\bar t}$ is $1$-ordinary and that the level two Dwork quotient and the excellent Frobenius are defined on the residue disk. A marked lift of $X_{\bar t}$ is a smooth proper lift $Y/W_2(k)$ equipped with an identification $Y\otimes_{W_2(k)}k\simeq X_{\bar t}$. Fix either the polarized deformation problem or the $\mathcal G$-equivariant deformation problem, and let $\operatorname{Def}(X_{\bar t})$ denote its formal deformation space. Let $\widehat S_{\bar t}$ be the formal completion of the base $S$ at $\bar t$, and let $\iota:\widehat S_{\bar t}\to\operatorname{Def}(X_{\bar t})$ be the classifying morphism. Assume that $\iota$ identifies $\widehat S_{\bar t}$ with a one-dimensional formal subscheme $\mathcal D_{\bar t}\subset\operatorname{Def}(X_{\bar t})$. Let $Y_{\mathrm{can}}$ denote the Achinger--Zdanowicz canonical lift, regarded as a marked lift of $X_{\bar t}$ through the canonical identification $X_{\bar t}^{(p)}\simeq X_{\bar t}$. Assume that the $W_2(k)$-point defined by $Y_{\mathrm{can}}$ belongs to $\mathcal D_{\bar t}(W_2(k))$. Assume that the hypotheses of \cite[\S1.2]{Grabowski2025} hold for every marked lift parametrized by $\mathcal D_{\bar t}$.

For $t\in\widehat S_{\bar t}(W_2(k))$, let $a=T(t)\in W_2(k)^\times$ and let $\mathscr X_t=\mathscr X\times_{S,t}\operatorname{Spec}W_2(k)$ with its natural marking. Let $Q_t=Q^{(2)}\otimes_{R,T\mapsto a}W_2(k)$, and let $L_t\subset Q_t$ be the rank one direct summand generated by $1/f_a$ in the chosen Dwork basis.  Since the reductions depend only on $\bar t$, write $\overline Q_{\bar t}=Q_t/pQ_t$ and $\overline L_{\bar t}=L_t/pL_t$. Because $k=\F_p$, the Witt-vector Frobenius on $W_2(k)$ is the identity. For the source value $a$, the auxiliary coefficient Frobenius $\sigma_a(T)=a^{1-p}T^p$ has target $a$. It induces a $W_2(k)$-linear Cartier map $\mathcal C_t:Q_t\to Q_t$ from $\cC_{a,a}$.

Let $N_t=Q_t/L_t$ and let $\pi_t:Q_t\to N_t$ be the quotient map. The divisibility $\lambda_1\in pR$ from \cite[Proposition~7.7]{BeukersVlasenkoIII} gives $\pi_t\mathcal C_t(L_t)\subset pN_t$. Since $\mathcal C_t$ is $W_2(k)$-linear and $p^2=0$, one has $\pi_t\mathcal C_t(pL_t)=0$. Thus $\pi_t\mathcal C_t|_{L_t}$ descends to a map $\overline L_{\bar t}\to pN_t$. The module $N_t$ is free over $W_2(k)$, so multiplication by $p$ induces an isomorphism $N_t/pN_t\xrightarrow{\sim}pN_t$. Composing with its inverse defines
\[\operatorname{ob}_D(t)\in \operatorname{Hom}_k\bigl(\overline L_{\bar t}, \overline Q_{\bar t}/\overline L_{\bar t}\bigr).\]
In the Dwork basis $(\omega_0,\omega_1)$, the scalar of this map is the class $\overline{\lambda_1(a,a)/p}\in k$, defined as the unique class whose image under multiplication by $p$ is $\lambda_1(a,a)$.

Let $S_k=S\otimes_{W(k)}k$, and let $I_{\bar t}$ be the image of the Kodaira--Spencer map
\[T_{\bar t}S_k\otimes\operatorname{gr}^dH^d_{\dR}(X_{\bar t}/k) \longrightarrow\operatorname{gr}^{d-1}H^d_{\dR}(X_{\bar t}/k), \quad v\otimes\omega\longmapsto \operatorname{KS}_{\bar t}(v)\mathbin{\lrcorner}\omega.\]
Here $\lrcorner$ denotes the contraction induced by $T_{X_{\bar t}}\otimes\Omega^d_{X_{\bar t}} \to\Omega^{d-1}_{X_{\bar t}}$. Assume that $I_{\bar t}$ is one-dimensional. Choose a line $K_{\bar t}\subset\operatorname{gr}^1H^d_{\dR}(X_{\bar t}/k)$ that pairs perfectly with $I_{\bar t}$ under polarization. The Achinger--Zdanowicz obstruction restricted to $\mathcal D_{\bar t}$ is
\[\operatorname{ob}_C(t) :=\gamma_{\mathscr X_t}|_{F_k^*K_{\bar t}}\in \operatorname{Hom}_k\bigl(F_k^*K_{\bar t}, \operatorname{gr}^0H^d_{\dR}(X_{\bar t}/k)\bigr).\]
As part of the comparison data, assume that the integral residue comparison identifies $\overline L_{\bar t}$ with $\operatorname{gr}^dH^d_{\dR}(X_{\bar t}/k)$ and $\overline Q_{\bar t}/\overline L_{\bar t}$ with $I_{\bar t}$. Together with polarization and the choice of $K_{\bar t}$, these identifications define an isomorphism
\[\kappa_{\bar t}:\operatorname{Hom}_k\bigl(\overline L_{\bar t}, \overline Q_{\bar t}/\overline L_{\bar t}\bigr) \xrightarrow{\sim} \operatorname{Hom}_k\bigl(F_k^*K_{\bar t}, \operatorname{gr}^0H^d_{\dR}(X_{\bar t}/k)\bigr).\]
Because $k=\F_p$ we have canonical identifications $F_k^*V=V$ for $k$-vector spaces $V$.

Assume that $\omega_{X_{\bar t}}\simeq\mathcal O_{X_{\bar t}}$, and that all the modules $H^j(Y,\Omega^i_{Y/W_2(k)})$ are free over $W_2(k)$ and the Hodge--de Rham spectral sequence for $Y/W_2(k)$ degenerates at $E_1$ for every marked lift $Y/W_2(k)$ of $X_{\bar t}$. The next lemma identifies the zero locus of the crystalline obstruction with the canonical lift. This is the geometric vanishing criterion used in the comparison conjectures.

\begin{lemma}
\label{lem:canonical-obstruction}
For every $t\in\widehat S_{\bar t}(W_2(k))$, one has $\operatorname{ob}_C(t)=0$ if and only if $\mathscr X_t$ and $Y_{\mathrm{can}}$ are isomorphic as marked lifts.
\end{lemma}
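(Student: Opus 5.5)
One direction is immediate from the setup. If $\mathscr X_t\simeq Y_{\mathrm{can}}$ as marked lifts, the crystalline--de Rham comparison carries the Hodge filtration of $\mathscr X_t$ to that of $Y_{\mathrm{can}}$ inside the common module $H^d_{\cris}(X_{\bar t}/W_2(k))$. Hence $\Fil^1_{\mathscr X_t}=\Fil^1_{Y_{\mathrm{can}}}$. By \cite[Theorem~5.0.1]{AchingerZdanowicz2021}, $\Phi$ preserves $\Fil^1_{Y_{\mathrm{can}}}$, so $\gamma_{\mathscr X_t}=0$. In particular $\operatorname{ob}_C(t)=0$, since it is a restriction of $\gamma_{\mathscr X_t}$.

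The converse has two parts. First I will show that $\operatorname{ob}_C(t)=0$ forces the full obstruction $\gamma_{\mathscr X_t}$ to vanish. Then I will show that vanishing of $\gamma_Y$ characterizes $Y_{\mathrm{can}}$ among marked lifts.

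\emph{Step one.} The point $t$ lies in $\mathcal D_{\bar t}$, which is one-dimensional. The point of $Y_{\mathrm{can}}$ also lies in $\mathcal D_{\bar t}(W_2(k))$, by assumption. So the two lifts differ by a tangent vector $p\delta\cdot v$ with $v\in T_{\bar t}S_k$ and $\delta\in k$. I will use the standard description of how the Hodge filtration moves in a $W_2$-torsor of lifts. Changing the lift by a class $\xi\in H^1(X,T_X)$ changes $\Fil^1$ by $p$ times the contraction with $\xi$; this is Deligne--Illusie, or \cite[\S5]{AchingerZdanowicz2021}. It follows that $\gamma_Y$ is affine in the lift:
\[\gamma_{Y+\xi}=\gamma_Y+(\text{linear term in }\xi).\]
Since $\gamma_{Y_{\mathrm{can}}}=0$, we get $\gamma_{\mathscr X_t}=\delta\cdot\ell(\operatorname{KS}(v))$, where $\ell$ is the linear map induced by contraction and polarization. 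Along $\mathcal D_{\bar t}$, the nonzero part of $\ell(\operatorname{KS}(v))$ is detected on the line dual to $I_{\bar t}$. This is exactly $K_{\bar t}$, by its choice and perfectness of the pairing. So the restriction to $F_k^*K_{\bar t}$ is $\delta$ times a nonzero element.

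\emph{Step two.} From step one, $\operatorname{ob}_C(t)=0$ implies $\delta=0$, hence $\iota(t)=\iota(\text{point of }Y_{\mathrm{can}})$. Since $\iota$ is an identification onto $\mathcal D_{\bar t}$, we conclude $\mathscr X_t\simeq Y_{\mathrm{can}}$ as marked lifts. One could alternatively invoke the uniqueness in \cite[Theorem~5.7.1]{AchingerZdanowicz2021}: under freeness and $E_1$-degeneration, the canonical lift is the unique marked lift whose $\Fil^1$ is Frobenius-stable.

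The main obstacle is the nonvanishing in step one. I need the variation of $\gamma$ along the one-dimensional disk to be nonzero when restricted to $F_k^*K_{\bar t}$. To get it, I will compute the variation via Kodaira--Spencer. Because $\Phi(\sigma_W^*\Fil^1)\subset p H$, the Frobenius term contributes nothing to the derivative at first order. So the derivative is the change of $\Fil^1$ projected to $\operatorname{gr}^0$, paired against $\operatorname{gr}^1$. By Serre duality, that pairing is the polarization pairing $\operatorname{gr}^{d-1}\times\operatorname{gr}^1\to k$ composed with $\operatorname{KS}(v)\lrcorner\omega$. The result is then nonzero exactly because $I_{\bar t}\neq0$ and $K_{\bar t}$ pairs perfectly with it.
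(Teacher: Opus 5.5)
Your overall route is the paper's: marked lifts form an $H^1(X_{\bar t},T_{X_{\bar t}})$-torsor, the difference class lies on the tangent line of $\mathcal D_{\bar t}$, $\gamma_{Y_{\mathrm{can}}}=0$ by \cite[Theorem~5.0.1]{AchingerZdanowicz2021}, Ogus identifies the first-order change of the Hodge filtration with cup product by the difference class, and the perfect pairing of $K_{\bar t}$ with $I_{\bar t}$ detects that class. The problem is the step you identify as the main obstacle.

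\textbf{The faulty claim.} You assert that, because $\Phi(\sigma_W^*\Fil^1)\subset pH$, Frobenius contributes nothing to the first-order variation of $\gamma$. This is false. Write $\Fil^1_Y=\{x+p\widetilde\delta(x):x\in\Fil^1_{Y_{\mathrm{can}}}\}$. Then
\[p^{-1}\Phi\bigl(x+p\widetilde\delta(x)\bigr)=p^{-1}\Phi(x)+\Phi\bigl(\widetilde\delta(x)\bigr).\]
The first term is the same as for $Y_{\mathrm{can}}$. It even vanishes in $\operatorname{gr}^0$, since $\Phi(\Fil^1_{Y_{\mathrm{can}}})\subset p\Fil^1_{Y_{\mathrm{can}}}$. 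But the variation is Frobenius \emph{applied to} the change of filtration, not the change itself. Reducing to $\operatorname{gr}^0$ gives
\[\gamma_Y-\gamma_{Y_{\mathrm{can}}}=HW(0)\circ F_k^*\eta(\xi),\]
where $\xi$ is the difference class. This is the formula from the proof of \cite[Theorem~5.7.1]{AchingerZdanowicz2021} that the paper uses.

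\textbf{What this costs your argument.} Your polarization argument only shows that $\eta(\xi)|_{K_{\bar t}}\neq0$ for $\xi\neq0$. To conclude $\operatorname{ob}_C(t)\neq0$, you also need $HW(0)$ to be injective on $\operatorname{gr}^0=H^d(X_{\bar t},\mathcal O_{X_{\bar t}})$. That is exactly the $1$-ordinarity of $X_{\bar t}$. Your proof never uses this hypothesis at the point where it is needed; it enters only implicitly, through the existence of $Y_{\mathrm{can}}$. Without it, the linear term in your affine formula could vanish even though $\eta(\xi)|_{K_{\bar t}}\neq0$.

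\textbf{The repair.} In Step one, with $\xi=\delta\operatorname{KS}(v)$, write
\[\operatorname{ob}_C(t)=\bigl(HW(0)\circ F_k^*\eta(\xi)\bigr)\big|_{F_k^*K_{\bar t}},\]
and invoke bijectivity of $HW(0)$. With that change, your Step two and your easy direction go through, and the argument coincides with the paper's. The paper handles both directions at once by proving $\operatorname{ob}_C(t)=0\iff v(\mathscr X_t,Y_{\mathrm{can}})=0$.
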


\begin{proof}
Fix $t$ and let $Y=\mathscr X_t$. The kernel of $W_2(k)\to k$ is square-zero. Hence the isomorphism classes of marked lifts of $X_{\bar t}$ form a torsor under $H^1(X_{\bar t},T_{X_{\bar t}})$. Using $pW_2(k)\simeq k$, let $v(Y,Y_{\mathrm{can}})\in H^1(X_{\bar t},T_{X_{\bar t}})$ denote the unique class whose torsor action on $Y_{\mathrm{can}}$ gives $Y$. The differential of $\iota$ identifies the tangent line of $\mathcal D_{\bar t}$ with a one-dimensional subspace of $H^1(X_{\bar t},T_{X_{\bar t}})$. Since both $Y$ and $Y_{\mathrm{can}}$ are parametrized by $\mathcal D_{\bar t}$, their difference class belongs to this subspace.

\cite[Theorem~5.0.1]{AchingerZdanowicz2021} gives $\gamma_{Y_{\mathrm{can}}}=0$. Let $\eta(v)$ denote the infinitesimal period map $\operatorname{gr}^1\to\operatorname{gr}^0$ determined by $v=v(Y,Y_{\mathrm{can}})$, and let $HW(0):F_k^*\operatorname{gr}^0\to\operatorname{gr}^0$ be the Hasse--Witt map. The calculation in the proof of \cite[Theorem~5.7.1]{AchingerZdanowicz2021} gives
\[\gamma_Y-\gamma_{Y_{\mathrm{can}}} =HW(0)\circ F_k^*\eta(v).\]
The map $\eta(v)$ is cup product by the difference class \cite[Corollary~2.12]{Ogus1978}. Its compatibility with polarization gives
\[\langle\eta(v)(\alpha),\omega\rangle =\pm\langle\alpha,v\mathbin{\lrcorner}\omega\rangle\]
for $\alpha\in K_{\bar t}$ and $\omega\in\operatorname{gr}^dH^d_{\dR}(X_{\bar t}/k)$. For $v$ on the tangent line of $\mathcal D_{\bar t}$, the class $v\mathbin{\lrcorner}\omega$ belongs to $I_{\bar t}$ and is nonzero when $v\neq0$. Since $K_{\bar t}$ pairs perfectly with $I_{\bar t}$, this identity shows that $\eta(v)|_{K_{\bar t}}=0$ if and only if $v=0$. Since $X_{\bar t}$ is $1$-ordinary, $HW(0)$ is an isomorphism. Therefore $\operatorname{ob}_C(t)=0$ if and only if $v(\mathscr X_t,Y_{\mathrm{can}})=0$, which is equivalent to an isomorphism of the two marked lifts.
\end{proof}

\section{A general criterion}

This section proves \cref{thm:rank two-criterion}. We first identify the vanishing of the off-diagonal Cartier coefficient with the excellent Frobenius target. We then compare the Dwork filtration with the Newton filtration.

\subsection{Pointwise uniqueness for the excellent Frobenius}

The following pointwise uniqueness statement on a unit residue disk follows from Taylor transport for the Gauss--Manin connection.

\begin{lemma}
\label{lem:pointwise}
Let $k=\F_p$.  Fix a source $t\in\Z_p^\times$ in a unit residue disk on which the first two Hasse--Witt conditions hold.  Suppose the completely symmetric Calabi--Yau setting above and that the hypotheses of \cite[Theorem~7.3]{BeukersVlasenkoIII} hold.  Let $v=\sigma_{\mathrm{exc}}(t)$.  If $u\in\Z_p^\times$ satisfies $u\equiv t^p\pmod p$, then $\lambda_1(t,u)=0$ if and only if $u=v$.
\end{lemma}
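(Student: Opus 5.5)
We prove \cref{lem:pointwise} by studying how $\lambda_1(t,u)$ varies with the target $u$ while the source $t$ stays fixed. The two targets $u$ and $v$ lie in the same residue disk, since $u\equiv t^p\equiv v\pmod p$. The Gauss--Manin connection on $Q^{(2)}$, namely $\theta$ acting on the basis $(\omega_0,\omega_1)$, converges on this disk. Its Taylor transport gives an isomorphism $\tau_{v,u}:Q^{(2)}_v[1/p]\to Q^{(2)}_u[1/p]$, and this isomorphism is integral. The standard fact we use is that Cartier maps for different coefficient Frobenii with the same source are related by transport of the target:
\[
\cC_{t,u}=\tau_{v,u}\circ\cC_{t,v}
\]
on level two quotients. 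This is the compatibility of the Beukers--Vlasenko Cartier operator with $\theta$, in the form $\theta\circ\cC=\cC\circ(\text{twisted }\theta)$. We apply it after extending $\sigma$ to a two-variable Frobenius that interpolates between the targets, or equivalently by expanding $a_{p^s\mathbf r}(t)$ in the variable $u-v$ using \cite[Proposition~5.11]{BeukersVlasenkoIII}.

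Next we compute $\tau_{v,u}(\omega_0^v)$. We write $\epsilon=\log(u/v)\in p\Z_p$, so that the transport equals $\exp(\epsilon\theta)$ applied to the class of $1/f$. The first-order term gives
\[
\tau_{v,u}(\omega_0)=\omega_0+\epsilon\,\omega_1+O(\epsilon^2),
\]
where the $O(\epsilon^2)$ terms are computed using the Picard--Fuchs relation expressing $\theta\omega_1$ in the basis modulo $\cF_2$. Because $\lambda_1(t,v)=0$, we have $\cC_{t,v}(\omega_0)=\lambda_0(t,v)\omega_0^v$. Combining this with the transport formula gives
\[
\lambda_1(t,u)=\lambda_0(t,v)\,\bigl(\epsilon+\epsilon^2 h(\epsilon)\bigr)
\]
for a power series $h$. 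The second Hasse--Witt condition is what bounds the coefficients of $h$, once the Picard--Fuchs coefficients are written in the unit-normalized basis. The first Hasse--Witt condition makes $\lambda_0(t,v)$ a unit.

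From this formula both directions follow. If $u=v$, then $\epsilon=0$ and so $\lambda_1(t,u)=0$. Conversely, if $\epsilon\neq0$ we have $v_p(\epsilon)\geq1$, so $\epsilon+\epsilon^2h(\epsilon)$ has the same valuation as $\epsilon$ and is nonzero. Hence $\lambda_1(t,u)=0$ forces $\epsilon=0$, that is, $u=v$.

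The main obstacle is the integrality of the $\epsilon^2 h(\epsilon)$ term. The transport series $\exp(\epsilon\theta)$ involves the denominators $n!$, and we need $\epsilon^n\theta^n(\omega_0)/n!$ to be divisible by $\epsilon^2$ for $n\ge 2$. We plan to handle this in two steps. First, Dwork's lemma on the convergence of the transport on the disk $|u-v|<1$, or equivalently the $p$-adic integrality of the Frobenius structure \cite[Theorem~7.3 and its proof]{BeukersVlasenkoIII}, shows that $\tau_{v,u}$ is integral. Second, for the leading-order analysis it is enough to work modulo $p\epsilon$, where only the linear term survives. Working modulo $p\epsilon$ already shows that $\lambda_1(t,u)\equiv\lambda_0(t,v)\epsilon\pmod{p\epsilon}$, and this congruence alone gives the uniqueness.
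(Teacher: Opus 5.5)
Your proposal is correct and is essentially the paper's proof. You transport the target from $v$ to $u$: the paper defines $T_{v,u}$ on $M$ itself by expanding $f_v^{-m}$ in powers of $(v-u)g/f_u$, so it is the identity on formal Laurent expansions, commutes with coefficient extraction (giving $T_{v,u}\circ\cC_{t,v}=\cC_{t,u}$), and preserves $\cF_2$ by \cite[Corollary~3.4]{BeukersVlasenkoIII}. You then read off $\lambda_1(t,u)=\lambda_0(t,v)\,x\,U(x)$ with $x=\log(v/u)$ (your $\epsilon$ up to sign) and $\lambda_0(t,v)$ a unit.

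One point in your last paragraph needs fixing: integrality of the transport does not by itself make the terms of order $r\geq2$ vanish modulo $p\epsilon$. What does is integrality of $b_r$ in $\theta^r\omega_0=a_r\omega_0+b_r\omega_1$, which comes from the integral connection matrix of \cite[Proposition~7.6]{BeukersVlasenkoIII}, together with Legendre's bound $v_p(r!)\leq(r-1)/(p-1)$. For odd $p$ and $x\in p\Z_p$ these give $b_rx^{r-1}/r!\in p\Z_p$ for $r\geq2$, hence $U(x)\in1+p\Z_p$.
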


\begin{proof}
Over the residue disk, let $e_0=[1/f]$ and $e_1=[\theta(1/f)]$ be sections of the level two quotient, and write $e_i(w)$ for their values at $w$.  The first two Hasse--Witt conditions make $e_0(w),e_1(w)$ a basis of $Q_w^{(2)}$ by \cite[discussion following equation~(20)]{BeukersVlasenkoIII}.  By \cite[Proposition~7.6]{BeukersVlasenkoIII}, there are integral analytic functions $A$ and $B$ on the residue disk such that $\theta e_0=e_1$ and $\theta e_1=Ae_0+Be_1$.  For $r\geq0$, write $\theta^re_0=a_re_0+b_re_1$, where $b_0=0$ and $b_1=1$.  The connection equations show by induction that $a_r$ and $b_r$ are integral analytic functions on the same disk.

Theorems~7.3 and~7.15 of \cite{BeukersVlasenkoIII} show that the excellent target $v$ is defined on the residue disk.  Both $u$ and $v$ are congruent to $t^p$ modulo $p$, so $x=\log(v/u)$ belongs to $p\Z_p$. Let $T_{v,u}:M_v\to M_u$ be the map obtained by expanding the denominators $f_v^{-m}$ in powers of $(v-u)g/f_u$.  In $M_u$ one has
\[T_{v,u}(1/f_v) =\sum_{r\geq0}\frac{x^r}{r!} \left.\theta^r(1/f_t)\right|_{t=u}.\]
The identity $f_v=f_u-(v-u)g$ gives these binomial expansions.  For an interior generator with numerator supported in $m\Delta^\circ$, the term of order $j$ has numerator supported in $m\Delta^\circ+j\Delta\subset(m+j)\Delta^\circ$.  Thus the expansion preserves the Calabi--Yau module.  It converges because $v-u\in p\Z_p$, and interchanging $u$ and $v$ gives the inverse.  By \cite[Corollary~3.4]{BeukersVlasenkoIII}, $\cF_2M=M\cap d^2\Omega_{\mathrm{formal}}$, where $d^2\Omega_{\mathrm{formal}}$ is the submodule of second formal derivatives defined there. Under the embeddings into the common formal Laurent-series module, $T_{v,u}$ is induced by the identity and commutes with logarithmic differentiation in the $\mathbf x$-variables.  Hence $T_{v,u}(\cF_2M_v)=\cF_2M_u$. It therefore induces the Taylor isomorphism $Q_v^{(2)}\xrightarrow{\sim}Q_u^{(2)}$.  The Gauss--Manin Taylor identity \cite[the calculation immediately preceding Corollary~6.4] {BeukersVlasenkoIII} gives
\[T_{v,u}(e_0(v)) =\sum_{r\geq0}\frac{x^r}{r!}(\theta^re_0)(u) =\alpha(x)e_0(u)+\beta(x)e_1(u) \quad\text{in }Q_u^{(2)},\]
where $\beta(x)=x+\sum_{r\geq2}b_r(u)x^r/r!$. For $w\equiv t^p\pmod p$, coefficient extraction defines the specialized Cartier map $\cC_{t,w}:M_t\to M_w$; this map depends only on its source and target.  Since Taylor transport is induced by the identity on the common formal Laurent-series module, one has $T_{v,u}\circ\cC_{t,v}=\cC_{t,u}$ and $T_{v,u}(p^2\cF_2M_v)=p^2\cF_2M_u$.  We use the same symbols for the induced maps on the level two quotients.

For $r\geq1$, Legendre's estimate gives $v_p(r!)\leq(r-1)/(p-1)$. Since $b_r(u)\in\Z_p$, $x\in p\Z_p$, and $p$ is odd, the series defining $\beta(x)$ converges, and $U(x):=1+\sum_{r\geq2}b_r(u)x^{r-1}/r!$ belongs to $1+p\Z_p$. Indeed, each summand belongs to $p\Z_p$, and its $p$-adic valuation tends to infinity.  Thus $\beta(x)=xU(x)$.

For $v=\sigma_{\mathrm{exc}}(t)$, \cite[Theorem~7.3]{BeukersVlasenkoIII} gives $\cC_{t,v}(e_0(t))=c\,e_0(v)$ in $Q_v^{(2)}$, where $c$ is a unit by the first Hasse--Witt condition. Applying $T_{v,u}$ and using $T_{v,u}\circ\cC_{t,v}=\cC_{t,u}$ gives
\[\cC_{t,u}(e_0(t)) =c\{\alpha(x)e_0(u)+\beta(x)e_1(u)\} \quad\text{in }Q_u^{(2)}.\]
Comparison in the basis $(e_0(u),e_1(u))$ yields $\lambda_1(t,u)=c\beta(x)=cxU(x)$. Since $c$ and $U(x)$ are units, $\lambda_1(t,u)=0$ if and only if $x=0$. The $p$-adic logarithm is injective on $1+p\Z_p$ for odd $p$, so $x=0$ if and only if $u=v$.
\end{proof}

\subsection{Slopes and the rank two subisocrystal}

The following lemma relates the Dwork filtration to the Newton filtration.

\begin{lemma}
\label{lem:slope}
Fix an integer $d$, and let $K=\Q_p$.  Let $M$ be a $p$-torsion free, $p$-adically separated and complete $\Z_p$-module with a $K$-linear Cartier operator $C$ on $M[1/p]$, and let $(H,\varphi)$ be a finite-dimensional $K$-vector space with a $K$-linear invertible Frobenius.  Suppose that a surjection $\rho:M[1/p]\twoheadrightarrow H$ satisfies $\rho C=p^d\varphi^{-1}\rho$ and $\rho(M)\subset p^{-c}\Lambda$ for a lattice $\Lambda\subset H$ and a fixed integer $c\geq0$. For $r\geq1$, let $\cF_rM=\{x\in M:C^sx\in p^{rs}M\text{ for every }s\geq1\}$.  Then $\rho(\cF_rM)\subset H_{\varphi,\leq d-r}$, and $\rho$ induces a surjection $(M/\cF_rM)[1/p]\twoheadrightarrow H/H_{\varphi,\leq d-r}$. If the two sides have equal dimension, this map is an isomorphism.
\end{lemma}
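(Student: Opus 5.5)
The plan is to decompose $H$ into its slope parts and show that anything in $\cF_rM$ has no component in slopes greater than $d-r$. Use the slope decomposition $H=H_{\varphi,\leq d-r}\oplus H_{\varphi,>d-r}$. Both summands are $\varphi$-stable, so let $\pi$ denote the projection onto $H_{\varphi,>d-r}$; it commutes with $\varphi$. Put $\psi=p^d\varphi^{-1}$ restricted to $H_{\varphi,>d-r}$. The eigenvalues of $\varphi$ there have valuation $>d-r$, so the eigenvalues of $\psi$ have valuation $<r$. Hence $p^{-r}\psi$ has all eigenvalues of valuation $<0$, and its inverse $p^r\psi^{-1}=p^{r-d}\varphi$ has all eigenvalues of strictly positive valuation. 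This inverse is topologically nilpotent, so there are a constant $A$ and an $\epsilon>0$ with $\|(p^r\psi^{-1})^s\|\leq A\,|p|^{\epsilon s}$ for all $s$, measured in any norm, for instance the one given by $\pi(\Lambda)$.

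Now take $x\in\cF_rM$ and set $y=\pi\rho(x)$. For each $s\geq1$ write $C^sx=p^{rs}m_s$ with $m_s\in M$. Iterating $\rho C=\psi\rho$ on the projected piece gives
\[\psi^s y=\pi\rho(C^sx)=p^{rs}\pi\rho(m_s),\]
and so
\[y=(p^{r}\psi^{-1})^s\,\pi\rho(m_s).\]
Boundedness gives $\rho(m_s)\in p^{-c}\Lambda$, hence $\pi\rho(m_s)\in p^{-c}\pi(\Lambda)$, uniformly in $s$. Combining this with the operator bound, $y$ has norm at most $A|p|^{\epsilon s-c}$ for every $s$, which forces $y=0$. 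Therefore $\rho(\cF_rM)\subset H_{\varphi,\leq d-r}$. This also covers $\rho(\cF_rM[1/p])$, since $\cF_rM[1/p]$ is the rational span of $\cF_rM$.

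The induced map $(M/\cF_rM)[1/p]=M[1/p]/\cF_rM[1/p]\to H/H_{\varphi,\leq d-r}$ is then well defined. It is surjective because $\rho$ is. When the source and target are finite-dimensional of equal dimension, a surjection between them is an isomorphism.

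The main point needing care is the uniform operator estimate. It cannot be taken as contraction by a fixed factor at each single step, because the eigenvalue valuations may be non-integral and $\varphi$ need not be semisimple. I would handle this by passing to $\overline{\Q}_p$, or to a finite extension, and using the Jordan form: there $\|(p^{r-d}\varphi)^s\|\leq A'\,s^{n}|p|^{\mu s}$, where $\mu>0$ is the minimal valuation. This still tends to $0$, and that decay is all the argument uses. The twisted targets $M^{\sigma^j}$ in the original definition of $\cF_r$ do not arise here, because the lemma is stated with a single Cartier operator $C$ on $M$.
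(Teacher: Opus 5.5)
Your proof is correct and is essentially the paper's argument: both compare the growth of $p^{ds}\varphi^{-s}$ on the high-slope components of $\rho(x)$ with the $p^{rs}$-divisibility forced by $x\in\cF_rM$, using the uniform bound $\rho(M)\subset p^{-c}\Lambda$. The paper works one slope at a time, using the limit $\frac1s v_p\bigl(p^{ds}\varphi^{-s}\operatorname{pr}_\lambda\rho(x)\bigr)\to d-\lambda$. You treat all of $H_{\varphi,>d-r}$ at once through the contraction estimate for $p^{r-d}\varphi$, which makes the non-semisimple case explicit.
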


\begin{proof}
Let $x\in\cF_rM$.  For every $s\geq1$ one has $p^{ds}\varphi^{-s}\rho(x)=\rho(C^sx)\in p^{rs-c}\Lambda$. After a finite scalar extension, decompose $H$ into its slope subspaces and let $\operatorname{pr}_\lambda$ be the projector onto the slope-$\lambda$ subspace.  This projector is bounded for any fixed lattice.  For a vector in a slope subspace, $v_p$ denotes the minimum valuation of its coordinates in a fixed basis. If $\operatorname{pr}_\lambda\rho(x)\neq0$, the growth characterization of Newton slopes \cite{Katz1979} gives
\[\lim_{s\to\infty}\frac1s v_p\!\left(p^{ds}\varphi^{-s} \operatorname{pr}_\lambda\rho(x)\right)=d-\lambda.\]
The lattice inclusion above and the boundedness of $\operatorname{pr}_\lambda$ imply that the same limit is at least $r$.  Thus $\lambda\leq d-r$.  This proves the containment, which descends to $K$. The induced map on quotients is surjective because $\rho$ is surjective. If the source and target have the same finite dimension, the induced surjection is an isomorphism.
\end{proof}

The slope lemma now gives the comparison in the general criterion.

\begin{proof}[Proof of \cref{thm:rank two-criterion}]
Let $M=M_{t_0}$, let $Q^{(2)}=M/\cF_2M$, and let $H_{\mathrm{hi}}=H/H_{\varphi,\leq d-2}$.  Write $\bar\omega_i$ for the class of $\omega_i=\theta^i(1/f_{t_0})$ in $Q^{(2)}$.  Applying \cref{lem:slope} with $r=2$ gives a surjection $\bar\rho:Q^{(2)}[1/p]\twoheadrightarrow H_{\mathrm{hi}}$.  The first two Hasse--Witt conditions make $(\bar\omega_0,\bar\omega_1)$ a basis of $Q^{(2)}[1/p]$, while assumption~\textup{(ii)} gives $\dim H_{\mathrm{hi}}=2$.  Hence $\bar\rho$ is an isomorphism.  It intertwines $C$ with $p^d\varphi^{-1}$.  Since the Cartier slopes on the source are $0$ and $1$, the Frobenius slopes on $H_{\mathrm{hi}}$ are $d$ and $d-1$.

Let $h_0=\rho(\omega_0)=\rho(1/f_{t_0})$, and let $\bar h_0$ be its image in $H_{\mathrm{hi}}$.  Then $\bar h_0=\bar\rho(\bar\omega_0)\neq0$, because $\bar\omega_0$ is a basis vector and $\bar\rho$ is an isomorphism.  By functoriality of the slope decomposition for $\varphi$-stable subspaces, and because $d\geq2$ and the slopes of $E$ are $0$ and $d$, the intersection $E_0=E\cap H_{\varphi,\leq d-2}$ is the one-dimensional slope zero subisocrystal of $E$.  Consequently the image
\[\bar E=(E+H_{\varphi,\leq d-2})/H_{\varphi,\leq d-2} \simeq E/E_0\]
is a one-dimensional $\varphi$-stable subspace of $H_{\mathrm{hi}}$, of slope $d$. The assumption $h_0\in E$ and the nonvanishing of $\bar h_0$ show that $\bar E=\Q_p\bar h_0$.  Thus $\Q_p\bar h_0$ is stable under $p^d\varphi^{-1}$.  Transporting this line through $\bar\rho$ gives $\bar\rho^{-1}(\Q_p\bar h_0)=\Q_p\bar\omega_0$, so $\Q_p\bar\omega_0$ is Cartier-stable.  In the basis $(\bar\omega_0,\bar\omega_1)$ this is exactly $\lambda_1(t_0,t_0)=0$.

The determinant congruence follows from \cref{prop:excellent-congruence}, applied to the auxiliary coefficient Frobenius $\sigma_{t_0}$ and the equality $\lambda_1(t_0,t_0)=0$.

When the Beukers--Vlasenko excellent Frobenius theorem applies, $\lambda_1(t_0,t_0)=0$ and \cref{lem:pointwise} give $\sigma_{\mathrm{exc}}(t_0)=t_0$.
\end{proof}

We also record the resulting Newton polygons in weights two and three. The weight two case will be used for the mirror quartic K3 pencil example.

\begin{lemma}
\label{lem:low-weight-ordinary}
Let $M,H,\rho,C,\varphi$ satisfy the hypotheses of \cref{lem:slope}. Assume that $M$ is the specialization of a completely symmetric Calabi--Yau crystal at a point of $\Z_p^\times$ fixed by the Frobenius lift on the coefficient ring, that the first two Hasse--Witt conditions hold, and that $H$ has a weight-$d$ polarization.  If $(d,\dim H)=(2,3)$ or $(3,4)$, then the map induced by $\rho$ is an isomorphism $(M/\cF_2M)[1/p]\simeq H/H_{\varphi,\leq d-2}$ that intertwines $C$ with $p^d\varphi^{-1}$.  The slopes of $H$ are respectively $\{0,1,2\}$ or $\{0,1,2,3\}$.
\end{lemma}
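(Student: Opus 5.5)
Our plan is to apply \cref{lem:slope} with $r=2$ and then use polarization to pin down the slopes of $H$. \Cref{lem:slope} gives a surjection
\[\bar\rho:(M/\cF_2M)[1/p]\twoheadrightarrow H/H_{\varphi,\leq d-2}\]
that intertwines $C$ with $p^d\varphi^{-1}$, since $\rho C=p^d\varphi^{-1}\rho$ passes to the quotients. The first two Hasse--Witt conditions make the source two-dimensional, with basis $\bar\omega_0,\bar\omega_1$. So it suffices to show that the target has dimension at least $2$.

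First, transfer the Cartier slopes. By \cref{lem:cartier-slopes}, the Cartier slopes on the source are $0$ and $1$. The point is fixed by the Frobenius lift, so $C$ is an endomorphism of $M$ and we are in the auxiliary setting of that lemma. Because $\bar\rho$ is surjective and equivariant, the target is a quotient of the source as a module over $C\leftrightarrow p^d\varphi^{-1}$. Its $\varphi$-slopes are therefore of the form $d-\mu$ with $\mu$ a Cartier slope occurring in the source. In particular, $H$ has $\varphi$-slopes only in $\{d,d-1\}$ above $d-2$, each with multiplicity at most one.

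Second, use polarization to force both high slopes to occur. The weight-$d$ pairing makes the slope multiset symmetric under $\lambda\mapsto d-\lambda$. Since $\rho$ is bounded and $\varphi$ is invertible, all slopes should lie in $[0,d]$. One way to see this is via the lattice $\Lambda$ and the inclusion $\rho(M)\subset p^{-c}\Lambda$ together with surjectivity: $p^{ds}\varphi^{-s}\rho(x)=\rho(C^sx)$ stays bounded, which gives $\lambda\le d$, and symmetry then gives $\lambda\ge0$. Alternatively one can use weak admissibility with the Hodge degrees $0,\dots,d$.

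\emph{Case $(3,4)$.} The slopes lie in $[0,3]$ and are symmetric, and the part above $1$ has slopes in $\{3,2\}$, each of multiplicity at most $1$. The part with slopes $\le1$ is the mirror of the part with slopes $\ge2$, so $\dim H_{>1}=\dim H_{<2}$. If $\dim H_{>1}\le1$, then $4=\dim H_{\le1}+\dim H_{>1}$ forces $\dim H_{\le1}\ge3$, while symmetry gives $\dim H_{\ge2}=\dim H_{\le1}$. These two subspaces are disjoint, so the total dimension is at least $6>4$, a contradiction. Hence $\dim H_{>1}=2$, and the slopes $3,2$ appear with their mirrors $0,1$, giving $\{0,1,2,3\}$.

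\emph{Case $(2,3)$.} Here the quotient is by $H_{\le0}$, which is spanned by the slope $0$ part. The slopes above $0$ lie in $\{2,1\}$, each of multiplicity at most $1$, and by symmetry $\dim H_{\le0}=\dim H_{\ge2}\le1$. Then $3=\dim H_{\le0}+\dim H_{>0}\le 1+2$ forces equality throughout, so the slopes are $\{0,1,2\}$.

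In both cases the target has dimension $2$, so $\bar\rho$ is an isomorphism by the dimension clause of \cref{lem:slope}. The main obstacle is the first step: justifying that the slopes of an equivariant quotient are among the transported Cartier slopes. This is standard generalized-eigenvalue bookkeeping after extending scalars. The bound on the slopes of $H$ to $[0,d]$ must also be checked carefully from boundedness.
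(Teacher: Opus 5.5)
Your proof is correct and follows essentially the paper's argument: apply \cref{lem:slope} with $r=2$, use the polarization symmetry to force $\dim H_{\varphi,>d-2}\geq2$ so that the surjection from the two-dimensional level two quotient is an isomorphism, and then use \cref{lem:cartier-slopes} together with symmetry to pin down the slopes. The only difference is the order of steps. The paper obtains the lower bound from symmetry alone: in weight three each mirror pair contributes a slope above $1$, and in weight two the central slope $1$ and the pair each contribute a slope above $0$. Consequently, your preliminary transfer of Cartier slopes and your $[0,d]$ bound are not needed to obtain the isomorphism.
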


\begin{proof}
Polarization makes the Newton multiset symmetric under $\lambda\leftrightarrow d-\lambda$.  In weight two and rank three, the central slope is $1$ and every remaining symmetric pair contributes a slope strictly greater than $0$.  Thus $\dim H_{\varphi,>0}\geq2$.  In weight three and rank four, each symmetric pair contributes at least one slope strictly greater than $1$, so $\dim H_{\varphi,>1}\geq2$.

In either case \cref{lem:slope} gives a surjection from the two-dimensional space $(M/\cF_2M)[1/p]$ onto the high slope quotient. The lower bound shows that this quotient also has dimension at least two, so the surjection is an isomorphism.  By \cref{lem:cartier-slopes}, the Cartier slopes $0,1$ become the Frobenius slopes $d,d-1$, and polarization gives the complementary slopes displayed above.
\end{proof}

\section{A bounded Dwork--crystalline comparison}

The following theorem constructs the comparison map in \cref{thm:rank two-criterion} from a restriction isomorphism and a residue spanning condition for a compactification. We will use it in \cref{sec:bounded-residue,prop:K3-BR}. Write $\mathcal G$ for the monomial symmetry group defining $CY(g)$ and $G$ for the group acting on the compactification.

Assume that $p>2$.  Let $k=\F_p$, $W=W(k)=\Z_p$, and $K=W[1/p]$. Let $d\geq1$ and $n=d+1$.  Let $f_t=1-tg$ and $CY(g)$ be as in \cref{sec:vertex-determinants}. Choose $t_0\in W^\times$ and a coefficient Frobenius that fixes $t_0$, and let $f=f_{t_0}$ and $M=CY(g)_{t_0}$.  Let $\Omega_T=d\log x_1\wedge\cdots\wedge d\log x_n$. Let $\Delta$ be the Newton polytope of $f$. Let $M_{\mathrm{alg}}\subset M$ be the image under specialization at $t_0$ of the module $\Omega_{L_{\mathrm{num}},f_t}(\Delta^\circ)$ defined in \cref{sec:vertex-determinants}. It consists of finite sums of specialized admissible, $\mathcal G$-invariant interior generators and is dense in $M$. Let $C=\cC_{t_0,t_0}:M\to M$ be the Cartier operator associated with this Frobenius lift.  Let
\[\mathscr Y=V(f)\subset(\Gm^n)_W,\quad Y=\mathscr Y_k,\quad U=(\Gm^n)_k\setminus Y.\]
\begin{theorem}
\label{thm:general-bounded-residue}
Let $G$ be a finite abstract group acting on $(\Gm^n)_W$ by monomial substitutions defined over $W$. Assume that the following conditions hold.
\begin{enumerate}[label=\textup{(\roman*)}]
\item The hypersurface $\mathscr Y$ has a $G$-equivariant smooth projective compactification $\widetilde{\mathscr Y}/W$ whose boundary is a relative simple normal crossings divisor.
\item The action of $G$ preserves $f$ and the logarithmic volume form, and $|G|$ is prime to $p$.  Let
\[e_G=|G|^{-1}\sum_{g\in G}g, \quad H^\circ=e_GH^d_{\rig}(Y/K).\]
\item Let $V=H^d_{\cris}(\widetilde{\mathscr Y}_k/W)[1/p]$, identified with proper rigid cohomology and equipped with the Hodge filtration coming from $\widetilde{\mathscr Y}/W$.  Let $W_\bullet$ denote the weight filtration on $H^d_{\rig}(Y/K)$.  There is a direct summand $H\subset e_GV$ in the category of weakly admissible filtered isocrystals such that restriction induces a Frobenius-equivariant isomorphism
\begin{equation*}
 j_H:H\xrightarrow{\ \sim\ }W_dH^\circ.
\end{equation*}
\item Let $h=\dim_KH$.  There are integers $r_1,\ldots,r_h\geq0$ and a basis $h_1,\ldots,h_h$ of $H$ such that
\begin{equation}\label{eq:general-proper-residues}
 e_G\Res_{\rig}\!\left(
 \left[\left.\theta^{r_i}(1/f_t)\right|_{t=t_0}\Omega_T\right]\right)
 =j_H(h_i)
 \quad(1\leq i\leq h).
\end{equation}
Here we identify the underlying $K$-space of the Poincar\'e residue target $H^d_{\rig}(Y/K)(-1)$ with $H^d_{\rig}(Y/K)$, and $\theta=t\,d/dt$ is applied before specialization at $t_0$.
\end{enumerate}
Then there is a continuous surjection \(\rho:M[1/p]\twoheadrightarrow H.\) For every $W$-lattice $\Lambda\subset H$, there is an integer $c\geq0$ such that $\rho(M)\subset p^{-c}\Lambda$.  If $\varphi$ is crystalline pullback Frobenius on $H$, then
\begin{equation}\label{eq:general-Cartier-Frobenius}
 \rho C=p^d\varphi^{-1}\rho.
\end{equation}
\end{theorem}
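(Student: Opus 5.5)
The map $\rho$ will be built as a composite through rigid cohomology of the torus complement. On $M_{\mathrm{alg}}$, send a generator $\eta=(m-1)!A/f^m$ to the class $[\eta\,\Omega_T]\in H^n_{\rig}(U/K)$, take the Poincar\'e residue $\Res_{\rig}$ to $H^d_{\rig}(Y/K)$, and apply $e_G$. The $\mathcal G$-invariance of the numerators, together with the fact that $G$ preserves $f$ and $\Omega_T$, shows this lands in $H^\circ$. Next I will show that the image lies in $W_dH^\circ$, and define $\rho=j_H^{-1}\circ e_G\Res_{\rig}$ there.

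For the weight statement, the admissible generators with interior support $\operatorname{Supp}(A)\subset m\Delta^\circ$ are exactly the forms with at most logarithmic poles along the toric boundary of the compactification. Their residues therefore extend across $\widetilde{\mathscr Y}$ and lie in the image of proper cohomology, which is $W_dH^d_{\rig}(Y)$. Surjectivity onto $H$ then follows from hypothesis (iv): the classes $\theta^{r_i}(1/f_t)|_{t_0}$ lie in $M_{\mathrm{alg}}$, since $\theta^r(1/f_t)$ is a combination of $(j-1)!\,t^jg^j/f^{j+1}$, and these hit the basis $j_H(h_i)$.

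Boundedness and continuity are to be proved integrally. I will use the logarithmic de Rham complex of $\widetilde{\mathscr Y}/W$ with poles along $Y$ and the boundary, and the crystalline lattice $\Lambda_0=\operatorname{im}(H^d_{\cris}(\widetilde{\mathscr Y}_k/W))\cap H$. The key claim is that $(m-1)!A\,\Omega_T/f^m$ reduces, via Griffiths--Dwork pole reduction, to a form with pole order at most $n$. The factorial $(m-1)!$ compensates for the division by $(m-1)\cdots$ incurred in each step, so the result is an integral class up to a fixed denominator $p^{-c}$ coming from torsion in the log de Rham cohomology and from the integral comparison. This gives $\rho(M_{\mathrm{alg}})\subset p^{-c}\Lambda_0$, hence continuity for the $p$-adic topology, and $\rho$ extends to the completion $M$. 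Any other lattice $\Lambda$ is commensurable with $\Lambda_0$, so the claim holds for every $\Lambda$. This integral pole-reduction bound, uniform in $m$, is the step I expect to be the main obstacle. I would first try to import the Dwork-style estimate that $\Omega_f$ maps to $p$-adically bounded classes in Monsky--Washnitzer cohomology of $U$, then use the fact that residue and $j_H$ are defined over $W$ up to bounded denominators.

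For the relation \eqref{eq:general-Cartier-Frobenius}, I will use the standard fact that on Monsky--Washnitzer/dagger cohomology of the torus complement $U$, with the Frobenius lift $x_i\mapsto x_i^p$ (compatible with $\sigma$ fixing $t_0$), the Cartier operator $\cC_p$ on $n$-forms is $p^n$ times the inverse of the pullback Frobenius, that is, the trace of Frobenius divided appropriately. This gives $[\cC_p(\eta)\Omega_T]=p^{n}\varphi_U^{-1}[\eta\Omega_T]$. Since the residue map is Frobenius-equivariant up to the twist $(-1)$, which contributes a factor $p$, the exponent drops to $p^{n-1}=p^d$ on $H^d_{\rig}(Y)$. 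The projector $e_G$ commutes with Frobenius because $G$ acts by monomial substitutions over $W$, and $j_H$ is Frobenius-equivariant by (iii). Hence $\rho C=p^d\varphi^{-1}\rho$ holds on $M_{\mathrm{alg}}$, and by continuity it holds on all of $M$.
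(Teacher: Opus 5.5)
Your Frobenius computation and your surjectivity argument from (iv) match the paper. The Frobenius step is the trace of the lift $x_i\mapsto x_i^p$ on top forms, giving $\Psi\circ F_U^*=p^n$, and the Tate twist of the residue lowers $p^n$ to $p^{n-1}=p^d$.

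\textbf{The gap.} Your definition $\rho=j_H^{-1}\circ e_G\Res_{\rig}$ requires $e_G\Res_{\rig}[\eta\Omega_T]\in W_dH^\circ$ for every $\eta\in M_{\mathrm{alg}}$, and your justification for this is not supported by the hypotheses.
\begin{itemize}
\item Nothing in (i)--(iv) ties $\widetilde{\mathscr Y}$ to a toric compactification of $\Delta$. In the $A_4$ application it is a complete intersection in the permutohedral fivefold.
\item Your description of interior generators is off. $\operatorname{Supp}(A)\subset m\Delta^\circ$ means $A\Omega_T/f^m$ has \emph{no} pole along the toric boundary of $P_\Delta$; support in $m\Delta$ is what gives logarithmic poles.
\item For $m\geq2$ you would still need pole-order reduction along $Y$ before any residue could extend.
\item Results of this kind, such as Batyrev's description of the lowest weight part by the interior Jacobian ideal, assume $\Delta$-regularity. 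That is not among the hypotheses and fails in the main example: the facet polynomials of the $A_4$ root polytope factor as $(\sum_{i\in I}y_i)(\sum_{j\in J}y_j^{-1})$, which is why the toric closure has persistent boundary nodes.
\end{itemize}

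\textbf{The fix.} The paper never proves containment. It composes with the Frobenius-equivariant projector $\Pi_d=V(\varphi^\circ)P_{>d}(\varphi^\circ)$ onto $W_dH^\circ$. Here $P_d$ and $P_{>d}$ are the characteristic polynomials of $\varphi^\circ$ on $W_dH^\circ$ and on $H^\circ/W_dH^\circ$, and $UP_d+VP_{>d}=1$ is a B\'ezout relation. These polynomials are coprime because their roots have different Weil weights (Chiarellotto). Since $\Pi_d$ is a polynomial in Frobenius, it commutes with $\varphi^\circ$, so your Cartier--Frobenius identity survives. Since it is the identity on $W_dH^\circ$, surjectivity from (iv) is unaffected.

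\textbf{Boundedness.} The uniform integral Griffiths--Dwork reduction you flag as the main obstacle is unproved, and it is unnecessary. Because $c$ may depend on $\Lambda$, it suffices that $\rho$ be $p$-adically continuous, and the paper gets this from an overconvergence estimate.
\begin{itemize}
\item Present $\widehat\Omega_f$ as $\widehat{W[\Delta]}^+/\mathscr D_{0,f}(\widehat{W[\Delta]}^+)$ and send $\sum a_{m,u}X_0^mX^u$ to $\sum(-1)^ma_{m,u}(m-1)!\,x^u\Omega_T/f^m$.
\item On the strict neighbourhood $U_r=\{r^{-1}\leq|x_i|\leq r,\ |f|\geq r^{-1}\}$, with $c_\Delta=\max_{v\in\Delta}\lVert v\rVert_1$, one has $\lVert(m-1)!\,x^u/f^m\rVert_r\leq|(m-1)!|_p\,r^{(c_\Delta+1)m}$.
\item For $1<r<p^{1/((p-1)(c_\Delta+1))}$ this bound tends to zero, since $|(m-1)!|_p^{1/m}\to p^{-1/(p-1)}$. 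So the map is bounded into dagger $n$-forms.
\item The map kills $\mathscr D_{0,f}$. Exact top forms are closed in the dagger complex (Gro\ss e-Kl\"onne), so it induces a continuous map to $H^n_{\rig}(U/K)$.
\item Every later map is linear between finite-dimensional spaces, so $\rho(p^cM)\subset\Lambda$ for some $c$.
\end{itemize}

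The same estimate is what gives your ``standard fact'' content. For $\eta\in M_{\mathrm{alg}}$, $\cC_p\eta$ is an infinite Cartier series lying only in the completion. The paper identifies it with the dagger trace operator $P_0(\eta)$ by expanding $1/f^m=f^\nu F(f)^{-\ell}\sum_{r\geq0}p^r\binom{\ell+r-1}{r}b^r/F(f)^r$, where $\ell=\lceil m/p\rceil$, $\nu=p\ell-m$ and $b=(F(f)-f^p)/p$, and applying the map termwise.
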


\begin{proof}
Let $c_\Delta=\max_{v\in\Delta}\sum_{i=1}^n|v_i|$.  The exponential presentation of \cite[Proposition~A.3 and the paragraph following Corollary~A.4]{BeukersVlasenkoI} identifies $\widehat\Omega_f$ with $\widehat{W[\Delta]}^+/ \mathscr D_{0,f}(\widehat{W[\Delta]}^+)$. Here $\widehat{W[\Delta]}^+$ consists of series $\xi=\sum_{m,u}a_{m,u}X_0^mX^u$ with $m\geq1$, $u\in m\Delta\cap\Z^n$, $a_{m,u}\in W$, and $\ord_p(a_{m,u})\to\infty$ as $m\to\infty$.  Moreover, $\mathscr D_{0,f}=X_0\partial_{X_0}+X_0f$, where $X_0$ is an auxiliary variable and $X^u=X_1^{u_1}\cdots X_n^{u_n}$. We equip $\widehat{W[\Delta]}^+$ with the norm $\|\xi\|_p=\sup_{m,u}|a_{m,u}|_p$.  On finite sums define
\begin{equation*}
 \widetilde{\mathcal L}_f(\xi)=
 \sum_{m,u}(-1)^ma_{m,u}(m-1)!\frac{x^u}{f^m}\Omega_T.
\end{equation*}
Choose $r\in p^\Q$ such that $1<r<p^{1/((p-1)(c_\Delta+1))}$, and consider the strict neighbourhood $U_r=\{r^{-1}\leq|x_i|\leq r,\ |f|\geq r^{-1}\}$. We write $\|\cdot\|_r$ for the supremum norm on $U_r$. For $u\in m\Delta$, convexity gives $\lVert u\rVert_1\leq mc_\Delta$, and hence
\[\left\|(m-1)!\frac{x^u}{f^m}\right\|_r \leq |(m-1)!|_p r^{(c_\Delta+1)m}.\]
Legendre's formula gives $\lim_{m\to\infty}|(m-1)!|_p^{1/m}=p^{-1/(p-1)}$, so the right hand side tends to zero exponentially.  In particular,
\[\|\widetilde{\mathcal L}_f(\xi)\|_r\leq \left(\sup_{m\geq1}|(m-1)!|_pr^{(c_\Delta+1)m}\right)\|\xi\|_p.\]
Since each degree contains only finitely many lattice points, the series converges uniformly on $U_r$ and defines a continuous map to the dagger top forms.

On a monomial $X_0^mX^u$, the two terms of $\mathscr D_{0,f}(X_0^mX^u)$ map under $\widetilde{\mathcal L}_f$ to $(-1)^m m!x^u\Omega_T/f^m$ and $(-1)^{m+1}m!x^uf\Omega_T/f^{m+1}$, respectively. So their sum is zero. For $1\leq i\leq n$, let $\mathscr D_{i,f}=X_i\partial_{X_i}+X_0X_i\partial f/\partial X_i$. On finite sums, $\widetilde{\mathcal L}_f$ intertwines $\mathscr D_{i,f}$ with logarithmic differentiation $x_i\partial/\partial x_i$ of the coefficient of $\Omega_T$ \cite[Proposition~A.3]{BeukersVlasenkoI}. For every dagger function $h$ on $U$, the form $(x_i\partial h/\partial x_i)\Omega_T$ is exact.  Exact top forms are closed in the dagger de Rham complex \cite[Theorem~3.1 and Corollary~3.2]{GrosseKlonne2004}.  Thus $\widetilde{\mathcal L}_f$ descends through the $\mathscr D_{0,f}$ quotient, and the other twisted derivatives map to exact forms.  Passing to cohomology and applying Monsky--Washnitzer--rigid comparison \cite[Proposition~3.6]{GrosseKlonne2004} give a continuous map
\[\mathcal L_f:\widehat\Omega_f[1/p]\longrightarrow H^n_{\rig}(U/K).\]
We use the same notation for its composite with the map $M[1/p]\to\widehat\Omega_f[1/p]$ induced by \eqref{eq:CY-full-inclusion}. For a normalized generator $\eta=(m-1)!x^u/f^m$, its exponential representative is $(-1)^mX_0^mX^u$, and hence $\mathcal L_f(\eta)=[\eta\Omega_T]$.

We next construct the projector onto $W_dH^\circ$. The cohomology $H^d_{\rig}(Y/K)$ has weights in $[d,2d]$ \cite[Theorem~2.2]{Chiarellotto1998Weights}. Let $\varphi^\circ$ denote Frobenius on $H^d_{\rig}(Y/K)$ and also its restriction to $H^\circ$, and let
\[P_d(X)=\det(X-\varphi^\circ\mid W_dH^\circ),\quad P_{>d}(X)=\det(X-\varphi^\circ\mid H^\circ/W_dH^\circ).\]
The roots of these polynomials have different Weil weights, so the polynomials are coprime.  Choose $U_d,V_d\in K[X]$ such that $U_dP_d+V_dP_{>d}=1$ and let
\begin{equation*}
 \Pi_d=V_d(\varphi^\circ)P_{>d}(\varphi^\circ).
\end{equation*}
Cayley--Hamilton and coprimality show that $\Pi_d$ is the projector of $H^\circ$ onto $W_dH^\circ$.  It commutes with Frobenius.

Rigid Poincar\'e residue takes values in $H^d_{\rig}(Y/K)(-1)$.  To define $\rho$, we identify the underlying $K$-space of this Tate twist with that of $H^d_{\rig}(Y/K)$. Under this identification, Frobenius on $H^d_{\rig}(Y/K)(-1)$ corresponds to $p\varphi^\circ$ on $H^d_{\rig}(Y/K)$. Let
\begin{equation}\label{eq:general-rho}
 \rho=j_H^{-1}\circ\Pi_d\circ e_G\circ
 \Res_{\rig}\circ\mathcal L_f.
\end{equation}
All maps in this composite are continuous.  By induction, $\theta^r(1/f_t)$ is a finite sum of terms $c_{r,j}j!t^jg^j/f_t^{j+1}$ with $c_{r,j}\in\Z$; the numerator of the $j$th term is supported in $j\Delta$.  Since $0\in\Delta^\circ$, one has $j\Delta\subset(j+1)\Delta^\circ$; hence $\left.\theta^{r_i}(1/f_t)\right|_{t=t_0}\in M_{\mathrm{alg}}$.  Its image under $\mathcal L_f$ is the class of $\left.\theta^{r_i}(1/f_t)\right|_{t=t_0}\Omega_T$.  Therefore \eqref{eq:general-proper-residues} gives $\rho\!\left(\left.\theta^{r_i}(1/f_t)\right|_{t=t_0}\right)=h_i$, so $\rho$ is surjective.  If $\Lambda\subset H$ is a $W$-lattice, then $\Lambda$ is an open neighbourhood of zero.  Continuity and the $p$-adic topology on $M$ give an integer $c\geq0$ such that $\rho(p^cM)\subset\Lambda$, or equivalently $\rho(M)\subset p^{-c}\Lambda$.

It remains to prove \eqref{eq:general-Cartier-Frobenius}.  Let
\[A_0=W[x_1,y_1,\ldots,x_n,y_n,z]/ (x_iy_i-1\ (1\leq i\leq n),\ zf-1), \quad A^\dagger=A_0^\dagger\otimes_WK.\]
Let $F$ act trivially on $K$ and set $F(x_i)=x_i^p$ and $F(y_i)=y_i^p$.  Then $F(f)\equiv f^p\pmod p$, and setting $F(z)=F(f)^{-1}$ defines an endomorphism of $A^\dagger$.  Indeed, if $b=(F(f)-f^p)/p$, the expansions
\[F(f)^{-1}=f^{-p}\sum_{j\geq0}(-pb/f^p)^j, \quad f^{-p}=F(f)^{-1}\sum_{j\geq0}(pb/F(f))^j\]
converge on a sufficiently small strict neighbourhood.  They show that the dagger localizations at $f$ and at $F(f)$ coincide.  The standard decomposition of the torus algebra over its $p$-power subalgebra therefore remains valid after dagger localization.  If $B=F(A^\dagger)$, it gives
\[A^\dagger= \bigoplus_{\mathbf a\in\{0,\ldots,p-1\}^n} B\mathbf x^{\mathbf a}.\]
Thus $B\subset A^\dagger$ is finite free of degree $p^n$.  Since $p$ and the $x_i$ are units, the relative differentials vanish, so the extension is finite \'{e}tale.  Let $\operatorname{pr}_0:A^\dagger\to B$ be projection to the summand indexed by $\mathbf a=\mathbf0$, and let $P_0=F^{-1}\circ\operatorname{pr}_0$, where $F^{-1}:B\to A^\dagger$ is the inverse of $F:A^\dagger\xrightarrow{\sim}B$.  Thus $P_0(F(a)c)=aP_0(c)$. For $\mathbf a\neq\mathbf0$, multiplication by $\mathbf x^{\mathbf a}$ has zero diagonal in this $B$-basis.  Hence
\[\operatorname{Tr}_{A^\dagger/B}(c) =p^nF(P_0(c)).\]
Let $\operatorname{Tr}_F:\Omega^\bullet(A^\dagger/K) \to\Omega^\bullet(B/K)$ denote the trace on differential forms and let $\Psi=F^{-1}\circ\operatorname{Tr}_F$.  Let $F_U^*$ denote the induced pullback on rigid cohomology under the Monsky--Washnitzer comparison.  Since $F^*\Omega_T=p^n\Omega_T$, the definition of the trace on top forms gives $\operatorname{Tr}_F(c\Omega_T)=F(P_0(c))F^*\Omega_T$. Hence $\Psi(c\Omega_T)=P_0(c)\Omega_T$. We use the same notation for the induced map on rigid cohomology. The trace identity for a finite \'etale map of degree $p^n$ gives
\begin{equation}\label{eq:general-trace}
 \Psi\circ F_U^*=p^n\operatorname{id}.
\end{equation}
On Laurent polynomials, $P_0$ agrees with the coefficient-extraction operator $\cC_p$.

We first compare this coefficient extraction with the Cartier operator on the ambient algebraic module $\Omega_f[1/p]$.  Take a normalized generator $\eta=(m-1)!x^u/f^m$, let $\ell=\lceil m/p\rceil$, $\nu=p\ell-m$, and use $b=(F(f)-f^p)/p$ as above.  The expansion
\[\frac1{f^m}=\frac{f^\nu}{F(f)^\ell} \sum_{r\geq0}p^r\binom{\ell+r-1}{r}\frac{b^r}{F(f)^r}\]
converges in $A^\dagger$.  Since $P_0(F(a)c)=aP_0(c)$, it gives
\[P_0(\eta)= \sum_{r\geq0}\frac{p^r}{r!}\frac{(m-1)!}{(\ell-1)!} (\ell+r-1)!\frac{\cC_p(x^uf^\nu b^r)}{f^{\ell+r}}.\]
Here $\cC_p$ in the numerator means coefficient extraction on a Laurent polynomial. This is the Cartier formula in the proof of \cite[Proposition~3.3]{BeukersVlasenkoI}. The convergence estimate above permits termwise application of $\widetilde{\mathcal L}_f$ to the Cartier series.  Together with \eqref{eq:general-trace}, this gives
\begin{equation}\label{eq:general-Laplace-Cartier}
 \mathcal L_f(\cC_p\eta)=\Psi\mathcal L_f(\eta)
 \quad\text{in }H^n_{\rig}(U/K).
\end{equation}
By linearity the identity holds on $\Omega_f[1/p]$.  Proposition~2.7 of \cite{BeukersVlasenkoIII} identifies $C$ with the restriction of $\cC_p$ to $M$.  Hence \eqref{eq:general-Laplace-Cartier} gives $\mathcal L_f(C\eta)=\Psi\mathcal L_f(\eta)$ for every $\eta\in M_{\mathrm{alg}}[1/p]$.

Equation \eqref{eq:general-trace} gives $\Psi=p^n(F_U^*)^{-1}$ on rigid cohomology.  Poincar\'e residue is Frobenius equivariant with target $H^d_{\rig}(Y/K)(-1)$.  Since our convention makes Frobenius on $K(-1)$ equal to $p$, this says
\begin{equation*}
 \Res_{\rig}\circ F_U^*
 =p\varphi^\circ\circ\Res_{\rig}.
\end{equation*}
It follows that
\[\Res_{\rig}\circ\Psi =p^n\Res_{\rig}\circ(F_U^*)^{-1} =p^{n-1}(\varphi^\circ)^{-1}\circ\Res_{\rig}.\]
The reduction of the $W$-defined $G$-action commutes with absolute Frobenius, so $e_G$ commutes with $\varphi^\circ$.  The operator $\Pi_d$ commutes with Frobenius by construction, and $j_H$ is Frobenius equivariant.  Therefore, for $\eta\in M_{\mathrm{alg}}[1/p]$,
\begin{align*}
 \rho(C\eta)
 &=j_H^{-1}\Pi_de_G\Res_{\rig}\Psi\mathcal L_f(\eta)\\
 &=p^{n-1}j_H^{-1}\Pi_de_G(\varphi^\circ)^{-1}
       \Res_{\rig}\mathcal L_f(\eta)\\
 &=p^{n-1}\varphi^{-1}\rho(\eta)
  =p^d\varphi^{-1}\rho(\eta).
\end{align*}
Since $M_{\mathrm{alg}}$ is dense in $M$ and both $C$ and $\rho$ are continuous, the identity extends to $M[1/p]$.
\end{proof}

\section{Geometric canonical lifting and local dynamics}

Over a finite field $k_0=\F_q$, Brantner and Taelman study the full deformation functor of a smooth proper geometrically irreducible Calabi--Yau $d$-fold $X/k_0$ with trivial canonical bundle. They assume Bloch--Kato $2$-ordinarity and torsion-freeness of $H^d_{\et}(X_{\overline{k_0}},\Z_p)$. Under these assumptions, the deformation functor is canonically equivalent to a formal group of multiplicative type. Under this equivalence, the deformation associated with the unit section is the canonical lift \cite[Theorem~B (Theorem~8.28) and Definition~8.31]{BrantnerTaelman2024}.

We first isolate the local dynamical statement for a split formal torus. Let $q=p^f$, let $r\geq1$, let $K_0=W(\F_q)[1/p]$, and let $K/K_0$ be a finite extension with ring of integers $\mathcal O_K$ and maximal ideal $\mathfrak m_K$. Let $[q]$ be the multiplication-by-$q$ endomorphism of $\widehat{\Gm}^{\,r}$. In multiplicative coordinates it is given by $[q](Q_1,\ldots,Q_r)=(Q_1^q,\ldots,Q_r^q)$. We prove the following proposition.

\begin{proposition}
\label{prop:ST-contraction}
The map $[q]$ is a strict contraction on $\widehat{\Gm}^{\,r}(\mathcal O_K)=(1+\mathfrak m_K)^r$.  Every orbit converges to the unit point, which is the unique periodic point.
\end{proposition}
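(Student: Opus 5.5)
The plan is to work one coordinate at a time, using the $p$-adic logarithm to measure distance to the unit. Set $\epsilon_i=Q_i-1\in\mathfrak m_K$ and write $v_K$ for the normalized valuation on $K$, with $e=v_K(p)$ the ramification index. On $(1+\mathfrak m_K)^r$ we use the sup-metric $\|\epsilon\|=\max_i|\epsilon_i|$, which is a complete metric. Since $[q]$ acts diagonally, it is enough to treat $r=1$ with the single map $Q\mapsto Q^q$.

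The central estimate is
\[
|Q^q-1|<|Q-1|\quad\text{whenever } Q\neq1,
\]
together with a uniform version on any ball $|Q-1|\le\delta<1$. Write $q=p^f$, so it suffices to understand one $p$-power step $Q\mapsto Q^p$ and then iterate $f$ times. Expanding,
\[
Q^p-1=(1+\epsilon)^p-1=\sum_{j=1}^{p}\binom pj\epsilon^j.
\]
Every term with $1\le j\le p-1$ has absolute value at most $|p||\epsilon|$, and the last term is $\epsilon^p$. Therefore
\[
|Q^p-1|\le\max\bigl(|p|\,|\epsilon|,\ |\epsilon|^p\bigr)=|\epsilon|\cdot\max\bigl(|p|,\ |\epsilon|^{p-1}\bigr).
\]
Both factors in the maximum are strictly less than $1$, because $|\epsilon|<1$. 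On the ball $|\epsilon|\le\delta$ this gives the uniform bound $|Q^p-1|\le c\,|\epsilon|$ with $c=\max(|p|,\delta^{p-1})<1$.

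One subtlety needs care. There is no uniform constant on all of $\mathfrak m_K$ as a subset of a non-discrete field, but $K$ is discretely valued. So every $\epsilon\in\mathfrak m_K$ satisfies $|\epsilon|\le|\pi_K|$, and we may take $\delta=|\pi_K|$. This gives a genuine contraction constant $c<1$ on all of $1+\mathfrak m_K$. It is also Lipschitz, not merely contracting toward $1$: the identity
\[
Q^p-R^p=(Q-R)\sum_{j}Q^{j}R^{p-1-j}
\]
shows that $|Q^p-R^p|\le c'|Q-R|$ with $c'<1$. Indeed, the sum $\sum_j Q^jR^{p-1-j}$ is congruent to $p$ modulo $\mathfrak m_K$, hence lies in $\mathfrak m_K$ and has absolute value at most $\max(|p|,|\pi_K|)<1$. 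Iterating $f$ times gives the strict contraction for $[q]$.

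Given the contraction, the conclusion follows in two short steps. By Banach's fixed point theorem, or directly from $\|[q]^n(Q)-1\|\le c^{n}\|Q-1\|$, every orbit converges to the unit point. If $Q$ is periodic of period $N$, then
\[
\|Q-1\|=\|[q]^N Q-1\|\le c^N\|Q-1\|,
\]
which forces $Q=1$. So the unit is the unique periodic point.

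I expect the only real obstacle to be getting a uniform strict contraction constant rather than just the pointwise inequality. This is exactly where discreteness of the valuation on the finite extension $K/K_0$ is used. Note that no hypothesis on ramification is needed.
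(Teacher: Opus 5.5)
Your proof is correct and matches the paper's argument. Both rest on the factorization $Q^q-R^q=(Q-R)\sum_j Q^{q-1-j}R^j$, where the sum lies in $\mathfrak m_K$. Discreteness of the valuation then gives the uniform Lipschitz constant $|\pi_K|<1$, from which both arguments deduce convergence of every orbit to the unit point and uniqueness of periodic points. The differences are cosmetic: you factor $[q]=[p]^f$ and add a redundant binomial estimate toward $1$, while the paper applies the identity directly with exponent $q$.
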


\begin{proof}
For $Q,R\in1+\mathfrak m_K$, one has $Q^q-R^q=(Q-R)\sum_{j=0}^{q-1}Q^{q-1-j}R^j$. The sum belongs to $\mathfrak m_K$, because it is congruent to $q$ modulo $\mathfrak m_K$.  If $\pi_K$ is a uniformizer, then $|Q^q-R^q|\leq|\pi_K|\,|Q-R|$.  Applying this estimate coordinatewise shows that $[q]$ is a strict contraction for the sup metric.  The space $(1+\mathfrak m_K)^r$ is complete, so every orbit converges to the unit point, which is fixed by $[q]$.  A periodic orbit that converges to the unit point is constant.
\end{proof}

This applies directly to the deformation group of Brantner and Taelman when that group is split.

\begin{corollary}
\label{cor:BT-contraction}
Let $X_0/\F_q$ satisfy the hypotheses of Brantner and Taelman stated above. Suppose that its deformation formal group is split, so that $\operatorname{Def}(X_0)\simeq\widehat{\Gm}^{\,r}$ over $W(\F_q)$.  Let $K_0=W(\F_q)[1/p]$.  For every finite extension $K/K_0$, with ring of integers $\mathcal O_K$, multiplication by $q$ is a strict contraction on $\operatorname{Def}(X_0)(\mathcal O_K)$.  Its unique periodic point is the unit section, which corresponds to the Brantner--Taelman canonical formal lift of $X_0$.
\end{corollary}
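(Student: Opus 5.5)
The plan is to reduce the corollary directly to \cref{prop:ST-contraction} by transporting the contraction statement through the Brantner--Taelman equivalence. The hypotheses on $X_0$ are exactly those of \cite[Theorem~B (Theorem~8.28)]{BrantnerTaelman2024}, so the deformation functor of $X_0$ is canonically a formal group of multiplicative type over $W(\F_q)$. By assumption this group is split, and we fix an isomorphism $\operatorname{Def}(X_0)\simeq\widehat{\Gm}^{\,r}$ of formal groups over $W(\F_q)$. It must be a group isomorphism, not merely an isomorphism of formal schemes; this is the one point to check. The isomorphism then commutes with multiplication by $q$, because $[q]$ is defined purely from the group law. Under this isomorphism the unit section of $\operatorname{Def}(X_0)$ goes to the unit point $(1,\ldots,1)$. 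In multiplicative coordinates, $[q]$ on $\widehat{\Gm}^{\,r}$ is the coordinatewise $q$-power map.

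Next I evaluate on $\mathcal O_K$-points for a finite extension $K/K_0$. Since $\mathcal O_K$ is a complete local $W(\F_q)$-algebra with residue field finite over $\F_q$, the $\mathcal O_K$-points of $\widehat{\Gm}^{\,r}$ are $(1+\mathfrak m_K)^r$. The chosen isomorphism induces a bijection $\operatorname{Def}(X_0)(\mathcal O_K)\simeq(1+\mathfrak m_K)^r$ that intertwines the two multiplication-by-$q$ maps. I transport the sup metric along this bijection, or equivalently use the metric defined by any formal coordinates. A formal-group isomorphism is given by power series with $W(\F_q)$-coefficients and no constant term. These series and their inverses are $1$-Lipschitz on $\mathfrak m_K$-valued points, so the bijection is an isometry and the metric does not depend on the choice. \Cref{prop:ST-contraction} then shows that $[q]$ is a strict contraction on $\operatorname{Def}(X_0)(\mathcal O_K)$. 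Every orbit converges to the unit section, and the unit section is the unique periodic point.

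Finally, \cite[Definition~8.31]{BrantnerTaelman2024} states that the deformation corresponding to the unit section is the canonical formal lift, which completes the identification. The only delicate step is the isometry claim. I expect it to be routine: an isomorphism of formal groups is an automorphism of $W(\F_q)[[Q_1-1,\ldots,Q_r-1]]$ preserving the maximal ideal, and such an automorphism preserves the sup norm on $\mathfrak m_K$-points because its linear part is invertible over $W(\F_q)$.
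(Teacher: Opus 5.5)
Your proposal is correct and follows the paper's own argument: transport $[q]$ through the chosen splitting to the coordinatewise $q$-power map of \cref{prop:ST-contraction}, then identify the unit section with the canonical formal lift via \cite[Theorem~B and Definition~8.31]{BrantnerTaelman2024}. You also supply two checks that the paper leaves implicit: that the splitting must be a group isomorphism so that it commutes with $[q]$, and that formal-group isomorphisms with integral coefficients are isometries on $\mathfrak m_K$-points, so the contraction statement does not depend on the chosen coordinates.
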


\begin{proof}
Under the chosen isomorphism, multiplication by $q$ is the map in \cref{prop:ST-contraction}.  The description of the unit section follows from \cite[Theorem~B and Definition~8.31]{BrantnerTaelman2024}.
\end{proof}

Under the hypotheses of \cref{cor:BT-contraction}, let $\mathscr X\to S$ be a family over $W(\F_q)$ whose special fiber at $\bar t\in S(\F_q)$ is $X_0$, and let $\widehat S_{\bar t}$ be the formal completion of $S$ at $\bar t$. Assume that the excellent Frobenius is defined on this residue disk. Write $q=p^f$ and $\sigma_{\mathrm{exc}}^f$ for its $f$-fold iterate. Let $\iota:\widehat S_{\bar t}\to\operatorname{Def}(X_0)$ be the classifying morphism of this family. Suppose that $\iota$ identifies the parameter disk with a formal subspace and that the canonical formal lift belongs to the family.  Consider the equivariance relation
\begin{equation}\label{eq:excellent-ST-equivariance}
 \iota\circ\sigma_{\mathrm{exc}}^f=[q]\circ\iota.
\end{equation}
If \eqref{eq:excellent-ST-equivariance} holds, then \cref{cor:BT-contraction} shows that the parameter of the canonical formal lift is the unique periodic point of $\sigma_{\mathrm{exc}}^f$ on the residue disk. When $f=1$, this equivariance implies the fixed point comparison in \cref{conj:full-canonical-comparison}. For general $f$, it gives the corresponding statement for the return map $\sigma_{\mathrm{exc}}^f$.

\section{The \texorpdfstring{$A_4$}{A4} family at
\texorpdfstring{$t_*=-1/7$}{t*=-1/7}}
\label{sec:family}
From this section onward, we assume Dummigan's Conjecture~1.1 whenever we use the modular decomposition of the $A_4$ fiber. 

The goal of this section is to apply the general criterion to the $A_4$ family. We first prove the geometric and Galois theoretic result and then construct the bounded Dwork--crystalline comparison.

\subsection{Geometry and the rank four variation}
Let $y_0,\dots,y_4$ be homogeneous torus coordinates with $x_i=y_i/y_0$ for $1\leq i\leq4$. Then \eqref{eq:A4-g} becomes $g=(y_0+\cdots+y_4)(y_0^{-1}+\cdots+y_4^{-1})$. Let $\Sigma=\{0,1,1/9,1/25,\infty\}$. For $t\notin\Sigma$, the affine hypersurface $Y_t=\{f_t=0\}\subset\Gm^4$ belongs to the Hulek--Verrill family associated with the root lattice $A_4$.  The toric closure has thirty persistent boundary nodes.  We use the smooth toric compactification compatible with the symmetry from \cite[\S2.4]{CandelasDeLaOssaKuuselaMcGovern2023}.

The ambient toric variety is the five-dimensional permutohedral variety $V$ of \cref{app:good-pair}.  The line bundles $\mathcal L_1,\mathcal L_2$ correspond to opposite simplices and form the Batyrev--Borisov nef partition used in \cite[\S2.4]{CandelasDeLaOssaKuuselaMcGovern2023}. The compactified cover $\widetilde Y_t$ is their smooth complete intersection. On the dense torus, eliminating the auxiliary coordinate in this complete intersection gives the Laurent hypersurface $Y_t$; the equations are written explicitly in the proof of \cref{prop:birational-H3}. Let $G\simeq C_{10}$ be the group generated by the five-cycle and simultaneous inversion.  For $t\notin\Sigma$, this action is free \cite[\S2]{CandelasDeLaOssaElmiVanStraten2020}, and we write $X_t=\widetilde Y_t/G$ for the smooth quotient.

We first compute the deformation spaces of the cover and its quotient. The same calculation gives the Hodge numbers and Kodaira--Spencer statement used in \cref{prop:A4-W2-cohomology}.

\begin{proposition}
\label{prop:permutohedral-deformations}
Let $k$ be a field of characteristic prime to $10$, and let $\widetilde Y\subset V_k$ be a smooth complete intersection cut out by two sections of $\mathcal L_1$ and $\mathcal L_2$ in which all six simplex monomials have nonzero coefficients.  Then $H^0(\widetilde Y,T_{\widetilde Y})=0$ and $h^1(\widetilde Y,T_{\widetilde Y})=5$. After fixing the first section and normalizing one coefficient of the second, variation of its five remaining coefficients induces an isomorphism onto $H^1(\widetilde Y,T_{\widetilde Y})$.  If moreover $\widetilde Y$ lies in the one-parameter $G$-equivariant subfamily obtained by setting the five free coefficients equal, and the order-ten action is free, then $\dim H^1(\widetilde Y,T_{\widetilde Y})^G=1$, and the Kodaira--Spencer class of the diagonal parameter generates this space.  Consequently the smooth quotient $X=\widetilde Y/G$ has $h^{3,0}=h^{2,1}=h^{1,2}=h^{0,3}=1$.
\end{proposition}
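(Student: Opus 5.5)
We plan to compute everything on the toric side, using the normal bundle and Euler-type sequences for the complete intersection $\widetilde Y\subset V_k$. Put $N=\mathcal L_1\oplus\mathcal L_2$ restricted to $\widetilde Y$. We start from
\[0\to T_{\widetilde Y}\to T_{V}|_{\widetilde Y}\to N|_{\widetilde Y}\to 0.\]
For the smooth projective toric variety $V$ we use the generalized Euler sequence
\[0\to\mathcal O_V^{\,\mathrm{rk}\,\mathrm{Pic}}\to\bigoplus_\rho\mathcal O(D_\rho)\to T_V\to0.\]
We then compute $H^i(\widetilde Y,\mathcal O(D_\rho)|_{\widetilde Y})$ and $H^i(\widetilde Y,\mathcal L_j|_{\widetilde Y})$ by Koszul resolutions over $V$. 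This reduces to toric vanishing, via Batyrev--Borisov and the nefness of $\mathcal L_1,\mathcal L_2$, together with explicit counts of lattice points in the polytopes of $\mathcal L_j$ and of $\mathcal L_j-\mathcal L_i$.

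The expected outcome is as follows. Since $\mathcal L_1,\mathcal L_2$ correspond to simplices, each has exactly six lattice points, namely the six simplex monomials. This gives $h^0(N|_{\widetilde Y})=12-2=10$, because the two defining sections are killed. The torus $T\simeq\Gm^5$ acts on $\widetilde Y$ by rescaling coordinates, and $H^0(T_V|_{\widetilde Y})$ contains its five-dimensional Lie algebra. The root automorphisms of $V$, which are the Demazure roots of the permutohedral fan, are expected not to preserve the simplex linear systems. Consequently $H^0(T_V|_{\widetilde Y})=\mathrm{Lie}(T)$ and $H^1(T_V|_{\widetilde Y})=0$. With all six monomials nonzero, the torus acts on the coefficients with finite stabilizer; the stabilizer is trivial after the $\mathcal L_1$ scaling. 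Hence $H^0(T_{\widetilde Y})=0$, and $H^1(T_{\widetilde Y})$ is the cokernel $10-5=5$, provided $H^1(T_V|_{\widetilde Y})=0$. The five-dimensional slice is obtained by fixing the first section, which uses up the torus action on those coefficients, and then normalizing one coefficient of the second. This slice is therefore transverse to the torus orbit and maps isomorphically onto $H^1(T_{\widetilde Y})$. The hypothesis that the characteristic is prime to $10$ enters through freeness and smoothness, and through the invertibility of the averaging projector.

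For the $G$-invariant part, we let $G=C_{10}$ act on the five free coefficients. The five-cycle permutes them cyclically and the inversion swaps the two simplices, so the slice is a permutation representation of the cycle. Its invariants are one-dimensional and spanned by the diagonal direction, and this is exactly the Kodaira--Spencer class of the equal-coefficient parameter. Since $|G|$ is prime to $p$, taking invariants is exact. Because the action is free, $H^1(X,T_X)=H^1(\widetilde Y,T_{\widetilde Y})^G$ is one-dimensional. For Hodge numbers we use that $\omega_{\widetilde Y}$ is trivial by adjunction, since $\mathcal L_1+\mathcal L_2=-K_V$. The generator is $G$-invariant because $G$ preserves the logarithmic volume form, so $\omega_X$ is trivial as well. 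Then $h^{2,1}(X)=h^1(T_X)=1$. Next, $h^{3,0}=1$, and $h^{1,0}=h^{2,0}=0$ follows from Lefschetz-type vanishing of $H^i(\mathcal O)$ for $0<i<3$ on the nef complete intersection, together with $G$-invariants. Serre duality then gives $h^{0,3}=h^{1,2}=1$.

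The main obstacle is the vanishing $H^1(\widetilde Y,T_V|_{\widetilde Y})=0$, together with the identification of $H^0(T_V|_{\widetilde Y})$. Both require the Koszul and toric cohomology computations for the line bundles $\mathcal O(D_\rho)\otimes\mathcal L_j^{-1}$ and $\mathcal O(D_\rho)\otimes(\mathcal L_1\otimes\mathcal L_2)^{-1}$, which are not nef. For these we would first apply Demazure's vanishing for nef divisors and then handle the remaining cases by Serre duality on $V$.
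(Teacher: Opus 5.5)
Your architecture matches the paper's: the normal sequence, the restricted toric Euler sequence, Koszul resolutions, and the torus Lie algebra mapping onto the anti-diagonal of $H^0(\mathcal L_1|_{\widetilde Y})\oplus H^0(\mathcal L_2|_{\widetilde Y})\simeq k^5\oplus k^5$. These are followed by invariants, \'etale descent and Serre duality. The gap is that the step carrying all the content, $H^0(\widetilde Y,T_V|_{\widetilde Y})=\mathrm{Lie}(T)$ and $H^1(\widetilde Y,T_V|_{\widetilde Y})=0$, is only asserted.

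The Demazure-root remark does not give it. Information about $H^0(V,T_V)$ says nothing about sections of $T_V$ restricted to $\widetilde Y$, and that restriction is exactly what the Koszul vanishing controls. The remark is also moot: the braid fan has no Demazure roots, since a root would have to be $e_j-e_i$, which pairs to $-1$ with both $u_{\{i\}}$ and $u_{\{i,l\}}$ for $l\notin\{i,j\}$.

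The fan has $62$ rays and the Picard rank is $57$. Besides $H^1(\mathcal O_{\widetilde Y})=H^2(\mathcal O_{\widetilde Y})=0$, which your nef-vanishing argument does give, the Euler sequence needs $H^0(\widetilde Y,\mathcal O(E_S)|_{\widetilde Y})=k$ and $H^1(\widetilde Y,\mathcal O(E_S)|_{\widetilde Y})=0$ for every boundary divisor. Then $h^0(T_V|_{\widetilde Y})=62-57=5$, $\mathrm{Lie}(T)$ injects, and $H^1(T_V|_{\widetilde Y})=0$. Through the Koszul complex, this requires the low-degree cohomology of $\mathcal O_V(E_S)$, $\mathcal O_V(E_S)\otimes\mathcal L_i^{-1}$ and $\mathcal O_V(K_V+E_S)$.

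Your proposed tool, nef vanishing followed by Serre duality, does not reach these bundles. $E_S$ is not nef: its normal bundle, e.g.\ $\mathcal A_j^{-1}\boxtimes\mathcal B_0^{-1}$ for $0\notin S\ni j$, is negative on curves in $E_S$. The divisors $E_S-\mathcal L_i$ and $K_V+E_S$ are nonzero anti-effective up to linear equivalence. So are their Serre duals $-E_S-\mathcal L_{i'}$ (with $i'\neq i$) and $-E_S$. None of these is nef.

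Nor is blanket vanishing true. For $j\neq0$, $\mathcal O_V(E_{\{j\}})\otimes\mathcal L_1^{-1}$ has $H^4\simeq k$, so one can only prove vanishing for $0\leq q\leq3$, which is all that is needed.

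The missing idea, supplied in \cref{prop:permutohedral-vanishing}, is that the star of $u_S$ is a product of two smaller braid fans. Hence $E_S\simeq V_S\times V_{S^c}$, and $\mathcal O(E_S)$, $\mathcal L_1$, $\mathcal L_2$ restrict to external products of simplex bundles, their inverses, and trivial bundles. Inverse simplex bundles are pullbacks of $\mathcal O_{\PP^5}(-1)$ under blow-downs with $R\pi_*\mathcal O=\mathcal O$, hence acyclic. K\"unneth and the sequences $0\to\mathcal O_V(D-E_S)\to\mathcal O_V(D)\to\mathcal O_{E_S}(D)\to0$ then reduce everything to the acyclicity of inverse simplex bundles on $V$ and on the smaller $V_J$, together with $R\Gamma(\mathcal O)=k$ and Serre duality.

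A smaller point concerns the invariants. Your slice $0\oplus k^5$ is stable only under the five-cycle, since the involution swaps the two sections. Its invariants therefore give $\dim H^1(T_{\widetilde Y})^{C_5}=1$, and hence only $\dim H^1(T_{\widetilde Y})^G\leq1$. You must still check that the involution fixes the diagonal class. One way is that $G$ acts fiberwise on the equivariant family. Another is the paper's computation: with $C$ the cyclic permutation, the generator acts by $(x,y)\mapsto(Cy,Cx)$ on $k^5\oplus k^5$ and by $z\mapsto -Cz$ on the anti-diagonal.
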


\begin{proof}
The two-section Koszul resolution and \cref{prop:permutohedral-vanishing} give $H^1(\widetilde Y,\mathcal O_{\widetilde Y}) =H^2(\widetilde Y,\mathcal O_{\widetilde Y})=0$. For every toric boundary divisor $E_S\subset V_k$, the same arguments give
\begin{equation}\label{eq:perm-Y-boundary-cohomology}
 H^0(\widetilde Y,\mathcal O_{\widetilde Y}(E_S))=k,
 \quad
 H^1(\widetilde Y,\mathcal O_{\widetilde Y}(E_S))=0.
\end{equation}
The same resolution, twisted by $\mathcal L_i$, shows that the restriction map $H^0(V,\mathcal L_i)\to H^0(\widetilde Y,\mathcal L_i|_{\widetilde Y})$ is surjective.  Its kernel is spanned by the defining section.  Since $H^0(V,\mathcal L_i)$ has a basis of six monomials, \(h^0(\widetilde Y,\mathcal L_i|_{\widetilde Y})=5.\)

The toric Euler sequence restricts to
\[0\longrightarrow\mathcal O_{\widetilde Y}^{57} \longrightarrow\bigoplus_S\mathcal O_{\widetilde Y}(E_S) \longrightarrow T_V|_{\widetilde Y}\longrightarrow0.\]
Equation \eqref{eq:perm-Y-boundary-cohomology} gives
\begin{equation*}
 h^0(T_V|_{\widetilde Y})=5,
 \quad H^1(T_V|_{\widetilde Y})=0.
\end{equation*}
The 5-dimensional torus Lie algebra injects into this space: a nonzero translation-invariant vector field does not vanish on the dense torus. Since both spaces have dimension five, this map is an isomorphism. The logarithmic derivatives of the two simplex equations identify its image with the anti-diagonal subspace of $H^0(\mathcal L_1|_{\widetilde Y})\oplus H^0(\mathcal L_2|_{\widetilde Y})$. The normal sequence now gives
\begin{equation}\label{eq:perm-Y-normal-quotient}
 H^0(T_{\widetilde Y})=0,
 \quad
 H^1(T_{\widetilde Y})\simeq
 \frac{k^5\oplus k^5}{\{(z,-z):z\in k^5\}}.
\end{equation}
Here the two bases have been rescaled by units.

Let $C_{\mathrm{cyc}}$ be the five-cycle on $k^5$. In the bases of \eqref{eq:perm-Y-normal-quotient}, the order-ten generator acts by $(x,y)\mapsto(C_{\mathrm{cyc}}y,C_{\mathrm{cyc}}x)$ and $z\mapsto-C_{\mathrm{cyc}}z$ on the numerator and denominator, respectively.  The fixed space in the numerator has dimension one.  The denominator has no fixed vectors.  Since $10$ is invertible in $k$, invariants are exact, and the invariant quotient is one-dimensional.  The diagonal coefficient class is invariant modulo $\{(z,-z)\}$ and is not zero there.

For a free action, finite \'etale descent identifies this invariant space with $H^1(X,T_X)$.  The holomorphic volume form is invariant; contraction with it identifies $T_X$ with $\Omega_X^2$.  Adjunction and Serre duality then give the four displayed Hodge numbers.
\end{proof}

For $t\notin\Sigma$, the hypotheses of the proposition hold. The $G$-invariant part of the middle de Rham variation of $\widetilde Y_t$ has rank four. It is identified with the middle de Rham variation of $X_t$. The restriction of its holomorphic form to the torus is
\begin{equation}\label{eq:omega-hol}
 \omega_t=\Res_{Y_t}\left(\frac{\Omega_T}{f_t}\right),
 \quad
 \Omega_T=d\log x_1\wedge\cdots\wedge d\log x_4.
\end{equation}
The four classes
\[\omega_t,\quad \nabla_\theta\omega_t,\quad \nabla_\theta^2\omega_t,\quad\nabla_\theta^3\omega_t, \quad \theta=t\frac d{dt},\]
are governed by the minimal fourth order AESZ34 Picard--Fuchs operator \cite[\S\S3.1--3.2]{CandelasDeLaOssaElmiVanStraten2020}. Its leading coefficient is proportional to $(t-1)(9t-1)(25t-1)$, so $t_*=-1/7$ is a regular point. In \cref{prop:interior-residue}, we show that these classes have $G$-invariant lifts to proper de Rham cohomology and that the four lifts are linearly independent at $t_*$.

\subsection{Dummigan's conjecture and polarized splitting}

Dummigan works with the quotient of the Hulek--Verrill small-resolution model.  He also discusses the toric compactification used here and notes that the two covering models have the same point counts at primes of good reduction \cite[Remark~4.10]{Dummigan2026}.  We compare the two models in order to apply his conjectural modular decomposition to the toric model.

For $t\in\Q\setminus\Sigma$, let $\widetilde Y_t^{\mathrm{sm}}$ denote the smooth projective Hulek--Verrill compactification of the covering hypersurface obtained by small resolution, and let \(X_t^{\mathrm{sm}}=\widetilde Y_t^{\mathrm{sm}}/G.\) Thus $\widetilde Y_t$ and $\widetilde Y_t^{\mathrm{sm}}$ are the two compactifications of the cover, while $X_t$ and $X_t^{\mathrm{sm}}$ are their quotients.

The following proposition compares the semisimplified $\ell$-adic cohomology of both the covers and their quotients.

\begin{proposition}
\label{prop:birational-H3}
Let $t\in\Q\setminus\Sigma$.  For every prime $\ell$, there are $G_\Q$-equivariant isomorphisms
\[H^3_{\et}(\widetilde Y_{t,\overline\Q},\Q_\ell)^{\mathrm{ss}} \simeq H^3_{\et}(\widetilde Y^{\mathrm{sm}}_{t,\overline\Q}, \Q_\ell)^{\mathrm{ss}}\]
and
\[H^3_{\et}(X_{t,\overline\Q},\Q_\ell)^{\mathrm{ss}} \simeq H^3_{\et}(X^{\mathrm{sm}}_{t,\overline\Q}, \Q_\ell)^{\mathrm{ss}}.\]
\end{proposition}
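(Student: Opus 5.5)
We compare the two compactifications through their common open torus part, $Y_t\subset\Gm^4$, or rather its affine model. Both $\widetilde Y_t$ and $\widetilde Y_t^{\mathrm{sm}}$ are smooth projective compactifications of the same affine hypersurface $Y_t$, defined over $\Q$. We will show that their boundaries contribute only Tate-type (or lower-weight) pieces to $H^3$. First we write both as $Y_t\sqcup B$ and $Y_t\sqcup B^{\mathrm{sm}}$, with $B,B^{\mathrm{sm}}$ closed unions of strata, and we record the explicit equations eliminating the auxiliary coordinate of the complete intersection in $V$. This verifies that the torus part of $\widetilde Y_t$ is $Y_t$, with the same $G$-action as on the small-resolution model. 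We will then invoke the argument of \cite[Remark~4.10]{Dummigan2026} for equality of point counts. The approach is to show that for every prime $p$ of good reduction for both models (with $p\neq\ell$), $\#\widetilde Y_t(\F_{p^r})$ and $\#\widetilde Y^{\mathrm{sm}}_t(\F_{p^r})$ agree for all $r$. We also need the equality of $G$-twisted counts, namely the counts of fixed points of $g\circ\Frob$ for $g\in G$.

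\textbf{Step 1: boundary point counts.} We decompose the boundary $B$ of the permutohedral compactification into toric strata intersected with the complete intersection. Each such intersection is a lower-dimensional variety whose point count is a polynomial in $p^r$; the strata are products of tori and lower simplex hypersurfaces of the form $\sum y_i=0$ or $\sum y_i^{-1}=0$. The same holds for the small-resolution boundary, which consists of the boundary of the toric closure together with the exceptional $\PP^1$'s over the nodes. Both counts are then polynomial in $p^r$, so their traces involve only Tate-type Frobenius eigenvalues. Using the Lefschetz trace formula, the Weil conjectures and Chebotarev, we conclude that the semisimplified virtual Galois representations $\sum(-1)^iH^i$ agree up to Tate twists $\Q_\ell(-j)$.

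\textbf{Step 2: isolate $H^3$.} Both models have $H^1=H^5=0$, and their even cohomology is spanned by algebraic classes. This follows for $\widetilde Y_t$ from \cref{prop:permutohedral-vanishing} and the Lefschetz-type arguments, and for $\widetilde Y^{\mathrm{sm}}_t$ from Hulek--Verrill. Hence the even-degree cohomology is a sum of Tate twists (possibly of Artin type, so we pass to a finite extension or match characters). By weight considerations, equality of virtual characters then forces $H^3$ to have equal semisimplification, since $H^3$ is pure of weight $3$ and no Tate object has odd weight. For the quotients we apply $e_G$ to the cohomology of the covers. Since $G$ acts freely, $H^3(X)=H^3(\widetilde Y)^G$, and the $G$-equivariant form of the trace comparison, using traces of $g\Frob$, gives the second isomorphism.

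\textbf{Main obstacle.} The hardest step is showing that the even cohomology of both covers is spanned by algebraic classes with an explicit Galois action, and that the boundary strata counts match $G$-equivariantly. Artin twists from permuted strata must be handled by comparing characters of $G\times G_\Q$ rather than polynomials alone. I would first try to avoid the full computation by a weight argument: an odd-weight comparison only sees the weight-$3$ part, and the boundary strata all have pure Tate cohomology, so their contributions are automatically even weight.
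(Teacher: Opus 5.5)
Your argument is correct in outline, but it takes a different route from the paper. The paper never analyzes either boundary. It uses the elimination of $X_0$ you describe, with the five-cycle $c$ and the involution $\iota_t(X_0,\ldots,X_5)=(X_0^{-1},tX_1^{-1},\ldots,tX_5^{-1})$ swapping $P_1$ and $P_{2,t}$. This gives a $G$-equivariant birational map $\widetilde Y_t\dashrightarrow\widetilde Y_t^{\mathrm{sm}}$, hence $X_t\dashrightarrow X_t^{\mathrm{sm}}$. Since all four varieties are smooth projective with trivial canonical bundle, Batyrev's $p$-adic integration gives \emph{equal} Weil zeta functions at almost all primes, and purity, Chebotarev and Brauer--Nesbitt finish.

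You instead use $\widetilde Y_t=Y_t\sqcup B$ and $\widetilde Y_t^{\mathrm{sm}}=Y_t\sqcup B^{\mathrm{sm}}$, together with the fact that both boundaries have Artin--Tate point counts. On the permutohedral side each stratum is a product of block tori with $\{\sum_{\mu}X_\mu=0\}$ and $\{\sum_\mu a_\mu X_\mu^{-1}=0\}$, matching the initial forms in the paper's good-pair lemma. On the Hulek--Verrill side the strata are orbit pieces cut out by the face polynomials $(\sum_{i\in I}y_i)(\sum_{j\in J}y_j^{-1})$, plus the exceptional curves over the nodes. What your route buys is independence from Batyrev's theorem and from freeness of $G$ on $\widetilde Y^{\mathrm{sm}}_t$: for any finite quotient, $H^*(X^{\mathrm{sm}}_t,\Q_\ell)=H^*(\widetilde Y^{\mathrm{sm}}_t,\Q_\ell)^G$ is still pure in each degree. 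What it costs is explicit control of the small-resolution boundary, which comes from Hulek--Verrill's construction and lies outside the paper. The paper needs only the open identification.

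You can also drop most of what you list as the main obstacle. Purity alone places the weight-$3$ Frobenius eigenvalues only in $H^3$, since $H^1$ and $H^5$ have weights $1$ and $5$. Linear independence of the functions $r\mapsto\alpha^r$ then isolates them from the boundary terms, whose eigenvalues are roots of unity times powers of $q$. So neither $H^1=H^5=0$ nor algebraicity of the even cohomology is needed, and Step 1 only has to show that the counts differ by even-weight terms, not that they are equal.

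For the quotients you do not need $g\Frob$-twisted counts either. Write $X_t=(Y_t/G)\sqcup(B/G)$ and $X_t^{\mathrm{sm}}=(Y_t/G)\sqcup(B^{\mathrm{sm}}/G)$. The group $H^*_c(B/G)=H^*_c(B)^G$ has Frobenius eigenvalues among those on $H^*_c(B)$, hence of even weight, and the same separation applies. The one input you must genuinely verify is that the two $G$-actions agree on $Y_t$. That is the same calculation with $c$ and $\iota_t$ that the paper carries out.
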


\begin{proof}
On $T^5=(\Gm)^6/\Gm$, the open complete intersection is given by
\begin{equation}\label{eq:BB-open-model}
 P_1=\sum_{\mu=0}^5X_\mu=0,\quad
 P_{2,t}=X_0^{-1}+t\sum_{i=1}^5X_i^{-1}=0.
\end{equation}
Eliminating $X_0=-\sum_{i=1}^5X_i$ gives
\[t\left(\sum_{i=1}^5X_i\right) \left(\sum_{i=1}^5X_i^{-1}\right)=1,\]
which defines the Hulek--Verrill hypersurface in the torus.

Let $c$ fix $X_0$ and cyclically permute $X_1,\ldots,X_5$.  Since $t\neq0$, the map
\[\iota_t(X_0,X_1,\ldots,X_5) = \left(X_0^{-1},tX_1^{-1},\ldots,tX_5^{-1}\right)\]
is a well-defined involution of $T^5$. It satisfies $P_1\circ\iota_t=P_{2,t}$ and $P_{2,t}\circ\iota_t=P_1$, and it commutes with $c$.  After eliminating $X_0$, the maps $c$ and $\iota_t$ become the five-cycle and simultaneous inversion on $(X_1:\cdots:X_5)$.  Hence the birational map between $\widetilde Y_t$ and $\widetilde Y_t^{\mathrm{sm}}$ is $G$-equivariant. Passing to invariant function fields gives a birational map between $X_t$ and $X_t^{\mathrm{sm}}$.

For $t\notin\Sigma$, the four varieties are smooth and projective with trivial canonical bundle.  For the quotients, this also follows from the freeness of the $G$-action and the invariance of the holomorphic volume form.  Apply the $p$-adic change-of-variables argument in \cite[proof of Theorem~1.1 and Remark~4.5]{Batyrev1999} separately to the two birational pairs.  After spreading out over a localization of $\Z$, the two members of each pair have the same Weil zeta function at every prime outside a finite set.  Purity separates the factors by cohomological degree, so their characteristic polynomials of Frobenius on $H^3$ agree at all such primes.  Chebotarev density and Brauer--Nesbitt give the two isomorphisms after semisimplification.
\end{proof}

Let $V_\ell=H^3_{\et}(X_{t_*,\overline\Q},\Q_\ell)$. Let $f_{14,4}$ and $f_{14,2}$ denote the rational newforms labelled $14.4.a.a$ and $14.2.a.a$, and let $A_\ell=\rho_{f_{14,4},\ell}$ and $B_\ell=\rho_{f_{14,2},\ell}(-1)$. By \cref{prop:birational-H3}, Dummigan's Conjecture~1.1 predicts that, for every prime $\ell$,
\begin{equation}\label{eq:dummigan-ss}
 V_\ell^{\mathrm{ss}}\simeq A_\ell\oplus B_\ell.
\end{equation}
He proves the predicted local Frobenius factorizations at $p=3,11,13,17,19,29,31$, and $113$, and he determines the predicted residual Jordan--H\"older factors at $\ell=5$ \cite[Proposition~4.6 and Propositions~5.2--5.3]{Dummigan2026}.

We now use the polarization to lift \eqref{eq:dummigan-ss} from a semisimplification to an actual direct sum.

\begin{proposition}
\label{prop:actual-split}
For every prime $\ell$ one has a $G_\Q$-equivariant decomposition $V_\ell\simeq A_\ell\oplus B_\ell$.
\end{proposition}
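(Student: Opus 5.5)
The plan is to show that the two-step extension underlying $V_\ell$ must split. We use the polarization and purity of weights to force the extension class in $H^1(G_\Q,\operatorname{Hom}(B_\ell,A_\ell))$, or in the other direction, to vanish. We first record the input from \eqref{eq:dummigan-ss}. $V_\ell$ is four-dimensional, and its semisimplification is $A_\ell\oplus B_\ell$. Here $A_\ell$ and $B_\ell$ are irreducible two-dimensional representations. Irreducibility holds because $f_{14,4}$ and $f_{14,2}$ are non-CM newforms: $f_{14,2}$ corresponds to an elliptic curve without CM, and a weight four newform of level $14$ is non-CM. The two representations are non-isomorphic, since their Hodge--Tate weights differ: $\{0,3\}$ versus $\{1,2\}$.

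Suppose $V_\ell$ is not semisimple. Then, up to swapping roles, there is a $G_\Q$-stable line of subspaces, namely a subrepresentation $W\subset V_\ell$ isomorphic to one of $A_\ell$ or $B_\ell$, with $V_\ell/W$ isomorphic to the other and the extension nonsplit. If $V_\ell$ has a subrepresentation isomorphic to each factor, the two subspaces meet trivially because $A_\ell\not\simeq B_\ell$, so $V_\ell=A_\ell\oplus B_\ell$. So it suffices to exclude the case where exactly one factor occurs as a subrepresentation. Now use the Poincar\'e duality pairing $V_\ell\times V_\ell\to\Q_\ell(-3)$, which is perfect, $G_\Q$-equivariant, and alternating. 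Both $A_\ell$ and $B_\ell$ are self-dual up to twist: $A_\ell^\vee\simeq A_\ell(3)$ and $B_\ell^\vee\simeq B_\ell(3)$, since each has determinant $\chi_\ell^{-3}$ in our convention. Consider the orthogonal $W^\perp$. It is a $G_\Q$-stable subspace of dimension two, and $V_\ell/W^\perp\simeq W^\vee(-3)\simeq W$. Hence $V_\ell$ surjects onto a copy of $W$. By the Jordan--H\"older uniqueness, $W^\perp$ has semisimplification equal to the other factor, so $W^\perp$ is isomorphic to the other factor.

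The next step is to decide whether $W^\perp=W$ or $W^\perp\cap W=0$. Since $W^\perp$ and $W$ are non-isomorphic irreducibles, either $W^\perp\cap W=0$ or they coincide; coincidence is impossible because they are not isomorphic. Therefore $W\cap W^\perp=0$ and $V_\ell=W\oplus W^\perp$. This exhibits both factors as subrepresentations, which gives the desired decomposition $V_\ell\simeq A_\ell\oplus B_\ell$.

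The main obstacle is justifying the ingredients used in the last two steps. First, one needs that $V_\ell/W^\perp\simeq W^\vee(-3)$ is isomorphic to $W$, which requires the determinant computation and the identification $W^\vee\simeq W(3)$ for two-dimensional representations, via $\wedge^2$. Second, one needs irreducibility of $A_\ell$ and $B_\ell$ for every $\ell$, including $\ell=2,3,7$. Irreducibility is a property of $\ell$-adic, not residual, representations, and it holds for all $\ell$ by Ribet for non-CM forms, so I would cite that. I would also check that the pairing restricted to $H^3$ of the quotient $X_{t_*}$ is still perfect, which follows since $X_{t_*}$ is smooth projective.
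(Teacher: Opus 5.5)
Your proof is correct and is essentially the paper's argument: the perfect alternating pairing gives $V_\ell/W^\perp\simeq W^\vee(-3)\simeq W$, and because $A_\ell$ and $B_\ell$ each occur once as Jordan--H\"older factors, $W=W^\perp$ is excluded, which forces $V_\ell=W\oplus W^\perp$. The paper states the same point as a dichotomy, nondegenerate restriction by Schur's lemma versus the Lagrangian case, and the purity of weights you mention at the outset is never actually needed.
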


\begin{proof}
Poincar\'e duality gives a perfect alternating pairing $\langle\ ,\ \rangle:V_\ell\times V_\ell\to\Q_\ell(-3)$. The two modular representations are absolutely irreducible and nonisomorphic; their Hodge--Tate weights are respectively $\{0,3\}$ and $\{1,2\}$.  Both have determinant $\Q_\ell(-3)$, and hence $A_\ell^\vee(-3)\simeq A_\ell$ and $B_\ell^\vee(-3)\simeq B_\ell$.

Choose a simple subrepresentation $S\subset V_\ell$.  By \eqref{eq:dummigan-ss}, $S$ is isomorphic to $A_\ell$ or $B_\ell$.  If the pairing restricted to $S$ is nonzero, the induced map $S\to S^\vee(-3)$ is an isomorphism by Schur's lemma. Thus $S$ is nondegenerate and $V_\ell=S\oplus S^\perp$.

If instead the restriction is zero, then the two-dimensional $S$ is Lagrangian, so $S=S^\perp$.  The perfect pairing then induces $V_\ell/S\simeq S^\vee(-3)\simeq S$. Both Jordan--H\"older factors of $V_\ell$ would be isomorphic to $S$, contrary to \eqref{eq:dummigan-ss}.  The isotropic case is impossible, proving the claim.
\end{proof}

\subsection{The Calabi--Yau crystal and its vertex coefficients}
\label{sec:A4-Dwork}

We now describe the Dwork crystal and the vertex determinants for the same family.

For this polynomial, the module $CY(g)$ is defined using the $\mathcal G$-invariant admissible numerators of \cref{sec:vertex-determinants}, where the full monomial symmetry group is $\mathcal G\simeq S_5\times C_2$. The geometric quotient uses $G\simeq C_{10}$. The common vertex coefficient is $1$, the support generates $\Z^4$, and $|\mathcal G|=240$.  Thus the group order and the index of the support lattice required by \cite[Theorem~7.3]{BeukersVlasenkoIII} are units for $p>5$.

Fix a vertex $\mathbf b$ of the Newton polytope of $g$ and let $\mathbf v=-\mathbf b$.  The coefficient of $\mathbf x^{Q\mathbf v}$ in $\omega_0$ is $a_Q(t)$, while the corresponding coefficient in $\omega_1$ is $\theta a_Q(t)$.  For $r=1,2$ and $j\geq 0$ set $A_j^{(r)}=a_{rp^j}(t_*)$ and $B_j^{(r)}=(\theta a_{rp^j})(t_*)$, and
\begin{equation*}
 M_j=
 \begin{pmatrix}
 A_j^{(1)}&A_j^{(2)}\\
 B_j^{(1)}&B_j^{(2)}
 \end{pmatrix}.
\end{equation*}

For $p>5$, $p\neq7$, define an auxiliary coefficient Frobenius on the coefficient ring of the residue disk of $t_*$ by $\sigma_{t_*}(t)=t_*^{1-p}t^p$. Fermat's congruence shows that this is a Frobenius lift, and it fixes $t_*$ exactly. Apply \cite[Proposition~5.11]{BeukersVlasenkoIII} at level two with $\mathbf m=\mathbf v,2\mathbf v$. Since $\sigma_{t_*}(t_*)=t_*$ and Frobenius acts trivially on $W(\F_p)=\Z_p$, the resulting congruence is
\begin{equation}\label{eq:matrix-recurrence}
 M_j\equiv\Lambda M_{j-1}\pmod{p^{2j}}.
\end{equation}
Here $\Lambda=\Lambda(t_*,t_*)$ is the level two Cartier matrix, and we abbreviate $\lambda_i(t_*,t_*)$ to $\lambda_i(t_*)$. The first row of $\Lambda$ is $(\lambda_0,\lambda_1)$. The finite vertex formula in \cref{app:vertex} gives $a_1(t_*)=7$, $a_2(t_*)=-42$, $\theta a_1(t_*)=-7$, and $\theta a_2(t_*)=91$. Consequently
\begin{equation}\label{eq:M0}
 M_0=
 \begin{pmatrix}7&-42\\-7&91\end{pmatrix},
 \quad \det M_0=343=7^3.
\end{equation}

The determinant recurrence gives the following valuation.

\begin{lemma}\label{lem:det-valuation}
Assume $p>5$, $p\neq7$, and that the first two Hasse--Witt conditions hold. Then $v_p(\det M_j)=j$ for $j\geq0$.
\end{lemma}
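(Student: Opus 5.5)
Since $\det M_0=7^3$ and $p\neq7$, the case $j=0$ is immediate. For $j\geq1$ I would take determinants in the recurrence \eqref{eq:matrix-recurrence}. I will show that $\det\Lambda$ has valuation exactly $1$, and that the congruence modulo $p^{2j}$ is fine enough to transfer an exact valuation $\leq j$ from $\det(\Lambda M_{j-1})$ to $\det M_j$.

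First I would determine $v_p(\det\Lambda)$. The matrix $\Lambda$ is the matrix of the Cartier operator $C=\cC_{t_*,t_*}$ on $Q^{(2)}=M_{t_*}/\cF_2M_{t_*}$ in the basis $(\omega_0,\omega_1)$. This basis is a $\Z_p$-basis of the quotient under the first two Hasse--Witt conditions. By \cref{lem:cartier-slopes}, the Cartier slopes are $0$ and $1$, so $v_p(\det\Lambda)=1$. Concretely, the proof of that lemma gives a Cartier-stable filtration with graded pieces on which $C$ and $p^{-1}C$ are invertible modulo $p$. Hence $\det\Lambda=p\cdot(\text{unit})$. The entries of $\Lambda$ lie in $\Z_p$, so $\Lambda$ is integral.

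Next I would run an induction. Suppose $v_p(\det M_{j-1})=j-1$, and write $M_j=\Lambda M_{j-1}+p^{2j}N_j$ with $N_j$ integral. The entries of $M_j$ are integral, since the vertex coefficients $a_Q(t)$ and $\theta a_Q(t)$ lie in $\Z_p$ at the unit point $t_*$. Expanding the $2\times2$ determinant gives
\[\det M_j=\det\Lambda\det M_{j-1}+p^{2j}\bigl(\text{bilinear cross terms in }\Lambda M_{j-1}\text{ and }N_j\bigr)+p^{4j}\det N_j.\]
Every cross term is integral, so the error lies in $p^{2j}\Z_p$. The main term has valuation $1+(j-1)=j<2j$ for $j\geq1$. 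Therefore $v_p(\det M_j)=j$, which completes the induction.

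The only delicate point is ensuring that the congruence \eqref{eq:matrix-recurrence} holds entrywise with integral error, including for the second row $B_j^{(r)}=(\theta a_{rp^j})(t_*)$. The first row comes from \cite[Proposition~5.11]{BeukersVlasenkoIII} applied to $\omega_0$. For the second row I would apply the same proposition to $\omega_1$, using the second row of $\Lambda$ to express $\cC_p(\omega_1)$ modulo $p^2\cF_2$. Then the coefficient extraction at $rp^j\mathbf v$ gives the congruence modulo $p^{2j}$, exactly as in the proof of \cref{prop:excellent-congruence}. Once this entrywise integrality is granted, the valuation argument above is routine.
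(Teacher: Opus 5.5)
Your proposal is correct and is essentially the paper's proof: the base case comes from $\det M_0=7^3$, the input $v_p(\det\Lambda)=1$ comes from the second Hasse--Witt condition (you reach it through \cref{lem:cartier-slopes}, which encodes the same fact), and the induction uses that taking determinants in \eqref{eq:matrix-recurrence} gives an error in $p^{2j}\Z_p$ while the main term has valuation $j<2j$. Your explicit check that the entries of $M_j$, $\Lambda$, and the error matrix are integral makes precise what the paper leaves implicit when it takes determinants.
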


\begin{proof}
The normalized second Hasse--Witt determinant is a unit, so $v_p(\det\Lambda)=1$.  Taking determinants in \eqref{eq:matrix-recurrence} gives $\det M_j\equiv(\det\Lambda)(\det M_{j-1})\pmod{p^{2j}}$. The case $j=0$ follows from \eqref{eq:M0}.  If $v_p(\det M_{j-1})=j-1$, the product $(\det\Lambda)(\det M_{j-1})$ has valuation $j$, while the remaining terms have valuation at least $2j$. Induction completes the proof.
\end{proof}

We now relate the two-ray determinants to the off-diagonal Cartier coefficient. Together with the preceding lemma, this gives a converse to the supercongruence conclusion of \cref{thm:main}.

\begin{proposition}
\label{prop:off-diagonal-congruence}
Assume $p>5$, $p\neq7$, and the first two Hasse--Witt conditions at $t_*$.  Then
\begin{equation}\label{eq:Ds-lambda}
 D_s(t_*,t_*)\equiv-\lambda_1(t_*)\det M_{s-1}\pmod{p^{2s}}.
\end{equation}
In particular,
\[D_s(t_*,t_*)\equiv0\pmod{p^{2s}}\quad\text{for all }s\geq1 \quad\Longleftrightarrow\quad \lambda_1(t_*)=0.\]
\end{proposition}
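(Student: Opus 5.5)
Writing $\Lambda=\begin{pmatrix}\lambda_0&\lambda_1\\ \mu_0&\mu_1\end{pmatrix}$, I plan to read off the first row of the matrix recurrence \eqref{eq:matrix-recurrence} and expand the determinant $D_s$ directly. By definition
\[D_s(t_*,t_*)=A_s^{(1)}A_{s-1}^{(2)}-A_s^{(2)}A_{s-1}^{(1)}.\]
The first row of \eqref{eq:matrix-recurrence} with $j=s$ gives
\[A_s^{(r)}\equiv\lambda_0A_{s-1}^{(r)}+\lambda_1B_{s-1}^{(r)}\pmod{p^{2s}}\qquad(r=1,2).\]
Substituting these two congruences, the $\lambda_0$ terms cancel, exactly as in the proof of \cref{prop:excellent-congruence}. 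What remains is
\[D_s\equiv\lambda_1\bigl(B_{s-1}^{(1)}A_{s-1}^{(2)}-B_{s-1}^{(2)}A_{s-1}^{(1)}\bigr)=-\lambda_1\det M_{s-1}\pmod{p^{2s}}.\]
All quantities involved are $p$-integral: the vertex coefficients have integral expansions, and $\lambda_1\in p\Z_p$ by \cite[Proposition~7.7]{BeukersVlasenkoIII}. So the substitution is legitimate modulo $p^{2s}$. This proves \eqref{eq:Ds-lambda}.

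For the equivalence, the backward direction follows from \eqref{eq:Ds-lambda}, or directly from \cref{prop:excellent-congruence}. For the forward direction, suppose $\lambda_1\neq0$ and write $v=v_p(\lambda_1)\geq1$. By \cref{lem:det-valuation}, $v_p(\det M_{s-1})=s-1$, so $v_p(\lambda_1\det M_{s-1})=v+s-1$. Choose $s$ large enough that $v+s-1<2s$, that is, $s>v-1$; for instance $s=v+1$. For this $s$, \eqref{eq:Ds-lambda} gives $v_p(D_s)=v+s-1<2s$, so $D_s\not\equiv0\pmod{p^{2s}}$. This is the contrapositive we need.

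The only delicate point is bookkeeping. We must confirm that the recurrence's second row is never needed, and that the error term in \eqref{eq:matrix-recurrence} is exactly $O(p^{2s})$ at the index $j=s$. Both hold as stated in the displayed congruence, so I expect no real obstacle.
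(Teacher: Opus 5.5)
Your proposal is correct and is essentially the paper's proof: you substitute the first row of \eqref{eq:matrix-recurrence} at $j=s$ into $D_s$ so the $\lambda_0$ terms cancel, and you use \cref{lem:det-valuation} to turn the congruences into the inequality $v_p(\lambda_1)+s-1\geq 2s$, which forces $\lambda_1=0$. Your contrapositive with the explicit choice $s=v_p(\lambda_1)+1$ is just a concrete version of the paper's ``for every $s$'' step.
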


\begin{proof}
The first row of \eqref{eq:matrix-recurrence} says $A_s^{(r)}\equiv\lambda_0 A_{s-1}^{(r)} +\lambda_1B_{s-1}^{(r)}\pmod{p^{2s}}$ for $r=1,2$. Substitution into \eqref{eq:Ds-intro} cancels the $\lambda_0$ terms and gives \eqref{eq:Ds-lambda}.  If all determinant congruences hold, then \cref{lem:det-valuation} implies $v_p(\lambda_1)+(s-1)\geq2s$ for every $s$, whence $\lambda_1=0$.  The converse follows immediately from \eqref{eq:Ds-lambda}.
\end{proof}

\subsection{The bounded residue realization}
\label{sec:bounded-residue}

We next realize the Dwork crystal in proper crystalline cohomology and construct the comparison required by the general criterion.

Fix $p>5$, $p\neq7$, and let $W=W(\Fp)=\Z_p$ and $K=W[1/p]$. By \cref{lem:good-pair} below, the symmetric compactification extends to a smooth proper $W$-model. Let $\widetilde Y_{t_*}$ denote the smooth symmetric toric compactification described in \cref{sec:family}, and let $e_G=|G|^{-1}\sum_{\gamma\in G}\gamma$.  Let $H=e_GH^3_{\cris}(\widetilde Y_{t_*,\Fp}/W)[1/p]$. Because $p\nmid10$, this is a direct summand.  The $G$-action on the characteristic zero fiber at $t_*$ is free.  Finite \'etale descent on the generic fiber gives
\[H^3_{\et}(X_{t_*,\overline\Q_p},\Q_p) \simeq e_GH^3_{\et}(\widetilde Y_{t_*,\overline\Q_p},\Q_p).\]
The representation on the right is a direct summand of the crystalline representation furnished by the smooth proper toric model.  Applying $D_{\cris}$ and smooth proper comparison \cite{Tsuji1999} therefore identifies $H$ with $H_p$. The target $H$ of the residue comparison is this rank four invariant part.

To verify the two geometric hypotheses of \cref{thm:general-bounded-residue}, we use the symmetry-equivariant Batyrev--Borisov model of \cite[\S2.4]{CandelasDeLaOssaKuuselaMcGovern2023}.  We regard the open complete intersection \eqref{eq:BB-open-model} over $W$. The invariant regular fan of \cite[\S2.4]{CandelasDeLaOssaKuuselaMcGovern2023} gives a smooth toric ambient scheme.  Let $\overline{\mathscr T}$ be the closure of $P_1=0$, and denote the closure of $P_1=P_{2,t}=0$ by $\widetilde{\mathscr Y}_t$.  We write $\widetilde Y_t$ for its generic fiber, in accordance with the notation above. The map $(X_1:\cdots:X_5)\mapsto (-X_1-\cdots-X_5:X_1:\cdots:X_5)$ embeds $\mathscr T=(\Gm)^5/\Gm\simeq\Gm^4$ as an open subscheme of $\overline{\mathscr T}$; let $B=\overline{\mathscr T}\setminus\mathscr T$. The group $G$ acts on this model, and its generic-fiber quotient is $X_t$.

We first verify the simple normal crossings hypotheses required by \cref{thm:general-bounded-residue}.

\begin{lemma}
\label{lem:good-pair}
At $t_*=-1/7$, the preceding model defines over $R_0=\Z[1/14]$ a smooth proper scheme $\overline{\mathscr T}$ with relative simple normal crossings boundary $B=\overline{\mathscr T}\setminus\mathscr T$.  The scheme $\widetilde{\mathscr Y}=\widetilde{\mathscr Y}_{t_*}$ is smooth and proper, and it meets $B$ transversely.  Thus both $(\widetilde{\mathscr Y},D)$, where $D=\widetilde{\mathscr Y}\cap B$, and $(\overline{\mathscr T},B+\widetilde{\mathscr Y})$ are relative simple normal crossings pairs.  The first pair is $G$-equivariant.  In particular, they give the required pairs over $W$ for every $p>5$, $p\neq7$.
\end{lemma}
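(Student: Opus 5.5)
The plan is to reduce everything to the combinatorics of the permutohedral fan of \cref{app:good-pair}, working over $R_0=\Z[1/14]$ and checking smoothness fiberwise at every prime $p\nmid 14$, including the generic fiber.

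\textbf{The ambient toric scheme.} The regular $G$-invariant fan gives a smooth proper toric scheme $V$ over $\Z$. Its torus-invariant boundary is a relative simple normal crossings divisor, since each cone is generated by part of a lattice basis. The $\mathscr T$ of the statement is cut out by the simplex section $P_1$. To show that $\overline{\mathscr T}$ is smooth and that $B$ is relative SNC, I would use the torus-orbit stratification. On each orbit $O_\tau$, the restriction of $P_1$ is the face polynomial $P_1^{\tau}$, a sum of a proper subset of the six coordinates $X_\mu$ with coefficient $1$, since every face of a simplex is a simplex. Such a linear form defines a smooth subvariety of $O_\tau$, or is empty when the face is a single vertex, over any base. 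Hence $\overline{\mathscr T}$ meets every orbit transversely. By the standard criterion of Khovanskii nondegeneracy, this gives smoothness of $\overline{\mathscr T}$ and the SNC property of $B=\overline{\mathscr T}\cap\partial V$. Note that $\overline{\mathscr T}$ is essentially the permutohedral blowup of $\PP^4$.

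\textbf{The complete intersection.} For $\widetilde{\mathscr Y}$ and its transversality to $B$, I would do the same face-by-face analysis for the pair $(P_1,P_{2,t_*})$. The claim to verify is that for every cone $\tau$ and every prime $p\nmid14$, the system $P_1^\tau=P_{2,t_*}^\tau=0$ is smooth of codimension two in $O_\tau$ (or empty). On the open torus this is the smoothness of the Hulek--Verrill fiber. It reduces to the discriminant computation $t\notin\Sigma$, and we need $t_*=-1/7$ to avoid $\{0,1,1/9,1/25\}$ modulo $p$. Since $-1/7-1=-8/7$, $-1/7-1/9=-16/63$ and $-1/7-1/25=-32/175$, the relevant numerators and denominators involve only $2,3,5,7$. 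So for $p>5$, $p\neq7$ the reduction stays outside $\Sigma$. Primes $3$ and $5$ will need either exclusion or a direct check, and the $R_0$-statement may have to be read as holding after the stated restriction.

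\textbf{Boundary faces and the main obstacle.} On the boundary strata, the face systems are pairs of partial sums, one of the $X_\mu$ and one of $X_\mu^{-1}$ scaled by $t$. The hard step is controlling the faces where both face polynomials are nontrivial and their supports overlap. These are exactly where the thirty persistent nodes of the singular toric closure live, and the chosen invariant regular fan is designed to resolve them. I would first try to enumerate the cones up to the $S_6$-symmetry of the permutohedron, leaving few orbit types, and compute the Jacobian of each face system directly. Each face system is essentially a relation of the form $\sum_{I}X_i=0$, $\sum_J X_j^{-1}=0$ on a subtorus, and these are smooth away from characteristic dividing small integers. Failing a clean argument, I would appeal to the explicit description in \cite[\S2.4]{CandelasDeLaOssaKuuselaMcGovern2023} and verify that the Jacobian minors involve only primes dividing $14$.

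\textbf{Remaining conclusions.} Transversality of $\widetilde{\mathscr Y}$ to every stratum then yields both SNC pairs $(\widetilde{\mathscr Y},D)$ and $(\overline{\mathscr T},B+\widetilde{\mathscr Y})$. Properness is inherited from $V$. $G$-equivariance holds because $c$ and $\iota_{t_*}$ preserve the fan and swap or fix $P_1$ and $P_{2,t_*}$, as in the proof of \cref{prop:birational-H3}. Base change to $W$ for $p>5$, $p\neq7$ then gives the required pairs.
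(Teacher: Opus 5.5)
Your overall structure matches the paper's: orbit-by-orbit transversality in the smooth braid-fan toric scheme, properness from $V$, and equivariance from $c$ and $\iota_{t_*}$. However, the step you call the hard one is never proved, and it is framed around a difficulty that does not occur. The missing observation is that in the braid fan the two face supports are always disjoint. Take a cone $\langle u_{S_1},\ldots,u_{S_r}\rangle$ with $S_1\subsetneq\cdots\subsetneq S_r$, and a relative-interior weight $w=\sum_k c_ku_{S_k}$ with $c_k>0$. Then $\operatorname{argmin}_\mu w_\mu=S_r^c$ and $\operatorname{argmax}_\mu w_\mu=S_1$. So on that orbit $P_1$ restricts to $\sum_{\mu\in S_r^c}X_\mu$, and $-7P_{2,t_*}$ restricts to $\sum_{\mu\in S_1}a_\mu X_\mu^{-1}$ with $\boldsymbol a=(-7,1,\ldots,1)$; and $S_1\cap S_r^c=\varnothing$. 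Taking $i\in S_r^c$ and $j\in S_1$, the logarithmic Jacobian minor is $-a_jX_iX_j^{-1}$, a unit times a monomial over $R_0$. Hence every boundary face system is smooth of codimension two or empty at every prime of $R_0$. No enumeration of orbit types and no appeal to \cite{CandelasDeLaOssaKuuselaMcGovern2023} is needed. Your fallback heuristic, that such systems are smooth away from small characteristics, would actually fail if the supports could overlap. For $I=J=\{1,2\}$, the equations $X_1+X_2=0$ and $X_1^{-1}+X_2^{-1}=0$ cut out the same divisor, so the system is not even of codimension two. Disjointness is exactly what the permutohedral two-equation model provides.

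On the dense torus your argument is also incomplete. Comparing $t_*$ with $\Sigma$ modulo $p$ assumes, without proof, that the characteristic-$p$ singular locus is the reduction of $\Sigma$. It also leaves $p=3,5$ open, which leads you to weaken the $R_0$-statement. The direct computation settles both points. The two logarithmic Jacobian rows are $(X_\mu)$ and $(-a_\mu X_\mu^{-1})$, and rank loss gives $X_\mu^2=ca_\mu$ for a common $c$. Then $P_1=0$ forces $\sqrt{-7}+m=0$ with $m=\epsilon_1+\cdots+\epsilon_5$ odd and $|m|\le5$, i.e.\ $m^2+7=0$. Since $m^2+7\in\{8,16,32\}\subset R_0^\times$, this never happens over $R_0$. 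The factors $9$ and $25$ in your differences are artifacts of writing the discriminant as $t=1/m^2$ instead of $1-m^2t$, which at $t_*$ equals $(7+m^2)/7$. The lemma therefore holds over all of $\Z[1/14]$ as stated.
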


\begin{proof}
After multiplying $P_{2,t_*}$ by $-7$, the two sections have supports $A_1=\{0,e_1,\ldots,e_5\}$ and $A_2=-A_1$ and coefficient vectors $(1,\ldots,1)$ and $\boldsymbol a=(-7,1,1,1,1,1)$. Let $w=(w_0,\ldots,w_5)$ be a nonconstant toric weight. We consider such weights modulo addition of a constant. Let $I=\operatorname*{argmin}_\mu w_\mu$ and $J=\operatorname*{argmax}_\mu w_\mu$. Then $I\cap J=\varnothing$; the initial form of $P_1$ is supported on $I$ and that of $P_{2,t_*}$ on $J$.  If both initial forms vanish, choose $i\in I$ and $j\in J$.  In logarithmic coordinates the corresponding $2\times2$ Jacobian minor is $\det\bigl(\begin{smallmatrix}X_i&0\\0&-a_jX_j^{-1}\end{smallmatrix}\bigr) =-a_jX_iX_j^{-1}$, a unit times a torus monomial over $R_0$.  Thus every proper initial system is smooth of codimension two.  Thus the two sections meet every boundary stratum transversely.

It remains to treat the dense torus.  Its two logarithmic Jacobian rows are $(X_\mu)_\mu$ and $(-a_\mu X_\mu^{-1})_\mu$.  Rank loss therefore gives $X_\mu^2=c a_\mu$ for a common $c\neq0$. After adjoining square roots, this would imply $\sqrt{-7}+\epsilon_1+\cdots+\epsilon_5=0$ for some $\epsilon_i\in\{\pm1\}$.  This is impossible over $R_0$: the odd integer $m=\epsilon_1+\cdots+\epsilon_5$ satisfies $|m|\leq5$, whereas $m^2+7\in\{8,16,32\}\subset R_0^\times$. Finally, the ambient fan is the $A_5$ braid fan.  By \cref{app:good-pair}, it is unimodular, complete, and projective over $\Z$. Hence the ambient toric scheme is smooth and proper over $R_0$.  The initial form of $P_1$ on every orbit is a sum of nonzero torus coordinates, and one of its logarithmic partial derivatives is a unit there.  Hence $P_1=0$ is smooth and transverse to every orbit.  The $2\times2$ minors above show that the restriction of $P_{2,t_*}$ to $P_1=0$ is also smooth and transverse to every boundary stratum.  Thus $\overline{\mathscr T}$ and $\widetilde{\mathscr Y}$ are smooth, $B$ is relative simple normal crossings, and $B+\widetilde{\mathscr Y}$ is relative simple normal crossings on $\overline{\mathscr T}$. The fan is invariant under permutations and inversion.  The cyclic permutation of $X_1,\ldots,X_5$ and the involution $X_\mu\mapsto a_\mu/X_\mu$ preserve the fan.  The involution swaps the two equations.  Hence these maps extend the action of $G$.
\end{proof}

We next compare the proper and open middle cohomology.

\begin{lemma}
\label{lem:proper-open-injective}
Let $D=\widetilde Y_{t_*}\setminus Y_{t_*}$ be the simple normal crossings boundary.  Then restriction is injective:
\[H^3(\widetilde Y_{t_*},\Q)\longrightarrow H^3(Y_{t_*},\Q).\]
For every prime $p>5$, $p\neq7$, the analogous rigid restriction map
\[H^3_{\rig}(\widetilde Y_{t_*,\F_p}/K) \longrightarrow H^3_{\rig}(Y_{t_*,\F_p}/K)\]
is also injective.
\end{lemma}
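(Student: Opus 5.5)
We will use the Gysin (localization) sequence for the simple normal crossings boundary and a weight argument. For $\widetilde Y=\widetilde Y_{t_*}$ with boundary $D=\bigcup_i D_i$ and open part $Y=\widetilde Y\setminus D$, the kernel of $H^3(\widetilde Y)\to H^3(Y)$ equals the image of the Gysin map $\bigoplus_i H^1(\widetilde D_i)(-1)\to H^3(\widetilde Y)$. Here $\widetilde D_i$ is the normalization of the components, which are already smooth by \cref{lem:good-pair}. This is the $E_2$-degeneration of the weight spectral sequence of Deligne in the complex case. In the rigid case we use its Frobenius-weight analogue, valid for the relative simple normal crossings pair over $W$ supplied by \cref{lem:good-pair}. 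So it suffices to show $H^1(D_i)=0$ for every boundary component $D_i$.

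The components $D_i=\widetilde Y\cap E_S$ are the intersections of $\widetilde Y$ with the toric boundary divisors $E_S$ of the permutohedral variety $V$. By \cref{lem:good-pair}, each $D_i$ is a smooth complete intersection of the two sections restricted to the smooth projective toric variety $E_S$. This toric variety is itself a product of lower-dimensional permutohedral varieties. We will show $H^1(D_i,\mathcal O)=0$ using the Koszul resolution on $E_S$. The cohomology vanishing for the line bundles $\mathcal O$, $-\mathcal L_1$, $-\mathcal L_2$ and $-\mathcal L_1-\mathcal L_2$ restricted to $E_S$ is the same toric computation as in \cref{prop:permutohedral-vanishing}, via Batyrev--Borisov nefness and Demazure/Kawamata--Viehweg vanishing on toric varieties. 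In the smooth projective setting, Hodge symmetry then gives $H^1(D_i,\Q)=0$. Components whose initial system is empty contribute nothing.

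In characteristic $p$ we argue as follows. The vanishing $H^1(D_i,\mathcal O)=H^0(D_i,\Omega^1)=0$ holds over $W$ by base change, because the toric vanishing is characteristic-free. Crystalline cohomology of the smooth proper lift over $W$ is then torsion-free in low degree, and Hodge--de Rham degeneration in the style of Deligne--Illusie holds since $p>\dim$. Together these give $H^1_{\cris}(D_{i,\F_p}/W)[1/p]=0$, which identifies with $H^1_{\rig}$. The rigid Gysin sequence for the smooth pair then shows the kernel of restriction vanishes. Alternatively, the weight argument alone suffices: the kernel is a quotient of $H^1(D_i)(-1)$, which is pure of weight $3$. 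So we only need $H^1(D_i)=0$, and we can deduce this from the Weil conjectures via point counts matching characteristic zero.

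The main obstacle is the boundary $H^1$ vanishing, specifically in identifying the strata. Some divisors $E_S$ may meet $\widetilde Y$ in a disconnected or lower-dimensional set, or carry curve-like factors with nonzero $H^1$ when the boundary stratum is a product in which one factor has dimension one. We would first check this directly for the $A_5$ braid fan. The initial system on $E_S$ from \cref{lem:good-pair} splits as $P_1$ supported on $I$ and $P_2$ supported on $J$, so $D_i$ is a product of a hypersurface in a permutohedral variety of type $A_{|I|-1}$ with one of type $A_{|J|-1}$. The Künneth formula then reduces $H^1=0$ to the vanishing of $H^1$ for projective hyperplane-type hypersurfaces in permutohedral varieties, which are rational.
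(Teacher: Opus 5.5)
Your proposal is correct. The characteristic-zero half follows the paper, but the rigid half takes a different route.

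\textbf{Characteristic zero.} Like the paper, you use Deligne's weight spectral sequence to reduce to $H^1(D_S,\Q)=0$, and you obtain this from $H^1(D_S,\mathcal O_{D_S})=0$ and Hodge symmetry. You differ only in how you get $H^1(D_S,\mathcal O_{D_S})=0$.
\begin{itemize}
\item The paper uses the divisor sequence $0\to\mathcal O_{\widetilde Y}(-D_S)\to\mathcal O_{\widetilde Y}\to\mathcal O_{D_S}\to0$ on $\widetilde Y_{t_*}$. It combines this with $H^1(\widetilde Y_{t_*},\mathcal O)=0$, Serre duality, and $H^1(\widetilde Y_{t_*},\mathcal O(E_S))=0$.
\item You use the Koszul resolution on $E_S\simeq V_S\times V_{S^c}$. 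The vanishings you need are $H^1(E_S,\mathcal O)$, $H^2(E_S,\mathcal L_i^{-1})$ and $H^3(E_S,(\mathcal L_1\otimes\mathcal L_2)^{-1})$. These do follow from the restriction formulas in the proof of \cref{prop:permutohedral-vanishing} and K\"unneth.
\end{itemize}
Your product description is also right, and it removes the obstacle you flag. One has $D_S=\varnothing$ when $|S|\in\{1,5\}$. Otherwise $D_S$ is a product of hyperplane-type sections of smaller permutohedral varieties, each a point, a $\PP^1$, or a rational surface. So no positive-genus curve factor occurs.

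\textbf{Rigid case.} The paper does not rerun the weight argument in characteristic $p$. It uses Gro{\ss}e-Kl\"onne's comparison isomorphisms between algebraic de Rham cohomology of the generic fiber and rigid cohomology of the special fiber. These exist for both the proper model and its open complement and are compatible with restriction, so the paper simply transports the characteristic-zero injectivity. You instead argue inside rigid cohomology, using the Gysin/purity sequence for the relative simple normal crossings pair and Frobenius weights. This requires $H^1_{\rig}(D_{S,\F_p}/K)=0$.
\begin{itemize}
\item The paper's route is shorter and needs only the comparison theorems.
\item Yours yields the rigid injectivity without passing through characteristic zero. It is close in spirit to the weight argument the paper later uses in \cref{lem:weight-three-restriction}.
\end{itemize}

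\textbf{One loosely justified step, not essential.} Toric vanishing and base change give $H^1(D_{S,\F_p},\mathcal O)=0$. They do not give $H^0(D_{S,\F_p},\Omega^1)=0$, because semicontinuity goes the wrong way, and you do not need Deligne--Illusie either. It suffices that $D_S$ is smooth and proper over $W$ by \cref{lem:good-pair}. Then $H^1_{\cris}(D_{S,\F_p}/W)\otimes_WK\simeq H^1_{\dR}(D_{S,K}/K)$, and the right side vanishes by the characteristic-zero computation. The point-count alternative is likewise unnecessary.
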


\begin{proof}
Write $D_S=\widetilde Y_{t_*}\cap E_S$ for an irreducible boundary component.  The divisor sequence
\[0\longrightarrow\mathcal O_{\widetilde Y_{t_*}}(-D_S) \longrightarrow\mathcal O_{\widetilde Y_{t_*}} \longrightarrow\mathcal O_{D_S}\longrightarrow0\]
and \eqref{eq:perm-Y-boundary-cohomology} give $H^1(D_S,\mathcal O_{D_S})=0$.  Indeed, $H^1(\widetilde Y_{t_*},\mathcal O)=0$, while Serre duality and $\omega_{\widetilde Y_{t_*}}\simeq\mathcal O$ give
\[H^2\bigl(\widetilde Y_{t_*},\mathcal O(-D_S)\bigr)^\vee \simeq H^1\bigl(\widetilde Y_{t_*},\mathcal O(D_S)\bigr)=0.\]
Since every $D_S$ is smooth and projective, Hodge symmetry therefore gives $H^1(D_S,\Q)=0$.

The edge sequence of Deligne's weight spectral sequence for the smooth pair $(\widetilde Y_{t_*},D)$ contains
\[\bigoplus_S H^1(D_S,\Q)(-1) \longrightarrow H^3(\widetilde Y_{t_*},\Q) \longrightarrow \operatorname{Gr}_3^W H^3(Y_{t_*},\Q) \longrightarrow0;\]
see \cite[\S3.2]{Deligne1971}.  Its left hand term is zero, so restriction is injective. The groups $H^0(D_S\cap D_T,\Q)(-2)$ in cohomology with support have weight four, so their maps to the pure weight-three group $H^3(\widetilde Y_{t_*},\Q)$ vanish.

For such a prime, \cite[Theorem~2.3]{GrosseKlonne2004} compares algebraic de Rham cohomology of the generic fiber with dagger cohomology. Proposition~3.6 and Corollary~3.8(c) there identify dagger cohomology with rigid cohomology of the special fiber and preserve restriction.  Thus one obtains comparison isomorphisms compatible with restriction:
\[\begin{CD}
 H^3_{\dR}(\widetilde Y_{t_*,K}/K)
 @>{j^*}>> H^3_{\dR}(Y_{t_*,K}/K)\\
 @V{\sim}VV @VV{\sim}V\\
 H^3_{\rig}(\widetilde Y_{t_*,\F_p}/K)
 @>{j^*}>> H^3_{\rig}(Y_{t_*,\F_p}/K).
\end{CD}\]
These comparison maps are induced by the relative de Rham complexes. They preserve the differential form representatives and their residues. By algebraic de Rham comparison \cite{Grothendieck1966}, the top map is obtained from the Betti restriction map above by extension of scalars and is injective.  Hence the bottom map is injective as well.
\end{proof}

We can now verify the cyclic spanning hypothesis in \cref{thm:general-bounded-residue}.

\begin{proposition}
\label{prop:interior-residue}
For $t\notin\Sigma$ and $0\leq j\leq3$, the cyclic residue $\Res_{Y_t}\theta^j(\Omega_T/f_t)$ has a $G$-invariant lift to $H^3_{\dR}(\widetilde Y_t)$. At $t=t_*=-1/7$, the four lifted classes form a basis of $H^3_{\dR}(\widetilde Y_{t_*}/\Q)^G$. For every prime $p>5$, $p\neq7$, the rigid specializations of these proper classes form a basis of $H$.
\end{proposition}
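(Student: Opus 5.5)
The argument has three stages. First I lift the open residues to proper classes using the weight filtration. Then I prove independence at $t_*$ through the Picard--Fuchs operator. Finally I transport the conclusion to rigid cohomology through the comparison square of \cref{lem:proper-open-injective}.

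\emph{Lifting.} The forms $\theta^j(\Omega_T/f_t)=c_j\,j!\,t^jg^j\Omega_T/f_t^{j+1}+\cdots$ have numerators supported in $j\Delta\subset(j+1)\Delta^\circ$. These are exactly the interior (adjoint-type) numerators. By the Batyrev/Griffiths--Dwork description of the toric Hodge filtration, their residues lie in $W_3H^3(Y_t)$. I would check this by computing the residue along each boundary divisor $E_S$ of the permutohedral compactification and showing that it vanishes because the numerator vanishes to sufficient order along the corresponding face.

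The edge sequence in the proof of \cref{lem:proper-open-injective} identifies $W_3H^3(Y_t)=\operatorname{Gr}_3^WH^3(Y_t)$ with the image of $H^3(\widetilde Y_t)$. I use the vanishing $H^1(D_S)=0$ there, and this argument works for every $t\notin\Sigma$ by \eqref{eq:perm-Y-boundary-cohomology}. So each residue has a proper preimage, and the preimage is unique by injectivity. The residue classes are $G$-invariant because $G$ preserves $f_t$ and $\Omega_T$ (after the sign check for inversion acting on $\Omega_T$ together with residue orientation). Uniqueness of the lift then makes the lift $G$-invariant.

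\emph{Independence at $t_*$.} The invariant part $H^3_{\dR}(\widetilde Y_t)^G\simeq H^3_{\dR}(X_t)$ has rank four by \cref{prop:permutohedral-deformations}. The lifted classes are $\omega_t,\nabla_\theta\omega_t,\nabla_\theta^2\omega_t,\nabla_\theta^3\omega_t$, because restriction is injective and compatible with Gauss--Manin. The main obstacle is excluding a relation among them at the specific point $t_*$. A linear relation at a point would mean $\omega$ satisfies a differential equation of order $\le 3$ there. Instead I argue via Griffiths transversality. Since $h^{3,0}=h^{2,1}=1$ and the Kodaira--Spencer class generates $H^1(T_X)$ at every $t\notin\Sigma$ (\cref{prop:permutohedral-deformations}), the map $\nabla_\theta$ induces isomorphisms $\operatorname{gr}^3\to\operatorname{gr}^2$ pointwise. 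The polarization $\operatorname{gr}^2\times\operatorname{gr}^1\to\Q$ then makes $\operatorname{gr}^2\to\operatorname{gr}^1$ dual to it up to sign, and the same holds for $\operatorname{gr}^1\to\operatorname{gr}^0$. Thus $\nabla_\theta^j\omega$ has nonzero image in $\operatorname{gr}^{3-j}$, and the four classes are independent at every $t\notin\Sigma$. This is consistent with the AESZ34 leading coefficient being nonzero at $-1/7$.

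\emph{Rigid specialization.} By \cref{lem:good-pair} the model is smooth proper with snc boundary over $\Z_{(p)}$. The de Rham classes above are then given by log forms on $(\widetilde{\mathscr Y},D)$ defined over $\Z[1/14]$. The comparison square of \cref{lem:proper-open-injective}, together with Berthelot--Ogus crystalline--de Rham comparison, sends them to rigid classes of the special fiber. Independence over $K$ follows by base change $\Q\to K$ from the characteristic zero statement, because the comparison maps are $K$-linear isomorphisms. These classes are $G$-invariant, there are four of them, and $\dim H=4$, so they form a basis of $H$. Their images under restriction are $e_G\Res_{\rig}[\theta^j(1/f)\Omega_T]$, as required by \eqref{eq:general-proper-residues}.
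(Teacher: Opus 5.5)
\paragraph{Independence at $t_*$.}
The Griffiths-transversality argument for independence does not work. Differentiating $\langle F^p,F^{4-p}\rangle=0$ shows that the Higgs map $\bar\nabla_\theta\colon\operatorname{gr}^p\to\operatorname{gr}^{p-1}$ is minus the adjoint of $\bar\nabla_\theta\colon\operatorname{gr}^{4-p}\to\operatorname{gr}^{3-p}$. So polarization ties $\operatorname{gr}^3\to\operatorname{gr}^2$ to $\operatorname{gr}^1\to\operatorname{gr}^0$. It only says that the middle map $\operatorname{gr}^2\to\operatorname{gr}^1$ is adjoint to itself, and nothing relates that map to the Kodaira--Spencer class. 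Its nonvanishing is the nonvanishing of the Yukawa coupling $\langle\omega,\nabla_\theta^3\omega\rangle=-\langle\nabla_\theta\omega,\nabla_\theta^2\omega\rangle$. Since $\nabla_\theta\omega$ spans $\operatorname{gr}^2$, this coupling vanishes exactly when $\nabla_\theta^2\omega\in F^2$.

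Points with nonzero Kodaira--Spencer class where $\omega,\dots,\nabla_\theta^3\omega$ become dependent are the apparent singularities of Picard--Fuchs operators, and they occur in other one-parameter families. So transversality and polarization alone cannot exclude them, and your argument would prove something false in general. The input you set aside as a consistency check is the actual proof. Minimality of the AESZ34 operator makes $\mathcal W=\omega\wedge\nabla_\theta\omega\wedge\nabla_\theta^2\omega\wedge\nabla_\theta^3\omega$ generically nonzero. The Picard--Fuchs relation gives $\nabla_\theta\mathcal W=-(a_3/a_4)\mathcal W$, so $\mathcal W$ cannot vanish at the regular point $t_*$, where $t_*a_4(t_*)\neq0$.

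\paragraph{The lifting step.}
The lifting step is missing its key idea. Batyrev's description of the lowest weight part through interior numerators requires $f_t$ to be $\Delta$-regular, and it is not. On the facet of the root polytope indexed by $S\subset\{0,\ldots,4\}$, the face polynomial is a multiple of $(\sum_{i\in S}y_i)(\sum_{j\notin S}y_j^{-1})$. For $2\leq|S|\leq3$ this is a product of two non-monomial factors, so the face hypersurface is singular. This is the source of the persistent boundary nodes, and it is why the paper passes to the Batyrev--Borisov complete intersection.

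Your fallback, computing residues along each $E_S$, is not carried out. Moreover, for $j\geq1$ the forms have higher-order poles along $Y_t$, so there is no form-level boundary residue to compute until the pole order is reduced.

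The paper's mechanism is global on the permutohedral fivefold. First, $A\Omega_T/f_t^m$ is the residue along $P_1=0$ of $x_0^{1-m}A\Omega_5/(P_1P_2^m)$. Second, $K_V+D_1+D_2=0$ and $a=tr$ is a section of $\mathcal L_2$. Hence every term $\Omega_5a^k/(P_1P_2^{k+1})$ in the Stirling expansion of $\theta^j(P_2^{-1})$ has no poles along the toric boundary. The iterated residues are therefore classes on $\widetilde Y_t$ that restrict to the open residues.

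With these two repairs, your uniqueness argument for $G$-invariance and your transport to rigid cohomology are fine. They agree with the paper's.
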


\begin{proof}
On the dense torus of the permutohedral fivefold, we normalize $X_5=1$ and write $x_0=X_0$, $s=1+x_1+\cdots+x_4$, and $r=1+x_1^{-1}+\cdots+x_4^{-1}$. Then $P_1=x_0+s$, $P_2=x_0^{-1}+tr$, and $f_t=1-tsr$. Let $\Omega_5=d\log x_0\wedge\Omega_T$. The one-variable residue at $x_0=-s$ gives $\Res_{P_1=0}(\Omega_5/P_1)=-\Omega_T/s$. Since $P_2=-f_t/s$ on $P_1=0$, for every Laurent polynomial $A$ in $x_1^{\pm1},\ldots,x_4^{\pm1}$ and every $m\geq1$ one has
\begin{equation*}
 \Res_{P_1=0}\left(
   \frac{x_0^{1-m}A\Omega_5}{P_1P_2^m}
 \right)=\frac{A\Omega_T}{f_t^m}.
\end{equation*}
Indeed, the scalar obtained after substitution is $-s^{-1}(-s)^{1-m}(-f_t/s)^{-m}=f_t^{-m}$.

Let $D_i$ be the zero divisor of the defining section $P_i$, so that $\mathcal L_i=\mathcal O(D_i)$ on the toric fivefold $V$.  The nef partition satisfies $K_V+D_1+D_2=0$.  Thus the double residue $\widetilde\omega_t=\Res_{P_1=P_2=0}(\Omega_5/(P_1P_2))$ is a regular three form on the smooth proper complete intersection.  Let $a=tr$, so that $\theta P_2=a$.  For $j\geq1$,
\[\theta^j(P_2^{-1})= \sum_{k=1}^j(-1)^k k!S(j,k)\frac{a^k}{P_2^{k+1}},\]
where $S(j,k)$ is a Stirling number of the second kind.  The numerator $a^k$ is a section of $\mathcal L_2^k$, and $K_V+D_1+(k+1)D_2=kD_2$. Consequently each summand is a global meromorphic top form with poles only along $P_1$ and $P_2$; it has no poles along the toric boundary. Iterated residue and Gauss--Manin differentiation therefore give $\Res_{Y_t}\theta^j(\Omega_T/f_t)= \left.\nabla_\theta^j\widetilde\omega_t\right|_{Y_t}$. On the original four-dimensional torus, $f_t$ and $\Omega_T$ are $G$-invariant, and the action commutes with the Gauss--Manin connection. Thus the four open classes are $G$-invariant. Averaging $\widetilde\omega_t$ over $G$ gives a $G$-invariant proper class $\widetilde\omega_t^G$ whose restriction is $\omega_t$. Its first three Gauss--Manin derivatives are also $G$-invariant classes in $H^3_{\dR}(\widetilde Y_t)$.

The AESZ34 operator is the minimal Picard--Fuchs operator and has order four \cite[\S\S3.1--3.2]{CandelasDeLaOssaElmiVanStraten2020}. After multiplying it by a suitable nonzero constant, we write it as $a_4(t)\theta^4+a_3(t)\theta^3+\cdots+a_0(t)$, where $a_4(t)=(t-1)(9t-1)(25t-1)$. On the locus where $ta_4(t)\neq0$, let
\[\mathcal W=\widetilde\omega_t^G\wedge \nabla_\theta\widetilde\omega_t^G\wedge \nabla_\theta^2\widetilde\omega_t^G\wedge \nabla_\theta^3\widetilde\omega_t^G\]
be the resulting section of the determinant line of the $G$-invariant middle de Rham variation of the proper compactification. Minimality of the Picard--Fuchs operator makes $\mathcal W$ nonzero at the generic point, and the Picard--Fuchs relation gives $\nabla_\theta\mathcal W=-a_3(t)\mathcal W/a_4(t)$. A nonzero solution cannot vanish at a regular point, so $\mathcal W(t_*)\neq0$. The invariant proper cohomology has dimension four by \cref{prop:permutohedral-deformations}. Thus these four classes form a basis of $H^3_{\dR}(\widetilde Y_{t_*}/\Q)^G$. By \cref{lem:proper-open-injective} and algebraic de Rham comparison \cite{Grothendieck1966}, their restrictions to $Y_{t_*}$ are also independent.

For every prime $p>5$, $p\neq7$, we extend scalars from $\Q$ to $K$. The comparison square in the proof of \cref{lem:proper-open-injective} carries this basis in the de Rham cohomology of the proper compactification to a basis of $e_GH^3_{\rig}(\widetilde Y_{t_*,\F_p}/K)$, which is $H$. The injectivity of the lower horizontal map shows that the four restricted rigid classes are independent in $e_GH^3_{\rig}(Y_{t_*,\Fp}/K)$.
\end{proof}

Let $Y_0=Y_{t_*,\Fp}$ and $H^\circ=e_GH^3_{\rig}(Y_0/K)$.  In this section and in the K3 application, $W_\bullet$ denotes the weight filtration on rigid cohomology. Smooth proper comparison and restriction define a Frobenius-equivariant map
\[j_p:H\xrightarrow{\sim} e_GH^3_{\rig}(\widetilde Y_{t_*,\Fp}/K) \longrightarrow H^\circ.\]

The following lemma identifies the proper summand inside the open rigid cohomology.

\begin{lemma}
\label{lem:weight-three-restriction}
The map $j_p$ identifies $H$ with the weight-three part $W_3H^\circ$.
\end{lemma}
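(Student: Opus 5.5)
We prove the lemma in two steps: first we show that $j_p$ is injective with image inside $W_3H^\circ$, and then we compare dimensions. Injectivity comes directly from \cref{lem:proper-open-injective}. The rigid restriction map $H^3_{\rig}(\widetilde Y_{t_*,\F_p}/K)\to H^3_{\rig}(Y_0/K)$ is injective and commutes with the $G$-action. Since $p\nmid 10$, the idempotent $e_G$ is defined over $K$, so applying it to both sides keeps the map injective, and therefore $j_p$ is injective. For the image, note that $H$ is the rigid cohomology of a smooth proper variety over $\F_p$, so it is pure of weight three by Deligne's Weil~II (equivalently, Katz--Messing for crystalline cohomology). Restriction is Frobenius equivariant, so the Frobenius eigenvalues on $j_p(H)$ are Weil numbers of weight three. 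Recall that $H^3_{\rig}(Y_0/K)$ has weights in $[3,6]$ by \cite[Theorem~2.2]{Chiarellotto1998Weights}. Since the weight filtration splits Frobenius-equivariantly by the coprimality argument used to construct $\Pi_d$, the weight-three eigenvectors lie in $W_3H^\circ$, and so $j_p(H)\subset W_3H^\circ$.

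It remains to show $\dim W_3H^\circ\le 4$, which would make $j_p$ onto $W_3H^\circ$. We would use the rigid analogue of Deligne's weight spectral sequence for the simple normal crossings pair $(\widetilde Y_{t_*,\F_p},D_{\F_p})$; this pair is available over $W$ by \cref{lem:good-pair}. The lowest weight piece $W_3H^3_{\rig}(Y_0)$ equals the image of $H^3_{\rig}(\widetilde Y_0)\to H^3_{\rig}(Y_0)$, because in the edge sequence all other contributions in degree three have weight at least four. This is the same argument as in the Hodge-theoretic part of the proof of \cref{lem:proper-open-injective}, and we would cite the construction of the weight filtration via log-crystalline or rigid cohomology with compact supports and Poincaré duality. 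After applying $e_G$, the equality $W_3H^\circ=j_p(H)$ follows directly, since the rank of $H$ is four by \cref{prop:interior-residue}.

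The main obstacle is identifying $W_3$ with the image of proper cohomology in the rigid setting, that is, degeneration and functoriality of the weight spectral sequence in $p$-adic cohomology. If a direct citation is awkward, we would instead argue through the comparison square in the proof of \cref{lem:proper-open-injective}. That square identifies the open rigid cohomology with algebraic de Rham cohomology of $Y_{t_*,K}$, where Deligne's mixed Hodge theory gives $\dim W_3=\dim\operatorname{im}(H^3(\widetilde Y_{t_*})\to H^3(Y_{t_*}))$. We would then match the $p$-adic weight filtration with the Hodge-theoretic one by dimension counting. In every other degree the graded pieces are Tate twists of cohomology of the boundary strata $D_S\cap\cdots$, and these have matching dimensions on both sides, because the strata are smooth proper over $W$ and their de Rham and rigid cohomologies have equal ranks.
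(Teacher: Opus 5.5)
Your proposal is correct and is essentially the paper's argument. You get injectivity from \cref{lem:proper-open-injective} after applying $e_G$; the paper phrases this through the independence statement of \cref{prop:interior-residue}, which rests on that lemma. You get $W_3=\operatorname{im}(j^*)$ because everything else in degree three has weight at least four, which is also the paper's key step.

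The ``main obstacle'' you flag does not actually arise. The paper uses only the three-term rigid localization sequence
\[H^3_{\rig}(\widetilde Y_{t_*,\F_p}/K)\to H^3_{\rig}(Y_0/K)\to H^4_{D,\rig}(\widetilde Y_{t_*,\F_p}/K),\]
together with Chiarellotto's bound that the group with supports in the divisor has weights in $[4,6]$. So no weight spectral sequence, degeneration statement, or Hodge-theoretic dimension count is needed.
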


\begin{proof}
The rigid localization sequence contains
\[H^3_{\rig}(\widetilde Y_{t_*,\Fp}/K) \xrightarrow{j^*}H^3_{\rig}(Y_0/K) \longrightarrow H^4_{D,\rig}(\widetilde Y_{t_*,\Fp}/K).\]
The first group is pure of weight $3$ by the smooth proper theorem of \cite{KatzMessing1974}.  The middle group has weights in $[3,6]$ by \cite[Theorem~2.2]{Chiarellotto1998Weights}, while the group with support has weights in $[4,6]$ by \cite[Theorem~2.3]{Chiarellotto1998Weights}, because $D$ has codimension one.  Exactness and the canonical decomposition by integral weights described in \cite[\S2.0]{Chiarellotto1998Weights} therefore give $W_3H^3_{\rig}(Y_0/K)=\operatorname{im}(j^*)$. Taking $G$-invariants is exact because $p\nmid|G|$, and the weight decomposition commutes with $e_G$.  Hence $W_3H^\circ=e_GW_3H^3_{\rig}(Y_0/K)=\operatorname{im}(e_Gj^*)$. The rigid specializations in \cref{prop:interior-residue} are independent after restriction.  Thus $j_p$ is injective.  Its image is $W_3H^\circ$, so it is an isomorphism onto that space.
\end{proof}

\begin{theorem}
\label{thm:BR}
For every prime $p>5$, $p\neq7$, there is a continuous surjection
\begin{equation}\label{eq:rho}
 \rho_K:CY(g)_{t_*}[1/p]\longrightarrow H.
\end{equation}
For every $W$-lattice $\Lambda\subset H$, there is a constant $c$, independent of Cartier iteration, such that
\begin{equation}\label{eq:rho-bounded}
 \rho_K(CY(g)_{t_*})\subset p^{-c}\Lambda.
\end{equation}
\end{theorem}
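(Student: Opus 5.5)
The plan is to obtain \cref{thm:BR} as a direct application of \cref{thm:general-bounded-residue}, with $d=3$, $n=4$, $t_0=t_*$, $W=\Z_p$ and $G\simeq C_{10}$. The theorem's conclusion is exactly a continuous surjection $\rho:M[1/p]\twoheadrightarrow H$ together with the lattice bound $\rho(M)\subset p^{-c}\Lambda$. The constant $c$ depends only on $\Lambda$, since it comes from continuity of $\rho$, and so it is independent of Cartier iteration. The work therefore consists of checking hypotheses (i)--(iv) for the $A_4$ fiber at every prime $p>5$, $p\neq7$.

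\textbf{Hypotheses (i) and (ii).} Hypothesis (i) follows from \cref{lem:good-pair}. After base change from $\Z[1/14]$ to $W$, it gives the $G$-equivariant smooth proper model $\widetilde{\mathscr Y}$ with relative simple normal crossings boundary $D$. On the torus, the elimination of $X_0$ from \cref{prop:birational-H3} and the residue computation in the proof of \cref{prop:interior-residue} identify $\widetilde{\mathscr Y}\setminus D$ with $\mathscr Y=V(f_{t_*})\subset\Gm^4$.

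For (ii), $G$ is generated by the five-cycle and simultaneous inversion. Both are monomial substitutions defined over $\Z$. Both preserve $g$, and hence $f_{t_*}$. The five-cycle on $(x_1:\cdots:x_5)$ has determinant $+1$ on the character lattice. Inversion multiplies $\Omega_T$ by $(-1)^4=1$. Hence $\Omega_T$ is preserved, and $|G|=10$ is prime to $p$. One caveat needs checking: on the nose, inversion on the torus is the involution $\iota_t$ of \cref{prop:birational-H3}, which is twisted by $t$. Since $f_t$ depends only on $g$, the induced substitution on $x_1,\ldots,x_4$ is plain inversion, so this is harmless.

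\textbf{Hypothesis (iii).} Take $H=e_GH^3_{\cris}(\widetilde Y_{t_*,\Fp}/W)[1/p]$. This is a direct summand of $e_GV=V^G$ as a weakly admissible filtered isocrystal, because $e_G$ is an idempotent endomorphism commuting with Frobenius and with the Hodge filtration. In fact $H=e_GV$ here.

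For $j_H$ we use $j_p$. \Cref{lem:weight-three-restriction} shows that $j_p$ is a Frobenius-equivariant isomorphism $H\xrightarrow{\sim}W_3H^\circ$. Its Frobenius equivariance comes from functoriality of restriction in rigid cohomology together with the identification of crystalline and proper rigid cohomology.

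\textbf{Hypothesis (iv).} Take $r_i=i-1$ for $i=1,\ldots,4$, and let $h_i=j_p^{-1}e_G\Res_{\rig}[\theta^{i-1}(1/f_t)|_{t_*}\Omega_T]$. \Cref{prop:interior-residue} shows three things: these residues are restrictions of the $G$-invariant proper classes $\nabla_\theta^{i-1}\widetilde\omega^G_{t}$, they lie in the image of restriction (hence in $W_3H^\circ$), and their specializations form a basis of $H$. The residue compatibility in \cref{prop:interior-residue} was stated for algebraic de Rham classes. It transfers to rigid Poincar\'e residue through the comparison square in \cref{lem:proper-open-injective}, which preserves differential-form representatives and residues. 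The $e_G$ in \eqref{eq:general-proper-residues} acts trivially because the classes are already invariant. We also need $\theta^{j}(1/f_t)|_{t_*}\in M_{\mathrm{alg}}$, which holds by the $\mathcal G$-invariance of $g$ as noted in the proof of \cref{thm:general-bounded-residue}. Here $\mathcal G\simeq S_5\times C_2$ is larger than $G$, but the generators $\theta^j(1/f_t)$ are invariant under both groups.

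\textbf{Main obstacle.} The step I expect to be delicate is the precise matching in (iv) between the rigid residue of the Dwork class on the open torus and the restriction of the proper crystalline class. This involves the Tate-twist identification and the compatibility of the Gauss--Manin derivative $\theta$ with specialization at $t_*$ before taking rigid realizations. I would handle it by working over $W$ throughout. The double-residue forms are defined over $\Z[1/14]$, so the identity $\Res_{Y_t}\theta^j(\Omega_T/f_t)=\nabla^j_\theta\widetilde\omega_t|_{Y_t}$ holds integrally in algebraic de Rham cohomology of the $W$-model, and Große-Klönne's comparison then carries it to rigid cohomology. With (i)--(iv) verified, \cref{thm:general-bounded-residue} yields $\rho_K$ and \eqref{eq:rho-bounded}.
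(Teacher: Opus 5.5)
Your proposal is correct and follows the same approach as the paper. The paper also applies \cref{thm:general-bounded-residue} with $n=4$, $d=3$ and $G\simeq C_{10}$, takes the equivariant simple normal crossings model from \cref{lem:good-pair}, the weight-three identification from \cref{lem:weight-three-restriction} and the cyclic residue basis from \cref{prop:interior-residue}, and reads off surjectivity and the lattice bound from that theorem. Your explicit checks of hypothesis (ii) and of the de Rham-to-rigid residue compatibility fill in details the paper leaves implicit; the paper handles the latter inside \cref{prop:interior-residue} via the comparison square of \cref{lem:proper-open-injective}.
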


\begin{proof}
Apply \cref{thm:general-bounded-residue} with $n=4$, $d=3$, and the geometric group $G\simeq C_{10}$.  The smooth projective equivariant compactification and its relative simple normal crossings boundary are given by \cref{lem:good-pair}.  The target is the weakly admissible filtered isocrystal $H=e_GH^3_{\cris}(\widetilde Y_{t_*,\F_p}/W)[1/p]$. By \cref{lem:weight-three-restriction}, restriction identifies $H$ with $W_3H^\circ$.  By \cref{prop:interior-residue}, the cyclic residues with exponents $0,1,2,3$ lift to a basis of $H$.  Thus all the hypotheses of \cref{thm:general-bounded-residue} hold.  Its map \eqref{eq:general-rho} is the map $\rho_K$ in \eqref{eq:rho}, and the theorem gives both surjectivity and \eqref{eq:rho-bounded}.
\end{proof}

\subsection{The Frobenius normalization}

Let $\varphi$ be the usual crystalline pullback Frobenius on $H$.  We use the convention that $K(-1)$ has Frobenius $p$.

We next determine the Frobenius normalization of the comparison map.

\begin{proposition}
\label{prop:normalization}
On the completed module $CY(g)_{t_*}[1/p]$, one has
\begin{equation*}
 \rho_K\circ\cC_p=p^3\varphi^{-1}\circ\rho_K.
\end{equation*}
Consequently, a $\varphi$-slope $\lambda$ corresponds to a slope $3-\lambda$ for $\cC_p$ under $\rho_K$.
\end{proposition}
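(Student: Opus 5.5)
The identity is essentially the Frobenius-compatibility conclusion \eqref{eq:general-Cartier-Frobenius} of \cref{thm:general-bounded-residue}, specialized to $n=4$, $d=3$. In \cref{thm:BR}, $\rho_K$ was defined as the map \eqref{eq:general-rho}. So I would first check that the Cartier operator $\cC_p$ on $CY(g)_{t_*}$ is the operator $C=\cC_{t_*,t_*}$ of that theorem. By \cite[Proposition~2.7]{BeukersVlasenkoIII}, $C$ is the restriction of coefficient extraction to the Calabi--Yau module. For the auxiliary Frobenius $\sigma_{t_*}$, source and target agree because $\sigma_{t_*}(t_*)=t_*$ and Witt Frobenius is trivial on $\Z_p$. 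Hence $\cC_p$ maps $CY(g)_{t_*}$ to itself and equals $C$.

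Next I would retrace the three Frobenius inputs, now with the concrete data.
\begin{enumerate}[label=\textup{(\arabic*)}]
\item The Laplace-type map $\mathcal L_f$ intertwines $\cC_p$ with $\Psi=p^4(F_U^*)^{-1}$ on $H^4_{\rig}(U/K)$. This uses \eqref{eq:general-Laplace-Cartier} and \eqref{eq:general-trace} for the torus $\Gm^4$ minus $Y_0$.
\item Poincar\'e residue satisfies $\Res_{\rig}\circ F_U^*=p\varphi^\circ\circ\Res_{\rig}$ under our convention that $K(-1)$ has Frobenius $p$. This gives $\Res_{\rig}\circ\Psi=p^3(\varphi^\circ)^{-1}\circ\Res_{\rig}$.
\item The three remaining maps are Frobenius equivariant. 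The averaging projector $e_G$ for $G\simeq C_{10}$ commutes with $\varphi^\circ$ because the action is defined over $W$. The weight projector $\Pi_3$ commutes with $\varphi^\circ$ by construction. The map $j_p^{-1}$ is equivariant by \cref{lem:weight-three-restriction}, since smooth proper comparison and restriction respect Frobenius.
\end{enumerate}
Composing these gives $\rho_K(\cC_p\eta)=p^3\varphi^{-1}\rho_K(\eta)$ for $\eta$ in the dense algebraic submodule $M_{\mathrm{alg}}$. Continuity of $\rho_K$ and of $\cC_p$ then extends the identity to $CY(g)_{t_*}[1/p]$.

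For the slope statement, let $v$ lie in the generalized $\mu$-eigenspace of $\cC_p$ with $\rho_K(v)\neq0$. The identity gives $(p^3\varphi^{-1}-\mu)^N\rho_K(v)=0$. So $p^3/\mu$ is an eigenvalue of $\varphi$, and its valuation $\lambda$ satisfies $v_p(\mu)=3-\lambda$. Surjectivity of $\rho_K$ ensures that every $\varphi$-slope arises in this way.

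The main obstacle is the normalization bookkeeping rather than convergence, since convergence is already handled in the general theorem. One must make sure that crystalline pullback Frobenius on $H$ matches rigid Frobenius $\varphi^\circ$ under $j_p$ with no extra Tate twist. One must also make sure the residue twist contributes exactly one power of $p$, so that the exponent is $n-1=3$ and not $4$. I would check this against the rank-one case of the constant term $\CT(f^{p-1})$. At a Hasse--Witt ordinary prime it is a unit, which should correspond to the slope-$3$ Frobenius eigenvalue on the holomorphic line; this confirms the exponent $3$.
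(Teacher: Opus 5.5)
Your proof is correct and is the paper's argument. The paper's proof simply invokes \eqref{eq:general-Cartier-Frobenius} of \cref{thm:general-bounded-residue} with $n=4$, $d=3$, and your steps (1)--(3) unfold that theorem's proof: $\Psi=p^4(F_U^*)^{-1}$, one power of $p$ from the residue twist, and Frobenius-equivariance of $e_G$, $\Pi_3$ and $j_p$.

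One small overreach concerns the slope statement. Surjectivity of $\rho_K$ does not by itself produce generalized eigenvectors of $\cC_p$ in the infinite-dimensional space $CY(g)_{t_*}[1/p]$ that lift each $\varphi$-eigenvector. The correspondence should therefore be read, as in the paper, for the operator $p^3\varphi^{-1}$ that $\cC_p$ induces on the quotient by the $\cC_p$-stable subspace $\ker\rho_K$. Equivalently, read it on finite quotients such as $Q^{(2)}_{t_*}[1/p]$, where $\rho_K$ induces an isomorphism.
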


\begin{proof}
This is \eqref{eq:general-Cartier-Frobenius} in \cref{thm:general-bounded-residue} with $n=4$ and $d=3$.  The final assertion follows because multiplication by $p^3$ followed by $\varphi^{-1}$ changes a Frobenius slope $\lambda$ into the Cartier slope $3-\lambda$.
\end{proof}

\begin{remark}
If $A$ is a matrix for $\varphi$ and $P$ is the polarization matrix, then $A^{\mathsf t}PA=p^3P$, so $p^3A^{-1}=P^{-1}A^{\mathsf t}P$. The operator $p^3\varphi^{-1}$ induced by Cartier on $H$ has the same characteristic polynomial as $\varphi$.
\end{remark}

\subsection{The high slope quotient}

We now identify the level two Dwork quotient with the high slope quotient of $H$.

\begin{theorem}
\label{thm:Q2-comparison}
At a two-Dwork-ordinary prime $p>5$, $p\neq7$, one has
\begin{equation}\label{eq:Q2-high}
 Q^{(2)}_{t_*}[1/p]
 \simeq H/H_{\varphi,\leq1}=H_{\varphi,>1}.
\end{equation}
Moreover the Cartier slopes on $Q^{(2)}_{t_*}[1/p]$ are $\{0,1\}$ and the crystalline slopes on $H$ are $\{0,1,2,3\}$.
\end{theorem}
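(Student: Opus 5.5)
The plan is to apply \cref{lem:low-weight-ordinary} with $(d,\dim H)=(3,4)$ to the comparison map $\rho_K$ of \cref{thm:BR}.

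First I check the hypotheses of \cref{lem:slope}. By \cref{thm:BR}, $\rho_K$ is a surjection and satisfies the boundedness condition \eqref{eq:rho-bounded} for a fixed lattice. By \cref{prop:normalization}, it intertwines $C=\cC_p$ with $p^3\varphi^{-1}$; here the auxiliary Frobenius $\sigma_{t_*}$ fixes $t_*$, so $C$ is an endomorphism of $M=CY(g)_{t_*}$. The module $M$ is $p$-torsion free, separated and complete, being a $p$-adic completion specialized at a unit point. Its submodule $\cF_2M$ coincides with the abstract $\cF_2$ of \cref{lem:slope}, because with $\sigma_{t_*}(t_*)=t_*$ the twisted modules $M^{\sigma^j}$ specialize to $M$ itself.

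Next come the remaining hypotheses of \cref{lem:low-weight-ordinary}. The family is completely symmetric, and the two-Dwork-ordinary assumption gives the first two Hasse--Witt conditions. Poincar\'e duality on $\widetilde Y_{t_*}$, restricted to the $G$-invariant summand, gives a perfect Frobenius-compatible pairing $H\times H\to K(-3)$. The restriction is perfect because $e_G$ is self-adjoint and $p\nmid|G|$. By \cref{prop:interior-residue}, $\dim H=4$. The lemma then gives the isomorphism $Q^{(2)}_{t_*}[1/p]\simeq H/H_{\varphi,\leq1}$ intertwining $C$ with $p^3\varphi^{-1}$. The identification $H/H_{\varphi,\leq1}\simeq H_{\varphi,>1}$ is the slope decomposition $H=H_{\varphi,\leq1}\oplus H_{\varphi,>1}$.

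Finally, the slopes. \Cref{lem:cartier-slopes} gives Cartier slopes $\{0,1\}$ on $Q^{(2)}_{t_*}[1/p]$. Through the intertwining these become $\varphi$-slopes $\{3,2\}$ on $H_{\varphi,>1}$. Polarization symmetry $\lambda\leftrightarrow3-\lambda$ then forces the remaining two slopes to be $\{0,1\}$, so $H$ has slopes $\{0,1,2,3\}$.

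I expect the only delicate point to be the first step: checking that the Beukers--Vlasenko $\cF_2$, defined through twisted modules, really is the Cartier-iterate filtration used in \cref{lem:slope} once $\sigma$ fixes $t_*$. The rest is direct citation.
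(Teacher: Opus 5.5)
Your proposal is correct and follows the paper's proof. The only difference is packaging: the paper does not cite \cref{lem:low-weight-ordinary} but repeats its argument inline, namely the surjection from \cref{lem:slope} with $r=2$, the bound $\dim H_{\varphi,>1}\geq2$ from polarization symmetry, and then \cref{lem:cartier-slopes} and \cref{prop:normalization} for the slopes. On the step you flag as delicate, you only need the specialized Beukers--Vlasenko $\cF_2$ to be contained in the Cartier-iterate filtration of \cref{lem:slope}, not equal to it; this containment follows directly from the compatibility of Cartier with specialization when $\sigma_{t_*}(t_*)=t_*$.
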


\begin{proof}
Apply \cref{lem:slope} to the map in \cref{thm:BR}, with $d=3$ and $r=2$. This gives a surjection $Q^{(2)}_{t_*}[1/p]\twoheadrightarrow H_{\varphi,>1}$. The source has dimension $2$ by \eqref{eq:BV-split}.  The polarization makes the four Newton slopes symmetric under $\lambda\leftrightarrow3-\lambda$. Each symmetric pair contributes at least one slope $>1$, so $\dim H_{\varphi,>1}\geq2$.  Surjectivity gives the reverse inequality.  Both sides have dimension $2$, proving \eqref{eq:Q2-high}.

By \cref{lem:cartier-slopes}, the Cartier slopes of $Q^{(2)}$ are $0$ and $1$.  By \cref{prop:normalization}, the corresponding $\varphi$-slopes on $H_{\varphi,>1}$ are $3$ and $2$.  Polarization gives the complementary slopes $0$ and $1$.
\end{proof}

\subsection{The filtered modular splitting}
To pass from the cover to the quotient over $\Z_p$, we prove the following specialization lemma.

\begin{lemma}
\label{lem:tame-free-specialization}
Let $R$ be a discrete valuation ring.  Let a finite group $G$ act on a smooth projective $R$-scheme $Y$.  Assume that $|G|$ is a unit in $R$.  If $G$ acts geometrically freely on the generic fiber, then it acts freely on $Y$.  The quotient $Y/G$ is smooth over $R$, and $Y\to Y/G$ is finite \'etale.
\end{lemma}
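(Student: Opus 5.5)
The plan is to show that stabilizers are trivial at every point of $Y$, including points of the special fiber. The tool is the standard fact that the fixed locus of a tame finite group acting on a smooth scheme is smooth. Then I deduce étaleness of the quotient map and smoothness of the quotient.

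First, reduce to cyclic subgroups of prime order. It suffices to show that for every nontrivial $g\in G$ the fixed-point scheme $Y^g$ is empty. Replacing $g$ by a suitable power, assume $g$ has prime order $\ell$; since $\ell$ divides $|G|$, it is a unit in $R$. The subgroup $\langle g\rangle$ is linearly reductive over $R$, and $Y$ is smooth over $R$. The standard result on fixed loci of linearly reductive (here tame constant) group actions then says $Y^g$ is a closed subscheme of $Y$ that is smooth over $R$. To verify this, at a fixed point in the special fiber, linearize the action on the completed local ring over $R$; averaging is possible because $\ell$ is invertible. The fixed locus is then cut out by the coordinates in the nontrivial isotypic parts.

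Second, use properness and smoothness to propagate emptiness from the generic fiber. Since $Y$ is projective over $R$, the closed subscheme $Y^g$ is proper over $R$. Since $Y^g$ is smooth over $R$, it is flat, so each connected component dominates $\operatorname{Spec}R$. Hence if $Y^g$ were nonempty, it would have a nonempty generic fiber. Its geometric generic fiber is the fixed locus of $g$ on $Y_{\bar\eta}$, which is empty by geometric freeness. Therefore $Y^g=\varnothing$ for every $g\neq1$, and $G$ acts freely on $Y$, including scheme-theoretically: the inertia group scheme is trivial.

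Third, form the quotient. Because $Y$ is projective, every $G$-orbit lies in an affine open subset, so $Y/G$ exists as a scheme. A free action of a finite constant group makes $Y\to Y/G$ a $G$-torsor, hence finite étale. Since $Y$ is smooth over $R$ and the map is finite étale and surjective, $Y/G$ is smooth over $R$ by descent of smoothness along étale covers.

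The main obstacle is the first step: establishing that $Y^g$ is smooth over $R$, so in particular flat, at special-fiber points. I would prove it directly via formal linearization. Let $y$ be a closed fixed point with residue field $\kappa$. The action of $\langle g\rangle$ on $\widehat{\mathcal O}_{Y,y}\simeq R'[[z_1,\ldots,z_m]]$ can be diagonalized after an étale extension $R'$ of $R$ containing $\ell$th roots of unity. The fixed ideal is generated by the eigencoordinates with nontrivial character, so the fixed locus is formally a power series ring over $R'$. This yields smoothness of $Y^g$ over $R$ at $y$.
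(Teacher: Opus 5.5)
Your proposal is correct and follows the paper's proof. Both arguments show that $Y^g$ is smooth over $R$ using $|G|\in R^\times$, then use that a nonempty smooth $R$-scheme has open image in $\operatorname{Spec}R$ while its generic fiber is empty, so $Y^g=\varnothing$; both then conclude finite \'etaleness and descend smoothness. The only difference is how smoothness of $Y^g$ is proved. The paper uses the infinitesimal lifting criterion: the obstruction to a $g$-fixed lift lies in $H^1(\langle g\rangle,-)$, which is killed by averaging, and this works for any DVR. Your formal linearization, as written, presumes $\widehat{\mathcal O}_{Y,y}\simeq R'[[z_1,\ldots,z_m]]$ with $R'$ \'etale over $R$, which requires $\kappa(y)$ to be separable over the residue field of $R$. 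That holds when the residue field is perfect, as for $R=\Z_p$ in the paper's application; in general, first pass to a faithfully flat extension of DVRs with algebraically closed residue field.
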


\begin{proof}
Fix $g\neq1$, let $n$ be its order, and let $Y^g\subset Y$ be its fixed-point subscheme. We apply the infinitesimal lifting criterion to $Y^g$. For a square-zero extension, the lifts of a given point to the smooth scheme $Y$ form a torsor under a module on which the cyclic group $\langle g\rangle$ acts.  The obstruction to choosing a fixed lift is a class in the first cohomology of this cyclic group. This first cohomology group vanishes because the averaging operator $n^{-1}\sum_{i=0}^{n-1}g^i$ is defined over $R$. Thus $Y^g$ is formally smooth over $R$. Since it is of finite presentation, it is smooth over $R$. Its generic fiber is empty. A nonempty smooth $R$-scheme has open image in $\operatorname{Spec}R$.  Hence $Y^g$ is empty. This holds for every $g\neq1$.  The quotient map is finite \'etale because the action is free, and smoothness descends along this map.
\end{proof}
Take $\ell=p$ in \cref{prop:actual-split}.  By \cref{lem:good-pair}, the toric cover is smooth and proper over $\mathbb Z_p$.  Its generic order-ten action is geometrically free, and $p\nmid10$; the fixed-locus argument of \cref{lem:tame-free-specialization} therefore shows that the quotient is also smooth and proper over $\mathbb Z_p$.  Hence $V_p$ is crystalline by smooth proper comparison \cite{Tsuji1999}.  A direct summand of a crystalline representation is crystalline, and $D_{\cris}$ preserves direct sums.  Let $D_f=D_{\cris}(A_p)$ and $D_g=D_{\cris}(B_p)$. These filtered $\varphi$-modules are weakly admissible \cite{ColmezFontaine2000}, and one has a direct sum
\begin{equation}\label{eq:Dcris-split}
 H=D_f\oplus D_g.
\end{equation}
The Hodge--Tate weights of $A_p$ and $B_p$ are
\begin{equation}\label{eq:HT}
 \operatorname{HT}(A_p)=\{0,3\},
 \quad
 \operatorname{HT}(B_p)=\{1,2\}.
\end{equation}

To apply the rank two criterion, we first determine the Newton slopes of the two modular factors.

\begin{lemma}
\label{lem:slope-allocation}
Under the hypotheses of \cref{thm:Q2-comparison}, $\Sl(D_f)=\{0,3\}$ and $\Sl(D_g)=\{1,2\}$.
\end{lemma}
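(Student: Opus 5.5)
We will combine the Newton slope multiset of $H$ from \cref{thm:Q2-comparison} with weak admissibility of the two summands in \eqref{eq:Dcris-split}. By \cref{thm:Q2-comparison}, $\Sl(H)=\{0,1,2,3\}$, each with multiplicity one. Since \eqref{eq:Dcris-split} is a decomposition into $\varphi$-stable subspaces, $\Sl(H)$ is the disjoint union of $\Sl(D_f)$ and $\Sl(D_g)$, and each summand has rank two. So the task is to decide which two of the four slopes belong to $D_f$.

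The key input is the equality $t_H=t_N$ on each weakly admissible summand. By \eqref{eq:HT}, $t_H(D_f)=0+3=3$ and $t_H(D_g)=1+2=3$. Hence $t_N(D_f)=t_N(D_g)=3$. The only ways to choose two distinct elements of $\{0,1,2,3\}$ with sum $3$ are $\{0,3\}$ and $\{1,2\}$. Thus $\{\Sl(D_f),\Sl(D_g)\}=\{\{0,3\},\{1,2\}\}$, and it remains to exclude $\Sl(D_f)=\{1,2\}$.

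To fix the assignment, we use the weak admissibility inequality on $\varphi$-stable lines inside $D_f$. Suppose $\Sl(D_f)=\{1,2\}$. Then the slope-$2$ subspace $L\subset D_f$ is a $\varphi$-stable line with $t_N(L)=2$. The filtration on $D_f$ has jumps in degrees $0$ and $3$, so $\Fil^3D_f$ is a line. If $L=\Fil^3D_f$, then $t_H(L)=3>2=t_N(L)$, which contradicts weak admissibility. We therefore need $L\neq\Fil^3D_f$; the same check applies to the slope-$1$ line. This does not immediately give a contradiction, so I expect this step to be the main obstacle.

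I would instead resolve the assignment with the ordinary-type bound (Mazur) for the factors, as follows. If $D_f$ had slopes $\{1,2\}$, its Newton polygon would lie strictly above its Hodge polygon $\{0,3\}$ at the first vertex, which is allowed. Meanwhile $D_g$ would have slopes $\{0,3\}$ against Hodge degrees $\{1,2\}$. Then the unit-root line $L_0\subset D_g$ satisfies $t_N(L_0)=0$ but $t_H(L_0)\geq1$, because $\Fil^1D_g=D_g$ (the Hodge degrees of $D_g$ are $\geq1$). This violates $t_H\leq t_N$. This contradiction forces $\Sl(D_g)=\{1,2\}$ and hence $\Sl(D_f)=\{0,3\}$. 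The cleanest route is thus to apply the inequality to the slope-$0$ line of whichever summand has all Hodge degrees $\geq1$.
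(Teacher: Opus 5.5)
Your argument is correct and is essentially the paper's proof: $t_H=t_N$ on each summand forces the partition $\{0,3\}\sqcup\{1,2\}$. The slope-zero line cannot lie in $D_g$, because $\Fil^1D_g=D_g$ would give it $t_H\geq1>0=t_N$. The detour through the slope-$2$ line of $D_f$ can be dropped, and the last step uses the weak admissibility inequality itself, not a Mazur-type bound.
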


\begin{proof}
Weak admissibility gives $t_N(D_f)=t_H(D_f)=3$ and similarly for $D_g$.  By \cref{thm:Q2-comparison}, the total slopes are $\{0,1,2,3\}$.  The only two partitions into pairs of sum $3$ are $\{0,3\}\sqcup\{1,2\}$, in the two possible orders.

Let $L_0\subset H$ be the slope zero line. The factor $D_g$ satisfies $\Fil^1D_g=D_g$. If it contained $L_0$, then the induced filtration would have $t_H(L_0)\geq1$ while $t_N(L_0)=0$, contradicting weak admissibility. Hence the slope-zero line lies in $D_f$, so $\Sl(D_f)=\{0,3\}$ and $\Sl(D_g)=\{1,2\}$.
\end{proof}

By the construction of $\rho_K$ and \cref{prop:interior-residue}, the class $\rho_K(1/f)$ is the proper holomorphic residue class and belongs to $\Fil^3H$. From \eqref{eq:HT}, $\Fil^3D_g=0$, so
\begin{equation}\label{eq:omega-in-f}
 \rho_K(1/f)\in D_f.
\end{equation}
Its image in $H/H_{\varphi,\leq1}$ is nonzero.  Indeed, if the Hodge line lay in the slope zero line of $D_f$, that line would have $t_H\geq3$ and $t_N=0$, again violating weak admissibility.  Since the image of $D_f$ in the high slope quotient is one-dimensional, the image of $\rho_K(1/f)$ spans it.

\subsection{Proof of the \texorpdfstring{$A_4$}{A4} theorem}

\begin{proof}[Proof of \cref{thm:main}]
The support of $g$ generates $\Z^4$, every vertex coefficient is $1$, and the full monomial symmetry group is $\mathcal G\simeq S_5\times C_2$. Thus the relevant lattice index is $1$ and $p\nmid240$, so the additional hypotheses of \cite[Theorem~7.3]{BeukersVlasenkoIII} hold.  The first two Hasse--Witt conditions hold by two-Dwork-ordinarity.

\Cref{thm:BR,prop:normalization} give the bounded comparison and its Frobenius compatibility.  \Cref{thm:Q2-comparison} gives the high slope quotient and the crystalline slopes $\{0,1,2,3\}$.  The direct sum \eqref{eq:Dcris-split} makes $D_f$ a rank two subisocrystal, \cref{lem:slope-allocation} gives its Newton slopes $0$ and $3$, and \eqref{eq:omega-in-f} gives $\rho_K(1/f)\in D_f$.  These verify the hypotheses of \cref{thm:rank two-criterion} with $d=3$.  That theorem gives the Cartier-stable line, $\lambda_1(t_*,t_*)=0$, the equality $\sigma_{\mathrm{exc},p}(t_*)=t_*$, and \eqref{eq:main-supercong}. The complementary factor $D_g$ has slopes $1$ and $2$.
\end{proof}

\section{The canonical lift modulo \texorpdfstring{$p^2$}{p squared}}
\label{sec:A4-W2}

We first compare the lift of the $A_4$ fiber at $t_*$ with the Achinger--Zdanowicz canonical lift modulo $p^2$. Let $p>5$, $p\neq7$, and let $k=\F_p$, $W=\Z_p$, $A=W_2(k)$, $a=\bar t_*$, and $h_p=\CT(f_a^{p-1})\in k$. By \cref{lem:good-pair}, the fiber over $a$ of the variable parameter compactified toric model is smooth.  Since smoothness is open, this model gives a smooth proper family $\mathscr Y_{S_a}\to S_a$ over an open neighborhood $S_a$ of $a$ in $\mathbb A^1_W$. The action of $C_{10}$ on the fiber over $a$ is free by \cref{lem:tame-free-specialization}. For every nontrivial $g\in C_{10}$, the fixed locus $\mathscr Y_{S_a}^g\to S_a$ is proper, so its image is closed and does not contain $a$.  After shrinking $S_a$, all these fixed loci are empty.  Thus $\mathscr X_{S_a}=\mathscr Y_{S_a}/C_{10}$ is a smooth proper family over $S_a$.  Let $\widehat S_a$ be the formal completion of $S_a$ at $a$ and let
\[\widehat{\mathscr X}=\mathscr X_{S_a}\times_{S_a}\widehat S_a \longrightarrow\widehat S_a\]
be its formal completion. Every lift of $a$ to $A$ has the form $t_\delta=t_*+p\delta$, where $\delta\in k$; the product $p\delta\in A$ is independent of the lift of $\delta$ to $A$.  Write $X_0/k$ its fiber over $a$ and $X_\delta^A/A$ for its fiber at $t_\delta$. The lift at $\delta=0$ is denoted by $X_*^A$.

\subsection{Hodge and crystalline cohomology}

We begin by establishing the integral Hodge and crystalline properties of the quotient family.

Let $\mathscr X/W$ be the fiber of $\widehat{\mathscr X}$ at $t_*$.  It is the quotient by $C_{10}$ of the smooth proper toric model at $t_*$ from \cite[\S2.4]{CandelasDeLaOssaKuuselaMcGovern2023}. The residue volume form $\omega_{t_*}$ in \eqref{eq:omega-hol} is invariant and descends to $\mathscr X$. In particular, $X_0$ is a Calabi--Yau threefold.

We use marked lifts in the sense of \cref{sec:canonical-lifts}.

The following length estimate gives freeness and base change for the Hodge groups of every marked lift.

\begin{lemma}
\label{lem:Wn-length}
Let $A_n=W(k)/p^n$.  Let $P$ be a perfect complex over $A_n$.  Then
\[\operatorname{length}_{A_n}H^j(P) \leq n\dim_kH^j(P\otimes_{A_n}^{\mathbf L}k).\]
If equality holds, then $H^j(P)$ is free, and its natural base change map to $H^j(P\otimes_{A_n}^{\mathbf L}k)$ is an isomorphism after reduction modulo $p$.
\end{lemma}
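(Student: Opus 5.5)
I would prove the length bound by induction on $n$, using the short exact sequences of coefficient rings $0\to k\xrightarrow{p^{n-1}}A_n\to A_{n-1}\to0$. For $n=1$ there is nothing to prove, since $P\otimes^{\mathbf L}_{A_1}k=P$. For $n\geq2$, tensoring $P$ with this sequence gives a distinguished triangle
\[P\otimes^{\mathbf L}_{A_n}k\longrightarrow P\longrightarrow P\otimes^{\mathbf L}_{A_n}A_{n-1}\xrightarrow{+1}.\]
Here $P_{n-1}:=P\otimes^{\mathbf L}_{A_n}A_{n-1}$ is a perfect complex over $A_{n-1}$, and $P_{n-1}\otimes^{\mathbf L}_{A_{n-1}}k\simeq P\otimes^{\mathbf L}_{A_n}k$. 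The long exact sequence in degree $j$ gives
\[\operatorname{length}H^j(P)\leq\dim_kH^j(P\otimes^{\mathbf L}k)+\operatorname{length}H^j(P_{n-1}).\]
The induction hypothesis bounds the second term by $(n-1)\dim_kH^j(P\otimes^{\mathbf L}k)$, which proves the inequality.

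For the equality case, I would use a more structural argument. Represent $P$ by a bounded complex of finite free $A_n$-modules; by the standard minimality reduction over the local ring $A_n$ we may assume all differentials vanish modulo $p$. Then $P\otimes k$ has zero differentials, so $\dim_kH^j(P\otimes k)=\operatorname{rk}P^j$. I would then show that $H^j(P)$ is free of rank $\operatorname{rk}P^j$ precisely when the differentials into and out of degree $j$ vanish. Write $Z^j=\ker d^j$ and $B^j=\operatorname{im}d^{j-1}$. We have $\operatorname{length}H^j=\operatorname{length}Z^j-\operatorname{length}B^j$ and $\operatorname{length}Z^j\leq n\operatorname{rk}P^j$, with equality iff $d^j=0$. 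Hence equality $\operatorname{length}H^j=n\operatorname{rk}P^j$ forces $d^j=0$ and $B^j=0$, so $d^{j-1}=0$. Then $H^j(P)=P^j$ is free, and the base change map $H^j(P)\otimes k=P^j\otimes k\to H^j(P\otimes k)$ is an isomorphism. This also recovers the inequality directly, since $\operatorname{length}H^j\leq\operatorname{length}Z^j\leq n\operatorname{rk}P^j$.

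The main obstacle is justifying the minimal-complex reduction. Over the local Artinian ring $A_n$, any bounded complex of finite free modules is homotopy equivalent to one whose differentials have entries in $\mathfrak m=pA_n$. I would prove this by splitting off contractible pieces $A_n\xrightarrow{1}A_n$ whenever some matrix entry is a unit, using Gaussian elimination; this terminates because the total rank decreases. Homotopy equivalence preserves both $H^j(P)$ and $P\otimes^{\mathbf L}k$, so the reduction is harmless and compatible with the base change map. With this in hand, the first inductive argument is optional, and the minimal-complex computation gives both assertions at once.
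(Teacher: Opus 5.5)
Your proof is correct and matches the paper's: you split off contractible summands to reach a minimal free complex whose differentials vanish modulo $p$, then read off both the bound and the equality case from $Z^j\subseteq P^j$ and $B^j$. Your inductive argument for the inequality via $0\to k\to A_n\to A_{n-1}\to0$ is also valid, but, as you note, it is not needed once the minimal-complex computation is in place.
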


\begin{proof}
By elementary changes of bases, split off all contractible summands arising from unit entries in a finite free representative of $P$. The remaining complex is minimal, so every differential is divisible by $p$. Its reduction modulo $p$ has zero differential.  The rank of its term in degree $j$ is therefore $\dim_kH^j(P\otimes_{A_n}^{\mathbf L}k)$.  The group $H^j(P)$ is a subquotient of this term, so its length has the stated bound.  If equality holds, the kernel of the outgoing differential has the full length of this term, and the image of the incoming differential has length zero. Both differentials therefore vanish.  The cohomology in degree $j$ is the free term itself, which also gives the asserted base change map.
\end{proof}

We apply the length estimate to obtain the Hodge and crystalline properties needed for the canonical lift comparison.

\begin{proposition}
\label{prop:A4-W2-cohomology}
The special fiber satisfies $\dim_kH^{3-i}(X_0,\Omega^i_{X_0/k})=1$ for $0\leq i\leq3$. Its diagonal Kodaira--Spencer class generates $H^1(X_0,T_{X_0})$. Moreover, $H^3_{\cris}(X_0/W)$ and $H^4_{\cris}(X_0/W)$ are torsion free. For every marked smooth proper lift $Y/A$ with special fiber $X_0$ and every $0\leq i\leq3$, the group $H^{3-i}(Y,\Omega^i_{Y/A})$ is free of rank one, and the natural map
\[H^{3-i}(Y,\Omega^i_{Y/A})\otimes_Ak \longrightarrow H^{3-i}(X_0,\Omega^i_{X_0/k})\]
is an isomorphism. The Hodge--de Rham spectral sequence degenerates in total degree three.  For the model $\mathscr X/W$, the degree three Hodge filtration consists of saturated direct summands.  In particular, the integral filtration equals the intersection of the de Rham lattice with its rational Hodge filtration.
\end{proposition}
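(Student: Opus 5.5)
The plan is to compute everything on the cover $\widetilde{\mathscr Y}$ and pass to $C_{10}$-invariants, using $p\nmid10$ and the finite \'etale quotient of \cref{lem:tame-free-specialization}. Since $X_0=\widetilde Y_0/C_{10}$ with $\widetilde Y_0$ the special fiber of the toric model, coherent cohomology of $X_0$ is the invariant part of that of $\widetilde Y_0$, and taking invariants is exact. \Cref{prop:permutohedral-deformations} applies over $k$, since $\operatorname{char}k=p>5$, the six simplex coefficients $(1,\dots,1)$ and $(-7,1,\dots,1)$ are units, and smoothness holds by \cref{lem:good-pair}. It gives $h^1(X_0,T_{X_0})=1$ with the diagonal Kodaira--Spencer class as generator. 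Triviality of $\omega_{X_0}$ (the descended residue form) gives $T_{X_0}\simeq\Omega^2_{X_0}$, so $h^{1,2}=1$. Serre duality gives $h^{2,1}=1$ and $h^{0,3}=h^{3,0}=1$. These are exactly the arguments of that proposition, carried out over $k$.

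For crystalline torsion freeness I would compare with characteristic zero. Write $H^3_{\cris}(X_0/W)=H^3_{\dR}(\mathscr X/W)$, which is finitely generated. Its rank is $4$, by \cref{prop:interior-residue} and the invariant comparison. By universal coefficients, $H^3_{\dR}(X_0/k)$ has dimension $4$ plus the torsion contributions from degrees $3$ and $4$. Hodge--de Rham gives $\dim H^3_{\dR}(X_0/k)\le\sum_i h^{3-i,i}=4$, so both torsion contributions vanish, and the spectral sequence degenerates in degree $3$. For $H^4_{\cris}$ I would run the same count in degree $4$. The Hodge numbers $h^{1,3},h^{2,2},h^{3,1}$ of $X_0$ equal $0,h^{1,1},0$, using $H^1(\mathcal O)=H^2(\mathcal O)=0$, duality, and $h^{2,2}=h^{1,1}$. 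Comparing with $b_4=b_2$ in characteristic zero then forces the degree-$5$ torsion to vanish as well. This step needs the invariant $h^{1,1}$ to be controlled in characteristic $p$ as in characteristic $0$. I expect it to be the main obstacle. The first thing I would try is to deduce it from the toric Lefschetz-type vanishing (\cref{prop:permutohedral-vanishing}) applied to $\Omega^1$ via the conormal and Euler sequences.

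For a marked lift $Y/A$, apply \cref{lem:Wn-length} with $n=2$ to $P=R\Gamma(Y,\Omega^i_{Y/A})$, which is perfect with $P\otimes^{\mathbf L}k=R\Gamma(X_0,\Omega^i_{X_0})$. It gives $\operatorname{length}H^{3-i}(Y,\Omega^i)\le2$. For the reverse inequality I would use de Rham cohomology. Since $H^3_{\cris}(X_0/W)$ is free of rank $4$, $H^3_{\dR}(Y/A)\simeq H^3_{\cris}(X_0/A)$ has length $8$, and the Hodge spectral sequence bounds this length by $\sum_i\operatorname{length}H^{3-i}(Y,\Omega^i)\le8$. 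Equality is therefore forced everywhere. \Cref{lem:Wn-length} then yields freeness of rank one and the base change isomorphism, and equality also forces degeneration in total degree $3$.

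For $\mathscr X/W$, the same argument at every level $A_n$, combined with a limit, gives free Hodge groups $H^{3-i}(\mathscr X,\Omega^i)$ of rank one and degeneration. The graded pieces of the integral filtration on $H^3_{\dR}(\mathscr X/W)$ are then free, so each $\Fil^i$ is a saturated direct summand. A saturated submodule is determined by its rational span, which gives $\Fil^i=H^3_{\dR}\cap\Fil^iH^3_{\dR}[1/p]$.
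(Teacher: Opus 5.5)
Your proposal is correct, and apart from one step it follows the paper. The Hodge numbers and Kodaira--Spencer class via the cover and $C_{10}$-invariants, the length count with \cref{lem:Wn-length} for a marked lift $Y/A$, and the passage to $\mathscr X/W$ through all $A_n$ all match.

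\textbf{Where your route differs.} The difference is how you obtain $\dim_kH^3_{\dR}(X_0/k)=4$. The paper cites Deligne--Illusie, using $\dim X_0<p$ and the $W_2$-lift. You instead play the universal-coefficient expression for $\dim_kH^3_{\dR}(X_0/k)$, namely $4$ plus torsion terms, against the elementary $E_1$-bound $\sum_ih^{3-i,i}=4$. Your route is more elementary and needs no condition relating $p$ to the dimension. It also gives degeneration in total degree three for $X_0$ immediately. Deligne--Illusie gives degeneration in all degrees, which is not needed here.

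\textbf{The obstacle you flag is not needed.} In your own count, the degree-$4$ contribution is $\operatorname{Tor}_1^W(H^4_{\cris}(X_0/W),k)=H^4_{\cris}(X_0/W)[p]$. So your sentence that both torsion contributions vanish already proves $H^4_{\cris}(X_0/W)$ is torsion free. This is exactly the paper's argument $4=4+\tau_3+\tau_4$. Your separate degree-four count would only bear on torsion in $H^5_{\cris}$, which the statement does not assert. It also could not be closed as sketched: you would need an upper bound on $h^{1,1}(X_0)$, and semicontinuity only bounds it from below.

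Drop that paragraph. In its place, cite the torsion-freeness of $H^4_{\cris}$ in the lift step, where you need $H^3_{\cris}(X_0/A)\simeq H^3_{\cris}(X_0/W)\otimes_WA$. Freeness of $H^3_{\cris}$ alone does not give this, because the obstruction term is $H^4_{\cris}(X_0/W)[p^2]$.
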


\begin{proof}
\Cref{prop:permutohedral-deformations,prop:permutohedral-vanishing} give the four Hodge dimensions and the Kodaira--Spencer statement.

The variety $X_0$ lifts to $W_2(k)$ and has dimension $3<p$. Deligne--Illusie degeneration therefore gives $\dim_kH^3_{\dR}(X_0/k)=4$ \cite{DeligneIllusie1987}. The generic fiber has third Betti number four.  Thus $\operatorname{rank}_W H^3_{\cris}(X_0/W)=4$. Crystalline derived base change gives an exact sequence
\[0\longrightarrow H^3_{\cris}(X_0/W)\otimes_Wk \longrightarrow H^3_{\dR}(X_0/k) \longrightarrow H^4_{\cris}(X_0/W)[p] \longrightarrow0.\]
See \cite{BerthelotOgus1978}. Let $\tau_i=\dim_kH^i_{\cris}(X_0/W)[p]$.  Taking dimensions gives $4=4+\tau_3+\tau_4$. Both terms are nonnegative, so $\tau_3=\tau_4=0$.

Crystalline base change now gives $H^3_{\dR}(Y/A)\simeq A^4$. Its $A$-length is eight.  Apply \cref{lem:Wn-length} to the four coherent complexes $R\Gamma(Y,\Omega^i_{Y/A})$.  Each Hodge group in total degree three has length at most two.  The Hodge spectral sequence gives
\[8 =\sum_{i=0}^3\operatorname{length}_AE_\infty^{i,3-i} \leq\sum_{i=0}^3\operatorname{length}_AE_1^{i,3-i} \leq8.\]
Every inequality is an equality.  The equality case of \cref{lem:Wn-length} makes each Hodge group in total degree three free of rank one and identifies its reduction with the corresponding Hodge group of $X_0$. The equality of the $E_1$ and $E_\infty$ lengths also shows that no Hodge differential enters or leaves total degree three. The same length argument applies to $\mathscr X\otimes_WW_n(k)$ for every $n$.  After passage to the inverse limit, its degree three Hodge filtration consists of saturated direct summands.  In particular, the integral filtration equals the intersection of the de Rham lattice with its rational Hodge filtration.
\end{proof}

We next prove a degree three version of the Achinger--Zdanowicz uniqueness argument. For the following lemma, let $k=\F_p$, let $A=W_2(k)$, and let $Z/k$ be a $1$-ordinary smooth projective Calabi--Yau threefold. Assume that $H^3_{\cris}(Z/W(k))$ and $H^4_{\cris}(Z/W(k))$ are torsion free. For every marked smooth proper lift $Y/A$ of $Z$, assume that the Hodge--de Rham spectral sequence degenerates in total degree three and that the groups $H^{3-i}(Y,\Omega^i_{Y/A})$ are free and satisfy base change:
\[H^{3-i}(Y,\Omega^i_{Y/A})\otimes_Ak \xrightarrow{\sim}H^{3-i}(Z,\Omega^i_{Z/k}), \quad 0\leq i\leq3.\]

Let $H_A=H^3_{\cris}(Z/A)$, and let $\Phi:H_A\to H_A$ denote crystalline Frobenius. The map $\Phi$ is $A$-linear because Witt-vector Frobenius on $A$ is the identity. For each marked lift $Y/A$, let $\Fil_Y^\bullet H_A$ denote the filtration corresponding to the Hodge filtration on $H^3_{\dR}(Y/A)$ under crystalline--de Rham comparison \cite{BerthelotOgus1978}.

\begin{lemma}
\label{lem:AZ-degree-three}
A marked smooth proper lift $Y/A$ of $Z$ is isomorphic to the Achinger--Zdanowicz canonical lift as a marked lift if and only if $\Phi(\Fil_Y^1H_A)\subseteq\Fil_Y^1H_A$.
\end{lemma}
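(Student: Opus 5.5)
The plan is to reduce the lemma to two facts: the Achinger--Zdanowicz lift has zero obstruction, and any two marked lifts are compared by a torsor computation. The hypotheses (torsion-free $H^3_{\cris},H^4_{\cris}$, degeneration in degree three, freeness and base change of the Hodge groups) are exactly what makes the divided-Frobenius obstruction well defined in degree three. This lets us run the argument of \cite[Theorem~5.7.1]{AchingerZdanowicz2021} without its global hypotheses.

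\textbf{Forward direction.} First I construct the obstruction. For a marked lift $Y$, freeness and base change give that $\Fil_Y^iH_A$ is an $A$-direct summand with graded pieces free of rank $\dim_kH^{3-i}(Z,\Omega^i)$. Torsion-freeness gives $H_A=H^3_{\cris}(Z/W)\otimes A$, and Mazur's theorem (divisibility $\Phi(\Fil^i)\subset p^iH$ on the integral lattice) then gives $\Phi(\Fil_Y^1H_A)\subset pH_A$. Since $H_A$ is free over $A=W_2(k)$, dividing by $p$ yields $\gamma_Y\colon \operatorname{gr}^1_Y\to H_A/pH_A\to\operatorname{gr}^0$. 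The map $\gamma_Y$ factors through $\operatorname{gr}^1$ because $\Phi(\Fil^2)\subset p^2H_A=0$. By construction, $\gamma_Y=0$ if and only if $\Phi(\Fil_Y^1)\subseteq\Fil_Y^1$. By \cite[Theorem~5.0.1]{AchingerZdanowicz2021}, $\gamma_{Y_{\mathrm{can}}}=0$, and this proves the forward implication.

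\textbf{Converse.} Marked lifts form a torsor under $H^1(Z,T_Z)$, so write $Y=v\cdot Y_{\mathrm{can}}$. The key formula, taken from the calculation in the proof of \cite[Theorem~5.7.1]{AchingerZdanowicz2021}, is
\[\gamma_Y-\gamma_{Y_{\mathrm{can}}}=HW(0)\circ F_k^*\eta(v).\]
The reasoning is as follows. Changing the lift by $v$ moves $\Fil^1$ inside $H_A$ by $p\cdot\eta(v)$, where $\eta(v)\colon\operatorname{gr}^1\to\operatorname{gr}^0$ is cup product with $v$ \cite[Corollary~2.12]{Ogus1978}. Applying $\Phi/p$ to this correction, and reducing modulo $p$, gives the Frobenius on $\operatorname{gr}^0=H^3(Z,\mathcal O)$, which is the Hasse--Witt map. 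Verifying this identity in degree three under only the stated hypotheses is the main obstacle. I would redo their local computation with the Hodge filtration on $H^3$ only, checking that each step uses just freeness of the degree-three Hodge groups and $E_1$-degeneration there.

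\textbf{Injectivity.} Since $Z$ is $1$-ordinary, $HW(0)$ is bijective. It remains to show that $v\mapsto\eta(v)$ is injective on $H^1(T_Z)$ as a map into $\operatorname{Hom}(H^2(\Omega^1),H^3(\mathcal O))$. Serre duality with trivial canonical bundle identifies $T_Z\simeq\Omega^2_Z$. The pairing $H^1(\Omega^2)\times H^2(\Omega^1)\to H^3(\Omega^3)$ is perfect, and contracting with a generator of $H^0(\Omega^3)$ converts it into the cup product pairing above. Hence $\eta(v)=0$ forces $v=0$, so $\gamma_Y=0$ forces $Y\simeq Y_{\mathrm{can}}$ as marked lifts.
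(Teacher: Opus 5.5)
Your proposal is correct and follows essentially the same route as the paper. Both arguments use the divided-Frobenius obstruction $\gamma_Y$, whose vanishing is equivalent to $\Phi(\Fil^1_YH_A)\subseteq\Fil^1_YH_A$, together with $\gamma_{Y_{\mathrm{can}}}=0$ from \cite[Theorem~5.0.1]{AchingerZdanowicz2021}, the difference formula $\gamma_Y-\gamma_{Y_{\mathrm{can}}}=HW(0)\circ F_k^*\eta_Y$, Ogus's cup-product identification, and Serre duality combined with $1$-ordinarity.

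The step you defer is the one the paper writes out. It expresses $\Fil^1_YH_A$ as the graph $\{x+p\widetilde\delta_Y(x)\}$ over $\Fil^1_{Y_{\mathrm{can}}}H_A$, so that $\Phi(x+p\widetilde\delta_Y(x))/p$ splits into the $\gamma_{Y_{\mathrm{can}}}$ term plus $\overline{\Phi(\widetilde\delta_Y(x))}$, which is your heuristic made precise. The paper also obtains $\Phi(\Fil^2_YH_A)=0$ for an arbitrary lift from the canonical lift and transversality, rather than applying Mazur's estimate to each lift.
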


\begin{proof}
Since $k=\F_p$, the Frobenius twist of $Z$ is $Z$. Let $H_k=H_A/pH_A$. Crystalline base change and the torsion-freeness assumptions make $H_A$ free. The assumptions on the Hodge groups in total degree three imply that each $\Fil^i_YH_A$ is a direct summand and has reduction $\Fil^iH_k$.

Let $Y_{\mathrm{can}}$ be the Achinger--Zdanowicz canonical lift and write $F_{\mathrm{can}}^i=\Fil^i_{Y_{\mathrm{can}}}H_A$. By \cite[Theorem~5.0.1]{AchingerZdanowicz2021}, one has $\Phi(F_{\mathrm{can}}^1)\subset F_{\mathrm{can}}^1$. In the degree-three form used in \cite[\S5.8]{AchingerZdanowicz2021}, Mazur's divisibility estimate \cite{Mazur1973} gives $\Phi(F_{\mathrm{can}}^1)\subset pH_A$ and $\Phi(F_{\mathrm{can}}^2)\subset p^2H_A=0$. Since $F_{\mathrm{can}}^1$ is a direct summand, $F_{\mathrm{can}}^1\cap pH_A=pF_{\mathrm{can}}^1$. Thus $\Phi(F_{\mathrm{can}}^1)\subset pF_{\mathrm{can}}^1$.

Let $Y/A$ be a marked smooth proper lift of $Z$. Choose an $A$-module complement $C_0$ of $F_{\mathrm{can}}^1$ in $H_A$. Projection along $C_0$ identifies $\Fil^1_YH_A$ with $F_{\mathrm{can}}^1$: modulo $p$, this projection is the identity on $\Fil^1H_k$, and hence it is an isomorphism by Nakayama's lemma. Thus $\Fil^1_YH_A$ is the graph of an $A$-linear map $g_Y:F_{\mathrm{can}}^1\to C_0$ whose image lies in $pC_0$. Since $C_0$ is free, write $g_Y=p\widetilde\delta_Y$ for an $A$-linear map $\widetilde\delta_Y:F_{\mathrm{can}}^1\to C_0$, determined modulo $p$. Thus $\Fil^1_YH_A=\{x+p\widetilde\delta_Y(x):x\in F_{\mathrm{can}}^1\}$. Our graph convention uses the plus sign in this formula. Reducing $\widetilde\delta_Y$ modulo $p$ and using $C_0/pC_0\simeq H_k/\Fil^1H_k$ gives a map $\bar\delta_Y:\Fil^1H_k\to H_k/\Fil^1H_k$. The first-order Griffiths-transversality calculation in the proof of \cite[Theorem~5.7.1]{AchingerZdanowicz2021} shows that $\bar\delta_Y$ vanishes on $\Fil^2H_k$. It therefore induces $\eta_Y:H^2(Z,\Omega_Z^1)\to H^3(Z,\mathcal O_Z)$.

For every marked lift $Y/A$, one also has $\Phi(\Fil_Y^2H_A)=0$. If $y\in\Fil^2_YH_A$, write $y=x+p\widetilde\delta_Y(x)$ with $x\in F_{\mathrm{can}}^1$, and choose $x_2\in F_{\mathrm{can}}^2$ with $x_2\equiv x\pmod p$; such a choice exists because the reduction of $y$ belongs to $\Fil^2H_k$. Then $x-x_2\in pF_{\mathrm{can}}^1$. The preceding transversality statement gives $\widetilde\delta_Y(x)\in F_{\mathrm{can}}^1+pH_A$, so the term $p\widetilde\delta_Y(x)$ can also be absorbed into $pF_{\mathrm{can}}^1$. We may therefore write $y=x_2+pz$ with $z\in F_{\mathrm{can}}^1$. Hence $\Phi(y)=\Phi(x_2)+p\Phi(z)=0$, because $\Phi(F_{\mathrm{can}}^2)=0$, $\Phi(F_{\mathrm{can}}^1)\subset pF_{\mathrm{can}}^1$, and $p^2=0$ in $A$. Thus $\Phi(\Fil^2_YH_A)=0$.

Since $H_A$ is free over $A=W_2(k)$, multiplication by $p$ induces an isomorphism $H_k=H_A/pH_A\xrightarrow{\sim}pH_A=H_A[p]$. Let $\operatorname{div}_p:pH_A\to H_k$ denote its inverse. Since $k=\F_p$, we identify $F_k^*\operatorname{gr}^1H_k$ with $\operatorname{gr}^1H_k$. For $\bar x$ in this space, choose $x\in F_{\mathrm{can}}^1$ representing $\bar x$ and use $x+p\widetilde\delta_Y(x)$ as its lift in $\Fil_Y^1H_A$. Define
\[\gamma_Y(\bar x)=\left[ \operatorname{div}_p\bigl(\Phi(x)\bigr) +\overline{\Phi\bigl(\widetilde\delta_Y(x)\bigr)} \right]_{\operatorname{gr}^0H_k}.\]
This is well defined. First, changing $\widetilde\delta_Y$ by a map with values in $pC_0$ does not change the second term. Next, suppose that the representative $x$ is changed by $x_2\in F_{\mathrm{can}}^2$. Then $\Phi(x_2)=0$, while Griffiths transversality gives $\widetilde\delta_Y(x_2)\in F_{\mathrm{can}}^1+pH_A$. Its contribution therefore vanishes in $\operatorname{gr}^0H_k$. Finally, if $x$ is changed by $pz$ with $z\in F_{\mathrm{can}}^1$, then
\[\operatorname{div}_p(\Phi(x+pz))- \operatorname{div}_p(\Phi(x))=\overline{\Phi(z)}=0,\]
because $\Phi(F_{\mathrm{can}}^1)\subset pF_{\mathrm{can}}^1$; the change in the second term also vanishes modulo $p$. These are precisely the choices involved in representing a class in $\operatorname{gr}^1H_k$.

For $y=x+p\widetilde\delta_Y(x)\in\Fil_Y^1H_A$, write $\Phi(x)=ph_x$. Then $\Phi(y)=p(h_x+\Phi(\widetilde\delta_Y(x)))$. Moreover, $\Fil_Y^1H_A\cap pH_A=p\Fil_Y^1H_A=pF_{\mathrm{can}}^1$. It follows that
\begin{equation}\label{eq:gamma-stability}
 \gamma_Y=0
 \quad\text{if and only if}\quad
 \Phi(\Fil^1_YH_A)\subset\Fil^1_YH_A.
\end{equation}

For $Y_{\mathrm{can}}$, we choose $\widetilde\delta_{Y_{\mathrm{can}}}=0$, and one has $\gamma_{Y_{\mathrm{can}}}=0$. Reduction of $\Phi$ on $\operatorname{gr}^0H_k=H^3(Z,\mathcal O_Z)$ is the Hasse--Witt map $HW(0)$. With the graph convention above, a direct calculation gives
\begin{equation}\label{eq:AZ-obstruction-difference}
 \gamma_Y-\gamma_{Y_{\mathrm{can}}}
 =HW(0)\circ F_k^*\eta_Y.
\end{equation}
Indeed, since $\Phi(F_{\mathrm{can}}^1)\subset pF_{\mathrm{can}}^1$, the term $\operatorname{div}_p(\Phi(x))$ lies in $\Fil^1H_k$ and therefore vanishes in $\operatorname{gr}^0H_k$. The remaining term is the reduction of crystalline Frobenius applied to the graph map, and crystalline Frobenius induces $HW(0)$ on $\operatorname{gr}^0H_k$.

The map $HW(0)$ is an isomorphism because $Z$ is $1$-ordinary. If $Y$ satisfies the Frobenius-stability condition, then \eqref{eq:gamma-stability} and \eqref{eq:AZ-obstruction-difference} give $\eta_Y=0$. The isomorphism classes of marked lifts over $A$ form a torsor under $H^1(Z,T_Z)$. Let $v(Y,Y_{\mathrm{can}})\in H^1(Z,T_Z)$ denote the difference class. Ogus identifies the map from $v(Y,Y_{\mathrm{can}})$ to $\eta_Y$ with cup product \cite[Corollary~2.12]{Ogus1978}. Since $\omega_Z\simeq\mathcal O_Z$, Serre duality shows that
\[H^1(Z,T_Z)\longrightarrow \operatorname{Hom}\bigl(H^2(Z,\Omega_Z^1),H^3(Z,\mathcal O_Z)\bigr)\]
is an isomorphism. Thus $v(Y,Y_{\mathrm{can}})=0$, so $Y$ and $Y_{\mathrm{can}}$ are isomorphic as marked lifts of $Z$.
\end{proof}

\subsection{The integral splitting and the canonical lift}

\begin{proof}[Proof of \cref{thm:A4-W2-canonical}]
We first identify the Hasse--Witt scalar of the quotient with the Dwork scalar. We then construct an integral projector and transport it to the crystalline lattice by the integral comparison theorem of Cais--Liu. The resulting filtered splitting gives the Frobenius stability needed to apply \cref{lem:AZ-degree-three}.

The connecting map in the Koszul resolution identifies $H^3(\widetilde{\mathscr Y}_k,\mathcal O_{\widetilde{\mathscr Y}_k})$ with $H^5(V_k,\omega_{V_k})$.  In the toric \v Cech description, Frobenius is computed by multiplication by $(P_1P_{2,a})^{p-1}$ followed by extraction of the constant term.  Hence the Hasse--Witt scalar of the cover is $\CT((P_1P_{2,a})^{p-1})$. Let $c_m=\CT(g^m)$.  Direct expansion of the two factors gives
\begin{align*}
 \CT\bigl((P_1P_{2,t})^{p-1}\bigr)
 &=\sum_{m=0}^{p-1}\binom{p-1}{m}^2t^mc_m\\
 &\equiv\sum_{m=0}^{p-1}\binom{p-1}{m}(-t)^mc_m
 =\CT(f_t^{p-1})\pmod p.
\end{align*}
The congruence uses $\binom{p-1}{m}\equiv(-1)^m\pmod p$.  Pullback along the finite \'etale quotient identifies $H^3(X_0,\mathcal O_{X_0})$ with $H^3(\widetilde{\mathscr Y}_k,\mathcal O_{\widetilde{\mathscr Y}_k})^G$. Thus $HW(0)$ on this space has scalar $h_p$, and $X_0$ is $1$-ordinary if and only if $h_p\neq0$.

Let $\Lambda_{\et}=H^3_{\et}(X_{t_*,\overline\Q},\Z_p)/\mathrm{tors}$, and write $V_f=A_p$ and $V_g=B_p$ for the two factors in the rational decomposition of \cref{prop:actual-split}. Dummigan gives $a_3(f_{14,4})=8$ and $a_3(f_{14,2})=-2$ \cite[the table in the proof of Proposition~4.6]{Dummigan2026}. The Tate twist changes the second trace to $-6$.  Thus their Frobenius polynomials at the auxiliary prime $3$ are $P_f(Z)=Z^2-8Z+27$ and $P_g(Z)=Z^2+6Z+27$. Let $\rho_{\et}$ be the Galois representation on $\Lambda_{\et}$, let $\Frob_3$ be arithmetic Frobenius, and let $F_3=\rho_{\et}(\Frob_3^{-1})$.  Thus $F_3$ is geometric Frobenius, as in Dummigan's convention. Define
\begin{equation}\label{eq:A4-integral-projector}
 e_f=\frac{P_g(F_3)(8-F_3)}{14\cdot27}.
\end{equation}
On $V_f$, the relations $P_g(F_3)=14F_3$ and $F_3(8-F_3)=27$ give $e_f=1$.  On $V_g$, the relation $P_g(F_3)=0$ gives $e_f=0$. Thus $e_f$ is the rational Galois equivariant projector onto $V_f$. The denominator $14\cdot27=2\cdot3^3\cdot7$ is a unit in $W$ for every prime under consideration.  Formula \eqref{eq:A4-integral-projector} therefore preserves $\Lambda_{\et}$.  Let $\Lambda_f=e_f\Lambda_{\et}$ and $\Lambda_g=(1-e_f)\Lambda_{\et}$.  Then $\Lambda_{\et}=\Lambda_f\oplus\Lambda_g$.

Let $\mathfrak S=W[[u]]$, let $\vartheta:\mathfrak S\to W$ send $u$ to the uniformizer $p$, and let $S$ be the $p$-adic completion of the divided-power envelope of $\ker(\vartheta)$.  We use the Frobenius on $S$ induced by $u\mapsto u^p$. The map $\operatorname{sp}_0:S\to W$, defined by $\operatorname{sp}_0(f(u))=f(0)$, is the crystalline specialization map.

We use Theorem~5.4 of \cite{CaisLiu2019} in the corrected form of \cite{CaisLiuCorrigendum2020}. The integral comparison theorem of Cais--Liu applies in degree three: one has $3<p-1$, and \cref{prop:A4-W2-cohomology} gives the required torsion-freeness in degrees three and four.  Let $\mathscr M^3=H^3_{\cris}(X_0/S)$.  By \cite[Theorem~5.4(2)]{CaisLiu2019}, $\mathscr M^3$ is a strongly divisible $S$-lattice. Define $e_f^\vee:\Lambda_{\et}^\vee\to\Lambda_{\et}^\vee$ by $e_f^\vee(\lambda)=\lambda\circ e_f$. Under the natural isomorphism of \cite[Theorem~5.4(3)]{CaisLiu2019}, the endomorphism attached to $e_f^\vee$ defines an idempotent on the Breuil--Kisin module associated with $\mathscr M^3$. The equivalence in \cite[Theorem~3.8]{CaisLiu2019} gives a unique idempotent $\widetilde e_f$ on the corresponding filtered Breuil module. The equivalence transports the idempotent as a morphism of filtered Breuil modules.  By construction it preserves the filtration and commutes with Frobenius and monodromy. Hence it is an endomorphism of the strongly divisible lattice $\mathscr M^3$.

Since $K=\Q_p$ has ramification index $e=1$, the condition $p^n\geq e$ used before equation~(5.3) of \cite{CaisLiu2019} permits $n=0$.  The resulting $\varphi^0$-twists are identities, and that equation gives
\[H^3_{\cris}(X_0/W)\otimes_WS\xrightarrow{\sim}\mathscr M^3.\]
Write $H_W=H^3_{\cris}(X_0/W)$ for the crystalline lattice. Base change by $\operatorname{sp}_0:S\to W$ gives a Frobenius-equivariant isomorphism from $H_W$ to $\mathscr M^3\otimes_{S,\operatorname{sp}_0}W$. Taking the inverse isomorphism gives
\[\mathscr M^3\otimes_{S,\operatorname{sp}_0}W\xrightarrow{\sim} H_W.\]
Let $e_{f,\cris}$ be the specialization of $\widetilde e_f$, and let $H_{W,f}=\operatorname{im}(e_{f,\cris})$ and $H_{W,g}=\operatorname{im}(1-e_{f,\cris})$. Then $H_W=H_{W,f}\oplus H_{W,g}$, and both summands are Frobenius-stable.  Under the smooth proper comparison
\[H_W[1/p]\simeq H_p= D_{\cris}\bigl(\Lambda_{\et}\otimes_{\Z_p}\Q_p\bigr),\]
the naturality of \cite[Theorem~4.3(2)]{CaisLiu2019} and of \cite[equation~(5.5)]{CaisLiu2019} identifies $e_{f,\cris}[1/p]$ with the projector induced by $e_f$. Consequently, in the notation of \eqref{eq:Dcris-split}, $H_{W,f}[1/p]=D_f$ and $H_{W,g}[1/p]=D_g$. The Hodge degrees of $D_f$ and $D_g$ are $\{0,3\}$ and $\{1,2\}$, respectively, by \eqref{eq:HT}.

The specialization $\operatorname{sp}_0:u\mapsto0$ gives the crystalline lattice, whereas the Hodge filtration comes from the de Rham specialization $\vartheta:u\mapsto p$.  Let $D=H_W[1/p]$.  By the preceding identification, $D=D_f\oplus D_g$.  The rational projector preserves $\Fil^\bullet D$.  Define $\Fil^iH_{W,f}=H_{W,f}\cap\Fil^iD_f$ and $\Fil^iH_{W,g}=H_{W,g}\cap\Fil^iD_g$. By \cref{prop:A4-W2-cohomology}, the Hodge filtration on $H_W\simeq H^3_{\dR}(\mathscr X/W)$ is saturated and satisfies $\Fil^iH_W=H_W\cap\Fil^iD$.  Since $e_{f,\cris}$ preserves $H_W$ and its rationalization is the filtered projector onto $D_f$, it preserves $\Fil^iH_W$.  The complementary projector has the same property.  Hence $\Fil^iH_W=\Fil^iH_{W,f}\oplus\Fil^iH_{W,g}$. The Hodge degrees give $\Fil^1D_f=\Fil^3D_f$ and $\Fil^1D_g=D_g$.  Intersecting with the two lattices yields
\begin{equation}\label{eq:A4-Fil-split}
 \Fil^1H_W=\Fil^3H_{W,f}\oplus H_{W,g}.
\end{equation}

Write $\varphi_S$ for Frobenius on $\mathscr M^3$, and let $\Phi$ denote crystalline Frobenius on $H_W$. Under $\mathscr M^3\simeq S\otimes_WH_W$, the rational monodromy operator is $N_S\otimes1$, where $N_S=-u\,d/du$.  If $x\in\Fil^3H_W$, its constant section $1\otimes x$ has zero monodromy and its specialization under $\vartheta$ belongs to $\Fil^3D$.  The recursive definition of the Breuil filtration in \cite[equation~(4.5)]{CaisLiu2019} therefore shows that $1\otimes x$ lies in $\Fil^3(\mathscr M^3[1/p])\cap\mathscr M^3 =\Fil^3\mathscr M^3$. Strong divisibility gives $\varphi_S(1\otimes x)\in p^3\mathscr M^3$. Applying $\operatorname{sp}_0$ and using its Frobenius compatibility yields $\Phi(x)\in p^3H_W$.  Thus Frobenius stability of $H_{W,f}$ gives
\[\Phi(\Fil^3H_{W,f})\subset p^3H_W\cap H_{W,f}=p^3H_{W,f}.\]
The last equality uses the direct sum $H_W=H_{W,f}\oplus H_{W,g}$. Moreover, $\Phi(H_{W,g})\subset H_{W,g}$. Base change $W\to A$ identifies the Hodge filtration of $\mathscr X/W$ with that of $X_*^A$. After this base change, the first summand in \eqref{eq:A4-Fil-split} maps to zero under $\Phi$, because $p^3H_{W,f}\subset p^2H_W$, while the second summand remains in $H_{W,g}\otimes_WA$. Consequently \eqref{eq:A4-Fil-split} gives
\begin{equation}\label{eq:A4-F1-stable}
 \Phi\bigl(\Fil^1H^3_{\dR}(X_*^A/A)\bigr)
 \subset\Fil^1H^3_{\dR}(X_*^A/A).
\end{equation}
If $h_p\neq0$, then $X_0$ is $1$-ordinary.  Since $k=\F_p$, its Frobenius twist is $X_0$ itself. By \cref{prop:A4-W2-cohomology} the torsion-freeness, freeness, degeneration, and base change hypotheses of \cref{lem:AZ-degree-three} hold. Consequently that lemma, applied with \eqref{eq:A4-F1-stable}, identifies $X_*^A$ with the Achinger--Zdanowicz canonical lift.
\end{proof}

\subsection{Comparison of the two obstruction scalars}

We now choose compatible bases and compare the Dwork and crystalline obstruction scalars throughout the residue disk.

\begin{proof}[Proof of \cref{thm:A4-W2-comparison}]
Assume now that both Dwork Hasse--Witt conditions hold at $a$. Let $\omega$ denote the residue section in \eqref{eq:omega-hol}. At the canonical lift $X_*^A$, choose a symplectic basis $(e_3,e_2,e_1,e_0)$ of $H^3_{\dR}(X_*^A/A)$ adapted to the Hodge filtration.  Normalize it by
\[e_3=[\omega],\quad e_2=[\nabla_\theta\omega]\pmod{\Fil^3},\quad \langle e_3,e_0\rangle=\langle e_2,e_1\rangle=1.\]
Let $\overline Q_a$ be the reduction of the level two quotient at $a$, and let $\bar\omega_0,\bar\omega_1\in\overline Q_a$ be the reductions of the Dwork basis classes $1/f$ and $\theta(1/f)$. Let $\overline L_a\subset\overline Q_a$ be the line generated by $\bar\omega_0$. The splitting \eqref{eq:BV-split} gives integral representatives of these classes in $M(2)$. Let $\bar e_i$ denote the reduction of $e_i$.  The residue maps send $\bar\omega_0$ to $\bar e_3$ and the class of $\bar\omega_1$ modulo $\overline L_a$ to $\bar e_2$.  By \cref{prop:A4-W2-cohomology}, these classes generate $\operatorname{gr}^3H^3_{\dR}(X_0/k)$ and $\operatorname{gr}^2H^3_{\dR}(X_0/k)$; for $\bar e_2$, this uses the nonzero Kodaira--Spencer class of the diagonal parameter.  Since the source and target of each residue map are one-dimensional, they give isomorphisms $\overline L_a\simeq\operatorname{gr}^3H^3_{\dR}(X_0/k)$ and $\overline Q_a/\overline L_a\simeq \operatorname{gr}^2H^3_{\dR}(X_0/k)$.  Thus $I_a=k\bar e_2$ and $K_a=k\bar e_1$ in the notation of \cref{sec:canonical-lifts}. With these normalizations, $\kappa_a$ sends the map $\bar\omega_0\mapsto\bar\omega_1\bmod\overline L_a$ to the map $\bar e_1\mapsto\bar e_0$ on the corresponding graded pieces. These identifications are compatible with a change of holomorphic generator and parameter: for units $c,v$, replacing $\omega$ by $c\omega$ and $\theta$ by $v\theta$ multiplies both obstruction scalars by $v^{-1}$.

The reductions of $e_1$ and $e_0$ give bases of $\operatorname{gr}^1$ and $\operatorname{gr}^0$.  We use these bases to identify $\gamma_Y$ with its scalar in $k$.

We first compute the Achinger--Zdanowicz obstruction. Crystalline--de Rham comparison identifies the de Rham cohomology of the lifts at $t_\delta$ and $t_*$ with two evaluations of the same crystal. The Gauss--Manin connection gives the Taylor isomorphism between these evaluations. With our convention, transport of the Hodge line from $t_\delta$ to $t_*$ has first-order term $+(t_\delta-t_*)\nabla_{\partial_t}$ \cite{BerthelotOgus1978}. Since $t_\delta-t_*=p\delta$ and $\nabla_{\partial_t}=t^{-1}\nabla_\theta$, the transported Hodge line is $\Fil_\delta^3=\langle e_3+p(\delta/a)e_2\rangle$. The class $p(\delta/a)\in pA$ is well defined. Since $\Fil_\delta^1$ is the annihilator of $\Fil_\delta^3$, the class of $y_\delta=e_1-p(\delta/a)e_0$ generates $\Fil_\delta^1/\Fil_\delta^2$. With the graph convention of \cref{lem:AZ-degree-three}, $\eta_{X_\delta^A}(\bar e_1)=-(\delta/a)\bar e_0$. The obstruction at $X_*^A$ is zero, and $HW(0)$ has scalar $h_p$ on $\operatorname{gr}^0$.  Equation \eqref{eq:AZ-obstruction-difference} gives
\begin{equation}\label{eq:A4-AZ-obstruction}
 \gamma_{X_\delta^A}=-\frac{h_p}{a}\delta\quad\text{in }k.
\end{equation}

For the root $\mathbf v$ fixed in \cref{sec:A4-Dwork}, let $A_m(t)=\CT(f_t^m)$ and $B_{m,Q}(t)=[\mathbf x^{Q\mathbf v}]f_t^m$, as in \cref{app:level two}. For the Beukers--Vlasenko obstruction, let $R_p(t)=-A_{2p-1}(t)-(1-5t)B_{2p-1,p}(t)/t$. The finite formula \eqref{eq:lambda-finite-target} gives $\lambda_1(t,t)\equiv R_p(t)\pmod{p^2}$ for every lift $t$ of $a$. Reduction modulo $p$ gives $f_t^p=f_{t^p}(\mathbf x^p)$. The support of $f_t^{p-1}$ is contained in $(p-1)\Delta$. Since $0$ is the only interior lattice point of $\Delta$, no nonzero lattice vector $z$ satisfies $pz\in(p-1)\Delta$. For a root $\mathbf w$, the exponent $p\mathbf w$ is outside this polytope; for distinct roots $\mathbf v,\mathbf w$, so is $p(\mathbf v-\mathbf w)$. Consequently only the constant term of $f_t^{p-1}$ contributes to the constant and $\mathbf x^{p\mathbf v}$ coefficients of $f_t^{p-1}f_{t^p}(\mathbf x^p)$, and
\[A_{2p-1}(t)\equiv(1-5t^p)A_{p-1}(t),\quad B_{2p-1,p}(t)\equiv-t^pA_{p-1}(t)\pmod p.\]
It follows that $R_p(t)\equiv(t^{p-1}-1)A_{p-1}(t)\pmod p$. Since $a^{p-1}=1$, its derivative satisfies $R_p'(a)=-h_p/a$ in $k$. By \cref{thm:main}, $\lambda_1(t_*,t_*)=0$, so $R_p(t_*)\equiv0\pmod{p^2}$. The function $R_p(t)$ is a finite Laurent polynomial with coefficients in $W$.  Its first-order expansion at $t_*$ therefore gives
\begin{equation}\label{eq:A4-BV-obstruction}
 \frac{\lambda_1(t_\delta,t_\delta)}p
 =-\frac{h_p}{a}\delta\quad\text{in }k.
\end{equation}
Under the comparison $\kappa_a$, equations \eqref{eq:A4-AZ-obstruction} and \eqref{eq:A4-BV-obstruction} prove \cref{thm:A4-W2-comparison}. Since $h_p/a\neq0$, their common zero is $\delta=0$.
\end{proof}

\section{The mirror quartic K3 pencil}
\label{sec:K3}

We apply \cref{thm:rank two-criterion} in weight two. At the CM fiber considered below, the variable cohomology of the mirror quartic splits into a rank two transcendental factor and an algebraic line.

\subsection{The simplicial Dwork model and its CM fiber}

Let
\begin{equation*}
 h(x,y,z)=x+y+z+(xyz)^{-1},
 \quad f_t=1-th.
\end{equation*}
This is the completely symmetric simplicial family in three variables studied in \cite[\S7]{BeukersVlasenkoIII}. The equation $f_t=0$ gives a torus model for the quotient by the diagonal automorphisms preserving the holomorphic two form of the Fermat--Dwork quartic 
\[Y_0^4+Y_1^4+Y_2^4+Y_3^4 =4\psi Y_0Y_1Y_2Y_3, \quad t=(4\psi)^{-1}.\]
For the hypergeometric parameter, write
\begin{equation*}
 z_{\mathrm{hyp}}=\psi^{-4}=(4t)^4=256t^4.
\end{equation*}
Let $\tau$ be a root of
\begin{equation}\label{eq:K3-tau}
 \tau^4=-\frac1{12288};
\end{equation}
then $z_{\mathrm{hyp}}=-1/48$.  At this value the associated Fermat--Dwork quartic has Picard number $20$ and is a Kummer surface associated with the CM order of discriminant $-36$; see \cite[the introduction and Example~3.12]{DembelePanchishkinVoightZudilin2022}. This value also has a modular description. If $\omega_{\mathrm{CM}}=(1+3i)/2$, then the modular function $\rho_{\mathrm{mod}}$ of \cite[(2.4)--(2.6)]{DembelePanchishkinVoightZudilin2022} satisfies $\rho_{\mathrm{mod}}(\omega_{\mathrm{CM}})=-1/48=z_{\mathrm{hyp}}$.

Let $\lambda=t^4$, so that $\rho_{\mathrm{mod}}=256\lambda$. For the associated third order modular differential equation, the excellent Frobenius in the modular coordinate is induced by $\lambda(\omega)\mapsto\lambda(p\omega)$ \cite[\S1 and Appendix~C.2]{BeukersSupercongruencesModularForms}. Thus the K3 fixed point considered below lies in the modular CM setting.

We consider
\[V_\lambda:\quad \lambda(Y_1+Y_2+Y_3+Y_4)^4=Y_1Y_2Y_3Y_4 \quad\text{in }\mathbb P^3.\]
On the dense torus, the map $(x,y,z)\mapsto(x:y:z:(xyz)^{-1})$ identifies $f_t=0$ over $\Q(t)$ with the open subset of $V_{t^4}$ where all four coordinates are nonzero. Indeed, the displayed representative has product one and sum $1/t$; conversely a point of $V_{t^4}$ with nonzero coordinates and sum $S$ can be scaled by $(tS)^{-1}$ to have these two properties. The surface $V_{-1/12288}$ is defined over $\Q$. After base change to $\Q(\tau)$, its open subset where all four coordinates are nonzero is isomorphic to $\{f_\tau=0\}$.  This surface is birational to the diagonal quotient of the Fermat--Dwork quartic.  The relation with this quotient and the third order Picard--Fuchs equation are described in \cite[Example~3.12]{DembelePanchishkinVoightZudilin2022} and \cite[Example~4.1.9 and Theorem~5.1.3] {DoranKellySalernoSperberVoightWhitcher2018}.

Choose an embedding $\Q(\tau)\hookrightarrow\C$. Let $Y_\tau^\circ=\{f_\tau=0\}\subset\Gm^3$. Choose a toric crepant resolution $\widetilde Y_\tau$ of its compactification, and let $D_\tau=\widetilde Y_\tau\setminus Y_\tau^\circ$ be its reduced boundary. Pullback under the quotient map embeds the transcendental cohomology of $\widetilde Y_\tau$ into that of the Fermat--Dwork quartic. The latter has rank two, so $\widetilde Y_\tau$ also has Picard number $20$. In rational Betti cohomology, let $L_{\mathrm{tor},B}$ be the span of the classes of all irreducible components of $D_\tau$, and let $H_{\mathrm{var},B}=L_{\mathrm{tor},B}^{\perp}$. The restriction calculation in \cref{prop:K3-BR} below proves that $L_{\mathrm{tor},B}$ has rank nineteen. The Hodge index theorem makes the intersection form on $L_{\mathrm{tor},B}$ nondegenerate, so $H_{\mathrm{var},B}$ has rank three.  At $z_{\mathrm{hyp}}=-1/48$, the intersection $L_{\mathrm{alg},B}=H_{\mathrm{var},B}\cap \operatorname{NS}(\widetilde Y_{\tau,\overline{\mathbb Q}})_{\mathbb Q}$ is a line because the Picard number is $20$.  It lies in the primitive $(1,1)$-part, and the polarization is nonzero on it.  Its orthogonal complement therefore gives a rational Hodge decomposition
\begin{equation}\label{eq:K3-split}
 H_{\mathrm{var},B}=H_{\mathrm{tr},B}\oplus L_{\mathrm{alg},B}.
\end{equation}
The two summands have Hodge types $\{(2,0),(0,2)\}$ and $\{(1,1)\}$, respectively.  The spaces $L_{\mathrm{tor},B}$ and $L_{\mathrm{alg},B}$ are generated by algebraic cycle classes.  Their cycle-class realizations are Galois-stable: in $\ell$-adic cohomology, $L_{\mathrm{alg}}$ is the intersection of the Galois-stable variable summand with the Galois-stable geometric N\'eron--Severi subspace.  Their orthogonal projectors, defined using the intersection form, are therefore Galois equivariant.  At every good place at which these divisor classes extend, the projectors are compatible with the Betti, $\ell$-adic, de Rham, and crystalline realizations \cite{Grothendieck1966,Tsuji1999}.  We denote the corresponding summands by $H_{\mathrm{var}}$, $H_{\mathrm{tr}}$, and $L_{\mathrm{alg}}$ after specifying a realization. The de Rham realizations of $H_{\mathrm{tr}}$ and $L_{\mathrm{alg}}$ have Hodge degrees $\{0,2\}$ and $\{1\}$, respectively.

We now construct the bounded residue comparison.

\begin{proposition}
\label{prop:K3-BR}
Let $v$ be the place of $\Q(\tau)$ above $p=37$ selected by $\tau\equiv15\pmod {37}$, and let $K=\Q_{37}$. Write $f=f_\tau$, and use the Frobenius lift on the coefficient ring $\sigma_\tau(t)=\tau^{1-p}t^p$, which fixes $\tau$. Derivatives are taken before setting $t=\tau$. Let $H_{\mathrm{var}}$ denotes the crystalline realization over $K$ of the rank three summand $H_{\mathrm{var},B}$ defined above.  Crystalline--de Rham comparison equips it with a Hodge filtration, and it is a weakly admissible filtered $\varphi$-module \cite{Tsuji1999,ColmezFontaine2000}.  There is a bounded surjection
\[\rho_{K3}:CY(h)_\tau[1/p]\twoheadrightarrow H_{\mathrm{var}}\]
which satisfies $\rho_{K3}\cC_p=p^2\varphi^{-1}\rho_{K3}$. The images of $1/f$ and $\theta(1/f)$ generate respectively $\Fil^2H_{\mathrm{var}}$ and $\Fil^1H_{\mathrm{var}}/\Fil^2H_{\mathrm{var}}$.
\end{proposition}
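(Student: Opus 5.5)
The plan is to deduce the proposition from \cref{thm:general-bounded-residue} with $n=3$, $d=2$, exactly as \cref{thm:BR} was deduced for the $A_4$ family. The only new feature is that the target is a summand $H_{\mathrm{var}}$ of the weight-two part, not the whole invariant part. The group $G$ will be the cyclic group of order four permuting $x,y,z,(xyz)^{-1}$. It preserves $h$ and $\Omega_T$, and has order prime to $37$. The point $\tau\in\Z_{37}$ exists because $15^4\equiv-1/12288\pmod{37}$ is a simple root, by Hensel. The reflexive simplex gives a crepant toric resolution whose fan is unimodular, and I will check good reduction at $37$ by the same initial-form argument as in \cref{lem:good-pair}. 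On the dense torus the singular points satisfy $x=y=z=(xyz)^{-1}$, so $x^4=1$ and $1=4\tau x$, forcing $256\tau^4=1$. This is false modulo $37$ because $256\cdot(-1/12288)=-1/48\not\equiv1$. Boundary strata are faces of a simplex whose initial forms are single monomials or binomial-type sums, and these are smooth transverse. This gives hypothesis~(i) over $W=\Z_{37}$.

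For hypothesis~(iii), I first identify $W_2H^\circ$ with $e_G$ of the image of proper cohomology, as in \cref{lem:weight-three-restriction}. Using Chiarellotto's weight bounds, that image is the proper $H^2$ modulo the span of the boundary divisor classes $L_{\mathrm{tor}}$, since the kernel of restriction is generated by the Gysin images of the components $D_S$. Because the intersection form on $L_{\mathrm{tor}}$ is nondegenerate, restriction identifies $H_{\mathrm{var}}=L_{\mathrm{tor}}^{\perp}$ isomorphically with $W_2H^2_{\rig}(Y^\circ_\tau)$, Frobenius-equivariantly. Here I would rather take $G$ trivial, or note that $H_{\mathrm{var}}$ lies in the invariants. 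The rank-three claim, namely that $L_{\mathrm{tor}}$ has rank $19$, follows from the same computation: I will compute $\dim W_2H^2_{\rig}(Y^\circ_\tau)=3$ from the rank-three Picard--Fuchs operator, or count torus points via Batyrev's formula for the variable cohomology of a simplicial reflexive family. The summand $H_{\mathrm{var}}$ is a direct summand in weakly admissible filtered isocrystals because its orthogonal projector is given by algebraic cycle classes, compatibly with the crystalline realization.

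For hypothesis~(iv), I take the classes $\theta^j(1/f_t)|_{t=\tau}\Omega_T$ for $j=0,1,2$. Their residues lift to proper cohomology by the adjunction argument of \cref{prop:interior-residue}, since the numerators $a^kh^k$ have poles only along $f$. A Wronskian argument with the minimal third-order Picard--Fuchs operator shows they are independent at $\tau$. Here $z_{\mathrm{hyp}}=-1/48$ is not among the singular points $0,1/256,\infty$ (in the $z_{\mathrm{hyp}}$ normalization, $1$). The independence then transfers to rigid cohomology through the de Rham--dagger comparison square of \cref{lem:proper-open-injective}. The theorem then yields $\rho_{K3}$, boundedness, and $\rho_{K3}\cC_p=p^2\varphi^{-1}\rho_{K3}$.

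Finally, the Hodge statement follows from the residue descriptions. The class $\rho(1/f)$ is the holomorphic two-form, so it is nonzero in $\Fil^2$, which is one-dimensional. The class $\rho(\theta(1/f))=\nabla_\theta\omega$ lies in $\Fil^1$ by Griffiths transversality. It is nonzero modulo $\Fil^2$ because the Kodaira--Spencer map is nonzero, which is equivalent to the Picard--Fuchs operator having order three and not less. I expect the main obstacle to be the weight-two identification: I must show that the kernel of proper-to-open restriction is exactly $L_{\mathrm{tor}}$ integrally at $37$, and that $H_{\mathrm{var}}$ maps isomorphically onto $W_2H^\circ$. I would attack this by dualizing the localization sequence to the Gysin sequence $\bigoplus H^0(D_S)(-1)\to H^2\to W_2H^2(Y^\circ)\to 0$. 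That sequence is exact on the weight-two part by purity.
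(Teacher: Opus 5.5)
Your skeleton is the paper's. You apply \cref{thm:general-bounded-residue} with $n=3$, $d=2$ and the trivial group. You build a unimodular crepant toric model over $\Z_{37}$, checked by initial forms and the discriminant $(4\tau)^4-1=-49/48$. You then use the localization sequence and Chiarellotto's weight bounds to identify $H_{\mathrm{var}}=L_{\mathrm{tor}}^\perp$ with $W_2H^2_{\rig}(Y_0^\circ/K)$. That step needs the form on $L_{\mathrm{tor}}$ to be nondegenerate, which you assert without proof; justify it as the paper does, by the Hodge index theorem, since $L_{\mathrm{tor}}$ contains the restriction of a toric ample class.

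Two of your claims must be corrected.

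First, the order-four group is not admissible. A $4$-cycle of $x,y,z,(xyz)^{-1}$ is an odd permutation, so its matrix in $\mathrm{GL}_3(\Z)$ has determinant $-1$ and it sends $\Omega_T$ to $-\Omega_T$. Hypothesis~(ii) therefore fails. Moreover, the residue of $\Omega_T/f_t$ and its $\nabla_\theta$-derivatives are anti-invariant, so $e_G$ kills them and (iv) cannot hold. Your fallback that ``$H_{\mathrm{var}}$ lies in the invariants'' is false, since $\Fil^2H_{\mathrm{var}}$ is spanned by an anti-invariant class. Use the trivial group, as the paper does.

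Second, the order of the Picard--Fuchs operator only gives $\dim W_2H^2(Y_\tau^\circ)\geq3$, i.e.\ $\operatorname{rank}L_{\mathrm{tor}}\leq19$. It cannot show that the boundary curves span all of the algebraic part. The inequality you actually need, $\dim W_2H^2(Y_\tau^\circ)\leq3$, requires an exact computation. The paper uses the Batyrev--Cox Jacobian ring at $\tau$: the logarithmic Jacobian relations cut the positive-degree quotient down to $(z)\Q(\tau)[z]/(z^4)$, and this is combined with $b_2=22$. Batyrev's formula for the toric Picard rank is an acceptable substitute, but the Picard--Fuchs option alone does not suffice.

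Where you genuinely differ is in the independence of the three cyclic classes and the Hodge assertion. The paper gets both at once from the Batyrev--Cox residue description. There $1/f,1/f^2,1/f^3$ correspond to $z_0,z_0^2,z_0^3$ in the three Hodge graded pieces, and the passage to $\theta^j(1/f)$ is triangular with diagonal $1,1,2$. Your Wronskian argument also works and avoids the Jacobian ring: $\omega\wedge\nabla_\theta\omega\wedge\nabla_\theta^2\omega$ does not vanish at the regular point $z_{\mathrm{hyp}}=-1/48$. Note that it is this pointwise nonvanishing at $\tau$, not merely the generic order three, that gives $\nabla_\theta\omega(\tau)\notin\Fil^2$.

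Finally, the ``integral at $37$'' control of the kernel of restriction, which you list as the main obstacle, is unnecessary. Working over $K$, the paper uses exactness of the rigid localization sequence, the weight bounds, and the Grosse-Kl\"onne comparisons. It adds that the boundary components specialize with the same intersection matrix, and together these already give the isomorphism $H_{\mathrm{var}}\simeq W_2H^2_{\rig}(Y_0^\circ/K)$.
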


\begin{proof}
First we compute the variable part by the toric Jacobian ring, then we identify it in Betti and de Rham cohomology through the restriction map. Finally we construct a model over $\Z_{37}$ and pass to rigid cohomology.

Let $\Delta$ be the Newton polytope of $h$.  It is the reflexive simplex with vertices $(1,0,0)$, $(0,1,0)$, $(0,0,1)$, and $(-1,-1,-1)$. Every proper face consists of affinely independent vertices, so its logarithmic Jacobian has full rank on the torus.  In the dense torus the critical equations force $x=y=z=(xyz)^{-1}$; hence the only discriminant is $(4t)^4=1$. At \eqref{eq:K3-tau}, $(4\tau)^4-1=-49/48$, a $37$-adic unit.  

Over $\Q(\tau)$, the homogeneous semigroup ring of the simplex is
\[S_\Delta=\Q(\tau)[z_0,z_1,z_2,z_3,z_4]/ (z_1z_2z_3z_4-z_0^4),\]
where $\deg z_i=1$, and the homogenization of $f_t$ is $F_t=z_0-t(z_1+z_2+z_3+z_4)$. The logarithmic Jacobian relations give $z_1=z_2=z_3=z_4=:z$ and $z_0=4tz$. The semigroup relation then gives $\bigl(1-(4t)^4\bigr)z^4=0$. Since $1-(4\tau)^4=49/48$ is a unit, the positive-degree Jacobian quotient at $t=\tau$ is $(z)\Q(\tau)[z]/(z^4)=\Q(\tau)z\oplus\Q(\tau)z^2\oplus\Q(\tau)z^3$. Let $I_\Delta^{(1)}\subset S_\Delta$ be the ideal generated by monomials of degree $m\geq1$ whose exponents lie in $m\Delta^\circ$. The monomial $z_0$ is the degree-one interior monomial, and $z=(4\tau)^{-1}z_0$ in the quotient. Thus the image of the interior ideal $I_\Delta^{(1)}$ is the whole three-dimensional positive-degree quotient.

Let $\overline Y_\tau\subset P_\Delta$ be the closure of $Y_\tau^\circ$ in the projective simplicial toric threefold associated with $\Delta$.  It is an ample nondegenerate anticanonical hypersurface. Write $PH^2(\overline Y_\tau)$ for its primitive cohomology, namely the quotient by the image of $H^2(P_\Delta)$.  Proposition~11.6 of \cite{BatyrevCox1994} gives a natural isomorphism $PH^2(\overline Y_\tau)\xrightarrow{\sim}W_2PH^2(Y_\tau^\circ)$. Here $W_\bullet$ denotes the weight filtration. The group $PH^2(Y_\tau^\circ)$ is the cokernel of $H^2(\Gm^3)\to H^2(Y_\tau^\circ)$.  Since $H^2(\Gm^3)$ is pure of weight four, strictness of the weight filtration gives $W_2PH^2(Y_\tau^\circ)=W_2H^2(Y_\tau^\circ)$.  Theorem~11.8 of \cite{BatyrevCox1994} identifies this isomorphism with the residue map from the image of $I_\Delta^{(1)}$ in the Jacobian ring.  The calculation above therefore shows that both sides have dimension three and have one-dimensional Hodge pieces of types $(2,0)$, $(1,1)$, and $(0,2)$.

Apply the localization sequence to the smooth pair $(\widetilde Y_\tau,D_\tau)$.  Its degree two part gives
\[\bigoplus_{D_i\subset D_\tau}\Q(-1)[D_i] \longrightarrow H^2(\widetilde Y_\tau,\Q) \xrightarrow{j^*}H^2(Y_\tau^\circ,\Q).\]
The image of the first map is $L_{\mathrm{tor},B}$ by definition. Deligne's weight spectral sequence for a smooth proper variety with a simple normal crossings boundary \cite[\S3.2]{Deligne1971} gives $\operatorname{im}(j^*)=W_2H^2(Y_\tau^\circ,\Q)$. The Batyrev--Cox calculation above shows that this image has dimension three.  Since a K3 surface has second Betti number $22$, exactness gives
\[\dim L_{\mathrm{tor},B}=19, \quad H^2(\widetilde Y_\tau,\Q)/L_{\mathrm{tor},B} \xrightarrow{\sim}W_2H^2(Y_\tau^\circ,\Q).\]
A toric ample divisor is supported on the toric boundary, so its restriction to $\widetilde Y_\tau$ belongs to $L_{\mathrm{tor},B}$.  The Hodge index theorem therefore makes the intersection form on $L_{\mathrm{tor},B}$ nondegenerate.  Orthogonal projection consequently identifies the quotient above with $H_{\mathrm{var},B}=L_{\mathrm{tor},B}^\perp$.

Algebraic de Rham comparison \cite{Grothendieck1966} gives the same dimension and restriction statements in de Rham cohomology. The residue theorem identifies the classes of $z_0,z_0^2,z_0^3$, up to nonzero scalar factors, with the images of the residues represented by $1/f,1/f^2,1/f^3$ in the corresponding Hodge graded pieces. Since $\tau h=1-f$, one has
\[\theta\!\left(\frac1f\right)=\frac1{f^2}-\frac1f, \quad \theta^2\!\left(\frac1f\right) =\frac2{f^3}-\frac3{f^2}+\frac1f.\]
The transition matrix is triangular with diagonal entries $1,1,2$. Thus the residues represented by $1/f$, $\theta(1/f)$, and $\theta^2(1/f)$ form a basis of the image of the de Rham cohomology of the proper compactification in $H^2_{\dR}(Y_\tau^\circ/\Q(\tau))$. The restriction map gives unique lifts in the orthogonal complement of the boundary divisor classes in $H^2_{\dR}(\widetilde Y_\tau/\Q(\tau))$, and these lifts form a basis. We extend scalars to $K$ through the embedding $\Q(\tau)\hookrightarrow K$ determined by $v$.

A projective unimodular subdivision of the fan of the dual simplex \cite[Theorem~2.2.24 and Proposition~3.2.1]{Batyrev1994Dual} gives a smooth proper toric threefold $\mathscr P$ over $\Z_{37}$.  The closure $\widetilde{\mathscr Y}_\tau\subset\mathscr P$ is smooth and proper, and it meets the toric boundary transversely.  Thus the boundary together with $\widetilde{\mathscr Y}_\tau$ is a relative simple normal crossings divisor.  Write $Y_0^\circ$ for the affine special fiber in the torus and $\widetilde Y_0$ for the proper special fiber.

For the smooth relative simple normal crossings model over $\Z_{37}$ constructed above, the comparison theorems \cite[Theorem~2.3, Proposition~3.6, Corollary~3.8(c), and Corollary~3.9]{GrosseKlonne2004} give comparison isomorphisms for the proper model and the complement of its boundary, compatible with restriction. Under these isomorphisms, the boundary divisor classes and the algebraic de Rham residue classes represented by $1/f$, $1/f^2$, and $1/f^3$ specialize to the corresponding rigid classes.  Let $D=\widetilde Y_0\setminus Y_0^\circ$.  The degree two part of the rigid localization sequence is
\[\bigoplus_{D_i\subset D}K(-1)[D_i] \longrightarrow H^2_{\rig}(\widetilde Y_0/K) \xrightarrow{j^*}H^2_{\rig}(Y_0^\circ/K).\]
Let $L_{\mathrm{tor},K}$ denote the image of the first map, equivalently the span of the crystalline classes of all irreducible components of $D$. These comparison isomorphisms identify it with the realization of $L_{\mathrm{tor},B}$.  The weight bounds for the localization sequence give $\operatorname{im}(j^*)=W_2H^2_{\rig}(Y_0^\circ/K)$ \cite[Theorems~2.2--2.3]{Chiarellotto1998Weights}. The intersection matrix and the dimensions above are unchanged after extension to $K$.  Restricting $j^*$ to the orthogonal complement of $L_{\mathrm{tor},K}$ therefore gives the Frobenius-equivariant isomorphism
\begin{equation}\label{eq:K3-weight-two-restriction}
 j_{37}:H_{\mathrm{var}}\xrightarrow{\sim}
 W_2H^2_{\rig}(Y_0^\circ/K).
\end{equation}
The three cyclic classes above map to a basis under $j_{37}$.

Apply \cref{thm:general-bounded-residue} with the trivial group, $n=3$, $d=2$, and $H=H_{\mathrm{var}}$.  The smooth projective relative simple normal crossings model was constructed above.  Equation \eqref{eq:K3-weight-two-restriction} is condition~\textup{(iii)} of that theorem, and the three cyclic residue classes give condition~\textup{(iv)}. The theorem therefore gives the bounded surjection $\rho_{K3}$ and the identity $\rho_{K3}\cC_p=p^2\varphi^{-1}\rho_{K3}$. Finally, the Batyrev--Cox residue description above shows that $1/f$ spans $\Fil^2$ and that $\theta(1/f)$ has nonzero image in $\Fil^1/\Fil^2$.  This proves the assertion about the top two Hodge pieces.
\end{proof}

\subsection{The excellent Frobenius at \texorpdfstring{$p=37$}{p=37}}

We now verify the Hasse--Witt conditions at $p=37$ and apply the general criterion. We choose this prime because the CM parameter has a lift in $\Z_{37}$ and the first two Hasse--Witt invariants are units there.

Hensel's lemma gives $\tau\equiv1088\pmod{37^2}$ for the root selected in \cref{prop:K3-BR}. By symmetry, choose the vertex $e_1$, and let $c_k=\CT(h^k)$ and $d_{k,Q}=[\mathbf x^{Qe_1}]h^k$. The two coefficient sequences used in the finite level two calculation are
\begin{equation*}
 c_k=
 \begin{cases}
 \dfrac{k!}{(k/4)!^4},&4\mid k,\\[4pt]
 0,&4\nmid k,
 \end{cases}
 \quad
 d_{k,Q}=
 \begin{cases}
 \dfrac{k!}{(r+Q)!r!^3},&k-Q=4r\geq0,\\[4pt]
 0,&\text{otherwise}.
 \end{cases}
\end{equation*}
The proof of \cref{prop:finite-level two} applies to the simplex polynomial $h$.  For the standard Frobenius lift $\sigma_0(t)=t^{37}$, the target parameter at $t=\tau$ is $u=\tau^{37}$.  The constant coefficient used in that proof is $s_h=\CT(1-uh)=1$, because $\CT(h)=0$.  Substitution gives the first Hasse--Witt scalar $\CT(f_\tau^{36})\equiv4\pmod{37}$ and $37^{-1}\det HW^{(2)}_{\sigma_0}(\tau)\equiv4\pmod{37}$. Thus both Hasse--Witt invariants are units.

By \cite[Lemma~7.2]{BeukersVlasenkoIII}, these Hasse--Witt conditions also hold for the coefficient Frobenius $\sigma_\tau$ of \cref{prop:K3-BR}.

By \cref{prop:K3-BR,lem:low-weight-ordinary}, $H_{\mathrm{var}}$ has slopes $\{0,1,2\}$. At the selected place, the algebraic-cycle projectors described above give the Frobenius-stable decomposition $H_{\mathrm{var}}=H_{\mathrm{tr}}\oplus L_{\mathrm{alg}}$. The algebraic line in \eqref{eq:K3-split} is generated by a divisor class.  Frobenius acts on the finite-dimensional span of divisor classes as $37$ times a finite-order operator.  Hence the algebraic line has slope $1$.  The transcendental factor consequently has slopes $\{0,2\}$.

The vertex coefficients are finite Laurent polynomials. Along the inward ray $-e_1$, let $a_Q^{K3}(t)$ be the coefficient at $-Qe_1$ in the vertex expansion. Direct multinomial extraction gives
\begin{equation}\label{eq:K3-vertex}
 a_Q^{K3}(t)=
 -\sum_{r=0}^{\lfloor(Q-1)/4\rfloor}(-1)^r
 \frac{(Q-1-r)!}{(Q-1-4r)!(r!)^3}\,t^{-(Q-4r)}.
\end{equation}
In particular, \(a_1^{K3}=-t^{-1}\) and \(a_2^{K3}=-t^{-2}\).

\begin{theorem}
For the root $\tau\in\Z_{37}$ specified by \eqref{eq:K3-tau} and $\tau\equiv15\pmod {37}$, $\sigma_{\mathrm{exc},37}(\tau)=\tau$. For every $s\geq1$, the coefficients $a_Q^{K3}(t)$ defined in \eqref{eq:K3-vertex} satisfy
\[a_{37^s}^{K3}(\tau)a_{2\cdot37^{s-1}}^{K3}(\tau) -a_{2\cdot37^s}^{K3}(\tau)a_{37^{s-1}}^{K3}(\tau) \equiv0\pmod{37^{2s}}.\]
\end{theorem}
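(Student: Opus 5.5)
The plan is to apply \cref{thm:rank two-criterion} with $d=2$, $p=37$, $t_0=\tau$, $M_{t_0}=CY(h)_\tau$, $H=H_{\mathrm{var}}$, $\rho=\rho_{K3}$ and $E=H_{\mathrm{tr}}$, and then to identify the determinant in the statement with $D_s^{\mathbf v,2\mathbf v}(\tau,\tau)$ for the vertex $\mathbf b=e_1$, $\mathbf v=-e_1$.

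\textbf{Hypotheses of the criterion.} The first two Hasse--Witt conditions for the auxiliary Frobenius $\sigma_\tau$ were verified above by direct computation modulo $37$, followed by \cite[Lemma~7.2]{BeukersVlasenkoIII}. Condition~(i) is \cref{prop:K3-BR}: the map $\rho_{K3}$ is a bounded surjection with $\rho_{K3}\cC_p=p^2\varphi^{-1}\rho_{K3}$.

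For condition~(ii), \cref{lem:low-weight-ordinary} with $(d,\dim H)=(2,3)$ gives the slopes $\{0,1,2\}$ of $H_{\mathrm{var}}$. Hence $H/H_{\varphi,\leq0}$ has dimension two.

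For condition~(iii), the algebraic-cycle projectors give the Frobenius-stable splitting $H_{\mathrm{var}}=H_{\mathrm{tr}}\oplus L_{\mathrm{alg}}$, and this splitting is also filtered. The slope computation above shows that $H_{\mathrm{tr}}$ is a rank two subisocrystal with slopes $\{0,2\}$. It remains to show that $\rho_{K3}(1/f)\in H_{\mathrm{tr}}$.
\begin{itemize}
\item By \cref{prop:K3-BR}, $\rho_{K3}(1/f)$ spans $\Fil^2H_{\mathrm{var}}$.
\item The filtered decomposition gives $\Fil^2H_{\mathrm{var}}=\Fil^2H_{\mathrm{tr}}\oplus\Fil^2L_{\mathrm{alg}}$.
\item Since $L_{\mathrm{alg}}$ has Hodge degree $1$, we have $\Fil^2L_{\mathrm{alg}}=0$, so $\Fil^2H_{\mathrm{var}}\subset H_{\mathrm{tr}}$.
\end{itemize}

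This is the step needing the most care. One must confirm that the crystalline realization of the projector onto $L_{\mathrm{alg}}$ is compatible with the Hodge filtration coming from crystalline--de Rham comparison. I would argue this from compatibility of cycle classes with the comparison isomorphism at the good place $v$ \cite{Tsuji1999}. One also needs the divisor generating $L_{\mathrm{alg}}$ to extend over $\Z_{37}$; if necessary, pass to a finite unramified extension and descend, since both the projector and the Frobenius are defined over $\Q_{37}$.

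\textbf{Conclusions of the criterion.} \Cref{thm:rank two-criterion} now gives $\lambda_1(\tau,\tau)=0$, and, for every $\mathbf m,\mathbf n\in C_{e_1}\cap\Z^3$, the congruence $D_s^{\mathbf m,\mathbf n}(\tau,\tau)\equiv0\pmod{37^{2s}}$. The vertex coefficient of $h$ is $1$, the support lattice has index $1$, and the symmetry group $S_4$ has order prime to $37$. The excellent Frobenius theorem \cite[Theorem~7.3]{BeukersVlasenkoIII} therefore applies on the residue disk, and \cref{lem:pointwise} gives $\sigma_{\mathrm{exc},37}(\tau)=\tau$.

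\textbf{Identifying the determinant.} We need $a_Q^{K3}$ to be exactly the coefficient $a_{Q\mathbf v}$ of the expansion at $\mathbf b=e_1$ along the inward ray $-e_1$. This holds by the definition in \eqref{eq:K3-vertex}, after checking that its normalization (the leading factor $c_{\mathbf b}(t)^{-1}=-t^{-1}$) matches the $\mathbf b$-expansion of $1/f_t$. With $\mathbf m=\mathbf v$ and $\mathbf n=2\mathbf v$, the quantity $D_s^{\mathbf v,2\mathbf v}(\tau,\tau)$ is the displayed determinant, which completes the proof.
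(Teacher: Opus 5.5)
Your proposal is correct and follows the paper's proof essentially step for step. You apply \cref{thm:rank two-criterion} with $d=2$ and $E=H_{\mathrm{tr}}$, using \cref{prop:K3-BR} for (i), \cref{lem:low-weight-ordinary} for (ii), and $\Fil^2H_{\mathrm{var}}\subset H_{\mathrm{tr}}$ for (iii); you then read off the fixed point and the congruence along the inward ray $-e_1$, and your justification via $\Fil^2L_{\mathrm{alg}}=0$ and your normalization check of $a_Q^{K3}$ just spell out what the paper asserts in one line.
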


\begin{proof}
The support of $h$ generates $\mathbb Z^3$, every vertex coefficient is $1$, and the full monomial symmetry group of its simplex is $S_4$, of order $24$.  Thus the relevant lattice index is $1$ and $37\nmid24$, so the additional hypotheses of \cite[Theorem~7.3]{BeukersVlasenkoIII} hold.  The two Hasse--Witt conditions were verified above.  \Cref{prop:K3-BR} gives the comparison map and its Frobenius normalization.  \Cref{lem:low-weight-ordinary} gives the dimension and slopes of the high slope quotient. Equation \eqref{eq:K3-split} and the slope calculation show that $H_{\mathrm{tr}}$ is a rank two subisocrystal with slopes $0$ and $2$. The image of $1/f$ spans $\Fil^2H_{\mathrm{var}}\subset H_{\mathrm{tr}}$. Thus \cref{thm:rank two-criterion} applies with $d=2$ and $E=H_{\mathrm{tr}}$.  The Frobenius lift on the coefficient ring $\sigma_\tau$ fixes $\tau$, so that theorem gives the asserted fixed point.  The congruence follows for every $s\geq1$ by applying the vertex-coefficient conclusion of \cref{thm:rank two-criterion} to the inward ray in \eqref{eq:K3-vertex}.
\end{proof}

\section{A cycle criterion for a motivic projector}
\label{sec:motivic-projector}

We give a cycle criterion for realizing $h^1(E)(-1)$ as a direct summand of the Chow motive of $X_{t_*}$. Such a decomposition would give the rational Hodge decomposition predicted at $t_*$. Let $X=X_{t_*}$, write $f=f_{14,4}$ and $g=f_{14,2}$, and let $E=X_0(14)$ be the elliptic curve associated with $g$ \cite[\S6]{CandelasDeLaOssaElmiVanStraten2020}.

We use Chow motives with rational coefficients in the contravariant convention of \cite[\S\S1.3--1.8]{Scholl1994}, and write $h(X)$ for the Chow motive of $X$. Let $O\in E(\Q)$ be the origin and let $\pi_E^1=\Delta_E-[O\times E]-[E\times O]$. This projector defines $h^1(E)$ and satisfies ${}^t\pi_E^1=\pi_E^1$ \cite[\S3.2]{Scholl1994}.

The following proposition gives construction of  the projector from a correspondence whose action on $H^1(E)(-1)$ is nonzero.

\begin{proposition}
\label{prop:cycle-projector}
Suppose that a cycle $Z\in\mathrm{CH}^2(E\times X)_{\Q}$ defined over $\Q$ induces a nonzero map $Z_*:H^1(E(\C),\Q)(-1)\to H^3(X(\C),\Q)$. Let
\[\Gamma=Z\circ\pi_E^1:h^1(E)(-1)\longrightarrow h(X), \quad \Gamma^\dagger=\pi_E^1\circ{}^tZ: h(X)\longrightarrow h^1(E)(-1).\]
Then $u=\Gamma^\dagger\Gamma$ is invertible, and
\begin{equation}\label{eq:motivic-projector}
 \Pi_g=\Gamma u^{-1}\Gamma^\dagger
 \in\mathrm{CH}^3(X\times X)_{\Q}
\end{equation}
is an idempotent with image isomorphic to $h^1(E)(-1)$. Its image has complexification $H^{2,1}(X)\oplus H^{1,2}(X)$ on $H^3(X(\C),\Q)$ and its kernel has complexification $H^{3,0}(X)\oplus H^{0,3}(X)$. Assuming Dummigan's Conjecture~1.1, the corresponding $\ell$-adic summands are those attached to $g(-1)$ and $f$, respectively.
\end{proposition}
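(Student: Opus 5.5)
The plan is to show that $u$ is an invertible endomorphism of the motive $h^1(E)(-1)$ in $\mathrm{CH}^1(E\times E)_\Q$-correspondences. Idempotency and the image then follow formally, and the Hodge and $\ell$-adic statements are read off in realizations. We first record the formal part. Since $\pi_E^1$ is a projector, $\Gamma\pi_E^1=\Gamma$ and $\pi_E^1\Gamma^\dagger=\Gamma^\dagger$. If $u$ is invertible in $\operatorname{End}(h^1(E)(-1))=\pi_E^1\circ\mathrm{CH}^1(E\times E)_\Q\circ\pi_E^1$, then
\[
\Pi_g^2=\Gamma u^{-1}(\Gamma^\dagger\Gamma)u^{-1}\Gamma^\dagger=\Pi_g .
\]
Moreover, $\Gamma$ and $u^{-1}\Gamma^\dagger$ exhibit $h^1(E)(-1)$ as the image of $\Pi_g$: indeed $(u^{-1}\Gamma^\dagger)\Gamma=\mathrm{id}$ and $\Gamma(u^{-1}\Gamma^\dagger)=\Pi_g$.

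The main step is the invertibility of $u$. Since $E=X_0(14)$ has no CM, $\operatorname{End}_\Q(E)\otimes\Q=\Q$. By the standard identification $\operatorname{End}(h^1(E))\simeq\operatorname{End}(E)\otimes\Q$ (Scholl, \S3), the degree-zero correspondence $u$ acts on $h^1(E)(-1)$ as a rational scalar $c$. We then check $c\neq0$ on Betti cohomology. On $H^1(E)(-1)$, $u_*=({}^tZ)_*Z_*$ is the adjoint composite of $Z_*$ with respect to the Poincaré pairings on $E$ and $X$, up to the sign conventions for the twist. For $x,y\in H^1(E)(-1)$ this gives
\[
\langle Z_*x,Z_*y\rangle_X=\pm\langle x,u_*y\rangle_E=\pm c\langle x,y\rangle_E .
\]
Since $Z_*\neq0$ and $H^1(E)$ is an irreducible rational Hodge structure, $Z_*$ is injective. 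Its image $I$ is therefore a rank-two sub-Hodge structure of type $\{(2,1),(1,2)\}$, and it lies inside $H^{2,1}\oplus H^{1,2}$ because $h^{2,1}=h^{1,2}=1$ by \cref{prop:permutohedral-deformations}. The polarization of $H^3(X)$ is definite (up to sign) on the real $(2,1)+(1,2)$ part under the Hodge–Riemann relations, via $i^{p-q}$. Hence its restriction to $I$ is nondegenerate, and the left side of the display is not identically zero. This forces $c\neq0$, so $u=c\,\pi_E^1$ is invertible with inverse $c^{-1}\pi_E^1$.

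The realization statements follow directly. The image of $(\Pi_g)_*$ on $H^3$ is $Z_*H^1(E)(-1)=I$, whose complexification is $H^{2,1}\oplus H^{1,2}$ by dimension count. The projector is self-adjoint for the polarization, since ${}^t\Pi_g=\Pi_g$ because ${}^tu=u$ and ${}^t\pi_E^1=\pi_E^1$. Its kernel is therefore $I^\perp$, which is $H^{3,0}\oplus H^{0,3}$ by orthogonality of Hodge types. On the $\ell$-adic side, the image is Galois-isomorphic to $H^1_{\et}(E)(-1)\simeq\rho_{g,\ell}(-1)=B_\ell$ by Eichler–Shimura, because $Z$ is defined over $\Q$. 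Under Dummigan's conjecture, \cref{prop:actual-split} then identifies the complementary summand with $A_\ell$. I expect the main care to be needed in the sign and twist bookkeeping of the adjointness identity and in confirming that $\operatorname{End}(h^1(E))=\Q$ applies to correspondences defined over $\Q$.
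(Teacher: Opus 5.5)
Your proposal is correct and follows essentially the same route as the paper: formal idempotency, Scholl's identification $\operatorname{End}(h^1(E)(-1))\simeq\operatorname{End}_{\Q}(E)\otimes\Q$, and detecting that $u\neq0$ on Betti cohomology from the adjointness of ${}^tZ$ and the nondegeneracy of cup product on the image $H^{2,1}\oplus H^{1,2}$. The only difference is that you use the absence of CM to write $u=c\,\pi_E^1$. The paper instead notes that any nonzero element of $\operatorname{End}_{\Q}(E)\otimes\Q$ is an isogeny and hence invertible, so it does not need that input.
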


\begin{proof}
By \cref{prop:permutohedral-deformations}, the four Hodge numbers of $H^3(X)$ are all one. The Betti realization of $\Gamma$ is a nonzero morphism of rational Hodge structures. Its source has one-dimensional components of types $(2,1)$ and $(1,2)$. Complex conjugation shows that the maps on these two components are both nonzero. Thus $\Gamma_*$ is injective and its image has complexification $H^{2,1}(X)\oplus H^{1,2}(X)$. The restriction of cup product to this image is nondegenerate, and its orthogonal complement has complexification $H^{3,0}(X)\oplus H^{0,3}(X)$.

Since ${}^t\pi_E^1=\pi_E^1$, the realization of $\Gamma^\dagger$ is the adjoint of $\Gamma_*$ for the cup-product pairings. Hence $u$ has invertible Betti realization and is nonzero. By \cite[Proposition~3.3]{Scholl1994},
\[\operatorname{End}(h^1(E)(-1)) \simeq\operatorname{End}_{\Q}(E)\otimes_{\Z}\Q.\]
Every nonzero endomorphism of $E$ is an isogeny, so $u$ is invertible. Composition of correspondences gives
\[\Pi_g^2 =\Gamma u^{-1}(\Gamma^\dagger\Gamma)u^{-1}\Gamma^\dagger =\Pi_g.\]
The maps $\Gamma$ and $u^{-1}\Gamma^\dagger$ identify its image with $h^1(E)(-1)$. On Betti cohomology, $\Pi_g$ is the orthogonal projector onto the image of $\Gamma_*$, which proves the asserted Hodge decomposition.

Under Dummigan's Conjecture~1.1, \cref{prop:actual-split} gives the direct sum of the representations attached to $g(-1)$ and $f$. Betti--$\ell$-adic comparison shows that the $\ell$-adic realization of $\Gamma$ is nonzero. Its source is the absolutely irreducible representation attached to $g(-1)$, so its image is the unique copy of that representation in $H^3_{\et}(X_{\overline\Q},\Q_\ell)$. The complementary summand is the representation attached to $f$.
\end{proof}

The projector gives compatible splittings in rational Betti, de Rham, and $\ell$-adic cohomology. After spreading out $X$, the projector, and its idempotent relation, it also gives an integral crystalline splitting at every prime in the resulting smooth proper spread where the projector's denominators are invertible.

\subsection{Constraints on a correspondence}

We first consider the correspondences given by the known elliptic ruled surfaces on the Hulek--Verrill small-resolution cover of \cref{prop:birational-H3}. Their underlying elliptic curves are isogenous to $E$. Let $W$ be the subspace of the cover's rational Betti cohomology $H^3$ generated by their cylinder maps, and let $W^\perp$ denote its orthogonal complement for cup product. The quotient $W/(W\cap W^\perp)$ is the direct sum of four copies of $H^1(E)(-1)$ \cite[Propositions~4.4--4.5]{Dummigan2026}. The calculation in \cite[proof of Proposition~4.6]{Dummigan2026} gives $W^G\subset W\cap W^\perp$ for the order-ten group $G$. Thus cup product vanishes on $W^G$.

After composition with isogeny correspondences, their cylinder maps have source $H^1(E)(-1)$. Averaging over $G$ gives maps to the cohomology of the quotient $X_{t_*}^{\mathrm{sm}}$. Under Dummigan's Conjecture~1.1, the argument of \cref{prop:actual-split} shows that any such nonzero map has image equal to the nondegenerate summand attached to $g(-1)$. Pullback to the cover multiplies cup product by $10$, but this image would lie in $W^G$, where cup product vanishes. Therefore these averaged maps are zero. They remain zero after composition with a correspondence from $X_{t_*}^{\mathrm{sm}}$ to $X$.

The numerical period relations in \cite[\S4.2 and Table~7]{CandelasDeLaOssaElmiVanStraten2020} suggest a further constraint on an integral cylinder map. In the coordinates of that table with $\kappa=1$, let \(L_{\mathrm{out}}=\langle v_1,v_2\rangle_{\Z}\) and \(L_g=\langle w_1,w_2\rangle_{\Z}\) be lattices where
\[v_1=(4,-15,-5,0)^t,\quad v_2=(0,0,2,1)^t, \quad w_1=(3,-6,0,1)^t,\quad w_2=(1,-2,-5,-1)^t.\]
For the lattice argument below, assume the predicted integral symplectic identification of $H^3(X(\C),\Z)/\mathrm{tors}$ with $\Z^4$, under which $L_g\otimes_{\Z}\C$ is $H^{2,1}(X)\oplus H^{1,2}(X)$.

The two lattices are orthogonal, and each has alternating pairing of absolute value $7$ in its displayed basis. The matrix with columns $v_1,v_2,w_1,w_2$ has determinant $-49$, so $L_{\mathrm{out}}\oplus L_g$ has index $49$ in $\Z^4$. Moreover, $L_g$ is saturated: the maximal minors of $(w_1,w_2)$ include $-4$ and $5$, and hence have greatest common divisor $1$.

Let $B$ be an elliptic curve and let
\[c:H^1(B(\C),\Z)(-1)\to H^3(X(\C),\Z)/\mathrm{tors}\]
be an integral cylinder map induced by an algebraic correspondence. Define its pairing multiplier $m(c)$ by
\[\langle c(\alpha),c(\beta)\rangle_X =m(c)\langle\alpha,\beta\rangle_B.\]
If $c$ is nonzero, its image lies in $L_g$ and has finite index $n=[L_g:\operatorname{im}c]$. In integral bases, the matrix of $c$ has determinant $\pm n$, so $m(c)=\pm7n$. In particular, a cylinder map with nonzero saturated image has multiplier $\pm7$. Replacing $c$ by a rational multiple $ac$ multiplies its pairing multiplier by $a^2$.

Suppose that $i:S\hookrightarrow X$ is a smooth divisor and $\pi:S\to B$ is a $\PP^1$-bundle. Its cylinder map is $c=i_*\pi^*$. The self-intersection formula and adjunction give
\[\langle i_*\pi^*\alpha,i_*\pi^*\beta\rangle_X =\int_S\pi^*(\alpha\wedge\beta)c_1(N_{S/X}) =-2\int_B\alpha\wedge\beta,\]
because $N_{S/X}\simeq K_S$ and $\deg(K_S|_{\PP^1})=-2$; see \cite[\S6.3]{Fulton1998} for the self-intersection formula. Thus $c$ is nonzero and has multiplier $-2$, contrary to the divisibility by $7$. Under the stated period prediction, $X$ therefore contains no smooth divisor that is a $\PP^1$-bundle over an elliptic curve.

\subsection{A possible construction}

We can look for such a correspondence using a family of curves on $X$ parameterized by an elliptic curve. Suppose that the Hilbert scheme of curves on $X$ has a smooth geometrically integral genus one component $B$ defined over $\Q$. Choose a point $O_B\in B(\Q)$ as origin and an isogeny $\varphi:B\to E$ defined over $\Q$. Let $\mathcal U\subset B\times X$ be the universal curve, with projections $q:\mathcal U\to B$ and $e:\mathcal U\to X$. Let $\pi_B^1$ be the Chow--K\"unneth projector defined by $O_B$, and let
\[\Gamma_B=[\mathcal U]\circ\pi_B^1: h^1(B)(-1)\longrightarrow h(X), \quad \Gamma=\Gamma_B\circ\varphi^*: h^1(E)(-1)\longrightarrow h(X).\]
Since $\varphi^*$ is an isomorphism on rational $H^1$, a nonzero Betti realization of $\Gamma_B$ gives a nonzero realization of $\Gamma$. Then \cref{prop:cycle-projector} gives the motivic projector. Computing $\Gamma^\dagger\Gamma$ makes the formula for this projector explicit.

Let $p_{ij}$ denote the projections from $B\times X\times B$ onto the indicated factors. The composition of correspondences is
\[u_B:=\Gamma_B^\dagger\Gamma_B =\pi_B^1\circ(p_{13})_* \bigl(p_{12}^*[\mathcal U]\cdot p_{23}^*[{}^t\mathcal U]\bigr)\circ\pi_B^1 \in\operatorname{End}(h^1(B)(-1)),\]
where $\Gamma_B^\dagger=\pi_B^1\circ{}^t[\mathcal U]$ \cite[\S1.3]{Scholl1994}. The corresponding endomorphism of $h^1(E)(-1)$ is $u=\varphi_*u_B\varphi^*$.

Let $c_B=(\Gamma_B)_*$ and $c_E=c_B\circ\varphi^*$ be the integral cylinder maps on $H^1$. The projection formula gives
\[m(c_E)=(\deg\varphi)\,m(c_B).\]
We note thath if $q$ is a $\PP^1$-bundle and $e$ is a closed immersion, then $m(c_B)=-2$ and $m(c_E)=-2\deg\varphi$. The lattice constraint above excludes this case under the period prediction.

\section{Further questions}
\label{sec:future}

\subsection{Bounded residue comparisons in other families}

We would like to apply \cref{thm:rank two-criterion} to other Calabi--Yau Laurent families. Under the geometric hypotheses of \cref{thm:general-bounded-residue}, we need to verify two statements: restriction identifies the selected crystalline summand with the lowest weight part of the open cohomology, and the cyclic residues span this summand. What geometric or combinatorial conditions on the Laurent polynomial ensure these properties?

We can also ask for comparisons beyond the level two quotient. For every $r\geq1$, \cref{lem:slope} gives $\rho(\cF_rM)\subset H_{\varphi,\leq d-r}$. The induced map on the quotients is surjective. If these quotients have the same dimension, it is an isomorphism $(M/\cF_rM)[1/p]\simeq H/H_{\varphi,\leq d-r}$. We ask when higher Hasse--Witt conditions give this equality of dimensions, and which cohomological splittings force congruences among the Laurent coefficients.

\subsection{A modular correspondence}
Let $E=X_0(14)$ be the elliptic curve attached to $f_{14,2}$, as in \cref{sec:motivic-projector}. Dummigan's Conjecture~1.1 predicts that $H^3_{\et}(X_{t_*,\overline{\Q}},\Q_\ell)$ contains a summand isomorphic to $H^1_{\et}(E_{\overline{\Q}},\Q_\ell)(-1)$ \cite[Conjecture~1.1]{Dummigan2026}. We ask whether an algebraic correspondence realizes this factor.

\begin{conjecture}
\label{conj:A4-modular-cycle}
There is a cycle $Z\in\mathrm{CH}^2(E\times X_{t_*})_{\Q}$ whose Betti realization induces a nonzero map $H^1(E)(-1)\to H^3(X_{t_*})$.
\end{conjecture}

By \cref{prop:cycle-projector}, such a cycle gives an idempotent whose image on $H^3$ is the factor attached to $f_{14,2}(-1)$. Its complement is the factor attached to $f_{14,4}$. Thus \cref{conj:A4-modular-cycle} would prove the predicted rational Hodge splitting.

\subsection{Other predicted rank two parameters}

The conjugate parameters $t_\pm=33\pm8\sqrt{17}$ in the Hulek--Verrill family are further predicted rank two attractors. Dummigan's Conjecture~1.2 gives their expected modular decompositions over $\Q(\sqrt{17})$ \cite[Conjecture~1.2]{Dummigan2026}.

We would like to apply \cref{thm:rank two-criterion} at these parameters. Assuming the predicted decompositions, one need to construct the bounded comparison in \cref{thm:general-bounded-residue} and verify the Dwork Hasse--Witt conditions and the slope conditions on the factor containing the holomorphic line. At places of residue degree one, the criterion would then give excellent fixedness and the corresponding supercongruences.

At a place $v$ of residue degree $f_v>1$, the excellent Frobenius carries the parameter through its Frobenius orbit. The corresponding question is whether $\sigma_{\mathrm{exc},v}^{f_v}$ fixes the parameter. This requires the comparisons and Cartier relations along the orbit; see \cref{prop:finite-frobenius-orbit}.

\subsection{Algebraicity of rank two attractor parameters}
Our arithmetic questions concern parameters defined over number fields. The two parameters considered in this paper are algebraic: $t_*=-1/7$ is rational, and for the mirror quartic pencil, the modular description identifies the relevant parameters with CM values. This raises a question about the original complex geometry, which Frits Beukers also asked more generally: must an isolated attractor be algebraic?

Moore’s attractor conjecture predicts algebraicity for isolated attractor points  \cite[Conjecture~8.2.2]{Moore1998}.  For the Hermitian symmetric Calabi--Yau variations studied by Lam, the attractor points are rank two and are CM points \cite[Theorem~3.6.3 and \S3.9]{LamAttractorConjecture}.  Lam and Tripathy consider the attractor condition that there exists a nonzero class $\gamma\in H^d(X,\Z)$ orthogonal to $H^{d-1,1}(X)$. By contrast, they prove that assuming a case of the Zilber--Pink conjecture, the algebraicity fails in every odd dimension other than $1,3,5,$ and $9$ \cite[Theorem~1.1.3]{LamTripathyAttractors}.  Their result does not settle the algebraicity question for rank two attractors.  The rank two condition makes $H^{d,0}\oplus H^{0,d}$ the complexification of a rational Hodge summand and is therefore expected to have a motivic origin.  Baldi, Klingler, and Ullmo interpret such points in one-parameter Calabi--Yau variations as atypical special points and conjecture finiteness in the setting they consider \cite[\S5.5]{BaldiKlinglerUllmo}.

 We formulate the rank two case of Beukers's question as follows.

\vspace{0.5em}
\noindent\textbf{Question.}
Let $\mathscr X\to S$ be a non-isotrivial Calabi--Yau family defined over a number field.  If $t\in S(\C)$ is an isolated rank two attractor point, must $t$ belong to $S(\overline{\Q})$? 
 
 A positive answer to the question would allow us to formulate the finite-place comparison questions for every isolated rank two attractor.

\subsection{Distribution of ordinary and canonical primes}
For the \(A_4\) fiber, our results apply at two-Dwork-ordinary primes. We ask how often this condition holds and whether the two modular forms predicted by Dummigan determine it. For a prime $p>5$, $p\neq7$, let $a_p^{(4)}=a_p(f_{14,4})$ and $a_p^{(2)}=a_p(f_{14,2})$. Dummigan's Conjecture~1.1 implies that
\begin{equation}\label{eq:A4-local-modular-polynomial}
\det\!\left(1-T\operatorname{Frob}_p
\mid H^3_{\et}(X_{t_*,\overline{\Q}},\Q_\ell)\right)
=
(1-a_p^{(4)}T+p^3T^2)(1-pa_p^{(2)}T+p^3T^2),
\end{equation}
where $\ell\neq p$ and $\operatorname{Frob}_p$ denotes geometric Frobenius. The first factor has Newton slopes $\{0,3\}$ precisely when $a_p^{(4)}\not\equiv0\pmod p$, and the second has slopes $\{1,2\}$ precisely when $a_p^{(2)}\not\equiv0\pmod p$.  It follows from \cref{thm:main} that every two-Dwork-ordinary prime is ordinary for both modular forms.

We first compare the first Hasse–Witt scalar with the weight four Fourier coefficient.

\begin{proposition}
\label{prop:first-HW-modular}
Assume Dummigan's Conjecture~1.1. For every prime $p>5$, $p\neq7$, one has
\[h_p\equiv a_p^{(4)}\pmod p.\]
\end{proposition}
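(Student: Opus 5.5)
The plan is to compute the trace of Frobenius on $H^3$ of $X_0$ modulo $p$ in two ways. On one side, Dummigan's Conjecture~1.1 and \eqref{eq:A4-local-modular-polynomial} express this trace through the modular forms. On the other side, the Katz congruence formula relates a point count modulo $p$ to the Hasse--Witt operator on $H^3(X_0,\mathcal O_{X_0})$. In the proof of \cref{thm:A4-W2-canonical}, that operator was already identified with the scalar $h_p$.

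\textbf{Modular side.} Take the trace of Frobenius in \eqref{eq:A4-local-modular-polynomial}. It gives
\[\operatorname{tr}(\Frob_p\mid H^3)=a_p^{(4)}+p\,a_p^{(2)}\equiv a_p^{(4)}\pmod p.\]
By \cref{prop:A4-W2-cohomology} we have $H^1(X_0,\mathcal O)=H^2(X_0,\mathcal O)=0$, and the Hodge numbers force $h^1=h^5=0$ for the Betti numbers. Hence the Lefschetz trace formula gives
\[\#X_0(\F_p)=1+p\,b_2'+p^2 b_4'+p^3-\operatorname{tr}(\Frob_p\mid H^3),\]
where the $H^2$ and $H^4$ traces are $p$ and $p^2$ times traces of finite-order operators on algebraic classes. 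Indeed, $H^2$ is spanned by divisor classes since $h^{2,0}=0$, and $H^4$ is dual to $H^2$. These are algebraic integers, so modulo $p$ we get $\#X_0(\F_p)\equiv 1-a_p^{(4)}$.

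\textbf{Geometric side.} I would invoke the Katz (or Fulton) congruence for a smooth proper variety over $\F_p$:
\[\#X_0(\F_p)\equiv\sum_i(-1)^i\operatorname{tr}\bigl(F\mid H^i(X_0,\mathcal O_{X_0})\bigr)\pmod p.\]
The terms $H^0,H^1,H^2$ contribute $1,0,0$, and $H^3(X_0,\mathcal O)$ is one-dimensional with Frobenius scalar $h_p$. This last fact was shown in the proof of \cref{thm:A4-W2-canonical}, using the Koszul connecting map, the constant-term computation $\CT((P_1P_{2,a})^{p-1})\equiv\CT(f_a^{p-1})$, and finite \'etale descent along the free $C_{10}$-action. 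Since $k=\F_p$, the $p$-linear scalar is just $h_p\in\F_p$. So $\#X_0(\F_p)\equiv 1-h_p$, and comparing with the modular side gives $h_p\equiv a_p^{(4)}$.

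\textbf{Main obstacle.} The hard part is the bookkeeping in the point-count step. I need the toric model $X_0$ over $\F_p$ to be the same variety whose $\ell$-adic $H^3$ satisfies Dummigan's factorization. This follows from smooth proper base change, using the good reduction model of \cref{lem:good-pair} together with \cref{lem:tame-free-specialization}, and \cref{prop:birational-H3} to pass to the model Dummigan uses. I also need the Frobenius conventions (geometric versus arithmetic) to match, so that the trace equals $a_p^{(4)}+pa_p^{(2)}$ and not a conjugate; since the coefficients are rational this is harmless.

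If one prefers to avoid the Katz congruence, there is an alternative route through \cref{thm:Q2-comparison} and \cref{lem:slope-allocation}. The unit root of $\varphi$ on $D_f$ is congruent modulo $p$ to the trace $a_p^{(4)}$. Through $\rho$ and the Cartier action, this unit root matches the Cartier eigenvalue $\lambda_0$ on $M/\cF_1M$, which is congruent to $\CT(f^{p-1})$ by the first Hasse--Witt theory. This route, however, needs two-Dwork-ordinarity, so the point-count route is the one to use for all $p$.
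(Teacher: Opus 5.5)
Your argument is correct, but it follows a different route from the paper. The paper's proof works integrally in crystalline cohomology:
\begin{itemize}
\item it uses the filtered integral splitting \eqref{eq:A4-Fil-split}, $\Fil^1H_W=\Fil^3H_{W,f}\oplus H_{W,g}$, built in the proof of \cref{thm:A4-W2-canonical} from the auxiliary-prime projector and the Cais--Liu comparison;
\item this identifies $H_W/\Fil^1H_W$, whose reduction is $H^3(X_0,\mathcal O_{X_0})$ with Hasse--Witt scalar $h_p$, with $H_{W,f}/\Fil^3H_{W,f}$;
\item the divisibility $\Phi(\Fil^3H_{W,f})\subset p^3H_{W,f}$ then makes $\operatorname{tr}(\Phi\mid H_{W,f})=a_p^{(4)}$ congruent to $h_p$.
\end{itemize}

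You instead use the characteristic polynomial of $\Frob_p$ on all of $H^3$, the Lefschetz trace formula, and the Fulton--Katz congruence for $\#X_0(\F_p)$. The weight-two contribution $p\,a_p^{(2)}$ and the algebraic $H^2,H^4$ contributions vanish modulo $p$. (The crystalline counterpart is Mazur's estimate applied to a Hodge-adapted basis of the whole lattice $H_W$, which gives $\operatorname{tr}(\Phi\mid H_W)\equiv h_p$, combined with Katz--Messing.)

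Your route is lighter. It never needs the global Galois splitting, the integral projector, or Cais--Liu, only the local factorization at $p$. So at primes where that local factorization is known unconditionally (such as those Dummigan treats), it gives the congruence without any conjecture. The paper's route gives more structural information: it places the Hasse--Witt line inside the weight-four crystalline summand, which is the picture used afterwards for the unit root $\alpha_p$.

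One small citation fix: the vanishing $H^1(X_0,\mathcal O_{X_0})=H^2(X_0,\mathcal O_{X_0})=0$ is not stated in \cref{prop:A4-W2-cohomology}. It comes from the Koszul computation in the proof of \cref{prop:permutohedral-deformations}, together with taking $C_{10}$-invariants, which is exact because $p\nmid10$.
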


\begin{proof}
By \eqref{eq:A4-Fil-split}, projection induces an isomorphism $H_W/\Fil^1H_W\simeq H_{W,f}/\Fil^3H_{W,f}$. After reduction modulo $p$, the left side is $H^3(X_0,\mathcal O_{X_0})$.  Mazur's divisibility \cite{Mazur1973} gives $\Phi(\Fil^1H_W)\subset pH_W$, so the reduction of Frobenius induces the Hasse–Witt map whose scalar is $h_p$.  On the weight four summand, $\Phi(\Fil^3H_{W,f})\subset p^3H_{W,f}$, so the trace of Frobenius on \(H_{W,f}\) reduces to the scalar on \(H_{W,f}/\operatorname{Fil}^3H_{W,f}\). The first factor of \eqref{eq:A4-local-modular-polynomial} gives $\operatorname{tr}(\Phi\mid H_{W,f})=a_p^{(4)}$. This proves the congruence.
\end{proof}

At a two-Dwork-ordinary prime, let $\alpha_p$ be the unique $p$-adic unit root of $X^2-a_p^{(4)}X+p^3$. Under \eqref{eq:Q2-high}, the Cartier eigenline generated by the class of $1/f$ corresponds to the slope three line of $D_f$. Since Cartier corresponds to $p^3\varphi^{-1}$, one has $\lambda_0(t_*,t_*)=\alpha_p$. Consequently, for every $\mathbf m\in C_{\mathbf b}\cap\Z^n$ and every $s\geq1$, Proposition~5.11 of \cite{BeukersVlasenkoIII} gives
\begin{equation*}
 a_{p^s\mathbf m}(t_*)\equiv
 \alpha_p a_{p^{s-1}\mathbf m}(t_*)\pmod{p^{2s}}.
\end{equation*}

We next compare the second Hasse–Witt invariant with the two modular forms. Fix the basis $(1/f,\theta(1/f))$ and the coefficient Frobenius $\sigma_0(t)=t^p$, and let
\[h_p^{(2)}= \left(p^{-1}\det HW^{(2)}_{\sigma_0}(t_*)\right)\bmod p\in\F_p.\]
The reduction of this normalized determinant is independent of the choice of coefficient Frobenius \cite[Lemma~7.2]{BeukersVlasenkoIII}.  The modular decomposition suggests the following integral comparison.

\begin{conjecture}
\label{conj:A4-modular-HW}
For every prime $p>5$, $p\neq7$, one has \(h_p^{(2)}=a_p^{(4)}a_p^{(2)} \in \F_p.\)
\end{conjecture}

Together with \cref{prop:first-HW-modular}, this conjecture says that $p$ is two-Dwork-ordinary at $t_*$ if and only if both $f_{14,4}$ and $f_{14,2}$ are ordinary at $p$.  On the two-Dwork-ordinary locus, \eqref{eq:Q2-high} and the identity $C=p^3\varphi^{-1}$ already give the displayed equality: the two eigenvalues of $C$ are the unit root of the weight four factor and $p$ times the unit root of the weight two factor. The conjecture extends this equality to the remaining good primes and therefore predicts the converse implication between the two ordinary conditions.  Direct evaluation of \cref{prop:finite-level two} verifies the conjecture for $p=11,13,17,19,29,31,113$, the good primes listed in \cite[the table in the proof of Proposition~4.6]{Dummigan2026}. Extending the verification to every good prime requires control of the integral level two lattice at primes where the second Hasse--Witt condition fails.

\begin{conjecture}
\label{conj:A4-density}
The set
\[\{p>5:p\neq7\text{ and }a_p^{(4)}a_p^{(2)}\not\equiv0\pmod p\}\]
has natural density one.
\end{conjecture}

\Cref{conj:A4-density} includes the ordinary-prime problem for the weight four form $f_{14,4}$; even the infinitude of ordinary primes for a fixed newform of weight greater than two is open \cite[Remark~2.3]{LongoVigni2021}. Assuming Dummigan's Conjecture~1.1 and the two conjectures above, the two-Dwork-ordinary primes would have density one. The main theorem and \cref{thm:A4-W2-comparison} would then show that the excellent Frobenius fixes the parameter; they would also give the supercongruences and the canonical lift modulo $p^2$ at this set of primes.  If \cref{conj:full-canonical-comparison} also holds under its stated hypotheses, then $t_*$ parametrizes the Brantner--Taelman canonical lift at all Witt orders at this set of primes.

We can ask the fixed-point question for any algebraic rank two attractor parameter. Let $t$ be a parameter defined over a number field. At every finite place $v$ where the chosen integral model, the parameter, its residue disk, and the excellent Frobenius are defined over $W(k(v))$, write $k(v)=\F_{p^{f_v}}$ and let $t_v$ be the resulting $W(k(v))$-point. The excellent Frobenius lifts the $p$-power map, so it carries the residue disk of $\bar t_v$ to that of $\bar t_v^p$. Since the $f_v$th power of Witt Frobenius is the identity on $W(k(v))$, the return map on the original disk is $\sigma_{\mathrm{exc},v}^{f_v}$. Define
\[\mathcal P_{\mathrm{exc}}(t)=
 \left\{v:\begin{array}{l}
 \text{the excellent Frobenius is defined at $v$, and}\\
 \sigma_{\mathrm{exc},v}^{f_v}(t_v)=t_v
 \end{array}\right\}.\]
When the residue of $t_v$ is fixed by the $p$-power map, the corresponding one-step condition is $\sigma_{\mathrm{exc},v}(t_v)=t_v$.

\vspace{0.5em}
\noindent\textbf{Question.}
If $t$ is a rank two attractor parameter defined over a number field, does $\mathcal P_{\mathrm{exc}}(t)$ have positive density?

In the opposite direction, we ask whether canonical lifting at many finite places forces a rational Hodge splitting in characteristic zero.

\vspace{0.5em}
\noindent\textbf{Question.}
Let $t$ be a parameter of a Calabi--Yau family over a number field. Suppose that at a set of good places $v$ of positive density, the lift defined by $t_v$ is the Brantner--Taelman canonical formal lift. Must the characteristic zero fiber admit a nontrivial rational Hodge decomposition, or a motivic projector realizing such a decomposition?

This asks whether a condition on lifts at finite places can detect the Hodge specialness studied through Hodge loci and atypical intersections \cite[\S5.5]{BaldiKlinglerUllmo}.

\appendix

\section{Permutohedral geometry of the good pair}
\label{app:good-pair}

Let $I=\{0,\ldots,5\}$ and $N_I=\Z^I/\Z(1,\ldots,1)$. For every nonempty proper subset $S\subset I$, write $u_S=\sum_{i\in S}\bar e_i\in N_I$.  The $A_5$ braid fan has rays $\mathbb R_{\geq0}u_S$ and cones $\langle u_{S_1},\ldots,u_{S_r}\rangle$, where $S_1\subsetneq\cdots\subsetneq S_r$. It is the normal fan of the permutohedron, hence is complete and projective. A maximal chain is determined by an ordering $(i_0,\ldots,i_5)$ of $I$.  Taking $i_5$ as affine origin, its five ray generators are the successive partial sums of the basis $\bar e_{i_0},\ldots,\bar e_{i_4}$; the corresponding matrix is triangular with diagonal entries $1$.  Thus every cone is unimodular, over any base ring.  We denote the resulting smooth projective toric fivefold by $V$ and the divisor of the ray $u_S$ by $E_S$.

For $i\in I$, define the two opposite simplex line bundles
\begin{equation}\label{eq:perm-simplex-bundles}
 \mathcal A_i=\mathcal O_V\!\left(\sum_{i\in S}E_S\right),
 \quad
 \mathcal B_i=\mathcal O_V\!\left(\sum_{i\notin S}E_S\right).
\end{equation}
The two opposite maps from the braid fan to the simplex fan give toric blow-downs $\pi_{\mathcal A},\pi_{\mathcal B}:V\to\PP^5$ satisfying
\begin{equation}\label{eq:perm-canonical}
 \mathcal A_i=\pi_{\mathcal A}^*\mathcal O_{\PP^5}(1),
 \quad
 \mathcal B_i=\pi_{\mathcal B}^*\mathcal O_{\PP^5}(1),
 \quad
 \mathcal A_i\otimes\mathcal B_i\simeq\omega_V^{-1}.
\end{equation}
Each blow-down is a composition of blow-ups of coordinate linear spaces, so $R\pi_*\mathcal O_V=\mathcal O_{\PP^5}$.  The projection formula therefore gives
\begin{equation}\label{eq:inverse-simplex-acyclic}
 R\Gamma(V,\mathcal A_i^{-1})=
 R\Gamma(V,\mathcal B_i^{-1})=0,
 \quad
 H^{>0}(V,\mathcal A_i)=H^{>0}(V,\mathcal B_i)=0.
\end{equation}

We record the cohomological consequences needed in \cref{prop:permutohedral-deformations,prop:A4-W2-cohomology}.

\begin{proposition}
\label{prop:permutohedral-vanishing}
Let $\mathcal L_1=\mathcal A_0$ and $\mathcal L_2=\mathcal B_0$.  For every boundary divisor $E_S$ one has
\[\begin{aligned}
 &h^0(V,\mathcal O_V(E_S))=1,
 &&H^{q>0}(V,\mathcal O_V(E_S))=0,\\
 &R\Gamma(V,\mathcal O_V(-E_S))=0,
 &&R\Gamma(V,\mathcal O_V(K_V+E_S))=0,\\
 &H^q(V,\mathcal O_V(E_S)\otimes\mathcal L_i^{-1})=0
 &&(0\leq q\leq3,\ i=1,2),\\
 &R\Gamma(V,\mathcal L_1\otimes\mathcal L_2^{-1})=0,
 &&R\Gamma(V,\mathcal L_2\otimes\mathcal L_1^{-1})=0.
\end{aligned}\]
All these statements hold over an arbitrary field.
\end{proposition}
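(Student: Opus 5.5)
We would compute everything through the two blow-downs $\pi_{\mathcal A},\pi_{\mathcal B}:V\to\PP^5$ of \eqref{eq:perm-canonical} and the identity $R\pi_*\mathcal O_V=\mathcal O_{\PP^5}$, combined with toric cohomology computed by the standard polytope/chamber description: for a torus-invariant divisor $D=\sum a_SE_S$, $H^q(V,\mathcal O(D))$ decomposes over characters $m\in M$, and the $m$-graded piece is the reduced cohomology of a subcomplex of the fan (the cones whose rays satisfy $\langle m,u_S\rangle<-a_S$). This description is characteristic free, which gives the final sentence. We would first treat the two simplex bundles. The vanishing in \eqref{eq:inverse-simplex-acyclic} gives $R\Gamma(V,\mathcal L_i^{-1})=0$ and the higher vanishing of $\mathcal L_i$. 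Then $R\Gamma(\mathcal L_1\otimes\mathcal L_2^{-1})$ is handled the same way: $\mathcal L_1\otimes\mathcal L_2^{-1}=\omega_V^{-1}\otimes\mathcal L_2^{-2}$, and we would instead use $\mathcal L_1\otimes\mathcal L_2^{-1}=\pi_{\mathcal A}^*\mathcal O(1)\otimes\pi_{\mathcal B}^*\mathcal O(-1)$.

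The key computation is for $\mathcal O_V(\pm E_S)$. For $\mathcal O(-E_S)$, the sequence $0\to\mathcal O(-E_S)\to\mathcal O_V\to\mathcal O_{E_S}\to0$ and the fact that $E_S\simeq V_{S}\times V_{I\setminus S}$ is a product of smaller permutohedral varieties, hence rational with $H^{>0}(\mathcal O)=0$, give $R\Gamma(\mathcal O(-E_S))=0$. Serre duality then gives $R\Gamma(K_V+E_S)=0$. For $\mathcal O(E_S)$, the sequence $0\to\mathcal O_V\to\mathcal O(E_S)\to\mathcal O_{E_S}(E_S)\to0$ reduces us to the normal bundle of $E_S$. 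This normal bundle is the exterior product $\mathcal O(-1)\boxtimes\mathcal O(-1)$ of the tautological pieces on the two factors, each of which pulls back $\mathcal O(-1)$ under a simplex blow-down, so it is acyclic by \eqref{eq:inverse-simplex-acyclic} and Künneth. This yields $h^0=1$ and $H^{>0}=0$.

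Next come the twisted groups $H^q(\mathcal O(E_S)\otimes\mathcal L_i^{-1})$ for $q\le3$. Tensoring the last sequence with $\mathcal L_i^{-1}$ kills the $\mathcal O_V$ term. It then remains to show $H^q(E_S,N_{E_S}\otimes\mathcal L_i^{-1}|_{E_S})=0$ for $q\le3$. The restriction $\mathcal L_i|_{E_S}$ is an exterior product in which one factor is trivial and the other is a simplex bundle, depending on whether $0\in S$. Künneth then presents the group as a tensor of $R\Gamma$ of an $\mathcal O(-1)$ twist on one factor, which is acyclic, or of an $\mathcal O(-2)$-type twist. The latter lives only in top degree equal to the factor dimension, and the count of total degrees should then exceed $3$.

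The main obstacle is this last bookkeeping: identifying precisely the restrictions of $\mathcal L_i$ and $N_{E_S}$ to each product factor for all $S$, including the edge cases $|S|=1$ or $5$ where one factor is a point. There we would check the degree bound $q\le3$ directly, using the chamber description as a fallback.
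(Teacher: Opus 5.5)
Your treatment of $\mathcal O_V(\pm E_S)$ and $K_V+E_S$ follows the paper's argument. You use $E_S\simeq V_S\times V_{S^c}$ and the normal bundle $\mathcal A_j^{-1}\boxtimes\mathcal B_0^{-1}$ (for $0\notin S$, $j\in S$). This bundle is acyclic by \eqref{eq:inverse-simplex-acyclic} and K\"unneth, because one factor always has positive dimension. The two divisor sequences and Serre duality then finish those cases.

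The genuine gap is in the last line. The form $\pi_{\mathcal A}^*\mathcal O(1)\otimes\pi_{\mathcal B}^*\mathcal O(-1)$ mixes pullbacks along two different blow-downs, so the projection formula along either one does not compute it. Along $\pi_{\mathcal A}$ you would need $R\pi_{\mathcal A*}\mathcal B_0^{-1}$, which \eqref{eq:inverse-simplex-acyclic} does not provide, and ``handled the same way'' does not supply it. The missing ingredient is duality. Since $\mathcal L_1\otimes\mathcal L_2\simeq\omega_V^{-1}$, you have $\mathcal L_1\otimes\mathcal L_2^{-1}\simeq\omega_V\otimes\mathcal L_1^{2}$; this is your expression $\omega_V^{-1}\otimes\mathcal L_2^{-2}$ rewritten. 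By Serre duality its cohomology is dual to that of $\mathcal L_1^{-2}=\pi_{\mathcal A}^*\mathcal O_{\PP^5}(-2)$. Then $R\pi_{\mathcal A*}\mathcal O_V=\mathcal O_{\PP^5}$ and the projection formula reduce it to $R\Gamma(\PP^5,\mathcal O(-2))=0$. The same argument with $\pi_{\mathcal B}$ handles $\mathcal L_2\otimes\mathcal L_1^{-1}$.

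The twisted groups need the bookkeeping you defer, and it is what makes your degree count true. For $0\notin S$ and $j\in S$ one has $\mathcal A_0|_{E_S}=\mathcal O\boxtimes\mathcal A_0$ and $\mathcal B_0|_{E_S}=\mathcal B_j\boxtimes\mathcal O$. So each $\mathcal L_i$ is nontrivial exactly on the factor where the normal bundle carries the opposite simplex bundle with the same index. The identity $\mathcal A\otimes\mathcal B\simeq\omega^{-1}$ on that factor then gives
$(E_S-\mathcal L_1)|_{E_S}=\mathcal A_j^{-1}\boxtimes\omega_{V_{S^c}}$ and $(E_S-\mathcal L_2)|_{E_S}=\omega_{V_S}\boxtimes\mathcal B_0^{-1}$.
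Your ``$\mathcal O(-2)$-type twist'' is therefore the canonical bundle of the factor. That identification is what justifies cohomology only in top degree; a genuine pullback of $\mathcal O_{\PP^m}(-2)$ with $m\geq2$ would be acyclic. The product is acyclic unless the factor carrying the inverse simplex bundle is a point, and in that case its only cohomology is $H^4\simeq k$. This is why the statement stops at $q\leq3$. The case $0\in S$ follows by replacing $S$ with its complement and swapping $\mathcal A$ and $\mathcal B$, so no chamber computation is needed.
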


\begin{proof}
For every nonempty $J\subset I$, let $N_J=\Z^J/\Z(1,\ldots,1)$.  Let $V_J$ denote the permutohedral toric variety with cocharacter lattice $N_J$.  We use $\mathcal A_i,\mathcal B_i$ for the analogous simplex line bundles on these varieties. The star of $u_S$ is the product of the two smaller braid fans, hence
\begin{equation}\label{eq:perm-boundary-product}
 E_S\simeq V_S\times V_{S^c}.
\end{equation}
Suppose first that $0\notin S$ and choose $j\in S$.  Restricting the Cartier data in \eqref{eq:perm-simplex-bundles} gives
\begin{equation}\label{eq:perm-boundary-restrictions}
\begin{aligned}
 \mathcal A_0|_{E_S}&=\mathcal O\boxtimes\mathcal A_0,
 &\mathcal B_0|_{E_S}&=\mathcal B_j\boxtimes\mathcal O,\\
 \mathcal O_V(E_S)|_{E_S}
   &=\mathcal A_j^{-1}\boxtimes\mathcal B_0^{-1}.
\end{aligned}
\end{equation}
For example, the last identity follows by subtracting the local Cartier character with coordinates $1$ at $0$, $-1$ at $j$, and zero elsewhere. The case $0\in S$ follows by complementing $S$ and interchanging $\mathcal A$ and $\mathcal B$.

At least one factor in \eqref{eq:perm-boundary-product} has positive dimension, so \eqref{eq:inverse-simplex-acyclic} and K\"unneth show that the normal bundle $\mathcal O_{E_S}(E_S)$ is acyclic.  The two divisor sequences for $E_S$ then give the first two displayed assertions; the assertion involving $K_V+E_S$ follows by Serre duality.

Equations \eqref{eq:perm-canonical} and \eqref{eq:perm-boundary-restrictions} also give
\[\begin{aligned}
 (E_S-\mathcal L_1)|_{E_S}
   &=\mathcal A_j^{-1}\boxtimes\omega_{V_{S^c}},\\
 (E_S-\mathcal L_2)|_{E_S}
   &=\omega_{V_S}\boxtimes\mathcal B_0^{-1}.
\end{aligned}\]
These bundles are acyclic unless the factor carrying the negative simplex bundle is a point.  In that extremal case the only cohomology is $H^4\simeq k$.  Thus their cohomology vanishes in degrees $0$ through $3$; the divisor sequence and \eqref{eq:inverse-simplex-acyclic} give the third displayed assertion of the proposition.

Finally, $\mathcal L_1\otimes\mathcal L_2^{-1}=K_V+2\mathcal L_1$ and $\mathcal L_2\otimes\mathcal L_1^{-1}=K_V+2\mathcal L_2$. By Serre duality, their cohomology is dual to that of $-2\mathcal L_1$ and $-2\mathcal L_2$.  Projection formula identifies the latter with the cohomology of $\mathcal O_{\PP^5}(-2)$.  This cohomology vanishes in every degree.  This proves the last line.
\end{proof}

\section{A finite level two Hasse--Witt formula}
\label{app:level two}

For the $A_4$ family $f_t=1-tg$ of \eqref{eq:A4-g}, we give the finite Hasse--Witt calculation used in \cref{sec:A4-W2}. Let $\mathbf v=e_1-e_0$ be a root of $A_4$. Statements below involving $\lambda_1$ or $\sigma_{\mathrm{exc}}$ are made on the locus where the first two Hasse--Witt conditions hold. For $k,Q\geq0$ let
\[c_k=\CT(g^k)= \sum_{r_0+\cdots+r_4=k} \left(\frac{k!}{r_0!r_1!r_2!r_3!r_4!}\right)^2\]
and
\[d_{k,Q}=[\mathbf x^{Q\mathbf v}]g^k= \sum_{r_0+\cdots+r_4=k-Q} \frac{k!^2}{r_0!(r_0+Q)!r_1!(r_1+Q)! (r_2!r_3!r_4!)^2},\]
with $d_{k,Q}=0$ for $k<Q$.  Define
\[A_m(t)=\sum_{k=0}^m\binom{m}{k}(-t)^kc_k=\CT(f_t^m), \quad B_{m,Q}(t)=\sum_{k=Q}^m\binom{m}{k}(-t)^kd_{k,Q} =[\mathbf x^{Q\mathbf v}]f_t^m.\]

The following formula is used in the $W_2$ comparison. The same argument gives the calculation at $p=37$.

\begin{proposition}
\label{prop:finite-level two}
For the standard lift $\sigma_0(t)=t^p$, let $u=t^p$ and $s_u=1-5u$. The level two Hasse--Witt matrix in the basis $(1/f,1/f^2)$ is
\[HW^{(2)}_{\sigma_0}(t)=
 \begin{pmatrix}\alpha&\beta\\ \alpha_2&\beta_2\end{pmatrix}\]
modulo $p^2$. The entries are defined by
\begin{align*}
 C_0&=2s_uA_{p-1}-A_{2p-1},&
 C_{\mathbf v}&=-2u A_{p-1}-B_{2p-1,p},\\
 D_0&=2s_uA_{p-2}-A_{2p-2},&
 D_{\mathbf v}&=-2u A_{p-2}-B_{2p-2,p},
\end{align*}
and
\[\alpha=-C_{\mathbf v}/u,\quad \beta=C_0-s_u\alpha, \quad \alpha_2=-D_{\mathbf v}/u,\quad \beta_2=D_0-s_u\alpha_2.\]
In the basis $(1/f,\theta(1/f))$, the upper-right coefficient is
\begin{equation}\label{eq:lambda-finite}
 \lambda_1^{(0)}(t)\equiv\beta
 =-A_{2p-1}(t)-\frac{1-5t^p}{t^p}B_{2p-1,p}(t)
 \pmod{p^2}.
\end{equation}
\end{proposition}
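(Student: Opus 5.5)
The plan is to compute the Cartier operator directly on the two level two generators $1/f$ and $1/f^2$ modulo $p^2$, and then reduce the result to the basis of the level two quotient. This follows the finite Hasse--Witt recipe of \cite[Definition~5.1]{BeukersVlasenkoIII}. With $\sigma_0(t)=t^p$ and $u=t^p$, write
\[\frac1{f_t^m}=\frac{f_t^{\,pN-m}}{f_t^{\,pN}}.\]
Modulo $p$ one has $f_t^p\equiv f_u(\mathbf x^p)$, and the standard level two manipulation upgrades this to a statement modulo $p^2$. For $m=1$ this expresses $\cC_p(1/f_t)$ modulo $p^2\cF_2$ through $\cC_p(f_t^{2p-1})$ divided by $f_u^2$, together with a correction term involving $f_t^{p-1}$ over $f_u$. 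The relevant numerators are $\cC_p(f_t^{2p-1})$ and $\cC_p(f_t^{p-1})$, and only their coefficients on $\Delta$ survive modulo $\cF_2$. Since the nonzero lattice points of $\Delta$ are exactly the roots, $\mathcal G$-invariance leaves two numbers: the constant term $A_{2p-1}$ (resp. $A_{p-1}$) and the root coefficient $B_{2p-1,p}$. The factors $2s_u$ and $-2u$ come from rewriting $1/f^2$-type terms using $f_u=s_u-u\sum_{\mathbf w\ne0}\mathbf x^{\mathbf w}$, where $s_u=1-5u=\CT(f_u)$.

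The steps, in order, are as follows.

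\textbf{Step 1.} Write $\cC_p(1/f_t)\equiv N_1(\mathbf x)/f_u^2\pmod{p^2\cF_2}$ with $N_1$ supported in $2\Delta$. Then use the level two reduction: modulo $\cF_2$, any numerator over $f_u^2$ supported in $2\Delta$ is equivalent to one supported in $\Delta$. After this reduction $N_1$ becomes $C_0+C_{\mathbf v}\sum_{\mathbf w}\mathbf x^{\mathbf w}$, with $C_0,C_{\mathbf v}$ as displayed.

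\textbf{Step 2.} Re-express the Laurent polynomial $C_0+C_{\mathbf v}\sum\mathbf x^{\mathbf w}$ as $\alpha f_u+\beta$. This gives
\[\alpha=-C_{\mathbf v}/u,\qquad \beta=C_0-s_u\alpha,\]
so $\cC_p(1/f_t)\equiv\alpha/f_u+\beta/f_u^2$.

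\textbf{Step 3.} Repeat Steps 1 and 2 for $1/f_t^2$, using $f_t^{2p-2}$ and $f_t^{p-2}$. This yields $\alpha_2,\beta_2$ and hence the matrix $HW^{(2)}_{\sigma_0}$.

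\textbf{Step 4.} Change basis using $\theta(1/f_u)=u g/f_u^2=1/f_u^2-1/f_u$. Then
\[\alpha/f_u+\beta/f_u^2=(\alpha+\beta)\omega_0^\sigma+\beta\,\omega_1^\sigma,\]
so $\lambda_1^{(0)}\equiv\beta$. To obtain \eqref{eq:lambda-finite}, substitute:
\[\beta=2s_uA_{p-1}-A_{2p-1}-s_u\bigl(2A_{p-1}+B_{2p-1,p}/u\bigr)=-A_{2p-1}-\frac{s_u}{u}B_{2p-1,p}.\]

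The main obstacle is Step 1. The justification that the truncation from $2\Delta$ to $\Delta$ introduces only errors in $p^2\cF_2$ is what actually makes the formula valid modulo $p^2$ rather than modulo $p$. I would take the dual-vector/coefficient-extraction characterization of $\cF_2$ from \cite[Corollary~3.4 and \S5]{BeukersVlasenkoIII}, and verify that the reduction relations used are formal derivatives. Concretely, $x_i\partial_i(A/f_u)$ produces $A'/f_u-A\,x_i\partial_if_u/f_u^2$, which lets one trade degree-two numerators for degree-one ones. I would then apply their Hasse--Witt congruence at level two to bound the error.
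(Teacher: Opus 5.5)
This is essentially the paper's proof. The paper also takes the level-two numerator $2f^{p-1}f_u(\mathbf x^p)-f^{2p-1}$ from \cite[Definition~5.1 and equation~(10)]{BeukersVlasenkoIII}, reads off its constant and $\mathbf x^{p\mathbf v}$ coefficients as $C_0$ and $C_{\mathbf v}$, writes the Cartier image as $(\alpha f_u+\beta)/f_u^2$ by complete symmetry, and converts to the basis $(1/f,\theta(1/f))$ via $1/f^2=1/f+\theta(1/f)$.

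One correction: the truncation you flag as the main obstacle never occurs. Since $\cC_p(f^{p-1})=A_{p-1}$ and $\cC_p(f^{2p-1})$ is supported in $\tfrac{2p-1}{p}\Delta$, whose only lattice points are $0$ and the roots, the numerator already has the form $C_0+C_{\mathbf v}\sum_{\mathbf w}\mathbf x^{\mathbf w}$. The passage from modulo $p$ to modulo $p^2$ comes from the choice of level-two numerator together with the Beukers--Vlasenko level-two Hasse--Witt congruence, which the paper cites rather than reproves.
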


\begin{proof}
By \cite[Definition~5.1 and equation~(10)]{BeukersVlasenkoIII}, the numerator used for the class $1/f=f/f^2$ is $2f^{p-1}f^{\sigma_0}(\mathbf x^p)-f^{2p-1}$. Its constant and $\mathbf x^{p\mathbf v}$ coefficients are $C_0$ and $C_{\mathbf v}$. Indeed, let $\Delta_{A_4}$ denote the $A_4$ root polytope.  It has $0$ as its unique interior lattice point; if $\mathbf w\neq\mathbf v$ are support exponents, then $p(\mathbf v-\mathbf w)\notin(p-1)\Delta_{A_4}$. This excludes every other contribution. Complete symmetry forces the numerator to have the form $\alpha f^{\sigma_0}+\beta$ after applying Cartier.  Comparing its vertex and constant coefficients gives the displayed formulas.  Repeating the argument for the class represented by $1/f^2$ gives the second row.  Finally $1/f^2=1/f+\theta(1/f)$, and substitution gives \eqref{eq:lambda-finite}.
\end{proof}

Allowing an arbitrary Frobenius target gives the form used in the obstruction comparison.

\begin{corollary}
Let $u$ be a unit and assume $u\equiv t^p\pmod p$.  Let the Frobenius lift on the coefficient ring send the source parameter $t$ to the target $u$. Then
\begin{equation}\label{eq:lambda-finite-target}
 \lambda_1(t,u)\equiv
 -A_{2p-1}(t)-\frac{1-5u}{u}B_{2p-1,p}(t)
 \pmod{p^2}.
\end{equation}
\end{corollary}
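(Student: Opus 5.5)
The plan is to derive \eqref{eq:lambda-finite-target} from \cref{prop:finite-level two} by redoing its computation with the general target $u$ in place of $t^p$. The only place the target enters the proof of that proposition is through the twisted polynomial $f^{\sigma}=f_u$, which appears in the Cartier numerator and in the comparison basis.

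\textbf{Step 1: the Cartier numerator.} For a coefficient Frobenius with $\sigma(T)|_{T=t}=u$, the level two Cartier formula of \cite[Definition~5.1 and equation~(10)]{BeukersVlasenkoIII} uses the numerator
\[2f_t^{p-1}f_u(\mathbf x^p)-f_t^{2p-1}\]
for the class $1/f_t=f_t/f_t^2$. This holds modulo $p^2$, because $f_u(\mathbf x^p)\equiv f_t^p\pmod p$. I take its constant coefficient and its $\mathbf x^{p\mathbf v}$ coefficient. The same polytope argument as before applies: $0$ is the unique interior lattice point, and $p(\mathbf v-\mathbf w)\notin(p-1)\Delta$. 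Hence only the constant term of $f_t^{p-1}$ pairs with $f_u(\mathbf x^p)$, giving
\[C_0=2s_uA_{p-1}(t)-A_{2p-1}(t),\qquad C_{\mathbf v}=-2uA_{p-1}(t)-B_{2p-1,p}(t),\qquad s_u=1-5u.\]
Here the $u$-dependence comes from the constant coefficient $1-5u$ and the root coefficient $-u$ of $f_u$.

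\textbf{Step 2: matching coefficients.} Complete symmetry forces the Cartier image to have the form $\alpha f_u+\beta$ over $f_u^2$ modulo the relevant submodule. Comparing the vertex coefficients gives $\alpha=-C_{\mathbf v}/u$, and comparing the constant coefficients gives $\beta=C_0-s_u\alpha$. Substituting, the $A_{p-1}$ terms cancel:
\[\beta=2s_uA_{p-1}-A_{2p-1}-2s_uA_{p-1}-\frac{s_u}{u}B_{2p-1,p}.\]
This leaves exactly the right side of \eqref{eq:lambda-finite-target}.

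\textbf{Step 3: change of basis.} In the target basis we have $1/f_u^2=1/f_u+\theta(1/f_u)$. So the image $\alpha/f_u+\beta/f_u^2$ has $\theta(1/f_u)$-coefficient $\beta$, and this coefficient is $\lambda_1(t,u)$ by \eqref{eq:lambda-row}.

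\textbf{Main obstacle.} The delicate point is justifying that the Beukers--Vlasenko level two formula, and the reduction to vertex and constant coefficients, hold modulo $p^2$ for an arbitrary target $u\equiv t^p$ rather than only for $t^p$. I would handle this as follows. The specialized Cartier map $\cC_{t,u}$ depends only on its source and target, as noted in the proof of \cref{lem:pointwise}. The error term lies in $p^2\cF_2$, which does not affect the classes. Finally, the target enters only through $f_u$, since $u$ is a unit.
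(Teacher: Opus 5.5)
Your proposal is correct and is essentially the paper's proof. The paper likewise replaces $f^{\sigma_0}(\mathbf x^p)$ by $f_u(\mathbf x^p)$ in the numerator $2f_t^{p-1}f_u(\mathbf x^p)-f_t^{2p-1}$ and reruns the constant/vertex coefficient comparison of \cref{prop:finite-level two}; you carry out that computation, the cancellation of the $A_{p-1}$ terms and the basis change $1/f_u^2=1/f_u+\theta(1/f_u)$ explicitly and correctly.

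One small precision: after applying $\cC_p$, the error from truncating the expansion of $f_t^{-p}$ around $f_u(\mathbf x^p)^{-1}$ at first order lies in $p^2M^\sigma$, not necessarily in $p^2\cF_2M^\sigma$. This does not matter, because projecting along $\cF_2M^\sigma$ via \eqref{eq:BV-split}, with $M^\sigma(2)$ free on $\omega_0^\sigma,\omega_1^\sigma$, still gives $\lambda_1(t,u)\equiv\beta\pmod{p^2}$.
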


\begin{proof}
Replace $f^{\sigma_0}(x^p)$ in the proof of \cref{prop:finite-level two} by $f_u(x^p)$.  The numerator becomes $2f_t^{p-1}f_u(x^p)-f_t^{2p-1}$. Its constant and vertex coefficients give \eqref{eq:lambda-finite-target} by the same comparison.
\end{proof}

Let $F_p(t)=\sum_{k=0}^{p-1}c_kt^k$.  On a unit residue disk on which the first two Hasse--Witt conditions hold, $F_p(t)$ is invertible.  If $\sigma_{\mathrm{exc}}(t)=t^p(1+h(t))$, the reverse-series construction of \cite[proof of Theorem~7.15]{BeukersVlasenkoIII} gives the first correction
\begin{equation}\label{eq:second-order-correction}
 \frac{h(t)}p\equiv
 \frac{\lambda_1^{(0)}(t)}pF_p(t)^{-1}\pmod p.
\end{equation}
Equivalently, if $\sigma_{\mathrm{exc}}(t)=t^p+pg_p(t)\pmod{p^2}$, then $g_p(t)\equiv t^p\lambda_1^{(0)}(t)F_p(t)^{-1}/p\pmod p$. Equations \eqref{eq:lambda-finite} and \eqref{eq:second-order-correction} express the first $p$-adic correction using only finite sums.
\section{A finite formula for the vertex coefficients}
\label{app:vertex}

The coefficients in \eqref{eq:aQ-def-intro} are given by the following finite formula. Let
\begin{equation*}
 \delta_n=
 \sum_{a+b+c=n}\left(\frac{n!}{a!b!c!}\right)^2.
\end{equation*}
The first several values are $1,3,15,93,639,\ldots$. For $0\leq k<Q$, define
\begin{equation*}
 c_{Q,k}=
 \sum_{r=0}^{Q-k-1}
 (-1)^{Q-k-1-r}\delta_r
 \binom{k+r}{k}^2
 \binom{Q+k+r}{Q-k-1-r}.
\end{equation*}
Then the $A_4$ vertex coefficient is
\begin{equation}\label{eq:aQ-finite}
 a_Q(t)=-\sum_{k=0}^{Q-1}c_{Q,k}t^{-k-1}.
\end{equation}
Let $z=y_0/y_1$, and for $i=2,3,4$ let $u_i=y_i/y_1$, $v_i=y_0/y_i$, so $u_iv_i=z$.  With $U=z+u_2+u_3+u_4$ and $V=z+v_2+v_3+v_4$, one has $zg=(1+U)(1+V)$ and the vertex expansion
\[\frac1{f_t}=-\sum_{k\geq0} z^{k+1}t^{-k-1}(1+U)^{-k-1}(1+V)^{-k-1}.\]
To obtain $z^Q$, choose paired copies of $u_i,v_i$ with total multiplicity $r$, and unpaired copies of $z$ with total multiplicity $Q-k-1-r$.  The paired choices contribute $\delta_r$. The binomial expansion contributes the sign $(-1)^{Q-k-1-r}$. Vandermonde's identity gives the remaining factor $\binom{k+r}{k}^2\binom{Q+k+r}{Q-k-1-r}$.  This proves \eqref{eq:aQ-finite}; applying $\theta=t\,d/dt$ also gives $\theta a_Q(t)=\sum_{k=0}^{Q-1}(k+1)c_{Q,k}t^{-k-1}$.

For $n\geq2$, the integers $\delta_n$ satisfy
\begin{equation}\label{eq:delta-rec}
 n^2\delta_n-\bigl(10n^2-10n+3\bigr)\delta_{n-1}
 +9(n-1)^2\delta_{n-2}=0.
\end{equation}
To prove this recurrence, let $F_{m,j}=\binom mj^2\binom{2j}{j}$, with $F_{m,j}=0$ for $j>m$.  Then $\delta_m=\sum_{j=0}^mF_{m,j}$. For $n\geq2$ and $0\leq j\leq n$,
\[\begin{aligned}
 \sum_{r=0}^j\!\Bigl[n^2F_{n,r}
  &-(10n^2-10n+3)F_{n-1,r}
  +9(n-1)^2F_{n-2,r}\Bigr]\\
  &=\frac{2(j-n)^2(2j+1)(3j-4n+3)}{n^2}F_{n,j}.
 \end{aligned}\]
Taking $j=n$ proves the recurrence. In particular, $a_1(t)=-t^{-1}$ and $a_2(t)=-t^{-1}-t^{-2}$. These are the values used in \eqref{eq:M0}.  Formula \eqref{eq:aQ-finite} makes every congruence in \cref{prop:off-diagonal-congruence} a finite integer calculation.

\bibliographystyle{alpha}
\bibliography{references.bib}

@article{AchingerZdanowicz2021,
  author  = {Achinger, Piotr and Zdanowicz, Maciej},
  title   = {Serre--Tate theory for {Calabi--Yau} varieties},
  journal = {Journal f\"ur die reine und angewandte Mathematik},
  volume  = {780},
  year    = {2021},
  pages   = {139--196},
  eprint  = {1807.11295},
  archivePrefix = {arXiv}
}

@book{BerthelotOgus1978,
  author    = {Berthelot, Pierre and Ogus, Arthur},
  title     = {Notes on Crystalline Cohomology},
  series    = {Mathematical Notes},
  volume    = {21},
  publisher = {Princeton University Press and University of Tokyo Press},
  year      = {1978}
}

@article{BeukersVlasenkoI,
  author  = {Beukers, Frits and Vlasenko, Masha},
  title   = {Dwork crystals {I}},
  journal = {International Mathematics Research Notices},
  year    = {2021},
  volume  = {2021},
  number  = {12},
  pages   = {8807--8844},
  doi     = {10.1093/imrn/rnaa119},
  eprint  = {1903.11155},
  archivePrefix = {arXiv}
}

@article{BeukersVlasenkoIII,
  author  = {Beukers, Frits and Vlasenko, Masha},
  title   = {Dwork crystals {III}: from excellent {F}robenius lifts towards supercongruences},
  journal = {International Mathematics Research Notices},
  year    = {2023},
  volume  = {2023},
  number  = {23},
  pages   = {20433--20483},
  doi     = {10.1093/imrn/rnad101},
  eprint  = {2105.14841},
  archivePrefix = {arXiv}
}

@misc{BrantnerTaelman2024,
  author  = {Brantner, Lukas and Taelman, Lenny},
  title   = {Deformations and lifts of {Calabi--Yau} varieties in characteristic $p$},
  year    = {2024},
  eprint  = {2407.09256},
  archivePrefix = {arXiv},
  primaryClass = {math.AG}
}

@article{CaisLiu2019,
  author  = {Cais, Bryden and Liu, Tong},
  title   = {Breuil--Kisin modules via crystalline cohomology},
  journal = {Transactions of the American Mathematical Society},
  volume  = {371},
  number  = {2},
  year    = {2019},
  pages   = {1199--1230},
  doi     = {10.1090/tran/7280},
  eprint  = {1610.09706},
  archivePrefix = {arXiv}
}

@article{CaisLiuCorrigendum2020,
  author  = {Cais, Bryden and Liu, Tong},
  title   = {Corrigendum to ``Breuil--Kisin modules via crystalline cohomology''},
  journal = {Transactions of the American Mathematical Society},
  volume  = {373},
  number  = {3},
  year    = {2020},
  pages   = {2251--2252},
  doi     = {10.1090/tran/7894}
}

@article{CandelasDeLaOssaElmiVanStraten2020,
  author  = {Candelas, Philip and de la Ossa, Xenia and Elmi, Mohamed and van Straten, Duco},
  title   = {A one parameter family of {Calabi--Yau} manifolds with attractor points of rank two},
  journal = {Journal of High Energy Physics},
  year    = {2020},
  volume  = {2020},
  number  = {10},
  pages   = {202},
  doi     = {10.1007/JHEP10(2020)202},
  eprint  = {1912.06146},
  archivePrefix = {arXiv}
}

@article{CandelasDeLaOssaKuuselaMcGovern2023,
  author  = {Candelas, Philip and de la Ossa, Xenia and Kuusela, Pyry and McGovern, Joseph},
  title   = {Mirror symmetry for five-parameter {Hulek--Verrill} manifolds},
  journal = {SciPost Physics},
  volume  = {15},
  year    = {2023},
  pages   = {144},
  doi     = {10.21468/SciPostPhys.15.4.144},
  eprint  = {2111.02440},
  archivePrefix = {arXiv}
}

@article{Deligne1971,
  author  = {Deligne, Pierre},
  title   = {Th\'eorie de {H}odge. {II}},
  journal = {Publications Math\'ematiques de l'IH\'ES},
  volume  = {40},
  year    = {1971},
  pages   = {5--57},
  doi     = {10.1007/BF02684692}
}

@article{DeligneIllusie1987,
  author  = {Deligne, Pierre and Illusie, Luc},
  title   = {Rel\`evements modulo $p^2$ et d\'ecomposition du complexe de de {R}ham},
  journal = {Inventiones Mathematicae},
  volume  = {89},
  number  = {2},
  year    = {1987},
  pages   = {247--270},
  doi     = {10.1007/BF01389078}
}

@misc{Dummigan2026,
  author  = {Dummigan, Neil},
  title   = {Modularity of a certain ``rank-2 attractor'' {Calabi--Yau} threefold},
  year    = {2026},
  eprint  = {2602.20188},
  archivePrefix = {arXiv},
  primaryClass = {math.NT}
}

@article{FerraraKallosh1996,
  author  = {Ferrara, Sergio and Kallosh, Renata},
  title   = {Supersymmetry and attractors},
  journal = {Physical Review D},
  volume  = {54},
  number  = {2},
  year    = {1996},
  pages   = {1514--1524},
  doi     = {10.1103/PhysRevD.54.1514},
  eprint  = {hep-th/9602136},
  archivePrefix = {arXiv}
}

@article{Grabowski2025,
  author  = {Grabowski, Przemys{\l}aw},
  title   = {Canonical liftings of {Calabi--Yau} hypersurfaces: {Dwork} hypersurfaces},
  journal = {Manuscripta Mathematica},
  volume  = {176},
  year    = {2025},
  eid     = {27},
  doi     = {10.1007/s00229-025-01625-y},
  eprint  = {2401.07945},
  archivePrefix = {arXiv}
}

@article{DembelePanchishkinVoightZudilin2022,
  author  = {Demb{\'e}l{\'e}, Lassina and Panchishkin, Alexei and Voight, John and Zudilin, Wadim},
  title   = {Special hypergeometric motives and their {$L$}-functions: {Asai} recognition},
  journal = {Experimental Mathematics},
  volume  = {31},
  number  = {4},
  year    = {2022},
  pages   = {1278--1290},
  doi     = {10.1080/10586458.2020.1737990},
  eprint  = {1906.07384},
  archivePrefix = {arXiv}
}

@article{DoranKellySalernoSperberVoightWhitcher2018,
  author  = {Doran, Charles F. and Kelly, Tyler L. and Salerno, Adriana and Sperber, Steven and Voight, John and Whitcher, Ursula},
  title   = {Zeta functions of alternate mirror {Calabi--Yau} families},
  journal = {Israel Journal of Mathematics},
  volume  = {228},
  number  = {2},
  year    = {2018},
  pages   = {665--705},
  doi     = {10.1007/s11856-018-1783-0},
  eprint  = {1612.09249},
  archivePrefix = {arXiv}
}

@article{Katz1979,
  author  = {Katz, Nicholas M.},
  title   = {Slope filtration of {$F$}-crystals},
  journal = {Ast\'erisque},
  volume  = {63},
  year    = {1979},
  pages   = {113--163}
}

@article{Mazur1973,
  author  = {Mazur, Barry},
  title   = {Frobenius and the {H}odge filtration: estimates},
  journal = {Annals of Mathematics},
  volume  = {98},
  number  = {1},
  year    = {1973},
  pages   = {58--95},
  doi     = {10.2307/1970906}
}

@article{Ogus1978,
  author  = {Ogus, Arthur},
  title   = {Griffiths transversality in crystalline cohomology},
  journal = {Annals of Mathematics},
  volume  = {108},
  number  = {3},
  year    = {1978},
  pages   = {395--419},
  doi     = {10.2307/1971182}
}

@article{BaldiKlinglerUllmo,
  author  = {Baldi, Gregorio and Klingler, Bruno and Ullmo, Emmanuel},
  title   = {On the Distribution of the {Hodge} Locus},
  journal = {Inventiones Mathematicae},
  volume  = {235},
  number  = {2},
  pages   = {441--487},
  year    = {2024},
  doi     = {10.1007/s00222-023-01226-0},
  eprint  = {2107.08838},
  archivePrefix = {arXiv},
  primaryClass  = {math.AG}
}

@article{BatyrevCox1994,
  author  = {Batyrev, Victor V. and Cox, David A.},
  title   = {On the {H}odge structure of projective hypersurfaces in toric varieties},
  journal = {Duke Mathematical Journal},
  volume  = {75},
  number  = {2},
  year    = {1994},
  pages   = {293--338},
  doi     = {10.1215/S0012-7094-94-07509-1},
  eprint  = {alg-geom/9306011},
  archivePrefix = {arXiv}
}

@incollection{Batyrev1999,
  author    = {Batyrev, Victor V.},
  title     = {Birational {Calabi--Yau} $n$-folds have equal {B}etti numbers},
  booktitle = {New Trends in Algebraic Geometry},
  series    = {London Mathematical Society Lecture Note Series},
  volume    = {264},
  publisher = {Cambridge University Press},
  year      = {1999},
  pages     = {1--12},
  doi       = {10.1017/CBO9780511721540.002},
  eprint    = {alg-geom/9710020},
  archivePrefix = {arXiv}
}

@article{Chiarellotto1998Weights,
  author  = {Chiarellotto, Bruno},
  title   = {Weights in rigid cohomology applications to unipotent {$F$}-isocrystals},
  journal = {Annales scientifiques de l'\'Ecole Normale Sup\'erieure},
  series  = {4},
  volume  = {31},
  number  = {5},
  year    = {1998},
  pages   = {683--715},
  doi     = {10.1016/S0012-9593(98)80004-9}
}

@article{GrosseKlonne2004,
  author  = {Gro{\ss}e-Kl\"onne, Elmar},
  title   = {De {R}ham cohomology of rigid spaces},
  journal = {Mathematische Zeitschrift},
  volume  = {247},
  number  = {2},
  year    = {2004},
  pages   = {223--240},
  doi     = {10.1007/s00209-003-0544-9},
  eprint  = {1408.3330},
  archivePrefix = {arXiv}
}

@article{KatzMessing1974,
  author  = {Katz, Nicholas M. and Messing, William},
  title   = {Some consequences of the {R}iemann hypothesis for varieties over finite fields},
  journal = {Inventiones Mathematicae},
  volume  = {23},
  year    = {1974},
  pages   = {73--77},
  doi     = {10.1007/BF01405203}
}

@article{Yang2021,
  author  = {Yang, Wenzhe},
  title   = {Rank-2 attractors and {Deligne}'s conjecture},
  journal = {Journal of High Energy Physics},
  volume  = {2021},
  number  = {3},
  year    = {2021},
  pages   = {150},
  doi     = {10.1007/JHEP03(2021)150},
  eprint  = {2001.07211},
  archivePrefix = {arXiv},
  primaryClass = {math.AG}
}

@incollection{Moore2007,
  author       = {Moore, Gregory W.},
  title        = {Les Houches Lectures on Strings and Arithmetic},
  booktitle    = {Frontiers in Number Theory, Physics, and Geometry II},
  editor       = {Cartier, Pierre and Julia, Bernard and Moussa, Pierre and Vanhove, Pierre},
  pages        = {265--347},
  publisher    = {Springer},
  address      = {Berlin},
  year         = {2007},
  eprint       = {hep-th/0401049},
  archivePrefix= {arXiv}
}

@article{DworkOgus1986,
  author  = {Dwork, Bernard and Ogus, Arthur},
  title   = {Canonical liftings of {J}acobians},
  journal = {Compositio Mathematica},
  volume  = {58},
  number  = {1},
  pages   = {111--131},
  year    = {1986}
}

@article{Tsuji1999,
  author  = {Tsuji, Takeshi},
  title   = {$p$-adic {\'e}tale cohomology and crystalline cohomology in the semi-stable reduction case},
  journal = {Inventiones Mathematicae},
  volume  = {137},
  number  = {2},
  pages   = {233--411},
  year    = {1999}
}

@article{ColmezFontaine2000,
  author  = {Colmez, Pierre and Fontaine, Jean-Marc},
  title   = {Construction des repr\'esentations $p$-adiques semi-stables},
  journal = {Inventiones Mathematicae},
  volume  = {140},
  number  = {1},
  pages   = {1--43},
  year    = {2000}
}

@article{Grothendieck1966,
  author  = {Grothendieck, Alexander},
  title   = {On the de {R}ham cohomology of algebraic varieties},
  journal = {Publications Math\'ematiques de l'IH\'ES},
  volume  = {29},
  pages   = {95--103},
  year    = {1966}
}

@article{Batyrev1994Dual,
  author  = {Batyrev, Victor V.},
  title   = {Dual polyhedra and mirror symmetry for {C}alabi--{Y}au hypersurfaces in toric varieties},
  journal = {Journal of Algebraic Geometry},
  volume  = {3},
  number  = {3},
  pages   = {493--535},
  year    = {1994}
}

@incollection{Scholl1994,
  author    = {Scholl, Anthony J.},
  title     = {Classical motives},
  booktitle = {Motives (Seattle, WA, 1991)},
  series    = {Proceedings of Symposia in Pure Mathematics},
  volume    = {55, Part 1},
  pages     = {163--187},
  publisher = {American Mathematical Society},
  address   = {Providence, RI},
  year      = {1994}
}

@book{Fulton1998,
  author    = {Fulton, William},
  title     = {Intersection Theory},
  edition   = {Second},
  series    = {Ergebnisse der Mathematik und ihrer Grenzgebiete. 3. Folge},
  volume    = {2},
  publisher = {Springer-Verlag},
  address   = {Berlin},
  year      = {1998}
}

@article{LongoVigni2021,
  author  = {Longo, Matteo and Vigni, Stefano},
  title   = {On {B}loch--{K}ato Selmer groups and {I}wasawa theory of $p$-adic {G}alois representations},
  journal = {New York Journal of Mathematics},
  volume  = {27},
  pages   = {437--467},
  year    = {2021},
  eprint  = {2010.10251},
  archivePrefix = {arXiv},
  primaryClass  = {math.NT}
}

@unpublished{BeukersSupercongruencesModularForms,
  author        = {Beukers, Frits},
  title         = {Supercongruences using modular forms},
  year          = {2024},
  note          = {To appear in Algebra \& Number Theory},
  eprint        = {2403.03301},
  archivePrefix = {arXiv},
  primaryClass  = {math.NT}
}

@unpublished{LamAttractorConjecture,
  author        = {Lam, Yeuk Hay Joshua},
  title         = {The Attractor Conjecture for {Calabi--Yau} Variations
                   of {Hodge} Structures},
  year          = {2020},
  eprint        = {2010.02063},
  archivePrefix = {arXiv},
  primaryClass  = {math.NT}
}

@article{LamTripathyAttractors,
  author  = {Lam, Yeuk Hay Joshua and Tripathy, Arnav},
  title   = {Attractors Are Not Algebraic},
  journal = {Compositio Mathematica},
  volume  = {160},
  number  = {5},
  year    = {2024},
  pages   = {1073--1100},
  doi     = {10.1112/S0010437X24007036}
}

@unpublished{Moore1998,
  author        = {Moore, Gregory W.},
  title         = {Arithmetic and Attractors},
  year          = {1998},
  eprint        = {hep-th/9807087},
  archivePrefix = {arXiv},
  primaryClass  = {hep-th}
}

\end{document}